\DeclareMathOperator{\dist}{dist}
\DeclareMathOperator{\id}{id}
\DeclareMathOperator{\dom}{dom}
\newcommand{\C}{\mathbb C}
\newcommand{\A}{\mathscr{A}}
\newcommand{\M}{\mathbb{M}}
\newcommand{\N}{\mathbb N}
\newcommand{\X}{\mathfrak{X}}
\DeclareMathOperator{\diam}{diam}
\renewcommand{\P}{\mathbb{P}}
\newcommand{\E}{\mathbb{E}}
\DeclareMathOperator{\Aut}{Aut}
\newcommand{\<}{\langle}
\renewcommand{\>}{\rangle}
\newcommand{\aff}{\ \widetilde{\in}\:}
\let\emptyset\varnothing
\newtheorem{thm}{Theorem}[section]
\newtheorem{thmx}{Theorem}
\newtheorem{prop}[thm]{Proposition}
\newtheorem{lem}[thm]{Lemma}
\newtheorem{cor}[thm]{Corollary}
\newtheorem*{thm*}{Theorem}
\theoremstyle{definition}
\newtheorem{define}[thm]{Definition}
\newtheorem{exmp}[thm]{Example}
\newtheorem{rmk}[thm]{Remark}
\newtheorem{note}[thm]{Notation}
\begin{document}
\title{\huge\textbf{Ergodic Quantum Processes\\ on Finite von Neumann Algebras}}
\author{
Brent Nelson\thanks{Department of Mathematics, Michigan State University \hfill \url{brent@math.msu.edu}} \ 
and 
Eric B. Roon\thanks{Department of Mathematics, The University of Arizona \hfill \url{ebroon@math.arizona.edu}}
}
\date{}                     
\setcounter{Maxaffil}{0}
\renewcommand\Affilfont{\itshape\small}
\maketitle
\begin{abstract}
\noindent Let $(M,\tau)$ be a tracial von Neumann algebra with a separable predual and let $(\Omega, \mathbb{P})$ be a probability space. A bounded positive random linear operator on $L^1(M,\tau)$ is a map $\gamma\colon \Omega \times L^1(M,\tau)\to L^1(M,\tau)$ so that $\tau(\gamma_\omega(x)a)$ is measurable for all $x\in L^1(M,\tau)$ and $a\in M$, and $x\mapsto \gamma_\omega(x)$ is bounded, positive, and linear almost surely. Given an ergodic $T\in \text{Aut}(\Omega, \mathbb{P})$, we study quantum processes of the form $\gamma_{T^n\omega}\circ \gamma_{T^{n-1}\omega}\circ \cdots \circ \gamma_{T^m\omega}$ for $m,n\in \mathbb{Z}$. Using the Hennion metric introduced in \cite{jeff}, we show that under reasonable assumptions such processes collapse to replacement channels exponentially fast almost surely. Of particular interest is the case when $\gamma_\omega$ is the predual of a normal positive linear map on $M$. As an example application, we study the clustering properties of normal states that are generated by such random linear operators. These results offer an infinite dimensional generalization of the theorems in \cite{jeff}.
\end{abstract}

\section*{Introduction}
In quantum information theory, changes in a quantum system are modelled by quantum channels. In the Schr\"odinger picture, quantum channels take the form of trace-preserving completely positive maps on states. Dually, in the Heisenberg picture of quantum mechanics, observable quantities are represented by operators in a C*-algebra (or von Neumann algebra), and quantum channels are then unital completely positive (normal) maps on that algebra. Physically, a quantum channel represents the dynamics of observables when the quantum system is \emph{weakly coupled} to an environment (or reservoir) into which information about the evolving system can escape \cite{BreuerPetruccione}. Asymptotic properties of these dynamics play an important role throughout the mathematical physics literature. 
	
 This dynamical interpretation of quantum channels (ucp maps) is not the only way of incorporating disorder into a quantum mechanical model. Recent years have seen an advance in quantum mechanical models that incorporate \emph{large disorder}, by randomizing the quantum channel that evolves the system (see the non-exhaustive list \cite{BougronJoyePillet, GGJN18, jeff, pathirana2023law}). It is natural, then, to analyze the asymptotic evolution of the system up to probability. That is, given a family of \emph{random} quantum channels $\{\phi_n\colon n\in \mathbb{N}\}$, one seeks to analyze the asymptotic behavior of
    \[
        \phi_n\circ \cdots \circ\phi_1,
    \] 
or in the dual Schr\"{o}dinger picture one would instead consider $\phi_1^*\circ\cdots \circ \phi_n^*$. In the work \cite{jeff}, the authors use ergodicity to obtain almost sure asymptotic estimates for non-independent homogeneously distributed quantum channels. More precisely, given an ergodic transformation $T$ on a probability space $(\Omega,\P)$ and a random variable $\gamma_\omega$ valued in quantum channels on states, Movassagh and Schenker prove a number of results for the family $\{\gamma_{T^n\omega}\colon n\in \mathbb{N}\}$, including almost sure convergence of $\gamma_{\omega}\circ \gamma_{T\omega}\circ \cdots \circ \gamma_{T^n\omega}$ to a replacement channel (see \cite[Theorem 2]{jeff}). As an application of these results, they also show that certain matrix product states exhibit an almost sure clustering estimate (see \cite[Theorem 3]{jeff}). 
 
 This work aims to provide an effective generalization of the work of \cite{jeff} to the case when the observables form a finite von Neumann algebra; that is, a von Neumann algebra admitting a faithful normal tracial state. A basic but infinite dimensional example is the hyperfinite $\mathrm{II}_1$ factor $\mathcal{R}$, which is physically relevant because it arises as the weak operator topology closure of the local algebra associated to a spin chain where the on-site observable algebras consist of the $2\times 2$ matrices (or more generally, the $n\times n$ matrices for any integer $n\geq 2$). Other sources of examples include representations of discrete groups, actions of groups on probability spaces, and measurable equivalence relations.
 
 The principal tool we use to carry out our analysis is the metric first introduced by Hennion in his 1997 paper \cite{Hennion}. Hennion was originally concerned with infinite products of random positive definite matrices and their convergence properties, and he used his metric as a means to study their rates of convergence. Movassagh and Schenker \cite{jeff} provide a finite dimensional noncommutative version of Hennion’s results on the $n\times n$ complex matrices $\mathbb{M}_{n}$. Both papers rely on what we call the $m$-\textit{quantity} of two positive matrices (or, in the case of \cite{Hennion}, vectors) $X$ and $Y$ given by $m(X,Y) := \max \{ \lambda \in \mathbb{R} \colon \lambda Y \le X\}$. In the case that $M$ is finite with faithful normal trace $\tau$, we recall that the normal state space $S\subset L^1(M,\tau)$ can be canonically identified with the set of unit-trace, positive, closed operators affiliated to $M$ \cite[Chapter 7]{ap}. Thus, from the $m$-quantity one can form a bounded metric $d$ on the normal state space of $M$ via 
    \[
        d(x,y) = \frac{1 - m(x,y)m(y,x)}{1+m(x,y)m(y,x)} \qquad\qquad x,y\in S\subset L^1(M,\tau).
    \]
We call this \emph{Hennion's metric}, and we study its geometric properties in Section~\ref{sec:dtop}. In addition to extending known results to the infinite dimensional case, we exhibit new results about the disconnected components of $S$ (see Theorem~\ref{thm:geometry}).
 
To each positive linear map $\gamma$ on $L^1(M,\tau)$, one can induce a projective action on $S$ by $\gamma \cdot x := \frac{1}{\tau(\gamma(x))} \gamma(x)$. Provided that $\tau\circ \gamma$ is non-zero on $S$, one can associate to $\gamma$ the Lipschitz constant 
    \[
        c(\gamma) := \sup_{\substack{x,y\in S\\x\neq y}} \frac{d(\gamma \cdot x, \gamma \cdot y)}{d(x,y)}.
    \] 
When $c(\gamma)<1$, we say $\gamma$ is a \emph{strict Hennion contraction}. Many properties of these maps are established in Section~\ref{sec:cmap}, including a complete classification (see Theorem~\ref{thm:strict_Hennion_projective_actions}). The duality $L^1(M,\tau)^*\cong M$ implies strict Hennion contractions can also arise from normal positive linear maps on $M$, and indeed we determine precisely when this occurs in Subsection~\ref{subsubsec:SHCfromnormal}.

In Section~\ref{sec:EQP}, we consider \emph{ergodic quantum processes}: compositions of random quantum channels on $L^1(M,\tau)$ evolving under an ergodic transformation. Our first main result roughly says that such processes collapse to a replacement channel almost surely, provided there is a chance that the process eventually contracts in Hennion's metric:

\begin{thmx}[{Theorem~\ref{thm:right_convergence}}]\label{thmx:A}
Let $(M,\tau)$ be a tracial von Neumann algebra with a separable predual, let $(\Omega, \P)$ be a probability space equipped with ergodic $T\in \Aut(\Omega, \P)$, and let $\gamma_\omega\colon L^1(M,\tau)\to L^1(M,\tau)$ be a bounded positive faithful random linear operator. Suppose that
    \[
        \P[\exists m \colon c(\gamma_{\omega}\circ \gamma_{T\omega }\circ \cdots \circ \gamma_{T^m \omega})<1]>0.
    \]
Then there is a state-valued random variable $X_\omega\in S$ so that for all $x\in S$ one has
        \[
           \lim_{m\to - \infty} \|\gamma_{\omega}\circ \gamma_{T\omega }\circ \cdots \circ \gamma_{T^m \omega} \cdot x - X_\omega\|_1 = 0
        \] 
almost surely.
\end{thmx}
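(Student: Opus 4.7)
My plan is to combine three tools: the Birkhoff-style contraction theory for Hennion's metric established in Sections~\ref{sec:dtop}--\ref{sec:cmap}, Kingman's subadditive ergodic theorem, and the hypothesis guaranteeing a contracting block with positive probability. Throughout, the faithfulness assumption ensures $\tau(\gamma_\omega(x)) > 0$ for all $x \in S$, so the projective actions $\gamma_\omega \cdot x$ and, by composition, $\Gamma_m^\omega := \gamma_\omega \circ \gamma_{T\omega} \circ \cdots \circ \gamma_{T^m\omega}$ are well-defined self-maps of $S$ almost surely.

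The first step is to show that $c(\Gamma_m^\omega) \to 0$ exponentially fast almost surely as $m \to -\infty$. The relevant properties of $c$, namely submultiplicativity under composition and the universal bound $c(\gamma) \leq 1$ for positive $\gamma$, come from Section~\ref{sec:cmap}. Setting $f_k(\omega) := \log c(\text{product of the first } k \text{ factors starting at }\omega)$, submultiplicativity together with the cocycle structure of the composition under $T$ yields subadditivity $f_{k+\ell}(\omega) \leq f_k(\omega) + f_\ell(T^{\pm k}\omega)$ for the appropriate shift. Since $f_k \leq 0$, its positive part is trivially integrable, so Kingman's theorem applies, and ergodicity of $T$ gives an almost-sure constant limit $\lambda = \inf_k \frac{1}{k}\mathbb{E}[f_k]$. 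The hypothesis produces some $k_0$ for which $f_{k_0} < 0$ on a set of positive probability, and since $f_{k_0} \leq 0$ everywhere this forces $\mathbb{E}[f_{k_0}] < 0$. Hence $\lambda < 0$, so $c(\Gamma_m^\omega)$ decays exponentially almost surely.

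The second step upgrades this to convergence of the orbit $\Gamma_m^\omega \cdot x$. For $m' < m \leq 0$, writing $\Gamma_{m'}^\omega = \Gamma_m^\omega \circ \Delta$ for the appropriate tail block $\Delta$ gives $\Gamma_{m'}^\omega \cdot x = \Gamma_m^\omega \cdot y$ for some $y \in S$; using that $d \leq 1$ on $S$,
\[
d(\Gamma_{m'}^\omega \cdot x,\, \Gamma_m^\omega \cdot x) \;\leq\; c(\Gamma_m^\omega)\cdot d(y,x) \;\leq\; c(\Gamma_m^\omega) \;\longrightarrow\; 0
\]
uniformly in $m'$ and in $x$. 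Thus $(\Gamma_m^\omega \cdot x)$ is $d$-Cauchy almost surely. Invoking the completeness and topological results of Section~\ref{sec:dtop}, this sequence converges in $\|\cdot\|_1$ to some limit $X_\omega^{(x)} \in S$. The bound $d(\Gamma_m^\omega \cdot x,\, \Gamma_m^\omega \cdot x_0) \leq c(\Gamma_m^\omega)\, d(x,x_0) \to 0$ shows $X_\omega^{(x)}$ is independent of $x$, giving the desired $X_\omega$; measurability follows from its pointwise-limit construction, with separability of the predual providing the standard framework.

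I expect the main obstacle to be the passage from $d$-Cauchy to $\|\cdot\|_1$-convergence. In infinite dimensions, Hennion's metric does not globally dominate $\|\cdot\|_1$ on $S$, and $S$ can split into multiple $d$-connected components (as emphasized in Theorem~\ref{thm:geometry}). Showing that the limit exists in the $\|\cdot\|_1$ sense, and lies in $S$, requires the delicate topological analysis of Section~\ref{sec:dtop} and is the reason separability of the predual appears in the hypotheses. A secondary nuisance is the bookkeeping of measurability (of $c$, of $\Gamma_m^\omega$, and of the final limit $X_\omega$), but this should be routine given the measurability framework set up for random linear operators earlier in the paper.
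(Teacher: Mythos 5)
Your proposal is correct and follows essentially the same route as the paper: Kingman's theorem applied to $\log c(\Gamma_{n,m})$ (the paper's Lemma~\ref{lem:randomconstproperties}) gives almost-sure exponential decay of the contraction constants, and your $d$-Cauchy-plus-completeness argument is the paper's nested-sets Cantor-intersection argument in slightly different clothing, with measurability of $X_\omega$ handled via weak measurability and Pettis' theorem. One correction to your closing remarks: the passage from $d$-convergence to $\|\cdot\|_1$-convergence is \emph{not} the delicate point, since Theorem~\ref{thm:tracemetricinequality_and_completeness} gives $\tfrac12\|x-y\|_1\le d(x,y)$ unconditionally (it is the reverse domination that fails in infinite dimensions), and separability of the predual is used for measurability (of $c(\Gamma_{n,m})$ via Proposition~\ref{prop:contr_const_for_bounded_maps}, and of the limit) rather than for the topological step.
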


Separability of the predual $L^1(M,\tau)$ in the above theorem is used extensively to avoid measurability issues, and the probabilistic assumption is analogous to \cite[Assumption 1]{jeff} (see the discussion preceding Lemma~\ref{lem:a_not_so_mild_hypothesis}). The rate of convergence is controlled by $c(\gamma_{\omega}\circ \gamma_{T\omega }\circ \cdots \circ \gamma_{T^m \omega})$, and so it depends on $\omega\in \Omega$ but is independent of $x\in S$. To show that these Lipschitz constants tend to zero almost surely, we use Kingman's ergodic theorem (see Theorem~\ref{thm:kingman}), and in fact the rate of convergence is exponentially fast almost surely (see Lemma~\ref{lem:randomconstproperties}).

Our second main result concerns ergodic quantum processes on $M$ rather than $L^1(M,\tau)$ and is essentially dual to Theorem~\ref{thmx:A}. Under similar assumptions, such processes also collapse to a replacement channel almost surely:

\begin{thmx}[{Theorem~\ref{thm:left_convergence_on_M}}]\label{thmx:B}
Let $(M,\tau)$ be a tracial von Neumann algebra with a separable predual, let $(\Omega, \P)$ be a probability space equipped with ergodic $T\in \Aut(\Omega, \P)$, and let $\phi_\omega\colon M\to M$ be a normal unital positive random linear operator. Suppose that
    \[
        \P[\exists n \colon c((\phi_{T^n\omega} \circ \cdots \circ \phi_\omega)_*)<1]>0.
    \]
Then there is a state-valued random variable $Y_\omega \in S$ so that for all $a\in M$ one has
    \[
        \lim_{n\to \infty} \| \phi_{T^n\omega}\circ\cdots\cdot \circ\phi_{\omega}(a) - \tau(a Y_\omega)\|_\infty = 0.
    \]
almost surely.
\end{thmx}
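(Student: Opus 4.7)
The plan is to pass to the predual and invoke Theorem~\ref{thmx:A}. Set $\gamma_\omega := (\phi_\omega)_* \colon L^1(M,\tau) \to L^1(M,\tau)$; normality, positivity, and unitality of $\phi_\omega$ translate into boundedness, positivity, and trace-preservation of $\gamma_\omega$, and the last of these forces $\gamma_\omega$ to be faithful on $S$. The measurability assumption on $\phi_\omega$ passes through the duality $L^1(M,\tau)^* \cong M$, so $\{\gamma_\omega\}$ is a bounded positive faithful random linear operator on $L^1(M,\tau)$ as needed for Theorem~\ref{thmx:A}. The identity
\[
\bigl(\phi_{T^n\omega} \circ \cdots \circ \phi_\omega\bigr)_* = \gamma_\omega \circ \gamma_{T\omega} \circ \cdots \circ \gamma_{T^n\omega} =: \Gamma_n
\]
converts the Hennion contraction hypothesis of Theorem~\ref{thmx:B} into that of Theorem~\ref{thmx:A}, which therefore produces a state-valued random variable $Y_\omega \in S$ such that $\Gamma_n \cdot x \to Y_\omega$ in $\|\cdot\|_1$ almost surely for every $x \in S$.

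The next step is to upgrade this pointwise convergence to a uniform rate on $S$. Since $\gamma_\omega$ is trace-preserving, the projective action $\Gamma_n \cdot x$ coincides with $\Gamma_n(x)$ on $S$. The rate of convergence in Theorem~\ref{thmx:A} is governed by $c(\Gamma_n)$, which is independent of $x$ and which tends to $0$ exponentially fast a.s.\ by Kingman's theorem (Lemma~\ref{lem:randomconstproperties}). Combining the Lipschitz estimate
\[
d(\Gamma_n \cdot x,\, \Gamma_n \cdot x_0) \le c(\Gamma_n)\, d(x,x_0) \le c(\Gamma_n) \cdot \diam(S,d)
\]
with the bounded diameter of $(S,d)$ (Theorem~\ref{thm:geometry}) and pointwise convergence at a fixed reference state $x_0 \in S$ gives uniform $d$-convergence of $\Gamma_n(x)$ to $Y_\omega$ over $x \in S$. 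Since convergence in $d$ dominates convergence in $\|\cdot\|_1$ on $S$ by the geometry developed in Section~\ref{sec:dtop}, this upgrades to $\sup_{x \in S} \|\Gamma_n(x) - Y_\omega\|_1 \to 0$ a.s. Decomposing any $\rho \in L^1(M,\tau)$ with $\|\rho\|_1 \le 1$ into a bounded combination of four states, linearity and trace-preservation then yield
\[
\sup_{\|\rho\|_1 \le 1} \|\Gamma_n(\rho) - \tau(\rho)\, Y_\omega\|_1 \longrightarrow 0 \quad \text{a.s.}
\]

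To conclude, dualize back using $\|b\|_\infty = \sup_{\|\rho\|_1 \le 1} |\tau(b\rho)|$ for $b \in M$: for every $a \in M$,
\[
\bigl\|\phi_{T^n\omega} \circ \cdots \circ \phi_\omega(a) - \tau(a Y_\omega)\bigr\|_\infty = \sup_{\|\rho\|_1 \le 1} \bigl|\tau\bigl(a[\Gamma_n(\rho) - \tau(\rho) Y_\omega]\bigr)\bigr| \le \|a\|_\infty \sup_{\|\rho\|_1 \le 1}\|\Gamma_n(\rho) - \tau(\rho)Y_\omega\|_1,
\]
and the right-hand side tends to $0$ a.s. by the previous step. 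The main obstacle is exactly the uniform upgrade in the second paragraph: naive duality delivers only weak-$*$ convergence of $\phi_{T^n\omega} \circ \cdots \circ \phi_\omega(a)$ in $M$, whereas operator-norm convergence requires uniform control on the $L^1$ unit ball. It is the $x$-independence of the Hennion rate $c(\Gamma_n)$ together with the comparison between $d$ and $\|\cdot\|_1$ on $S$ that makes this upgrade possible, and this is where the strength of the Hennion-metric machinery from Sections~\ref{sec:dtop} and \ref{sec:cmap} is essential.
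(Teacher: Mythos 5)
Your proposal is correct and follows essentially the same route as the paper's proof of Theorem~\ref{thm:left_convergence_on_M}: pass to the predual process $\Gamma_n=(\phi_{T^n\omega}\circ\cdots\circ\phi_\omega)_*$, apply Theorem~\ref{thmx:A}, exploit that the rate $2c(\Gamma_n)$ is uniform over $S$, decompose a general element of the $L^1$ unit ball into four positive pieces, and dualize back via $M\cong L^1(M,\tau)^*$. The only difference is that the paper proves the more general statement where $\phi_\omega(1)$ is merely invertible (hence the $\Phi_{n,m}(1)^{-1/2}\,\cdot\,\Phi_{n,m}(1)^{-1/2}$ normalization), which in the unital case collapses to exactly your argument.
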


In fact, we are able to prove the above theorem when $\phi_\omega(1)$ is only assumed to be almost surely invertible. In this case, one must instead consider a normalized process of the form
    \[
       a\mapsto \left(\phi_{T^n\omega}\circ\cdots\cdot \circ\phi_{\omega}(1)^{-\frac12} \right)\phi_{T^n\omega}\circ\cdots\cdot \circ\phi_{\omega}(a) \left(\phi_{T^n\omega}\circ\cdots\cdot \circ\phi_{\omega}(1)^{-\frac12} \right).
    \]
Theorems~\ref{thmx:A} and \ref{thmx:B} recover the asymptotic results in \cite[Theorem 1]{jeff} for the finite dimensional algebra $(\mathbb{M}_n,\frac{1}{n}\text{Tr})$. Compared to \cite[Theorem 1]{jeff}, the above theorems have left-right asymmetries that are a consequence of $L^1(M,\tau)\cong M$ \emph{failing} in the infinite dimensional case.

The deterministic versions of Theorems~\ref{thmx:A} and \ref{thmx:B} can be compared with Yeadon's mean ergodic theorems for semifinite von Neumann algebras (see \cite{Yea77,Yea80}). Indeed, suppose $\gamma\colon L^1(M,\tau)\to L^1(M,\tau)$ is a unital $\tau$-preserving positive linear map, and denote its dual map by $\phi:=\gamma^*$. Then \cite[Theorem 4.2]{Yea80} implies that for every $x\in L^1(M,\tau)$ and $a\in M$ there exists $\hat{x}\in L^1(M,\tau)$ and $\hat{a}\in M$ satisfying
    \[
        \lim_{n\to\infty} \left\| \frac1n \sum_{k=0}^{n-1} \gamma^k(x) - \hat{x}\right\|_1=0 \qquad\qquad \text{ and }\qquad\qquad \lim_{n\to\infty} \left\| \frac1n \sum_{k=0}^{n-1} \phi^k(a) - \hat{a} \right\|_\infty =0.
    \]
If $\inf_{n\geq 0} c(\gamma^n)<1$, then Theorems~\ref{thmx:A} and \ref{thmx:B} imply there exists $X\in S$ so that $\hat{x}=X$ for all $x\in S$ and $\hat{a} = \tau(aX)$ for all $a\in M$. Moreover, in this case the above convergences can be upgraded to 
    \[
        \lim_{n\to\infty} \| \gamma^n(x) - X\|_1 =0 \qquad\qquad \text{ and }\qquad\qquad \lim_{n\to\infty} \| \phi^n(a) - \tau(aX)\|_\infty =0.
    \]
(Note that $\gamma(x)=\gamma\cdot x$ here since $\gamma$ is $\tau$-preserving.)

Part of the work done in \cite{jeff} is to understand the clustering properties of certain matrix product states which are generated by a family of homogeneously-distributed random matrices. Physically, these correspond to \emph{random} states on a spin chain. Therefore it is an interesting question to understand what happens in the case when the on-site algebras are infinite dimensional. Given a von Neumann algebra $M$, let $\{M_n\colon n\in \mathbb{Z}\}$ be isomorphic copies of $M$ and for a finite subset $\Lambda\subset \mathbb{Z}$ we denote
    \[
        M_{\Lambda}:=\overline{\bigotimes_{n\in \Lambda}} M_n.
    \]
Inclusions $\Lambda\subset \Pi$ of finite subsets of $\mathbb{Z}$ induce embeddings $M_\Lambda \subset M_\Pi$ so that one can consider the inductive limit \emph{C*-algebra}
    \[
        \A_{\mathbb{Z}}:= \varinjlim M_\Lambda,    
    \]
which is called the \emph{quasi-local algebra} associated to the spin chain with on-site algebras $M_n=M$ for all $n\in \mathbb{Z}$ (see \cite[Definition 2.6.3 and Example 4.2.12]{BratteliRobinson1}). This algebra admits a canonical translation action $\mathbb{Z}\overset{\alpha}{\curvearrowright}\A_{\mathbb{Z}}$, and a state on $\A_\mathbb{Z}$ is said to be \emph{locally normal} if its restriction to each $M_{\Lambda}$ is normal (see \cite[Definition 2.6.6]{BratteliRobinson1} or \cite{HudsonMoody}). Taking inspiration from the classification of translation invariant states in \cite{FannesNachtergaeleWerner}, we construct a class of random variables taking values in locally normal states which obey a kind of \emph{translation covariance} property relative to the ergodic transformation. As an application of our previous main results, we establish the following clustering estimate for our class of translation covariant states:

\begin{thmx}[{Theorem~\ref{thm:Rclustering}}]\label{thmx:C}
Let $(M,\tau_M)$ and $(W,\tau_W)$ be tracial von Neumann algebras with separable preduals, let $(\Omega, \P)$ be a probability space equipped with ergodic $T\in \Aut(\Omega, \P)$, and let $\mathcal{E}_\omega \colon M\bar\otimes W\to W$ be a normal unital positive random linear operator. Define $\phi_\omega(x):=\mathcal{E}_\omega(1\otimes x)$ and suppose that
     \[
        \P[\exists n \colon c((\phi_{T^n\omega} \circ \cdots \circ \phi_\omega)_*)<1]>0.
    \]
Then $\mathcal{E}_\omega$ determines (see Theorem~\ref{thm:thermo}.(4)) a
random locally normal state $\Psi_\omega$ on the quasi-local algebra $\A_{\mathbb{Z}}$ associated to the spin chain whose on-site algebras are isomorphic to $M$ that satisfies
    \[
            \Psi_\omega \circ \alpha_k = \Psi_{T^k \omega} \qquad\qquad \forall k\in \mathbb{Z}.
    \]
Moreover, there is a constant $\kappa \in (0,1)$ and a random variable $E_\omega\in [0,+\infty)$ such that 
    \[
            |\Psi_\omega(ab) - \Psi_\omega(a) \Psi_\omega(b)| \le E_\omega \kappa^{\dist(\Lambda, \Pi)-1} \|a\|_\infty \|b\|_\infty \qquad \qquad \forall a\in M_{\Lambda},\ b\in M_{\Pi}
    \]
almost surely for finite subsets $\Lambda \subset (-\infty,-1)$ and $\Pi \subset [1,+\infty)$.
\end{thmx}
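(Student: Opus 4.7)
The plan is to construct $\Psi_\omega$ as an infinite-volume state via nested applications of $\mathcal{E}_\omega$ with a random left boundary supplied by Theorem~\ref{thmx:A}, and to extract the exponential clustering from the Hennion-contraction rate of the slab of $\phi$'s lying in the gap between $\Lambda$ and $\Pi$.

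\textbf{Construction.} First, applying Theorem~\ref{thmx:A} to the predual family $\gamma_\omega := (\phi_\omega)_*$—whose contraction hypothesis is precisely the one given—produces a random normal state $X_\omega$ of $W$ satisfying the invariance $(\phi_\omega)_*(X_{T^{-1}\omega}) = X_\omega$ (the normalization scalar is automatically $1$ since $\phi_\omega(1) = 1$). For each finite interval $[m,n]\subset\mathbb{Z}$, I would set on elementary tensors
\[
\Psi^{[m,n]}_\omega(a_m \otimes \cdots \otimes a_n) := \tau_W\!\left( X_{T^{m-1}\omega}\cdot \mathcal{E}^{a_m}_{T^m\omega} \circ \cdots \circ \mathcal{E}^{a_n}_{T^n\omega}(1)\right), \qquad \mathcal{E}^{a}_\omega(x) := \mathcal{E}_\omega(a\otimes x).
\]
Right-extension ($a_{n+1} = 1$) is automatic because $\mathcal{E}^1_\omega = \phi_\omega$ is unital; left-extension ($a_{m-1}=1$) follows from the invariance of $X$ after moving $\phi_{T^{m-1}\omega}$ through the trace. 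Invoking Theorem~\ref{thm:thermo}.(4) to promote positivity and normality from elementary tensors to all of $M_{[m,n]}$, the compatible family $\{\Psi^{[m,n]}_\omega\}$ induces a locally normal state $\Psi_\omega$ on $\A_\mathbb{Z}$. Translation covariance $\Psi_\omega\circ\alpha_k = \Psi_{T^k\omega}$ is read directly from the formula via the substitution $T^\cdot\omega \mapsto T^{\cdot+k}\omega$.

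\textbf{Clustering decomposition.} For $a \in M_\Lambda$, $b \in M_\Pi$, reducing by linearity and continuity to elementary tensors, set $e := \max\Lambda \le -2$, $f := \min\Pi \ge 1$, $d := f-e = \dist(\Lambda,\Pi)$, and define the gap composition $G_\omega := \phi_{T^{e+1}\omega} \circ \cdots \circ \phi_{T^{f-1}\omega}$ of length $d-1$; iterating the invariance of $X$ across the gap gives $(G_\omega)_*(X_{T^e\omega}) = X_{T^{f-1}\omega}$. Splitting the defining formula into a $\Lambda$-part $A_\omega$, the gap $G_\omega$, and a $\Pi$-part $B_\omega$,
\[
\Psi_\omega(ab) - \Psi_\omega(a)\Psi_\omega(b) = \tau_W\bigl(\chi\cdot\bigl[\,G_\omega(y) - \tau_W(y\, X_{T^{f-1}\omega})\cdot 1\,\bigr]\bigr),
\]
where $\chi := (A_\omega)_*(X_{T^{\min\Lambda-1}\omega})$ and $y := B_\omega(1)$. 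Moving the bracketed difference to the predual, the Hennion contraction property of $(G_\omega)_*$ on the state space yields
\[
\|(G_\omega)_*(\chi) - \tau_W(\chi)\cdot X_{T^{f-1}\omega}\|_1 \le |\tau_W(\chi)|\cdot \diam(S,\|\cdot\|_1)\cdot c((G_\omega)_*)
\]
(after a Jordan-type decomposition of $\chi$ for non-self-adjoint $a$). Combined with $\|y\|_\infty \le \|b\|_\infty$ and $\|\chi\|_1 \le \|a\|_\infty$, this produces $|\Psi_\omega(ab) - \Psi_\omega(a)\Psi_\omega(b)| \lesssim \|a\|_\infty\|b\|_\infty\, c((G_\omega)_*)$.

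\textbf{Exponential rate and uniformity.} The final step is to bound $c((G_\omega)_*) \le E_\omega\kappa^{d-1}$ almost surely with a single random $E_\omega < \infty$ and universal $\kappa\in(0,1)$, uniformly over admissible $(\Lambda,\Pi)$. The hypothesis $\Lambda\subset(-\infty,-1)$, $\Pi\subset[1,\infty)$ is used in an essential way: it forces positions $-1$ and $0$ to always lie in the gap, so I would split $G_\omega = G^{\leftarrow}_\omega\circ G^{\rightarrow}_\omega$ with $G^{\rightarrow}_\omega := \phi_\omega \circ \cdots \circ \phi_{T^{f-1}\omega}$ and $G^{\leftarrow}_\omega := \phi_{T^{e+1}\omega}\circ\cdots\circ\phi_{T^{-1}\omega}$, both anchored at the fixed sites $0$ and $-1$. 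Submultiplicativity gives $c((G_\omega)_*) \le c((G^{\leftarrow}_\omega)_*)\cdot c((G^{\rightarrow}_\omega)_*)$, and Kingman's subadditive ergodic theorem (Theorem~\ref{thm:kingman}) applied to the two one-sided processes $n \mapsto \log c((\phi_\omega\circ\cdots\circ\phi_{T^n\omega})_*)$ and $n\mapsto \log c((\phi_{T^{-n}\omega}\circ\cdots\circ\phi_{T^{-1}\omega})_*)$—the latter's hypothesis being equivalent to the former's by $T$-invariance—delivers a common exponential rate $\kappa<1$ almost surely and finite random pre-exponential constants whose product is the desired $E_\omega$. The main obstacle is precisely this uniformity in $(\Lambda,\Pi)$: without the anchor at $\{-1,0\}$, the pre-exponential Kingman constants would vary along the orbit $\{T^k\omega : k\in\mathbb{Z}\}$ and could not be absorbed into a single finite random variable.
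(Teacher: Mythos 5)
Your proposal is correct and follows essentially the same route as the paper: the state is the same object (your $X_{T^{m-1}\omega}$ is the paper's boundary state $Z_m(\omega)$ obtained from the one-sided convergence theorem, and compatibility under left/right extension is checked via the same invariance and unitality), and the clustering bound is obtained exactly as in the paper by isolating the gap process between $\Lambda$ and $\Pi$, bounding the deviation from the replacement map by its Hennion contraction constant, and then splitting that constant submultiplicatively at the anchor sites $\{-1,0\}$ so that Kingman's theorem supplies a single almost-surely-finite prefactor $E_\omega$ and a deterministic rate $\kappa$ (this is precisely the role of the paper's $D_{\bullet,k}$ and $D_{k-1,\bullet}$ with $k=0$). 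Your closing remark about why the uniformity over $(\Lambda,\Pi)$ forces the anchoring is exactly the point of Lemma~\ref{lem:randomconstproperties}.(2).
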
 

In words, we construct a family of \textit{random} locally normal states that exhibit almost sure exponential clustering. Our approach is modelled after \cite[Proposition 2.3 and 2.5]{FannesNachtergaeleWerner} and gives $\Psi_\omega(a)$ as an almost sure operator norm limit of
    \[
        \mathcal{E}_{T^{-N}\omega}\circ(1\otimes \mathcal{E}_{T^{-N+1}\omega})\circ \cdots \circ (1\otimes \cdots \otimes 1\otimes \mathcal{E}_{T^N\omega})(a\otimes 1_W)
    \]
for local observables $a\in M_{[m,n]}\subset M_{[-N,N]}$ (see Theorem~\ref{thm:thermo}.(4)). This result is an important step towards a better understanding of many-body systems subject to both information loss to a reservoir and large on-site disorder. 

\section*{Acknowledgements}
The authors would like to thank Jeffrey Schenker for bringing the question of an infinite dimensional generalization of \cite{jeff} to their attention, and for many productive conversations on the topic. The authors would also like to thank Stefaan Vaes for his helpful suggestions.  EBR would like to thank BN and Jeffrey Schenker for their hospitality during his visits to Michigan State University in the summer of 2022 and spring of 2023, and for their continued mentoring in this area of study. 
EBR would also like to thank his advisor, Robert Sims, for his support and patience while EBR worked on this project, 
a helpful discussion of \cite[Section IV.1]{CarLa}, and for lending EBR his hard copy of \cite{CarLa}
. BN was supported by NSF grants DMS-1856683 and DMS-2247047.

\tableofcontents

\section{Preliminaries}\label{sec:prelims}

\subsection{Finite von Neumann algebras}\label{sec:radonnikodym}
Throughout, $M$ will be a finite von Neumann algebra equipped with a faithful normal tracial state $\tau$, and we will refer to the pair $(M,\tau)$ simply as a \emph{tracial von Neumann algebra}.  We will identify $M$ with its standard representation on $L^2(M,\tau)$; that is, the Gelfand--Naimark--Segal construction associated to $\tau$. For a closed subspace $\mathcal{H}\leq L^2(M,\tau)$ we write $[\mathcal{H}]$ for the projection onto $\mathcal{H}$. We denote by $J$ the conjugate linear map on $L^2(M,\tau)$ determined by $Jx=x^*$, and we recall that $JMJ=M'$ the commutant of $M$ in $B(L^2(M,\tau))$.

One says that a closed, densely-defined operator $x$ on $L^2(M, \tau)$ is \emph{affiliated} to $M$ if and only if for the polar decomposition $x = v|x|$ one has $v,1_{[0,t]}(|x|)\in M$ for all $t\geq 0$. In this case one writes $x\aff M$. 
In particular, given $x\aff M$, one has $x\in M$ if and only if $x$ is bounded. Recall that every operator affiliated to $M$ is \textit{maximally extended} in the sense that if  $x\aff M$ and $x\subset y$, then  $x=y$. The set of affiliated operators $\widetilde{M}$ forms a $*$-algebra under the operations of closing linear combinations and products, with the usual adjoint as the involution (see \cite[Theorem 7.2.8]{ap}). We adopt the notation $\dom(x)$ for the domain of $x$ in $L^2(M,\tau)$.

Equip $M$ with the norm $\|\cdot \|_1:= \tau( |\cdot |)$. The completion of $M$ with respect to $\|\cdot \|_1$ is written $L^1(M,\tau)$. It is nontrivially isometrically isomorphic to the predual $M_*$ (see \cite[Theorem 7.4.5]{ap}). This isomorphism is implemented via $x\mapsto \tau_x(\cdot) := \tau(x\,\cdot\,)$. 
One can also identify $L^1(M,\tau)$ with the set of $x\aff M$ such that $\|x\|_1:=\sup_{t>0} \tau(|x|1_{[0,t]}(|x|)) <\infty$, so we will frequently view elements of $L^1(M,\tau)$ as unbounded operators on $L^2(M,\tau)$. More generally, for $1\leq p <\infty$ one defines $L^p(M,\tau)$ as the set of $x\aff M$ such that $\|x\|_p:= \sup_{t>0} \tau(|x|^p 1_{[0,t]})^{1/p}<\infty$. For $p=2$ one obtains a Hilbert space that is naturally isomorphic to the standard representation of $M$, and so there is no conflict of notation. Moreover, as unbounded operators $L^2(M,\tau)$ corresponds to those $x\aff M$ with $1\in \dom(x)$ (see \cite[Theorem 7.3.2]{ap}). It follows that $M\subset L^2(M,\tau)$ is a core for such $x$. Note that for $x\in L^1(M,\tau)$, $|x|^{1/p}\in L^p(M,\tau)$ for all $1\leq p <\infty$, and so in particular $M\subset L^2(M,\tau)$ is a core for $|x|^{1/2}$.

 Recall that we say $M$ has a \textbf{separable predual} if $L^1(M,\tau)$ is separable as a Banach space. Examples include finite dimensional von Neumann algebras, the hyperfinite $\mathrm{II}_1$ factor, and group von Neumann algebras for countable discrete groups. There are several equivalent formulations of this which will be useful.

\begin{thm}[{\cite[Theorem 1.3.11]{olesen}}]\label{thm:sep}
For a tracial von Neumann algebra $(M,\tau)$, the following are equivalent:
    \begin{enumerate}[label = (\roman*)]
        \item The predual $(L^1(M,\tau), \|\cdot\|_1)$ is a separable Banach space.
        \item $(M,\tau)$ is separable in the $\sigma$-WOT.
        \item $L^2(M,\tau)$ is a separable Hilbert space.
    \end{enumerate}
\end{thm}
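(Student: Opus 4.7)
The plan is to prove the cyclic chain of implications $(\text{iii})\Rightarrow(\text{i})\Rightarrow(\text{ii})\Rightarrow(\text{iii})$, taking advantage of the continuous inclusions $M\subset L^2(M,\tau)\subset L^1(M,\tau)$ together with the bound $\|x\|_1\leq \|x\|_2$ for $x\in M$ (Cauchy--Schwarz for the trace, using $\tau(1)=1$). For $(\text{iii})\Rightarrow(\text{i})$, I would observe that a countable $\|\cdot\|_2$-dense subset of $L^2(M,\tau)$ is automatically $\|\cdot\|_1$-dense in $L^2(M,\tau)$ by the inequality above, and combining this with the $\|\cdot\|_1$-density of $M\subset L^2(M,\tau)$ in $L^1(M,\tau)$ yields a countable $\|\cdot\|_1$-dense subset of $L^1(M,\tau)$.

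For $(\text{i})\Rightarrow(\text{ii})$, I would invoke the Banach-space duality $L^1(M,\tau)^*\cong M$ already recorded in the excerpt, and the fact that, under this identification, the $\sigma$-WOT on $M$ coincides with the weak-$*$ topology inherited from the predual. If the predual is norm-separable, then the closed unit ball $M_1\subset M$ is weak-$*$ compact and metrizable by the standard Banach--Alaoglu argument, hence weak-$*$ separable. Since $M=\bigcup_{n\geq 1} nM_1$, one obtains a countable $\sigma$-WOT-dense subset of $M$.

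For $(\text{ii})\Rightarrow(\text{iii})$, the main step, let $\{x_n\}$ be a countable $\sigma$-WOT-dense subset of $M$, and view the $x_n$ as vectors in $L^2(M,\tau)$ via the canonical inclusion $M\hookrightarrow L^2(M,\tau)$. I would show that the $\|\cdot\|_2$-closed linear span $\mathcal{H}:=\overline{\spn\{x_n\}}^{\|\cdot\|_2}$ is all of $L^2(M,\tau)$ by proving $\mathcal{H}^{\perp}=\{0\}$. Given $y\in \mathcal{H}^{\perp}$, the linear functional $\psi\colon M\to\mathbb{C}$ defined by $\psi(x):=\tau(y^* x)$ is well-defined and normal because $y^*\in L^2(M,\tau)\subset L^1(M,\tau)\cong M_*$, and hence it is $\sigma$-WOT-continuous. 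Since $\psi(x_n)=\langle x_n,y\rangle_{L^2}=0$ for every $n$, the $\sigma$-WOT-density of $\{x_n\}$ forces $\psi\equiv 0$ on $M$; equivalently, $y\perp M$ in $L^2(M,\tau)$. The cyclicity of the trace vector, i.e.\ the $\|\cdot\|_2$-density of $M$ in $L^2(M,\tau)$, then gives $y=0$. Rational-complex linear combinations of $\{x_n\}$ provide a countable $\|\cdot\|_2$-dense subset of $L^2(M,\tau)$.

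The main obstacle is the implication $(\text{ii})\Rightarrow(\text{iii})$: $\sigma$-WOT density is \emph{a priori} weaker than sequential density (the $\sigma$-WOT is not first countable in general), so one cannot simply produce $\|\cdot\|_2$-approximations of arbitrary $L^2$-vectors from sequences in $\{x_n\}$. The argument bypasses this by combining two pieces of duality: normal functionals on $M$ are automatically $\sigma$-WOT-continuous, and the trace vector makes $M$ cyclic in $L^2(M,\tau)$. Together these transfer the weak density hypothesis on $\{x_n\}$ in $M$ into the desired norm density of its linear span in $L^2(M,\tau)$.
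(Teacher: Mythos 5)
Your proof is correct. Note that the paper itself gives no argument for this statement at all---it is quoted verbatim from \cite[Theorem 1.3.11]{olesen} and used as a black box---so there is no internal proof to compare against; your writeup is a valid self-contained substitute for the citation. The three implications are each handled appropriately: $(iii)\Rightarrow(i)$ via the norm inequality $\|x\|_1\leq\|x\|_2$ (Cauchy--Schwarz with $\tau(1)=1$) together with the $\|\cdot\|_1$-density of $M\subset L^2(M,\tau)$ in $L^1(M,\tau)$; $(i)\Rightarrow(ii)$ via Banach--Alaoglu metrizability of the unit ball in the weak-$*$ topology, which coincides with the $\sigma$-WOT under $L^1(M,\tau)\cong M_*$; and $(ii)\Rightarrow(iii)$, the only implication with real content, via the orthogonal-complement argument. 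You correctly identify and resolve the subtlety there: one cannot extract $\|\cdot\|_2$-approximating sequences directly from a $\sigma$-WOT-dense set, but any $y\in L^2(M,\tau)$ orthogonal to the closed span of the dense set defines, through $y^*\in L^1(M,\tau)\cong M_*$, a normal (hence $\sigma$-WOT-continuous) functional vanishing on a dense set, so $y\perp M$ and cyclicity of the trace vector forces $y=0$. The only cosmetic point is that in $(iii)\Rightarrow(i)$ you state $\|x\|_1\leq\|x\|_2$ for $x\in M$ but apply it to elements of $L^2(M,\tau)$; the inequality does extend to all of $L^2(M,\tau)$ by the same Cauchy--Schwarz computation (or one simply chooses the countable dense subset inside $M$ to begin with), so nothing is lost.
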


The following lemma, which is well-known to experts, will be useful in analyzing Hennion's metric $d$.

\begin{lem}\label{lem:infinite_dimensional}
For a tracial von Neumann algebra $(M,\tau)$, the following are equivalent:
    \begin{enumerate}[label=(\roman*)]
        \item $M$ is infinite dimensional.
        
        \item There exists a family $\{p_n\colon n\in \N\}\subset M$ of non-zero pairwise orthogonal projections satisfying $\sum p_n =1$.
        
        \item There exists a sequence $(p_n)_{n\in \N}\subset M$ of non-zero projections satisfying $\tau(p_n)\to 0$.
    \end{enumerate}
\end{lem}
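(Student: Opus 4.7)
The plan is to prove the cycle of implications (ii) $\Rightarrow$ (iii) $\Rightarrow$ (i) $\Rightarrow$ (ii). The first two are immediate. For (ii) $\Rightarrow$ (iii), normality of $\tau$ gives $\sum_n \tau(p_n) = \tau(1) = 1$, forcing $\tau(p_n) \to 0$. For (iii) $\Rightarrow$ (i), I argue the contrapositive: a finite dimensional tracial von Neumann algebra is isomorphic to $\bigoplus_{k=1}^K M_{n_k}(\C)$, on which $\tau$ is a convex combination of the normalized traces, so the trace of any non-zero projection is bounded below by a positive constant.

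The substance is (i) $\Rightarrow$ (ii), which I plan to prove by producing an infinite dimensional abelian $*$-subalgebra $A \subseteq M$ containing $1$ and then extracting the desired family from inside $A$. For the reduction, I split on the center. If $Z(M)$ is infinite dimensional, take $A = Z(M)$. Otherwise $Z(M)$ has finitely many minimal projections $z_1, \dots, z_K$, so $M = \bigoplus_{k=1}^K z_k M$ is a finite direct sum of factors, each carrying a faithful normal tracial state. Since $M$ is infinite dimensional, some factor $z_{k_0} M$ is infinite dimensional; being a finite factor of infinite dimension, it is of type $\mathrm{II}_1$ and so contains a diffuse abelian subalgebra. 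Adjoining the remaining central projections $z_k$ for $k \neq k_0$ yields an infinite dimensional abelian $A \subseteq M$.

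Given such an $A$, decompose it into its atomic and diffuse parts $A = A_{\mathrm{at}} \oplus A_{\mathrm{diff}}$. The atoms of $A_{\mathrm{at}}$ are at most countable, since their traces sum to at most $1$. The diffuse summand $A_{\mathrm{diff}}$, if non-zero, admits projections of any trace in $(0, \tau(1_{A_{\mathrm{diff}}}))$ and can therefore be dyadically partitioned into countably many non-zero projections summing to $1_{A_{\mathrm{diff}}}$. Infinite dimensionality of $A$ forces either $A_{\mathrm{at}}$ to have infinitely many atoms or $A_{\mathrm{diff}}$ to be non-zero (or both), so the combined family is a countable pairwise orthogonal collection of non-zero projections in $A \subseteq M$ summing to $1$. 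The main obstacle is the central reduction when $Z(M)$ is finite dimensional: one must invoke the classification of finite factors to identify $z_{k_0} M$ as type $\mathrm{II}_1$ and the structure of type $\mathrm{II}_1$ algebras to extract a diffuse abelian subalgebra. A minor subtlety is that the lemma does not assume separable predual, but faithful finiteness of $\tau$ already enforces enough countability on the atomic part, and the dyadic halving works for any diffuse abelian piece.
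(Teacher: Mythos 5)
Your proposal is correct and follows essentially the same route as the paper: (ii)$\Rightarrow$(iii) by summability of the traces, (iii)$\Rightarrow$(i) by the contrapositive via the multimatrix structure of finite dimensional tracial algebras, and (i)$\Rightarrow$(ii) by splitting on the center and exploiting diffuseness (lack of minimal projections) of an infinite dimensional finite factor. You spell out the atomic/diffuse decomposition and dyadic halving that the paper leaves implicit, but the underlying argument is the same.
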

\begin{proof}
$(i)\Rightarrow (ii)$: Let $\mathcal{Z}(M)$ be the center of $M$, which is isomorphic to $L^\infty(X,\mu)$ for some probability space. If $(X,\mu)$ has any diffuse subsets or infinitely many inequivalent atoms, then we are done. Otherwise, $L^\infty(X,\mu)\cong \C^n$ for some $n\in \N$ and $M$ is a finite direct sum of factors. One of these factors is necessarily infinite dimensional (lest $M$ be finite dimensional) and hence contains such a family because it lacks minimal projections.\\

\noindent$(ii)\Rightarrow(iii)$: Since $\sum \tau(p_n) =1 <\infty$, we have $\tau(p_n)\to 0$.\\

\noindent$(iii)\Rightarrow (i)$: We proceed by contrapositive. If $M$ is finite dimensional, then it is necessarily a multimatrix algebra and $\tau$ is a convex combination of traces. It follows that the traces of projections in $M$ form a finite discrete subset of $[0,1]$, and in particular $0$ is isolated.
\end{proof}

\subsubsection{Positivity}
Recall that $a\in M$ is \emph{positive} if $a=b^*b$ for some $b\in M$; equivalently, if $a$ is positive semidefinite as an operator on $L^2(M,\tau)$. In this case, we write $a\geq 0$ and we denote the \emph{positive cone} of $M$ by $M_+:=\{a\in M\colon a\geq 0\}$. The positive cone induces an ordering on the self-adjoint elements of $M$: for $a,b\in M_{s.a.}$ we write $a\leq b$ if $b-a\geq 0$.

More generally, for unbounded self-adjoint operators $x,y$ on $L^2(M,\tau)$ we write $x\leq y$ when $y-x$ is positive semidefinite on $\dom(x)\cap \dom(y)$. Note that for affiliated operators $x,y\aff M$, $x\leq y$ is equivalent to saying the closure of $y-x$ is positive. In particular, $x\leq y$ and $y\leq x$ imply $x$ and $y$ agree on $\dom(x)\cap \dom(y)$ and therefore $x=y$. For each $1\leq p <\infty$, we denote $L^p(M,\tau)_+:=\{x\in L^p(M,\tau)\colon x\geq 0\}$, which we recall is the $\|\cdot\|_p$-closure of $M_+$.

\begin{rmk}\label{rmk:boundedimpliesbounded}
If $0\leq x\leq y$ and $y$ is bounded, then $x$ is necessarily bounded since
    \[
        \<x\xi,\xi\> \leq \<y\xi,\xi\>\leq \|y\|\|\xi\|^2
    \]
holds for $\xi$ in the dense subspace $\dom(y-x)$.$\hfill\blacksquare$
\end{rmk}

Also recall that we say a densely defined linear operator $x$ on $L^2(M,\tau)$ is \emph{boundedly invertible} if there exists $b\in B(L^2(M,\tau))$ satisfying $xb=1$ and $bx\subset 1$, in which case we write $x^{-1}:=b$. Note that if $x\aff M$ then $x^{-1}\in M$.

\begin{rmk}\label{rmk:boundedlyinvert_implies_boundedlyinvert}
A positive densely defined operator $x$ on $L^2(M,\tau)$ is boundedly invertible if and only if $x\geq \delta $ for some scalar $\delta>0$, and in this case one can choose $\delta=\|x^{-1}\|^{-1}$. In particular, if $x,y\aff M$ satisfy $0\leq x \leq y$, then $x$ being boundedly invertible implies $y$ is boundedly invertible.$\hfill\blacksquare$
\end{rmk}

A linear map $\phi\colon M\to N$ between von Neumann algebras is said to be \emph{positive} if $\phi(M_+)\subset N_+$, in which case we write $\phi\geq 0$. More generally, we say $\phi$ is \emph{$n$-positive} for $n\in \mathbb{N}$ if the map
    \begin{align*}
        \phi\otimes I_n \colon \quad M_n(M) &\to M_n(N)\\
            (a_{ij})_{1\leq i,j\leq n} &\mapsto (\phi(a_{ij}))_{1\leq i,j\leq n},
    \end{align*}
is positive. We say $\phi$ is \emph{completely positive} if it is $n$-positive for all $n\in \mathbb{N}$. Recall the following classical result. 

\begin{thm}[Russo--Dye Theorem]\label{thm:Russo-Dye}
Let $A$ be a unital $C^*$-algebra and $\phi:A\to A$ be a linear mapping. Then 
    \[
        \|\phi\|:= \sup_{x\colon \|x\|=1} \|\phi(x)\| = \sup_{u\in \mathcal{U}(A)} \|\phi(u)\|,
    \]
where $\mathcal{U}(A)$ denotes the unitary group of $A$. In particular, if $\phi$ is positive then $\|\phi\|=\|\phi(1)\|$.
\end{thm}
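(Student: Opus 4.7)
The plan is to deduce the theorem from two classical ingredients: the Russo--Dye convex hull theorem for unitaries, and the automatic complete positivity of positive maps on commutative C*-subalgebras.

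For the first equality $\|\phi\| = \sup_{u \in \mathcal{U}(A)} \|\phi(u)\|$, the inequality $\geq$ is immediate since unitaries have norm one. For the reverse, I would invoke the Russo--Dye convex hull theorem: every $x \in A$ with $\|x\| < 1$ admits a representation $x = \sum_{i=1}^N \lambda_i u_i$ as a finite convex combination of unitaries $u_i \in \mathcal{U}(A)$. The key constructive step is that a self-adjoint contraction $a = a^*$ with $\|a\| \leq 1$ is the average $\frac{1}{2}(u + u^*)$ of the unitary $u := a + i\sqrt{1 - a^2}$ (continuous functional calculus); one then extends to general (possibly non-normal) contractions via Cartesian decomposition combined with an averaging argument. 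Granting this, linearity and the triangle inequality yield $\|\phi(x)\| \leq \sum_i \lambda_i \|\phi(u_i)\| \leq \sup_u \|\phi(u)\|$, and since $\sup_{\|x\|<1}\|\phi(x)\| = \sup_{\|x\|=1}\|\phi(x)\| = \|\phi\|$ by linearity, this gives the desired bound.

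For the positive case, I would fix a unitary $u$ and consider the commutative unital C*-subalgebra $C^*(1, u) \subset A$. A positive linear map from a commutative C*-algebra into any C*-algebra is automatically completely positive (a standard consequence of Stinespring's theorem), so $\phi|_{C^*(1,u)}$ admits a Stinespring dilation $\phi(a) = V^*\pi(a)V$ with $\|V\|^2 = \|V^*V\| = \|\phi(1)\|$. Applied to $a = u$, the Kadison--Schwarz-type computation
\[
    \phi(u)^*\phi(u) = V^*\pi(u)^* V V^*\pi(u) V \leq \|V^*V\|\, V^*\pi(u^*u) V = \|\phi(1)\|\,\phi(1)
\]
yields $\|\phi(u)\|^2 \leq \|\phi(1)\|^2$. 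Combined with the first equality, $\|\phi\| = \sup_u \|\phi(u)\| \leq \|\phi(1)\|$, and the reverse inequality $\|\phi(1)\| \leq \|\phi\|$ is immediate.

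The main obstacle is the Russo--Dye convex hull theorem itself: while the self-adjoint case is transparent via the explicit unitary $a + i\sqrt{1 - a^2}$, the extension to general non-normal contractions is the genuinely subtle step and requires the classical averaging trick (or Haagerup's refinement for the finite-convex-combination form). The positivity step, by contrast, is cheap: it reduces entirely to the well-known fact that positive maps on commutative C*-algebras are completely positive, together with a one-line Schwarz-type inequality extracted from the Stinespring dilation.
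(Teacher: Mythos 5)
The paper does not actually prove this statement: it is recorded as a recalled classical result (with no argument given), so there is no in-paper proof to compare against. Your argument is the standard textbook derivation and is correct. The reduction of $\|\phi\|$ to $\sup_{u\in\mathcal{U}(A)}\|\phi(u)\|$ via the finite-convex-combination form of the Russo--Dye/Kadison--Pedersen--Haagerup theorem is the right move, and importantly you use the \emph{finite} convex combination version, which is what lets the argument go through without assuming boundedness of $\phi$ a priori (the closed-convex-hull version alone would require continuity to pass to limits). The self-adjoint case via $u=a+i\sqrt{1-a^2}$ is correct, and you are right that the non-normal case is the genuinely hard classical ingredient; citing it is appropriate here. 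For the positive case, restricting to the commutative algebra $C^*(1,u)$ and invoking automatic complete positivity of positive maps on commutative domains (a result of Stinespring, though strictly speaking a companion theorem rather than a corollary of the dilation theorem) to get the Schwarz-type inequality $\phi(u)^*\phi(u)\leq\|\phi(1)\|\,\phi(1)$ is valid; one could alternatively cite the direct commutative-domain estimate $\|\phi|_{C^*(1,u)}\|=\|\phi(1)\|$ (as in Paulsen's book) without passing through Stinespring, but both routes are sound. No gaps beyond the classical results you explicitly flag as being cited.
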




Under the identification of $M_*$ with $L^1(M,\tau)$, the positive linear maps in $M_*$ correspond to $L^1(M,\tau)_+$. In fact, one has
    \begin{align}\label{eqn:positive_iff}
    \begin{split}
        x\in L^1(M,\tau)_+ \qquad &\Longleftrightarrow  \qquad  \tau(xa)\geq 0 \qquad \forall a\in M_+,\\
        a\in M_+ \qquad &\Longleftrightarrow \qquad \tau(xa) \geq0 \qquad \forall x\in L^1(M,\tau)_+.
    \end{split}
    \end{align}
One says a linear map $\gamma\colon L^1(M,\tau)\to L^1(M,\tau)$ is \emph{positive} if $\gamma(L^1(M,\tau)_+) \subset L^1(M,\tau)_+$, and writes $\gamma\geq 0$. More generally, one can define \emph{$n$-postivitiy} and \emph{complete positivity} for such maps by considering $\gamma\otimes I_n$ defined on $M_n(L^1(M,\tau))\cong L^1( M_n(M), \tau\otimes(\frac1n \text{Tr}))$.

There is a well-known correspondence between normal linear maps $\phi\colon M\to M$ and bounded linear maps $\gamma\colon L^1(M,\tau)\to L^1(M,\tau)$. Indeed, using that $L^1(M,\tau)\cong (M,\text{weak}*)^*$, one has that $\phi_*\colon L^1(M,\tau)\to L^1(M,\tau)$ is a bounded linear map satisfying
    \[
        \tau(\phi_*(x) a) = \tau(x\phi(a)),
    \]
for all $x\in L^1(M,\tau)$ and $a\in M$. From Equation~(\ref{eqn:positive_iff}), it follows that $\phi_*$ is positive if and only if $\phi$ is positive. Similarly for $n$-positivity and complete positivity. Conversely, given a bounded linear map $\gamma$ on $L^1(M,\tau)$, using
$M\cong L^1(M,\tau)^*$ one has that $\gamma^*\colon M\to M$ is a normal linear map satisfying
    \[
        \tau( x \gamma^*(a)) = \tau( \gamma(x) a),
    \]
for all $x\in L^1(M,\tau)$ and $a\in M$. The positivity (resp. $n$-positivity or complete positivity) of $\gamma^*$ again follows from that of $\gamma$ via Equation~(\ref{eqn:positive_iff}). For future reference, we record these observations in Lemma~\ref{lem:normalbddextension} below.

Toward refining the above correspondence, we recall a bit more terminology. We say a positive linear map $\phi\colon M\to M$ is \emph{$\tau$-bounded} if there exists a constant $c>0$ so that $\tau(\phi(a)) \leq c\tau(a)$ for all $a\in M_+$. We say a positive linear map $\gamma\colon L^1(M,\tau)\to L^1(M,\tau)$ is \emph{$M$-preserving} if $\gamma(M)\subset M$. Since each element of $M$ can be decomposed as a linear combination of four positive elements, this is equivalent to $\gamma(M_+)\subset M_+$. Using $a\leq \|a\| 1$ for all $a\in M_+$, this is further equivalent to $\gamma(1)\in M_+$ by Remark~\ref{rmk:boundedimpliesbounded}. Observe that if $\phi\colon M\to M$ is normal, positive, and $\tau$-bounded with constant $c>0$, then
    \[
        \tau((c-\phi_*(1)) a) = c \tau(a) - \tau(\phi(a)) \geq 0
    \]
for all $a\in M_+$ so that $\phi_*(1)\leq c$ by Equation~(\ref{eqn:positive_iff}). Thus $\phi_*$ is $M$-preserving. Conversely, if $\gamma\colon L^1(M,\tau)\to L^1(M,\tau)$ is $M$-preserving, bounded, and positive, then for $a\in M_+$ one has
    \[
        \tau(\gamma^*(a)) = \tau(\gamma(1) a) \leq \|\gamma(1)\| \tau(a) = \|\gamma\| \tau(a).
    \]
Thus $\gamma^*$ is $\tau$-bounded with constant $\|\gamma\|$. Thus the correspondence from before restricts to a correspondence between $\tau$-bounded normal positive linear maps on $M$ and $M$-preserving bounded positive linear maps on $L^1(M,\tau)$. We also record this in Lemma~\ref{lem:normalbddextension} below.

Finally, we note that for $\tau$-bounded normal positive linear maps $\phi$ on $M$ (resp. $M$-preserving bounded positive linear maps $\gamma$ on $L^1(M,\tau)$) there is another correspondence given by extending (resp. restricting) the maps. Indeed, for $a\in M$ let $\phi(a)=v|\phi(a)|$ be the polar decomposition. Then one has
    \[
        \| \phi(a)\|_1 = |\tau( v^* \phi(a))| = |\tau( \phi_*(v^*) a)| \leq \| \phi_*(v^*)\| \|a\|_1,
    \]
where we have used that $\phi_*$ is $M$-preserving. It follows that $\phi$ admits a unique bounded linear extension $\phi|^{L^1(M,\tau)}$ to $L^1(M,\tau)$. Since $L^1(M,\tau)_+ = \overline{M_+}^{\|\cdot\|_1}$ one further has that the extension is positive, and it is $M$-preserving since $\phi(M)\subset M$. Conversely, if $\gamma$ is an $M$-preserving bounded positive  linear map on $L^1(M,\tau)$, then $\gamma|_M$ defines a positive linear map on $M$. For $a\in M_+$, one has
    \[
        \tau(\gamma(a)) = \| \gamma(a)\|_1 \leq \|\gamma\| \|a\|_1 = \|\gamma\| \tau(a)
    \]
so that $\gamma|_M$ is $\tau$-bounded and normal (see \cite[Proposition 2.5.11]{ap}). This is also recorded in the following lemma as well as an interaction between the above two correspondences (whose proof is left to the reader).

\begin{lem}\label{lem:normalbddextension}
Let $(M,\tau)$ be a tracial von Neumann algebra. There is a one-to-one correspondence between normal positive (resp. completely positive) linear maps $\phi$ on $M$ and bounded positive (resp. completely positive) linear maps $\phi_*$ on $L^1(M,\tau)$ determined by
    \[
        \tau(\phi_*(x) a)=\tau(x\phi(a))  \qquad\qquad x\in L^1(M,\tau),\ a\in M.
    \]
This correspondence restricts to a one-to-one correspondence between $\tau$-bounded normal positive (resp. completely positive) linear maps on $M$ and $M$-preserving bounded positive (resp. completely positive) linear maps on $L^1(M,\tau)$. The former maps $\phi$ also admit unique extensions $\phi|^{L^1(M,\tau)}$ to $L^1(M,\tau)$ that are $M$-preserving bounded and positive (resp. completely positive), and the latter maps $\phi_*$ also have restrictions $\phi_*|_M$ to $M$ that are $\tau$-bounded normal and positive (resp. completely positive). In this case, one has $(\phi_*|_M)_*= \phi|^{L^1(M,\tau)}$.
\end{lem}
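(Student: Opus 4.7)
The plan is to package the observations made in the paragraphs preceding the lemma into a unified proof, and then verify one remaining compatibility relation.

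First, I would establish the basic one-to-one correspondence. Given a normal linear map $\phi\colon M\to M$, the identification $L^1(M,\tau)\cong (M,\text{weak}^*)^*$ yields a bounded predual map $\phi_*$ on $L^1(M,\tau)$ satisfying the defining relation $\tau(\phi_*(x)a)=\tau(x\phi(a))$; conversely, a bounded linear $\gamma$ on $L^1(M,\tau)$ yields a normal adjoint $\gamma^*$ on $M$ via $M\cong L^1(M,\tau)^*$, and these two operations are mutually inverse. Positivity would be transferred in both directions by Equation~(\ref{eqn:positive_iff}): $\phi\geq 0$ iff $\tau(x\phi(a))\geq 0$ for every $x\in L^1(M,\tau)_+$ and $a\in M_+$, iff $\phi_*\geq 0$. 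The same reasoning would apply at the level of $n$-amplifications using the natural identification $M_n(L^1(M,\tau))\cong L^1(M_n(M),\tau\otimes \tfrac{1}{n}\mathrm{Tr})$, giving complete positivity.

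Next, I would consolidate the material preceding the lemma into the restricted correspondence. The calculation $\tau((c-\phi_*(1))a)=c\tau(a)-\tau(\phi(a))\geq 0$ shows that $\phi$ is $\tau$-bounded with constant $c$ precisely when $\phi_*(1)\leq c$, which by Remark~\ref{rmk:boundedimpliesbounded} characterizes the $M$-preserving positive maps on $L^1(M,\tau)$. For the extensions and restrictions, the polar-decomposition estimate $\|\phi(a)\|_1=|\tau(v^*\phi(a))|=|\tau(\phi_*(v^*)a)|\leq \|\phi_*(v^*)\|\,\|a\|_1$ gives a unique bounded extension $\phi|^{L^1(M,\tau)}$, whose positivity then follows from $L^1(M,\tau)_+=\overline{M_+}^{\|\cdot\|_1}$. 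In the converse direction, $\|\gamma(a)\|_1\leq \|\gamma\|\,\tau(a)$ shows $\gamma|_M$ is $\tau$-bounded, and normality follows from \cite[Proposition 2.5.11]{ap}.

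The only genuinely new step will be the compatibility $(\phi_*|_M)_*=\phi|^{L^1(M,\tau)}$. By boundedness of both maps and density of $M$ in $L^1(M,\tau)$, it suffices to check agreement on $M$. For $a,b\in M$, the defining relation for $(\phi_*|_M)_*$ combined with the trace property yields
\[
    \tau\bigl((\phi_*|_M)_*(a)\,b\bigr)=\tau\bigl(a\,\phi_*(b)\bigr)=\tau\bigl(\phi_*(b)\,a\bigr)=\tau\bigl(b\,\phi(a)\bigr)=\tau\bigl(\phi(a)\,b\bigr),
\]
and the duality $M\cong L^1(M,\tau)^*$ gives $(\phi_*|_M)_*(a)=\phi(a)=\phi|^{L^1(M,\tau)}(a)$. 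I do not anticipate any conceptual obstacle since the proof is essentially a consolidation; the only modest care will be to verify that the proposed extensions and restrictions land in the intended subclasses ($M$-preserving, $\tau$-bounded, normal) rather than merely in the ambient classes of bounded and positive maps.
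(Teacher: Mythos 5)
Your proposal is correct and follows essentially the same route as the paper, whose ``proof'' is precisely the discussion preceding the lemma (with the final compatibility left to the reader); your verification of $(\phi_*|_M)_*=\phi|^{L^1(M,\tau)}$ via the trace identity on the dense subspace $M$ and the duality pairing is exactly the intended argument. The only cosmetic remark is that the final step is most naturally phrased as saying an element of $L^1(M,\tau)$ is determined by its pairings against $M$ (i.e.\ $L^1(M,\tau)\cong M_*$), but this is equivalent to the duality you cite and changes nothing.
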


 As with the operators themselves, for linear maps $\phi,\psi\colon M\to M$ we write $\phi\leq \psi$ if $\psi- \phi\geq 0$. Similarly, for linear maps $\gamma,\rho\colon L^1(M,\tau)\to L^1(M,\tau)$ we write $\gamma\leq \rho$ if $\rho - \gamma\geq 0$.

\begin{lem}\label{lem:adjoint&predual}
Let $\alpha,\beta \colon M\to M$ be positive normal linear maps. Then $\alpha \leq \beta$ if and only if $\alpha_*\leq \beta_*$.
\end{lem}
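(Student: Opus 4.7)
The plan is to reduce the inequality $\alpha \le \beta$ to the positivity of $\beta-\alpha$ and use the characterization of positivity given in Equation~(\ref{eqn:positive_iff}), together with the defining identity of the predual correspondence from Lemma~\ref{lem:normalbddextension}. The key observation is that preduality is linear: $(\beta-\alpha)_* = \beta_*-\alpha_*$, and both $\beta-\alpha$ and $\beta_*-\alpha_*$ are bounded linear maps between their respective spaces (even if $\beta-\alpha$ itself need not be positive, this is irrelevant here as we only need the defining identity $\tau(x(\beta-\alpha)(a))=\tau((\beta_*-\alpha_*)(x)a)$, which extends to all normal linear maps by linearity).

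The forward direction proceeds by a chain of equivalences. Assume $\alpha \le \beta$, i.e., $(\beta-\alpha)(a)\in M_+$ for every $a\in M_+$. By Equation~(\ref{eqn:positive_iff}) applied in $M$, this is equivalent to
\[
\tau(x(\beta-\alpha)(a)) \ge 0 \qquad \forall x \in L^1(M,\tau)_+, \, a \in M_+.
\]
Applying the predual identity rewrites the left-hand side as $\tau((\beta_*-\alpha_*)(x)a)$. Then Equation~(\ref{eqn:positive_iff}) applied in $L^1(M,\tau)$ says that $(\beta_*-\alpha_*)(x)\in L^1(M,\tau)_+$ if and only if $\tau((\beta_*-\alpha_*)(x)a)\ge 0$ for all $a\in M_+$. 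Since the preceding inequality holds for all such $a$, we conclude $(\beta_*-\alpha_*)(x)\ge 0$ for every $x\in L^1(M,\tau)_+$, i.e., $\alpha_*\le \beta_*$.

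The reverse direction is identical, read in the opposite order: starting from $\alpha_*\le \beta_*$, apply the second half of Equation~(\ref{eqn:positive_iff}) (characterization of positivity in $L^1$), the predual identity to rewrite the trace pairing, and then the first half of Equation~(\ref{eqn:positive_iff}) (characterization of positivity in $M$). Since each step in both directions is a biconditional, the equivalence follows.

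There is no real obstacle here; the content is bookkeeping with the two halves of Equation~(\ref{eqn:positive_iff}) and the preduality identity from Lemma~\ref{lem:normalbddextension}. The only minor point worth noting explicitly in the write-up is that the predual identity, originally given for a single normal linear map, extends by linearity to the difference $\beta-\alpha$, so one may legitimately speak of $(\beta-\alpha)_* = \beta_*-\alpha_*$ even before knowing that this difference is positive.
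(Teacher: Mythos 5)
Your proposal is correct and is essentially the same argument as the paper's own proof: both rewrite $\tau(x(\beta-\alpha)(a)) = \tau((\beta_*-\alpha_*)(x)a)$ via the predual identity and then apply the two halves of Equation~(\ref{eqn:positive_iff}) to convert the positivity of $\beta-\alpha$ into that of $\beta_*-\alpha_*$ and back. The extra remark about linearity of the predual correspondence is a harmless elaboration of what the paper leaves implicit.
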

\begin{proof}
For $x\in L^1(M,\tau)_+$ and $a\in M_+$ we have
    \[
        \tau(x (\beta - \alpha)(a)) = \tau((\beta_* - \alpha_*)(x) a).
    \]
So $\alpha(a)\leq \beta(a)$ for all $a\in M_+$ if and only if the above quantity is non-negative for all $x\in L^1(M,\tau)_+$ and $a\in M_+$, which is in turn equivalent $\alpha_*(x) \leq \beta_*(x)$ for all $x\in L^1(M,\tau)_+$.
\end{proof}

\subsection{Probability theory}

Throughout $(\Omega,\mathcal{F}, \mathbb{P})$ will denote a probability space. An \emph{automorphism} of $(\Omega,\mathcal{F}, \mathbb{P})$ is a bijection $T\colon \Omega\to \Omega$ such that $T$ and $T^{-1}$ are measurable and measure preserving. Denote the automorphisms of $(\Omega,\mathcal{F}, \mathbb{P})$ by $\text{Aut}(\Omega, \mathbb{P})$. One says $T\in \text{Aut}(\Omega,\P)$ is \emph{ergodic} if whenever $A\in\mathcal{F}$ satisfies $T^{-1}(A)\subset A$ then $\mathbb{P}[A]\in \{0,1\}$. Note that in this case $T^{-1}$ is also necessarily ergodic. Indeed, if $T(A)\subset A$, then for
    \[
        B:= \bigcup_{n=0}^\infty T^{-n}(A),
    \]
we have $T^{-1}(B)\subset B$ and $\displaystyle \P[B]=\lim_{n\to \infty} \P[T^{-n}(A)]$ by continuity from below. Thus if $T$ is measure preserving and ergodic, it follows that $\P[A]=\P[B]\in \{0,1\}$.

\begin{rmk}
Any $T\in \text{Aut}(\Omega, \mathbb{P})$ also induces an automorphism of the von Neumann algebra $L^\infty(\Omega,\mathbb{P})$ via precomposition: $f\mapsto f\circ T$. In fact, one can induce such an automorphism using bijections of the form $T\colon \Omega\setminus N_1\to \Omega\setminus N_2$ where $N_1,N_2$ are null sets and $T$ and $T^{-1}$ are measurable and measure preserving. Also, if $T_1, T_2$ are two such bijections which agree almost surely then they induce the same automorphism. Thus after identifying maps modulo null sets, these bijections form a group $\text{Aut}_0(\Omega,\mathbb{P})$ that embeds as a subgroup of $\text{Aut}(L^\infty(\Omega,\mathbb{P}))$. This embedding is a surjection when $(\Omega, \mathcal{F},\mathbb{P})$ is a \emph{standard probability space} (see \cite[Section 3.3]{ap}); that is, if there exists a bijection $S\colon \Omega\setminus N_1\to \Omega_0\setminus N_2$ where $(\Omega_0,\mathcal{F}_0,\mathbb{P}_0)$ is the disjiont uniont of the Lebesgue measure on $[0,1]$ and countably many atoms, $N_1\subset \Omega$ and $N_2\subset \Omega_0$ are null sets, and $S$ and $S^{-1}$ are measurable and measure preserving.$\hfill\blacksquare$
\end{rmk}

\subsubsection{Kingman's ergodic theorem}

We will be interested in multiplicative stochastic processes and their ergodic properties. So we present the following:

\begin{lem}\label{lem:submultiplicativeStoppingtime}
Let $(\Omega, \P)$ be a probability space equipped with an ergodic automorphism $T$ and suppose that $X_n:\Omega\to [0,1]$ is a submultiplicative stochastic process in the sense that \[X_{n+m}\le X_nX_m\circ T^n.\] Let $\mathcal{F}_n := \sigma(X_0, \dots, X_n)$ for $n\ge 0$, denote the natural filtration taken with respect to $(X_n)_{n\ge0}$.  Suppose that $\P[\exists n_*: X_{n_*}<1]>0$. Then $\nu(X):= \inf\{n: X_n <1\}$ is a finite almost-surely stopping time with respect to $(\mathcal{F}_n)_{n\ge 0}$. 
\end{lem}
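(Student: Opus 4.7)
The plan is to verify separately that $\nu$ is a stopping time and that $\nu<\infty$ almost surely. For the stopping-time property, I would simply observe that
\[
\{\nu \le n\} = \bigcup_{k=0}^n \{X_k < 1\}
\]
is a finite union of sets in $\mathcal{F}_n = \sigma(X_0,\dots,X_n)$, since each $X_k$ with $k\le n$ is $\mathcal{F}_n$-measurable by definition of the natural filtration. This step is immediate and really just a bookkeeping check.

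The substantive part is showing $\nu<\infty$ almost surely. I would let $A := \{\nu<\infty\} = \bigcup_{n\ge 0}\{X_n<1\}$ and aim to show $T^{-1}(A)\subseteq A$, so that ergodicity (together with the measure-preserving property of $T$) upgrades $\P[A]>0$ to $\P[A]=1$. The inclusion $T^{-1}(A)\subseteq A$ will follow from the submultiplicative hypothesis applied with the pair of indices $(1,n)$: writing
\[
X_{n+1}(\omega) \le X_1(\omega)\, X_n(T\omega) \le X_n(T\omega),
\]
where the second inequality uses $X_1(\omega)\in[0,1]$. So if $\omega\in T^{-1}(A)$, witnessed by some $n$ with $X_n(T\omega)<1$, then $X_{n+1}(\omega)<1$ and hence $\omega\in A$.

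Once $T^{-1}(A)\subseteq A$ is in hand, the measure-preserving property forces $\P[T^{-1}(A)]=\P[A]$, and combined with the set inclusion this means $A$ and $T^{-1}(A)$ coincide modulo null sets; so $A$ is essentially $T$-invariant. Ergodicity of $T$ then gives $\P[A]\in\{0,1\}$, and the positivity hypothesis $\P[A]>0$ selects $\P[A]=1$, which is the desired conclusion $\P[\nu<\infty]=1$. I do not anticipate a real obstacle here: the only mildly subtle step is confirming that the $[0,1]$-bound on $X_1$ is what transfers a strict inequality across the submultiplicative estimate, and this drops out cleanly because $X_1(\omega)\le 1$ pointwise.
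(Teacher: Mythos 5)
Your proof is correct. Both halves check out: the identity $\{\nu\le n\}=\bigcup_{k=0}^n\{X_k<1\}$ is valid and each term lies in $\mathcal{F}_n$, and the inclusion $T^{-1}(A)\subseteq A$ follows exactly as you say from $X_{n+1}(\omega)\le X_1(\omega)\,X_n(T\omega)\le X_n(T\omega)$.

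The route differs from the paper's in two small but genuine ways. For the stopping-time property, the paper first notes that $(X_n)$ is decreasing (a consequence of submultiplicativity and $X_m\circ T^n\le 1$) and then writes $\{\nu\le n\}=\{X_n<1\}$; your finite-union formulation avoids invoking monotonicity altogether. For finiteness, the paper fixes a single index $n_*$ with $\P[X_{n_*}<1]>0$, uses the ergodic sweeping-out fact $\P[\bigcup_k T^{-k}\{X_{n_*}<1\}]=1$ to produce an almost surely finite random time $K$ with $X_{n_*}(T^K\omega)<1$, and then bounds $\nu\le n_*+K$ via $X_{n_*+K}\le X_K\cdot X_{n_*}\circ T^K<1$. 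You instead show the full event $A=\{\nu<\infty\}$ is sub-invariant and apply the paper's definition of ergodicity (namely, $T^{-1}(A)\subset A$ forces $\P[A]\in\{0,1\}$) directly; this is slightly cleaner in that it needs no auxiliary random variable and no selection of a specific $n_*$ of positive probability, though it gives only the qualitative conclusion $\nu<\infty$ a.s.\ rather than the explicit bound $\nu\le n_*+K$ that the paper's version yields. Your intermediate remark that $\P[T^{-1}(A)]=\P[A]$ makes $A$ essentially invariant is harmless but unnecessary given the paper's stated definition of ergodicity.
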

\begin{proof}
    Since $X_n$ is a decreasing sequence, observe that \[\{\omega: \nu(\omega)\le n\}=\{\omega:X_n <1\} \in \mathcal{F}_n.\] Ergodicity of $T$ informs us that $\P[ \bigcup_{k=0}^\infty \{X_{n_*}\circ T^k <1\}]=1$ and there is a finite almost surely random variable $K(\omega)$ so that $X_{n_*+K}\le X_K X_{n_*}\circ T^{K} <1$. Whence $\nu \le n_* +K$ is finite almost surely. 
\end{proof}

Furthermore, recall the Kingman ergodic theorem: \begin{thm}[Kingman's Ergodic Theorem]\label{thm:kingman}
    Let $(\Omega, \P)$ be a probability space equipped with ergodic $T\in \Aut(\Omega,\P)$. Assume that a stochastic process $\{X_n:\Omega \to \mathbb{R} \}_{n=1}^\infty$ satisfies: 
    \begin{enumerate}[label=(\roman*)]
        \item $X_{n+m}(\omega) \le X_n(\omega) + X_{m}(T^n\omega)$ almost surely;
        \item the positive part $\E[(X_n)^+] <\infty$ for all $n$. 
    \end{enumerate}
Then, $\displaystyle Z(\omega) := \lim_{n\to \infty} n^{-1} X_n(\omega)\in [-\infty, \infty)$ exists almost surely, and $\displaystyle Z = \inf_{n\in \N} n^{-1} \E [X_n]$; that is, $Z$ is constant almost surely.
\end{thm}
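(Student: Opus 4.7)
The plan is to split the argument into a deterministic component establishing the candidate limit $\gamma := \inf_n n^{-1}\E[X_n]$, followed by ergodic arguments that pin down both $\limsup_n n^{-1}X_n$ and $\liminf_n n^{-1}X_n$ almost surely to this constant. First, subadditivity of $X_n$ combined with $T$-invariance of $\P$ gives $\E[X_{n+m}] \le \E[X_n] + \E[X_m]$, so Fekete's subadditive lemma applied to the sequence $a_n := \E[X_n]$ yields $n^{-1}a_n \to \gamma \in [-\infty, \infty)$.

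Next, define $\overline{Z}(\omega) := \limsup_n n^{-1}X_n(\omega)$ and $\underline{Z}(\omega) := \liminf_n n^{-1}X_n(\omega)$. Using the subadditivity inequality $X_{n+1}(\omega) \le X_1(\omega) + X_n(T\omega)$, dividing by $n+1$ and passing to $\limsup$ respectively $\liminf$, one obtains $\overline{Z} \le \overline{Z} \circ T$ and $\underline{Z} \le \underline{Z} \circ T$ almost surely. Because $T$ preserves $\P$, these inequalities upgrade to equalities almost surely (after truncating at heights $\pm K$ and sending $K\to\infty$ to handle integrability), and ergodicity of $T$ then forces both $\overline{Z}$ and $\underline{Z}$ to be constants $\overline{z}$ and $\underline{z}$ almost surely.

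For the upper bound $\overline{z} \le \gamma$, fix $\epsilon > 0$ and choose $N$ with $N^{-1}a_N \le \gamma + \epsilon$. Writing $n = qN + r$ with $0 \le r < N$ and iterating subadditivity gives
\[
X_n(\omega) \le X_r(\omega) + \sum_{k=0}^{q-1} X_N(T^{r+kN}\omega).
\]
Dividing by $n$, Birkhoff's ergodic theorem applied to $X_N$ along the orbit of $T^N$ (splitting into positive and negative parts, with the positive part integrable by hypothesis) drives the right-hand side down to $N^{-1}a_N \le \gamma + \epsilon$, so $\overline{z} \le \gamma + \epsilon$, and $\epsilon \to 0$ closes this direction.

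The main obstacle is the matching lower bound $\underline{z} \ge \gamma$, since subadditivity controls $X_n$ from above by shorter pieces but not from below, and Fatou's lemma points the wrong way here. The standard remedy is a stopping-time argument in the spirit of Lemma~\ref{lem:submultiplicativeStoppingtime}: for $\epsilon > 0$ set $\nu(\omega) := \inf\{n \ge 1 : X_n(\omega)/n \le \underline{z} + \epsilon\}$, which is almost surely finite by definition of $\liminf$. Partition $[0,n]$ inductively into blocks whose lengths are $\nu$ evaluated at the successive shifts $T^{\,\cdot\,}\omega$, so each block of length $\ell$ contributes at most $\ell(\underline{z} + \epsilon)$ to $X_n$ via iterated subadditivity, with a leftover terminal block whose contribution is negligible after dividing by $n$ (bounding $\nu$ at level $N$ and letting $N\to\infty$ controls the tail of $\nu$ and forces the leftover $\E[X_r]/n \to 0$). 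Taking expectations and using $a_n/n \to \gamma$ yields $\gamma \le \underline{z} + \epsilon$, and $\epsilon \to 0$ completes the proof; the cases $\gamma = -\infty$ or $\underline{z} = -\infty$ are handled trivially by the upper-bound step.
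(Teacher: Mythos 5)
The paper does not actually prove Theorem~\ref{thm:kingman}: it is quoted as a known result, with the proof deferred to \cite{king} and \cite{AvilaBochi}. So your proposal is being measured against the standard literature proofs rather than against anything in the paper. Your architecture is the standard one (Fekete's lemma for the candidate constant $\gamma$, a.s.\ $T$-invariance and ergodicity to make $\overline{Z}$ and $\underline{Z}$ constant, a block decomposition for the upper bound, and a stopping-time covering for the lower bound), and the lower-bound sketch is essentially Steele's argument and is sound at the level of an outline.

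There is, however, a genuine gap in the upper-bound step. You write $n=qN+r$ and apply Birkhoff's theorem to $X_N$ ``along the orbit of $T^N$,'' asserting that the averages converge to $N^{-1}\E[X_N]$. But ergodicity of $T$ does not imply ergodicity of $T^N$, so Birkhoff for $T^N$ only gives convergence to the conditional expectation $\E[X_N\mid\mathcal{I}_{T^N}]$ on the $T^N$-invariant $\sigma$-algebra, which need not be constant. Your single-residue decomposition therefore only yields
\[
\overline{z}\;\le\;\max_{0\le r<N}\,\tfrac{1}{N}\,\E[X_N\mid\mathcal{I}_{T^N}](T^r\omega)
\]
almost surely, and since the average over $r$ of $\E[X_N\mid\mathcal{I}_{T^N}]\circ T^r$ is a $T$-invariant function equal a.s.\ to $\E[X_N]$, the maximum can strictly exceed $N^{-1}\E[X_N]$; taking expectations does not help, because the expectation of a maximum is at least $N^{-1}\E[X_N]$, which is the wrong direction. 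The standard repair is to run the decomposition from every starting offset $\ell\in\{0,\dots,N-1\}$ and sum the resulting $N$ inequalities: the arithmetic progressions then tile $[0,n)$, giving $N X_n(\omega)\le \sum_{j=0}^{n-1}X_N(T^j\omega)+(\text{boundary terms})$, and Birkhoff for the ergodic map $T$ itself produces the limit $\E[X_N]$ (with the usual truncation to handle $\E[X_N]=-\infty$). A smaller point: the case $\underline{z}=-\infty$ is not ``handled by the upper-bound step'' unless you already know $\gamma=-\infty$; you must instead rerun your stopping-time argument with an arbitrary threshold $-M$ in place of $\underline{z}+\epsilon$ to conclude $\gamma\le -M$ for every $M$.
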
 This is a corollary to \cite[Theorem 2]{king}. See \cite[Theorem 1]{AvilaBochi} for a concise proof. One can find alternative formulations in \cite{CarLa,steele,lalley} among others. The observation underpinning our Lemma~\ref{lem:randomconstproperties}, is that one may apply Theorem~\ref{thm:kingman} to the sub-additive process $(\log X_n)_{n\ge0}$, when $(X_n)_{n\ge0}$ is sub-multiplicative.

\subsubsection{Random linear operators}\label{sec:RLO}
We recall some general notions about the theory of random variables in a Banach space and random linear operators. The material in this section is largely based upon the treatment given in \cite[Chapters 1 and 2]{Bharucha-Reid}, although an equally well-written treatment in the case $\X$ is a Hilbert space may be found in \cite{Skorohod}.

Let $\X$ be a Banach space with norm $\|\cdot\|$. By $\mathfrak{B}$ we mean the $\sigma$-algebra generated by all the closed subsets of $\X$. Let $(\Omega, \mathcal{F}, \mu)$ be a measure space. Given a finite collection of sets $A_1, A_2, \dots, A_n\in \mathcal{F}$ and $b_1, b_2, \dots, b_n \in \X$, a function of the form 
    \[
        \phi(\omega) = \sum_{j=1}^n b_j\, \chi_{A_j}(\omega) 
    \] will be called \emph{simple function}. A \emph{strongly measurable function} is one $f:\Omega \to \X$ for which there is a sequence of simple functions $\phi_n$ so that $\displaystyle \lim_{n\to \infty} \|f(\omega)-\phi_n(\omega)\|= 0$ almost surely (see \cite[Chapter II]{DiestelUhl}). Writing $\X^*$ for the Banach dual space of $\X$, we say that $g:\Omega \to \X$ is a \emph{weakly measurable function} if for every $x^*\in \X^*$, one has that $x^*\circ g (\omega)$ is a $\C$-valued Borel-measurable function. 
    
    The following is originally due to Pettis \cite{pettis} but can be found in \cite[Proposition IV.7.2]{TakesakiI} or \cite[Theorem 2]{DiestelUhl}: 

\begin{thm}[{\cite[Corollary 1.11]{pettis}}]\label{thm:Pettis}
Let $\X$ be a separable Banach space with Borel $\sigma$-algebra $\mathfrak{B}$, then a function $f:\Omega \to \X$ is strongly measurable if and only if it is weakly measurable. 
\end{thm}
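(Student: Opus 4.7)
The forward implication is immediate from the definitions. If $\phi_n \to f$ almost surely in norm with each $\phi_n$ simple, then for any $x^* \in \X^*$ the composition $x^* \circ \phi_n$ is a $\C$-valued simple function, hence Borel measurable, and continuity of $x^*$ yields $x^* \circ \phi_n \to x^* \circ f$ almost surely. Thus $x^* \circ f$ is a pointwise a.s.\ limit of measurable functions and is itself measurable, giving weak measurability.

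For the converse, I plan to use separability in two ways. First, I would fix a countable dense subset $\{y_n\} \subset \X$ and, via Hahn--Banach, choose $x_n^* \in \X^*$ with $\|x_n^*\| = 1$ and $x_n^*(y_n) = \|y_n\|$. A short density argument shows that $\|y\| = \sup_n |x_n^*(y)|$ for every $y \in \X$: the inequality $\leq$ is clear, and for the reverse one chooses a subsequence $y_{n_k} \to y$ and notes $|x_{n_k}^*(y)| \geq \|y_{n_k}\| - \|y - y_{n_k}\| \to \|y\|$. Since $f - z$ remains weakly measurable for any fixed $z \in \X$, it follows that
\[
    \omega \mapsto \|f(\omega) - z\| = \sup_n |x_n^*(f(\omega) - z)|
\]
is measurable, as a countable supremum of measurable $\C$-valued functions.

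The second use of separability is to produce approximating simple functions. For each $k \geq 1$, I would set
\[
    A_{k,n} := \{\omega : \|f(\omega) - y_n\| < 1/k\} \setminus \bigcup_{m<n} \{\omega : \|f(\omega) - y_m\| < 1/k\},
\]
which are pairwise disjoint measurable sets that partition $\Omega$ (density guarantees some $y_n$ lies within $1/k$ of $f(\omega)$). The countably-valued map $\phi_k := \sum_n y_n \chi_{A_{k,n}}$ then satisfies $\|f - \phi_k\| < 1/k$ everywhere, but is not simple in the sense of the excerpt. The main obstacle is therefore to pass from these countably-valued approximants to genuine finitely-valued simple functions while preserving almost-sure convergence. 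I would resolve this by truncation and Borel--Cantelli: choose $N_k$ with $\P\bigl[\bigcup_{n > N_k} A_{k,n}\bigr] < 2^{-k}$, possible since the $A_{k,n}$ partition a probability space, and set $\psi_k := \sum_{n=1}^{N_k} y_n \chi_{A_{k,n}}$. Since $\sum_k 2^{-k} < \infty$, Borel--Cantelli ensures that almost every $\omega$ lies in $\bigcup_{n \leq N_k} A_{k,n}$ for all sufficiently large $k$, whence $\psi_k(\omega) = \phi_k(\omega)$ eventually and $\|f(\omega) - \psi_k(\omega)\| < 1/k$ eventually, yielding strong measurability.
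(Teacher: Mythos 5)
The paper does not actually prove this statement: it is quoted verbatim from Pettis, with pointers to \cite[Proposition IV.7.2]{TakesakiI} and \cite[Theorem 2]{DiestelUhl}, so there is no in-paper argument to compare against. Your proof is correct and is essentially the standard argument from those references: the easy direction by composing with functionals; then, for the converse, a countable norming family $\{x_n^*\}$ obtained by Hahn--Banach over a dense sequence, giving measurability of $\omega\mapsto\|f(\omega)-z\|$; then the countably-valued approximants $\phi_k$ built from the disjointified sets $A_{k,n}$; and finally truncation plus Borel--Cantelli to replace $\phi_k$ by genuinely finite-valued simple functions. All steps check out (the sets $A_{k,n}$ are measurable, partition $\Omega$, and $\|f-\phi_k\|<1/k$ everywhere; the tail estimate is available because $\sum_n\P[A_{k,n}]=1$). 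The one caveat is that your truncation step genuinely uses finiteness of the measure: the theorem is stated over the ambient measure space $(\Omega,\mathcal F,\mu)$ introduced just before it, and for an infinite measure one could not choose $N_k$ with $\mu[\bigcup_{n>N_k}A_{k,n}]<2^{-k}$. Since every application in the paper is to a probability space (and the cited sources likewise work with finite measures), this is harmless, but it would be worth stating the finiteness hypothesis explicitly or noting the $\sigma$-finite reduction.
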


In the case that $\mu=\P$ is a probability measure, we shall refer to a strongly measurable function in $\X$ as a \emph{random variable in $\X$}. A mapping $L:\Omega \times \X \to \X$ is an (everywhere-defined) \emph{random linear operator} if: $\omega\mapsto L_{\omega}(x)$ is a random variable in $\X$ for all $x\in \X$; and $\P[L_\omega (\alpha x + \beta y) = \alpha L_\omega x + \beta L_\omega y]=1$ for all $x,y\in \X$ and $\alpha, \beta \in \C$. Recall from \cite[Definition 2.24]{Bharucha-Reid} that a random linear operator $L$ is bounded if there is a random variable $M(\omega)$ so that $\P[ M(\omega)<\infty , \|L_\omega x\|\le M(\omega)\|x\| \text{ all }x\in \X]=1$.

In Sections~\ref{sec:EQP} and~\ref{sec:FCS} we will be concerned with dynamics arising from iterations of bounded random linear operators. It is not obvious that the composition of bounded random linear operators is, however, measurable. We follow the elegant proof given in Bharucha-Reid's \cite[Theorem 2.14]{Bharucha-Reid} below: 

\begin{thm}[{\cite[Theorem 2.14]{Bharucha-Reid}}]\label{thm:RLOcomp}
Let $\X$ be a separable Banach space and $x_\omega$ a random variable in $\X$. If $L$ is a bounded random linear operator then the function 
\[
    y_\omega := L_\omega x_\omega
\] is a random variable in $\X$. In particular, the composition of bounded random linear operators is a bounded random linear operator.  
\end{thm}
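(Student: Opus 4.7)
The plan is to combine the strong measurability of $x_\omega$ with the almost-sure boundedness of $L_\omega$ via an approximation argument. Since $x_\omega$ is a random variable in $\X$, choose $\X$-valued simple functions $\phi_n(\omega) = \sum_{j=1}^{k_n} b_j^{(n)} \chi_{A_j^{(n)}}(\omega)$ with pairwise disjoint $A_j^{(n)}$ satisfying $\|\phi_n(\omega) - x_\omega\| \to 0$ almost surely. Setting $y_\omega^{(n)} := L_\omega \phi_n(\omega)$, almost-sure linearity of $L_\omega$ (applied to the finite collection $\{b_j^{(n)}\}$) yields, off a single null set,
\[
    y_\omega^{(n)} = \sum_{j=1}^{k_n} \chi_{A_j^{(n)}}(\omega)\, L_\omega\bigl(b_j^{(n)}\bigr).
\]
By hypothesis, each $\omega \mapsto L_\omega(b_j^{(n)})$ is a random variable in $\X$, hence strongly measurable. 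Multiplying by the scalar-measurable indicator $\chi_{A_j^{(n)}}$ preserves strong measurability, and a finite sum of strongly measurable $\X$-valued functions is again strongly measurable, so each $y_\omega^{(n)}$ is a random variable in $\X$.

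Next, I would invoke the almost-sure bound: there is a random $M(\omega)$, finite a.s., with $\|L_\omega z\| \le M(\omega)\|z\|$ for every $z \in \X$. Therefore
\[
    \|y_\omega^{(n)} - L_\omega x_\omega\| = \|L_\omega(\phi_n(\omega) - x_\omega)\| \le M(\omega)\, \|\phi_n(\omega) - x_\omega\| \xrightarrow[n\to\infty]{} 0
\]
almost surely. Almost-sure pointwise limits of strongly measurable functions into a separable Banach space are strongly measurable — one sees this directly from the definition, or via Theorem~\ref{thm:Pettis} after noting that for each $x^* \in \X^*$ the scalar function $x^*(y_\omega)$ is the a.s.\ limit of the measurable functions $x^*(y_\omega^{(n)})$. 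Hence $y_\omega = L_\omega x_\omega$ is a random variable in $\X$.

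For the final assertion, given bounded random linear operators $L^{(1)}, L^{(2)}$ and any fixed $x \in \X$, the preceding result applied to the random variable $x_\omega := L^{(1)}_\omega x$ shows that $L^{(2)}_\omega L^{(1)}_\omega x = (L^{(2)} \circ L^{(1)})_\omega x$ is a random variable in $\X$. Linearity of the composition holds almost surely by intersecting the full-measure linearity sets of each factor, and the product $M_{L^{(2)}}(\omega) M_{L^{(1)}}(\omega)$ furnishes an almost-surely finite bound. The main point to watch is the bookkeeping of exceptional null sets, but since the approximation only involves the countable collection of vectors $\{b_j^{(n)}\}$ together with the countable collection at which we verify linearity, a countable union of null sets remains null and the argument goes through; separability of $\X$ is what makes Theorem~\ref{thm:Pettis} available and thus underwrites the whole approximation scheme.
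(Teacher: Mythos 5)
Your proposal is correct and follows essentially the same route as the paper's proof: approximate $x_\omega$ by simple functions, verify that $\omega\mapsto L_\omega\phi_n(\omega)$ is measurable (the paper does this via a preimage decomposition over the countable set of values of the $\phi_n$, you via writing it as a finite sum $\sum_j \chi_{A_j^{(n)}} L_\omega(b_j^{(n)})$ of strongly measurable functions — an equivalent bookkeeping), and then pass to the almost-sure limit using the random bound $M(\omega)$. The composition statement is handled identically in both arguments.
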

\begin{proof}
Let $(\phi_n)_{n\in \mathbb{N}}$ be a sequence of simple functions approximating $x_\omega$. By reducing to a subsequence if necessary, we assume $\|\phi_n(\omega) - x_\omega\|\to 0$ almost surely. Let $E:=\{\phi_n(\omega)\colon n\in \mathbb{N},\ \omega\in \Omega\}$, which we note is a countable subset of $\X$.
For each $n\in \mathbb{N}$, define $y_n(\omega) := L_\omega \phi_n(\omega)$. Then, for any Borel subset $S\in \mathfrak{B}$, we obtain the decomposition 
    
    \[
        [y_n\in S]= \bigcup_{b\in E} [L(b) \in S]\cap [\phi_n(\omega)=b],
    \] thus demonstrating that $y_n$ is a sequence of random variables in $\X$. Using that $L_\omega$ is almost surely bounded, we get that the following limit exists almost surely:
    \[
        y_\omega = \lim_{n\to \infty} y_n(\omega) = \lim_{n\to \infty} L_\omega \phi_n(\omega) = L_\omega x_\omega.
    \] That composition of random linear operators is a random linear operator is now immediate. 
    \end{proof}

\section{Metric Geometry of the Normal State Space.}\label{sec:dtop}
Let $(M,\tau)$ be a tracial von Neumann algebra. We  write
    \[
        S:=\{x\in L^1(M)_+\colon \tau(x)=1\}.
    \]
Note that $S$ corresponds to the normal states on $M$. Following \cite{jeff} we introduce a non-standard metric $d$ on the set of normal states $S\subset L^1(M,\tau)_+$ and investigate its properties. We shall show that $d$ admits the same formula (Lemma~\ref{lem:dformula}) as was shown in \cite[Lemma 3.6]{jeff}, and use this to show that $(S,d)$ is complete and is finer than $S$ with the trace-metric (Theorem~\ref{thm:tracemetricinequality_and_completeness}). Unlike \cite{jeff}, whenever $M$ is infinite dimensional $S$ admits two additional disconnected components corresponding to affiliated operators that are of separate interest and we investigate their properties in Theorem~\ref{thm:geometry}.
\subsection{Hennion's Metric}
\begin{lem}\label{lem:mismax}
For $x,y\in L^1(M,\tau)_+\setminus\{0\}$ one has $\{\lambda\in \mathbb{R} \colon \lambda y \leq x\}=(-\infty,\lambda_0]$ for some $0\leq \lambda_0\leq \frac{\tau(x)}{\tau(y)}$.
\end{lem}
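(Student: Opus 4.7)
The plan is to establish three features of $A:=\{\lambda\in\R:\lambda y\leq x\}$: that $A$ is downward closed, that $A$ is bounded above with the quantitative estimate $\sup A\leq \tau(x)/\tau(y)$, and that the supremum is attained. Together these give $A=(-\infty,\lambda_0]$ with $\lambda_0\geq 0$ (since $0\in A$) and $\lambda_0\leq \tau(x)/\tau(y)$.

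\textbf{Downward closure.} If $\lambda\in A$ and $\mu\leq \lambda$, then $(\lambda-\mu)y\geq 0$ as an affiliated operator (positivity of $y$ is preserved by nonnegative scaling), so $\mu y=\lambda y-(\lambda-\mu)y\leq \lambda y\leq x$. Hence $\mu\in A$.

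\textbf{Upper bound.} Suppose $\lambda\in A$, so that $x-\lambda y\in L^1(M,\tau)_+$ (recall $x-\lambda y\in L^1$ and the inequality $\lambda y\leq x$ for affiliated operators is equivalent to the closure being positive). Faithfulness of $\tau$ together with $y\geq 0$, $y\neq 0$ forces $\tau(y)>0$. Applying the bounded linear functional $\tau$ to the $L^1$-element $x-\lambda y$ gives $\tau(x)-\lambda\tau(y)\geq 0$, hence $\lambda\leq \tau(x)/\tau(y)$. In particular $A$ is bounded above, so $\lambda_0:=\sup A$ is a real number in $[0,\tau(x)/\tau(y)]$.

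\textbf{The supremum is attained.} Choose $\lambda_n\in A$ with $\lambda_n\uparrow \lambda_0$. I will use the duality characterization (\ref{eqn:positive_iff}): $x-\lambda_0 y\in L^1(M,\tau)_+$ if and only if $\tau((x-\lambda_0 y)a)\geq 0$ for every $a\in M_+$. Fix $a\in M_+$. For each $n$ we have $x-\lambda_n y\in L^1(M,\tau)_+$ and therefore $\tau((x-\lambda_n y)a)\geq 0$. Since $\tau(ya)$ is a fixed real number, continuity of the map $\lambda\mapsto \tau((x-\lambda y)a)=\tau(xa)-\lambda\tau(ya)$ gives $\tau((x-\lambda_0 y)a)\geq 0$. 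As $a\in M_+$ was arbitrary, (\ref{eqn:positive_iff}) yields $x-\lambda_0 y\in L^1(M,\tau)_+$, i.e.\ $\lambda_0\in A$.

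Combining these three steps, $A=(-\infty,\lambda_0]$ with $0\leq \lambda_0\leq \tau(x)/\tau(y)$, as required. The only mildly delicate point is the passage to the limit in the last step, but invoking the dual characterization of $L^1(M,\tau)_+$ from (\ref{eqn:positive_iff}) reduces it to continuity of a scalar affine function, bypassing any need to argue directly with domains of unbounded operators.
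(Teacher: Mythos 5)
Your proof is correct and follows essentially the same route as the paper: downward closure from monotonicity of scaling, the bound $\lambda\leq\tau(x)/\tau(y)$ by applying $\tau$, and attainment of the supremum by passing to the limit in $\tau((x-\lambda_n y)a)\geq 0$ for each $a\in M_+$ and invoking the duality characterization of $L^1(M,\tau)_+$. The paper's proof is just a more condensed version of the same argument.
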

\begin{proof}
Certainly $0\in \Lambda:= \{ \lambda \in \mathbb{R}: \lambda y \le x\}$, and if $\lambda\in \Lambda$ then $(-\infty, \lambda]\subset \Lambda$ since $t y\leq \lambda y\leq x$ for all $t\leq \lambda$. Note that if $\lambda y\le x$ then by applying $\tau$ we obtain $\lambda \le \tau(x)/ \tau(y)$. In particular, $\Lambda$ is bounded and we can find an increasing sequence $(\lambda_n)_{\in \N}\subset \Lambda$ converging to $\lambda_0:=\sup{\Lambda}\leq \tau(x)/\tau(y)$. Then for all $a\in M_+$ we have
    \[
        \tau(a(x-\lambda_0 y)) = \lim_{n\to\infty} \tau(a(x-\lambda_n y)) \geq 0.
    \]
Hence $x-\lambda_0 y\geq 0$ and $\lambda_0\in \Lambda$ so that $\Lambda=(-\infty,\lambda_0]$.
\end{proof}

\begin{define}\label{def:m}
Let $x,y\in L^1(M,\tau)_+$. We define their \textbf{m-quantity} to be the number
    \[
        m(x,y) := \max\{ \lambda\in \mathbb R : \lambda y \le x\}.\tag*{$\blacksquare$}
   \] 
\end{define}

\begin{thm}[Properties of $m(x,y)$]\label{thm:mproperties}
Let $x,y,z\in L^1(M,\tau)_+\setminus\{0\}$.
    \begin{enumerate}[label=(\arabic*)]
        \item\label{part:mrange} $0\le m(x,y) \le \frac{\tau(x)}{\tau(y)}$.
         \item\label{part:mscaling} $m(a x, b y) = \frac{a}{b} m(x,y)$ for scalars $a,b>0$.
        \item\label{part:mtriangle_ineq} $m(x,z)m(z,y) \le m(x,y)$.
        \item $m(x,y)m(y,x) = 1$ if and only if $\frac{1}{\tau(x)} x=\frac{1}{\tau(y)}y$. 
        \item If $x\leq y$ then one has
            \begin{align*}
                m(x,z) &\leq m(y,z) \\ m(z,x)&\geq m(z,y).
            \end{align*}
        \item\label{part:mstrictlypositive} $m(x,y)>0$ if and only if there exists non-zero $a\in M$ satisfying $y=x^{\frac12}a^*ax^{\frac12}$ with $a=a[x^{\frac12} M]$, in which case $m(x,y) = \|a\|^{-2}$.
        \item\label{part:mzeroapprox} $m(x,y)=0$ if and only if there exists a sequence of vectors $(\xi_n)_{n\in \N}\subset \dom(x)\cap \dom(y)$ so that $\|x^{1/2}\xi_n\|_2 \to 0$ and $\|y^{1/2}\xi_n\|_2\equiv 1$.
        \item\label{part:m_inf_formula} 
            \begin{align*}
                m(x,y) 
                    =\inf \left\{ \frac{\tau(xa)}{\tau(ya)}\colon a\in M_+,\ \tau(ya)> 0 \right\}.
            \end{align*}
    \end{enumerate}
\end{thm}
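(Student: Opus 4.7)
My plan is to dispatch parts (1)--(5) as direct consequences of Lemma~\ref{lem:mismax} and Definition~\ref{def:m}, handle (7) and (8) by exploiting the duality characterization in Equation~(\ref{eqn:positive_iff}), and then tackle (6) via a Radon--Nikodym-style construction, which I expect to be the main technical step.

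For (1)--(5) the arguments are largely mechanical. Part (1) is immediate from Lemma~\ref{lem:mismax}. For (2), the inequality $\lambda(by)\leq ax$ rewrites as $(\lambda b/a)y\leq x$, so the suprema scale by $a/b$. For (3), chaining $m(x,z)z\leq x$ with $m(z,y)y\leq z$ gives $m(x,z)m(z,y)y\leq x$. For (4), I would first observe $m(x,x)=1$ from Lemma~\ref{lem:mismax}, then use (2) to reduce to the unit-trace case, where the product equaling $1$ combined with the bound in (1) forces $m(x,y)=m(y,x)=1$ and hence $y\leq x\leq y$. Part (5) follows from the inclusions $\{\lambda\colon \lambda z\leq x\}\subset \{\lambda\colon \lambda z\leq y\}$ and $\{\lambda\colon \lambda y\leq z\}\subset \{\lambda\colon \lambda x\leq z\}$ when $x\leq y$.

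For (7), the hypothesis $m(x,y)=0$ means no positive $\lambda$ satisfies $\lambda y\leq x$; for each $n$ there must therefore exist $\xi\in\dom(x)\cap\dom(y)$ with $\tfrac{1}{n}\<y\xi,\xi\>>\<x\xi,\xi\>\geq 0$, which forces $\<y\xi,\xi\>>0$, and rescaling produces the required sequence. The converse is immediate, since any such sequence rules out $\lambda y\leq x$ for any $\lambda>0$. For (8), one direction follows from pairing $\lambda y\leq x$ with $a\in M_+$. For the other, I would argue contrapositively: if $\mu>m(x,y)$, then $\tau((x-\mu y)a)\geq 0$ for every $a\in M_+$ (the case $\tau(ya)=0$ is automatic since $\tau(xa)\geq 0$, and otherwise it follows from $\tau(xa)\geq\mu\tau(ya)$), so Equation~(\ref{eqn:positive_iff}) would give $\mu y\leq x$, contradicting $\mu>m(x,y)$.

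The heart of the proof is (6). Given $\lambda:=m(x,y)>0$, I would define $T$ on the subspace $x^{1/2}M\subset L^2(M,\tau)$ by $T(x^{1/2}m):=y^{1/2}m$. The computation
\[
    \|y^{1/2}m\|_2^2=\tau(m^*ym)\leq \lambda^{-1}\tau(m^*xm)=\lambda^{-1}\|x^{1/2}m\|_2^2
\]
simultaneously confirms well-definedness and gives $\|T\|\leq \lambda^{-1/2}$. Extending $T$ by continuity to $\overline{x^{1/2}M}$ and by zero elsewhere yields $a\in B(L^2(M,\tau))$ with $a=a[x^{1/2}M]$, and since right multiplication by $M$ preserves $x^{1/2}M$ and $T$ intertwines this action, $a$ lies in the commutant $(JMJ)'=M$. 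The identity $y=x^{1/2}a^*ax^{1/2}$ should then fall out from $\tau(m^*ym)=\|ax^{1/2}m\|_2^2=\tau(m^*x^{1/2}a^*ax^{1/2}m)$ for all $m\in M$ together with density. The reverse direction is easier: $a^*a\leq\|a\|^2$ forces $y\leq\|a\|^2 x$ and hence $m(x,y)\geq\|a\|^{-2}$, while the norm bound on $T$ yields the matching upper bound. I expect the main obstacle to be rigorously justifying that $a\in M$ rather than merely $a\in B(L^2(M,\tau))$, and in carefully navigating between the $L^2$-vectors $x^{1/2},y^{1/2}$ and their associated operator actions when deriving $y=x^{1/2}a^*ax^{1/2}$.
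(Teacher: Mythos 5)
Your proposal is correct, and for parts (1)--(5) and (7) it tracks the paper's argument essentially step for step. Two places diverge. In part \ref{part:mstrictlypositive} you build the intertwiner on $x^{1/2}M$ and use commutation with right multiplication to land directly in $(JMJ)'=M$, whereas the paper defines $T$ on $Mx^{1/2}$, shows $T\in M'$, and then writes $T=JaJ$; these are mirror images, both valid (you should just note that the orthogonal complement of $\overline{x^{1/2}M}$ is also $JMJ$-invariant, so the extension by zero still commutes), and yours saves the conjugation by $J$. The genuine difference is part \ref{part:m_inf_formula}: the paper splits into the cases $m(x,y)=0$ and $m(x,y)>0$, using the approximating vectors from part \ref{part:mzeroapprox} together with the core property of $M\subset L^2(M,\tau)$ in the first case and the representation $y=x^{1/2}c^*cx^{1/2}$ from part \ref{part:mstrictlypositive} in the second. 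You instead observe that if the infimum $I$ exceeded $m(x,y)$, then any $\mu$ with $m(x,y)<\mu\le I$ would satisfy $\tau((x-\mu y)a)\ge 0$ for all $a\in M_+$, whence $\mu y\le x$ by Equation~(\ref{eqn:positive_iff}), a contradiction. This is shorter, treats both cases uniformly, and decouples (8) from the harder structural parts (6) and (7), at the cost of not producing the explicit near-optimal witnesses $a=bb^*$ that the paper's argument yields. The only blemish is the garbled quantifier in your contrapositive (``if $\mu>m(x,y)$'' should read ``if $m(x,y)<\mu\le I$''), but the intended argument is clear and sound.
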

\begin{proof}
\begin{enumerate}[label = \textbf{(\arabic*):}]
\item This follows from Lemma~\ref{lem:mismax}.

\item The inequality $m(ax,by) by \leq ax$ immediately implies $m(ax,by)\frac{b}{a}\leq m(x,y)$. One the other hand, $m(x,y)y\leq x$ is equivalent to $m(x,y)\frac{a}{b} by \leq ax$, which gives $m(x,y)\frac{a}{b}\leq m(ax,by)$.

\item We have
    \[
        m(x,z)m(z,y) y \leq m(x,z) z\leq x,
    \]
whence $m(x,z)m(z,y)\leq m(x,y)$.

\item If $\frac{1}{\tau(x)}x=\frac{1}{\tau(y)}y$, then $m(x,y)$ and $m(y,x)$ achieve their maximum values of $\frac{\tau(x)}{\tau(y)}$ and $\frac{\tau(y)}{\tau(x)}$, respectively, and hence their product gives one. On the other hand, if $m(x,y)m(y,x)=1$ then by part~\ref{part:mrange} one necessarily has $m(x,y) =\frac{\tau(x)}{\tau(y)}$ and $m(y,x)=\frac{\tau(y)}{\tau(x)}$. Therefore $\frac{\tau(x)}{\tau(y)}y\leq x$ and $\frac{\tau(y)}{\tau(x)} x\le y$ so that $\frac{1}{\tau(x)} x= \frac{1}{\tau(y)} y$.

\item Suppose $x\leq y$. Then $m(x,z) z\leq x \leq y$ so that $m(x,z)\leq m(y,z)$. Similarly, $z\geq m(z,y) y \geq m(z,y)x$ so that $m(z,x) \geq m(z,y)$.

\item First suppose $y=x^{1/2}a^*ax^{1/2}$ for some non-zero $a\in M$. Then $y\leq \|a\|^2 x$, and so $m(x,y)\geq \|a\|^{-2}>0$. 
Conversely, if $m(x,y)>0$ then for any $b\in M$ we have
    \[
        \|b y^{1/2}\|_2^2 = \tau(b y b^*) \leq \frac{1}{m(x,y)} \tau(b x b^*) = \frac{1}{m(x,y)} \| b x^{1/2}\|_2^2.
    \]
Hence $bx^{1/2}\mapsto by^{1/2}$ extends to a bounded operator $T\colon \overline{M x^{1/2}} \to \overline{ M y^{1/2}}$ with $\|T\|\leq m(x,y)^{-1/2}$. Observe that $T$ is non-zero by virtue of $x^{1/2}$ and $y^{1/2}$ being non-zero. Let $p,q\in M$ be the support projections of $\tau_x$ and $\tau_y$, respectively, which we note satisfy $JpJ = [ M x^{1/2}]$ and $JqJ = [M y^{1/2}]$. Trivially extend $T$ to $L^2(M)$ so that $JqJ T JpJ=T$, and observe that for $b,c,d\in M$ we have
    \[
        \< T b(cx^{1/2}), d y^{1/2}\>_2 = \< bc y^{1/2}, dy^{1/2}\>_2 = \< c y^{1/2}, b^* y^{1/2}\>_2 = \< T cx^{1/2}, b^* d y^{1/2}\>_2 = \< bT cx^{1/2}, dy^{1/2}\>_2.
    \]
It follows that
    \[
        0=JqJ (Tb - bT) JpJ = JqJ T JpJ b - b JqJ T JpJ = Tb - bT.
    \]
Hence $T\in M'$ and so is of the form $T=JaJ$ for some $a\in M$. Note that $a$ is non-zero since $T$ is non-zero. Additionally, we have for all $b\in M$
    \[
        \tau(yb)=\< b y^{1/2}, y^{1/2}\>_2 =\< b Tx^{1/2}, T x^{1/2}\>_2= \< b x^{1/2}a^*, x^{1/2}a^*\>_2 = \tau(x^{1/2}a^*a x^{1/2} b).
    \]
Since $y$ is determined by $\tau_y\in M_*$, it follows that $y=x^{\frac12}a^*ax^{\frac12}$. Note  that $ap=J(TJpJ)J = JTJ=a$ and $p=J[Mx^{1/2}]J= [x^{1/2} M]$.

We saw above that $m(x,y)\geq \|a\|^{-2}$ and $\|T\| \leq m(x,y)^{-1/2}$. Since $\|T\|=\|a\|$, this gives $m(x,y)=\|a\|^{-2}$.

\item Assume $m(x,y)=0$ so that the closed operator $x-\lambda y$ is not positive for any $\lambda>0$. Since $x-\lambda y$ is self-adjoint, this implies there exists some $\xi\in \dom(x-\lambda y)$ such that
    \[
        \<(x-\lambda y)\xi, \xi\>_2 <0.
    \]
Because $\dom(x)\cap\dom(y)$ is a core for all $x-\lambda y$, we can in fact find $\xi\in \dom(x)\cap \dom(y)$ satisfying the above, and by scaling we can further assume $\|y^{1/2}\xi\|_2=1$. For each $n\in \N$, let $\xi_n$ be the vector obtained in this way for $\lambda = \frac1n$. Then
    \[
        \|x^{1/2}\xi_n\|_2^2 = \<x\xi_n, \xi_n\>_2 < \frac{1}{n} \<y\xi_n, \xi_n\>_2 = \frac1n \|y^{1/2} \xi_n \|_2^2 = \frac1n \to 0,
    \]
as desired.

Conversely, if $m(x,y)>0$ then $y=x^{\frac12} a^* a x^{\frac12}$ for some $a\in M$ by part~\ref{part:mstrictlypositive}. So for any $\xi \in \dom(x)\cap \dom(y)$ we have
    \[
        \| y^{1/2}\xi\|_2^2 = \< y\xi, \xi\>_2 = \<x^{\frac12}a^* a x^{\frac12} \xi, \xi\>_2 = \|ax^{1/2}\xi\|^2 \leq \|a\|^2 \|x^{1/2}\xi\|^2.
    \]
Consequently, for $(\xi_n)_{n\in \N}\subset \dom(x)\cap \dom(y)$ the condition $\|x^{1/2}\xi_n\|_2\to 0$ precludes $\|y^{1/2}\xi_n\|_2\equiv 1$.

\item If $m(x,y)=0$ then the equality follows from part~\ref{part:mzeroapprox}. Indeed, for $\epsilon>0$ let $\xi\in \dom(x)\cap \dom(y)$ be such that $\|x^{1/2}\xi\|_2<\frac{\epsilon}{2}$ and $\|y^{1/2}\xi\|_2=1$. Since $M\subset L^2(M,\tau)$ is a core for $x^{1/2}$ and $y^{1/2}$, we can find $b\in M$ satisfying $\|x^{1/2}b\|_2 < \epsilon$ and $\|y^{1/2} b\|_2 > 1- \epsilon$. Letting $a=bb^*\in M_+$, we have $\tau(ya)=\|y^{1/2}b\|_2^2 > 1-\epsilon>0$ and
    \[
        \frac{\tau(xa)}{\tau(ya)} = \frac{\|x^{1/2}b\|_2^2}{\|y^{1/2}b\|_2^2} < \frac{\epsilon^2}{(1-\epsilon)^2}.
    \]
Thus the infinimum is also zero.

Now suppose, $m(x,y)>0$, then $y=x^{\frac12}c^*c x^{\frac12}$ for some non-zero $c\in M$ with $c=c[x^{\frac12}M]$ by (4). Moreover,
    \[
        m(x,y) = \frac{1}{\|c\|^2} = \frac{1}{\sup\{\|c\xi\|_2/\|\xi\|_2\colon \xi\in L^2(M)\setminus\{0\} \}^2} = \inf\left\{ \frac{\|\xi\|_2^2}{\|c\xi\|_2^2} \colon \xi\in L^2(M),\ \|c\xi\|_2> 0\right\}.
    \]
The condition $c=c[x^{\frac12}M]$ implies that in the infimum one can restrict to $\xi = x^{1/2}\eta$ for $\eta\in\dom(x)$ with $\|cx^{1/2}\eta\|_2> 0$. Since $M\subset L^2(M)$ is a core for $x^{1/2}$, we can further restrict to $\xi= x^{1/2} b$ for $b\in M$ with $\|cx^{1/2}b\|_2>0$. Using $\|x^{1/2}b\|_2^2 = \tau(xbb^*)$ and $\|cx^{1/2}b\|_2^2 = \|y^{1/2}b\|_2^2 = \tau(ybb^*)$, we obtain the claimed equality.\qedhere
\end{enumerate}
\end{proof}

\begin{define}\label{def:mmetric}
For $x,y\in S$, let 
    \begin{equation*}
        d(x,y) = \frac{1 - m(x,y)m(y,x)}{1+m(x,y)m(y,x)}.
    \end{equation*}
\end{define}

\begin{thm}\label{thm:mmetricproperties}
The function $d$ forms a metric on $S$ such that $\diam_d(S) =1$ when $M\neq \C$.
\end{thm}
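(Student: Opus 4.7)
The plan is to check the three metric axioms for $d$ on $S$, then separately establish the diameter by exhibiting two states at distance one.

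First I would handle the easy items. For $x, y \in S$, Theorem~\ref{thm:mproperties}(1) gives $m(x,y) \in [0, \tau(x)/\tau(y)] = [0,1]$, and part (3) applied with $z = x$ yields $m(x,y)m(y,x) \leq m(x,x) = 1$; hence $d(x,y) \in [0,1]$. Symmetry is immediate from the symmetric form of the definition. For the separation axiom, $d(x,y) = 0$ is equivalent to $m(x,y)m(y,x) = 1$, and Theorem~\ref{thm:mproperties}(4) then forces $x/\tau(x) = y/\tau(y)$, which for two states means $x = y$.

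The main work, and the only nontrivial step, is the triangle inequality. The key move is to convert the submultiplicative bound $m(x,y)m(y,x) \geq m(x,z)m(z,x)\cdot m(y,z)m(z,y)$ (from two applications of Theorem~\ref{thm:mproperties}(3)) into an additive one via a logarithm. Setting $s(u,v) := m(u,v)m(v,u) \in [0,1]$ and $\ell(u,v) := -\tfrac12 \log s(u,v) \in [0, +\infty]$, a direct calculation gives
\[
    d(u,v) = \frac{1 - e^{-2\ell(u,v)}}{1 + e^{-2\ell(u,v)}} = \tanh \ell(u,v),
\]
and the submultiplicative bound becomes $\ell(x,y) \leq \ell(x,z) + \ell(z,y)$. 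Since $\tanh$ is increasing and concave on $[0,\infty)$ with $\tanh(0) = 0$, it is subadditive, so
\[
    d(x,y) = \tanh\ell(x,y) \leq \tanh\bigl(\ell(x,z) + \ell(z,y)\bigr) \leq \tanh\ell(x,z) + \tanh\ell(z,y) = d(x,z) + d(z,y).
\]
The edge case where some $\ell$ is $+\infty$ is handled by $\tanh(\infty) = 1 \geq d(x,y)$.

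For the diameter, $\diam_d(S) \leq 1$ is already clear. Since $M \neq \C$, a non-scalar self-adjoint element of $M$ yields (via an appropriate spectral projection) a projection $p \in M$ with $0 \neq p \neq 1$; set $q := 1-p$, which is also nonzero, and let $x := p/\tau(p)$, $y := q/\tau(q) \in S$. Plugging $a = q$ into the infimum formula of Theorem~\ref{thm:mproperties}(8) gives $m(x,y) \leq \tau(xq)/\tau(yq) = 0$, so $d(x,y) = 1$ and the diameter is attained. The main conceptual hurdle is recognizing the $\tanh$ reparametrization that turns the multiplicative $m$-estimate into a genuine triangle inequality; beyond that the argument is mechanical.
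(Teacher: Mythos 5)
Your proof is correct and follows essentially the same route as the paper: the paper defers the metric axioms to the argument in \cite{jeff}, which is exactly the $\tanh$ reparametrization of the multiplicative bound $m(x,y)m(y,x)\geq m(x,z)m(z,x)m(z,y)m(y,z)$ that you carry out explicitly. The only cosmetic difference is the witness for the diameter --- the paper uses the pair $\bigl(\tfrac{1}{\tau(p)}p,\,1\bigr)$ while you use $\bigl(\tfrac{1}{\tau(p)}p,\,\tfrac{1}{\tau(1-p)}(1-p)\bigr)$ --- and both give $m=0$ and hence distance one.
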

\begin{proof}
The proof that $d$ is a metric is the same as in \cite{jeff}. Note that $d$ is valued in $[0,1]$ since $m$ is, and so $\diam_d(S)\le 1$. Also for $M\neq \C$ there exists a non-trivial projection $p\in M$, and one has $\lambda 1 \not\leq \frac{1}{\tau(p)}p$ for all $\lambda >0$. Hence $m(\frac{1}{\tau(p)} p, 1)=0$ and so $d(\frac{1}{\tau(p)} p, 1)=1$.
\end{proof}

The following lemma provides some alternate formulas for the Hennion metric $d$ that are more geometric in nature.

\begin{lem}\label{lem:dformula}
Let $x,y\in S$ be distinct. Then
    \begin{align}\label{eqn:dformula}
        d(x,y) = \frac{t_+-t_-}{t_-+t_+-2t_-t_+} ,
    \end{align}
where 
    \begin{align*}
            t_+&:= \sup\{t\in\mathbb{R}: tx+(1-t)y \in S\} \in \left[1,\frac{2}{\|x-y\|_1}\right],\\
            t_- &:= \inf \{t\in \mathbb{R}: tx+(1-t)y \in S\} \in \left[\frac{-2}{\|x-y\|_1},0\right].
    \end{align*}
Equivalently, if $A_{\pm}=t_{\pm}x + (1-t_{\pm})y$ are the extreme points of the convex set $\{tx+(1-t)y\colon t\in \mathbb{R}\}\cap S$ then
    \begin{align}\label{eqn:eqn:conversedformula}
        d(x,y) = \left|\frac{r(1-s) - (1-r)s}{r(1-s) + (1-r)s}\right| = \frac{|r-s|}{r+s - 2rs},
    \end{align}
where $r,s\in [0,1]$ are determined by $x=rA_- + (1-r)A_+$ and $y=sA_- + (1-s) A_+$.
\end{lem}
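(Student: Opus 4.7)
The plan is to pin down $t_+$ and $t_-$ explicitly in terms of $m(x,y)$ and $m(y,x)$, and then verify both formulas by direct algebraic substitution. Since $\tau(tx+(1-t)y)=1$ automatically, membership of $tx+(1-t)y$ in $S$ is equivalent to the positivity condition $tx+(1-t)y\geq 0$. For $t>1$ this rearranges to $\frac{t-1}{t}y\leq x$, and for $t<0$ to $\frac{-t}{1-t}x\leq y$. Using $x\neq y$ together with Theorem~\ref{thm:mproperties}(4) to ensure $m(x,y),m(y,x)<1$, the definition of the $m$-quantity (Definition~\ref{def:m}) gives
\[
    t_+ = \frac{1}{1-m(x,y)} \qquad\text{and}\qquad t_- = \frac{-m(y,x)}{1-m(y,x)}.
\]

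Next I would verify the ranges. The bounds $t_+\geq 1$ and $t_-\leq 0$ are immediate because $t\in\{0,1\}$ both lie in the allowed set. For the upper bound on $t_+$: writing $tx+(1-t)y=y+t(x-y)$ and noting that when $tx+(1-t)y\geq 0$ its $L^1$-norm equals its trace, which is $1$, the reverse triangle inequality gives $1\geq |t|\|x-y\|_1-\|y\|_1 = |t|\|x-y\|_1-1$, hence $t\leq 2/\|x-y\|_1$. An analogous computation with $tx+(1-t)y=x+(1-t)(y-x)$ yields $1-t\leq 2/\|x-y\|_1$, and since $\|x-y\|_1\leq 2$ this implies $t_-\geq 1-\frac{2}{\|x-y\|_1}\geq -\frac{2}{\|x-y\|_1}$.

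Setting $\alpha:=m(x,y)$ and $\beta:=m(y,x)$, a direct computation gives
\[
    t_+-t_- = \frac{1-\alpha\beta}{(1-\alpha)(1-\beta)} \qquad\text{and}\qquad t_++t_--2t_-t_+ = \frac{1+\alpha\beta}{(1-\alpha)(1-\beta)},
\]
so their ratio is exactly $\frac{1-\alpha\beta}{1+\alpha\beta}=d(x,y)$, proving \eqref{eqn:dformula}.

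For the equivalent formula \eqref{eqn:eqn:conversedformula}, I would invert the $2\times 2$ linear system $A_\pm = t_\pm x + (1-t_\pm)y$, whose determinant $t_+-t_-$ is positive. Solving yields
\[
    r = \frac{t_+-1}{t_+-t_-}, \qquad s = \frac{t_+}{t_+-t_-},
\]
from which $r-s = -1/(t_+-t_-)$ and, after expanding, $r+s-2rs = (t_++t_--2t_-t_+)/(t_+-t_-)^2$; the ratio then reproduces the right-hand side of \eqref{eqn:dformula}. The identity $r(1-s)-(1-r)s=r-s$ and $r(1-s)+(1-r)s=r+s-2rs$ handle the two written forms. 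Nothing here is a substantive obstacle — the only real content is the identification of $t_\pm$ from positivity; the rest is bookkeeping with the $2\times 2$ linear algebra and the normalization of $d$.
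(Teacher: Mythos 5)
Your proof is correct, but it takes a genuinely different (and shorter) route than the paper's. The paper never computes $t_\pm$ in closed form: it instead shows $m(A_+,A_-)=m(A_-,A_+)=0$ by a maximality argument on $t_\pm$, then invokes the approximating-sequence characterization of vanishing $m$-quantity (Theorem~\ref{thm:mproperties}.\ref{part:mzeroapprox}) to extract $m(x,y)=\min\{r/s,(1-r)/(1-s)\}$ and $m(y,x)=\min\{s/r,(1-s)/(1-r)\}$, and only then assembles $d(x,y)$ from the product. Your observation that positivity of $tx+(1-t)y$ for $t>1$ is exactly $\frac{t-1}{t}\leq m(x,y)$ (and dually for $t<0$), giving $t_+=\frac{1}{1-m(x,y)}$ and $t_-=\frac{-m(y,x)}{1-m(y,x)}$, collapses all of this to algebra and has the side benefit of making the attainment of the sup and inf transparent (the maximum in Lemma~\ref{lem:mismax} is achieved, so $A_\pm\in S$ without appealing to $\|\cdot\|_1$-closedness of $S$). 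One small point to tighten: to ensure $m(x,y),m(y,x)<1$ you cite Theorem~\ref{thm:mproperties}(4), but that only rules out the \emph{product} equalling $1$; the correct justification is that $m(x,y)\leq 1$ by part (1), and $m(x,y)=1$ would give $y\leq x$ with $\tau(x-y)=0$, hence $x=y$ by faithfulness of $\tau$. With that one-line fix, the argument is complete; the verification of the ranges of $t_\pm$ and the inversion of the $2\times 2$ system agree with the paper's formulas $r=\frac{t_+-1}{t_+-t_-}$, $s=\frac{t_+}{t_+-t_-}$.
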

\begin{proof}
Since $S$ is convex we immediately have $t_+\geq 1$ and $t_-\leq 0$. To see their other bounds, let $x-y=v|x-y|$ be the polar decomposition. Since $v^*\in (M)_1$, if $tx+(1-ty)\in S$ then we have
    \begin{align*}
        |t|\|x-y\|_1 &= | \tau(tv^*(x-y))| = |\tau(v^*[tx+(1-t)y]) - \tau(v^*y)| \leq 2.
    \end{align*}
Hence $t_+\leq 2/\|x-y\|_1$ and $t_- \geq -2/\|x-y\|_1$.

Consider $A_+:= t_+ x + (1-t_+)y$ and $A_-:= t_-x + (1-t_-) y$, which both belong to $S$ since it is $\|\cdot\|_1$-closed. Additionally, $A_{\pm}$ are the extreme points of the $\|\cdot\|_1$-compact convex set $\{tx + (1-t)y\colon t_-\leq t \leq t_+\}$, and so $x=rA_- + (1-r) A_+$ and $y=sA_- + (1-s)A_+$ for some $r,s\in [0,1]$. In fact, one can explicitly solve a linear system as in \cite{jeff} to show that
    \begin{align*}
        r&= \frac{t_+-1}{t_+-t_-}   & 1-r&= \frac{1-t_-}{t_+-t_-} \\
        s&= \frac{t_+}{t_+ - t_-}   & 1-s&=\frac{-t_-}{t_+-t_-}.
    \end{align*}
We claim  that $m(A_+,A_-)=0$ and $m(A_-, A_+)=0$. Indeed, if there existed $\lambda >0$ so that $\lambda A_- \le A_+$ then we would further have $\lambda A_- \leq A_+ +\lambda x$. Hence
    \[
        0\leq A_+ + \lambda(x- A_-)= (t_+ + \lambda[1- t_-])x + (1- (t_+ + \lambda[1- t_-]))y,
    \]
and $t_+ +\lambda[1-t_-] > t_+$ contradicts the supremacy  of $t_+$. Thus we must have $m(A_+,A_-)=0$, and a similar argument using $t_-$ shows $m(A_-,A_+)=0$.

Now, $m(A_+,A_-)=0$ implies  by Theorem~\ref{thm:mproperties}.\ref{part:mzeroapprox} that there is $(\xi_n)_{n\in \N}\subset \dom(x) \cap \dom(y)$ so that $\|A^{1/2}_-\xi_n\|_2 \equiv 1$ while $\|A^{1/2}_+\xi_n\|_2 \to 0$ as $n\to \infty$. Since
    \begin{align}\label{eqn:A_plus_minus_inequality}
        m(x,y)(sA_- + (1-s) A_+)= m(x,y)y \leq x = r A_- + (1-r) A_+,
    \end{align}
for all $\epsilon >0$ there is $N\in \N$ so that for all $n\ge N$ one has 
    \[
        m(x,y) s = m(x,y) \<s A_- \xi_n, \xi_n\> \leq \<rA_-\xi_n,\xi_n\> + \epsilon = r + \epsilon.
   \]
Letting $\epsilon\to 0$ we see that $m(x,y)\leq \frac{r}{s}$. A similar argument using $m(A_-,A_+)=0$ yields
    \[
        m(x,y) \le \min\left\{ \frac{r}{s}, \frac{1-r}{1-s} \right\}.
    \]
In fact, the above is an equality: using $y=sA_-+(1-s)A_+$ one has
    \[
        \min\left\{\frac{r}{s}, \frac{1-r}{1-s}\right\} y \leq r A_- + (1-r) A_+ = x.
    \]
Reversing the roles of $x$ and $y$ and using the resulting version of (\ref{eqn:A_plus_minus_inequality}) gives
    \[
        m(y,x) = \min\left\{ \frac{s}{r}, \frac{1-s}{1-r} \right\}.
    \]
Thus one has
    \[
        0\le m(x,y)m(y,x) =\min\left\{ \frac{r}{s}, \frac{1-r}{1-s} \right\}\min\left\{ \frac{s}{r}, \frac{1-s}{1-r} \right\} = \min \left\{ \frac{r(1-s)}{s (1-r)}, \frac{(1-r) s}{(1-s) r} \right\}.
   \] 
One then explicitly calculates
    \begin{align*}
        d(x,y) = \frac{1-m(x,y)m(y,x)}{1+m(x,y)m(y,x)} = \left| \frac{r(1-s) - (1-r)s}{r(1-s) + (1-r)s} \right| = \frac{t_+-t_-}{t_-+t_+-2t_+t_-},
    \end{align*}
as claimed.
\end{proof}

\begin{thm}\label{thm:tracemetricinequality_and_completeness}
One has for all $x,y\in S$
    \begin{equation}\label{eqn:dinequality}
        \frac{1}{2}\|x-y\|_1 \le d(x,y).
    \end{equation}
Furthermore, $(S,d)$ is a complete metric space.
\end{thm}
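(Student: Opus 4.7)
The plan is to prove the two assertions separately, both leveraging Lemma~\ref{lem:dformula}. For the inequality, assume $x\neq y$ and invoke the representation $x = rA_- + (1-r)A_+$ and $y = sA_- + (1-s)A_+$ provided by that lemma, with $A_\pm \in S$. Then $\|x-y\|_1 = |r-s|\,\|A_- - A_+\|_1 \leq 2|r-s|$, since $\|A_\pm\|_1=1$. The second formula in Lemma~\ref{lem:dformula} gives $d(x,y) = |r-s|/(r+s-2rs)$; rewriting the denominator as $r(1-s)+s(1-r)$ and noting that it is affine separately in $r$ and $s$, its maximum over $[0,1]^2$ is attained on the boundary and equals $1$ (indeed, $\max(s,1-s)\leq 1$ for every $s\in[0,1]$). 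Combining, $d(x,y) \geq |r-s| \geq \tfrac12\|x-y\|_1$.

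For completeness, let $(x_n)\subset S$ be $d$-Cauchy. By the inequality just proven $(x_n)$ is also $\|\cdot\|_1$-Cauchy, so it converges in $L^1(M,\tau)$ to some $x$; positivity is preserved under $\|\cdot\|_1$-limits and $\tau$ is $\|\cdot\|_1$-continuous, so $x\in S$. To show $d(x_n,x)\to 0$, I would fix $\epsilon>0$ and choose $N$ with $d(x_n,x_m) < \epsilon$ for all $n,m\geq N$. Unpacking the definition of $d$, this reads $m(x_n,x_m)\,m(x_m,x_n) > (1-\epsilon)/(1+\epsilon)$, and since each factor is at most $\tau(x_n)/\tau(x_m) = 1$ by Theorem~\ref{thm:mproperties}, each factor individually exceeds $(1-\epsilon)/(1+\epsilon)$.

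The inequality $m(x_n,x_m)\geq (1-\epsilon)/(1+\epsilon)$ is the same as $\tfrac{1-\epsilon}{1+\epsilon}\,x_m \leq x_n$ as affiliated operators, and by (\ref{eqn:positive_iff}) this is equivalent to $\tau(x_n a) \geq \tfrac{1-\epsilon}{1+\epsilon}\tau(x_m a)$ for every $a\in M_+$. Since $a\in M$ and $\|x_m - x\|_1 \to 0$, letting $m\to\infty$ yields $\tfrac{1-\epsilon}{1+\epsilon}\,x \leq x_n$, i.e.\ $m(x_n,x) \geq (1-\epsilon)/(1+\epsilon)$; the symmetric argument gives $m(x,x_n) \geq (1-\epsilon)/(1+\epsilon)$. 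Plugging into the definition of $d$ then produces the bound $d(x_n,x) \leq \bigl[1 - ((1-\epsilon)/(1+\epsilon))^2\bigr]/\bigl[1+ ((1-\epsilon)/(1+\epsilon))^2\bigr]$, which vanishes as $\epsilon\to 0$. The only delicate point is this limit-passage step—a priori, the order relation on affiliated operators need not interact well with $\|\cdot\|_1$-limits—but the dual characterization (\ref{eqn:positive_iff}) converts positivity into a continuous linear condition, so the obstacle dissolves immediately.
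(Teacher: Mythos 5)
Your proof is correct. The inequality part is the same argument as the paper's: the decomposition $x=rA_-+(1-r)A_+$, $y=sA_-+(1-s)A_+$ from Lemma~\ref{lem:dformula}, the bound $\|A_--A_+\|_1\le 2$, and the observation that the denominator $r(1-s)+(1-r)s\le 1$. For completeness the overall strategy also matches the paper (use the inequality to produce a $\|\cdot\|_1$-limit $x\in S$, then bound $m(x_n,x)$ and $m(x,x_n)$ from below), but your limit-passage is organized differently and is in fact cleaner. The paper works through the infimum formula $m(x,y)=\inf_a \tau(xa)/\tau(ya)$ of Theorem~\ref{thm:mproperties}.\ref{part:m_inf_formula}, restricting to $a$ with $\tau(xa)>0$, choosing $m$ depending on $a$, and juggling exponents $\eta^{1/4},\eta^{1/2},\eta$; you instead convert $m(x_n,x_m)\ge \tfrac{1-\epsilon}{1+\epsilon}$ into the family of scalar inequalities $\tau(x_na)\ge \tfrac{1-\epsilon}{1+\epsilon}\tau(x_ma)$ via (\ref{eqn:positive_iff}), pass to the limit in $m$ for each fixed $a\in M_+$ using $\|\cdot\|_1$-$\|\cdot\|_\infty$ duality, and then reassemble the operator inequality $\tfrac{1-\epsilon}{1+\epsilon}x\le x_n$ from the limiting scalar inequalities. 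This buys you a shorter argument with no case distinction on whether $\tau(xa)$ or $\tau(x_ma)$ vanishes and no fractional powers of $\eta$; the underlying mechanism (testing against fixed $a\in M_+$ and letting $m\to\infty$) is the same in both proofs, so nothing essential is gained or lost in generality.
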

\begin{proof}
Let $A_{\pm}\in S$ and $r,s\in[0,1]$ be as in Lemma~\ref{lem:dformula}. Note that
    \[
        r-s = (1-s) - (1-r) = r(1-s) - (1-r)s,
    \]
and $r(1-s) + (1-r)s\leq 1$. Using these observations we have
    \[
        \|x-y\|_1 = \|(r - s)A_- + ((1-r) - (1-s))A_+\|_1 = |r-s| \|A_- - A_+\|_1 \leq 2 |r-s| \leq 2 \left|\frac{r(1-s) - (1-r)s}{r(1-s) + (1-r) s}\right|,
    \]
which equals $2d(x,y)$ by Equation~(\ref{eqn:eqn:conversedformula}).

To see that $(S,d)$ is complete, let $(x_n)_{n\in \N}\subset S$ be a Cauchy sequence with respect to $d$. The first part of the proof implies $(x_n)_{n\in \N}$ is also Cauchy with respect to $\|\cdot\|_1$ and hence converges to some $x\in S$ with respect to $\|\cdot\|_1$. In particular, one has $\tau(x_na)\to \tau(xa)$ for all $a\in M_+$. Given $\epsilon>0$, set $\eta:=(1-\epsilon)/(1+\epsilon)$. Let $N\in \N$ be such that for $m,n\geq N$ one has $d(x_m,x_n)<(1-\eta^{1/4})/(1+\eta^{1/4})$, which implies
    \[
        \min\{m(x_m,x_n),m(x_n,x_m)\} \geq m(x_m,x_n)m(x_n,x_m)>\eta^{1/4}.
    \]
Fix $n\geq N$. For $a\in M_+$ with $\tau(xa)>0$, let $m\geq N$ be large enough so that $|\tau(x_m a) - \tau(xa)| < (1-\eta^{1/4})\tau(xa)$ and $\tau(x_m a)\neq 0$. Then using Theorem~\ref{thm:mproperties}.\ref{part:m_inf_formula} one has
    \[
        \frac{\tau(x_na)}{\tau(xa)} = \frac{\tau(x_na)}{\tau(x_ma)}\frac{\tau(x_ma)}{\tau(xa)} \geq m(x_n,x_m) \frac{\tau(xa) - (1-\eta^{1/4})\tau(xa)}{\tau(xa)} = m(x_n,x_m) \eta^{1/4}> \eta^{1/2}.
    \]
Taking an infimum over $a\in M_+$ with $\tau(xa)>0$ yields $m(x_n,x)\geq \eta^{1/2}$. Next, for $a\in M_+$ with $\tau(x_na)>0$ let $m\geq N$ be large enough so that $|\tau(x_m a) - \tau(xa)| < (\eta^{-1/4} -1)\tau(xa)$ (and note $\tau(x_ma)\neq 0$ lest $m(x_m,x_n)=0$ and $d(x_m,x_n)=1$). Then
    \[
        \frac{\tau(xa)}{\tau(x_n a)} = \frac{\tau(x_ma)}{\tau(x_n a)}\frac{\tau(xa)}{\tau(x_m a)} \geq m(x_m,x_n) \frac{\tau(xa)}{\tau(xa) + (\eta^{-1/4} -1)\tau(xa)} = m(x_m,x_n) \eta^{1/4} > \eta^{1/2}.
    \]
Taking an infimum gives $m(x,x_n) \geq \eta^{1/2}$. Altogether this gives $d(x_n,x) \leq (1-\eta)/(1+\eta)=\epsilon$, so that $x_n\to x$ with respect to the metric $d$.
\end{proof}

\begin{rmk}
Superficially, there is striking visual resemblance between Inequality~(\ref{eqn:dinequality}) and the famous Pinsker's inequality \cite[Theorem 3.1]{HiaiOhyaTsukuda} for the relative entropy: $\frac{1}{2}\|\phi - \psi\|_1^2 \le S(\phi|\psi)$. Moreover the norm of the Radon-Nikodym derivatives of states (when defined) satisfies $\|\frac{d\phi}{d\psi}\|^2=\inf\{ \lambda: \psi \le \lambda \phi\}$ \cite[Theorem 12]{araki}, \cite[Appendix 7]{hiai} which is inversely proportional to $m( \frac{d\phi}{d\tau}, \frac{d\psi}{d\tau})$.
\end{rmk}

In light of Inequality~(\ref{eqn:dinequality}), it is natural to ask if the metric $d$ is equivalent to the metric induced by $\|\cdot\|_1$. This is always false when $M$ is infinite dimensional (see Remark~\ref{rmk:d_not_homeo_to_1-norm}), but nevertheless the Hennion metric is jointly lower semicontinuous with respect to $\|\cdot\|_1$:

\begin{thm}\label{thm:lower_semicontinuity_of_d}
Suppose $(x_n)_{n\in \N}, (y_n)_{n\in \N}\subset S$ satisfy
    \[
        \lim_{n\to\infty} \|x_n - x\|_1 =0 \qquad \text{ and } \qquad \lim_{n\to\infty} \|y_n - y\|_1=0,
    \]
for some $x,y\in S$. Then
    \[
        \limsup_{n\to\infty} m(x_n,y_n) \leq m(x,y),
    \]
and
    \[
        \liminf_{n\to\infty} d(x_n,y_n) \geq d(x,y).
    \]
\end{thm}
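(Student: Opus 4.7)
The plan is to derive both statements from the variational formula in Theorem~\ref{thm:mproperties}.\ref{part:m_inf_formula}, which expresses $m(x,y)$ as an infimum of ratios $\tau(xa)/\tau(ya)$ over $a \in M_+$ with $\tau(ya) > 0$. Since each functional $z \mapsto \tau(z\,a)$ with $a \in M$ is continuous on $L^1(M,\tau)$ (it is a dual functional of norm $\|a\|$), convergence $x_n \to x$ and $y_n \to y$ in $\|\cdot\|_1$ will give pointwise convergence of the ratios, and the infimum formula converts this into the desired $\limsup$-bound.

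More concretely, for the first inequality, fix $\varepsilon > 0$. By Theorem~\ref{thm:mproperties}.\ref{part:m_inf_formula} there exists $a \in M_+$ with $\tau(ya) > 0$ and $\tau(xa)/\tau(ya) < m(x,y) + \varepsilon$ (in the case $m(x,y)=0$ the same is true since the infimum is still $0$). Then $\tau(x_n a) \to \tau(xa)$ and $\tau(y_n a) \to \tau(ya) > 0$, so eventually $\tau(y_n a) > 0$ and
\[
    m(x_n, y_n) \leq \frac{\tau(x_n a)}{\tau(y_n a)} \longrightarrow \frac{\tau(xa)}{\tau(ya)} < m(x,y) + \varepsilon.
\]
Taking $\limsup$ and then $\varepsilon \to 0$ yields $\limsup_{n\to\infty} m(x_n,y_n) \leq m(x,y)$. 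Exchanging the roles of $x$ and $y$ gives the symmetric bound $\limsup_{n\to\infty} m(y_n,x_n) \leq m(y,x)$.

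For the second inequality, observe that both $m(x_n,y_n)$ and $m(y_n,x_n)$ lie in the bounded interval $[0,1]$ by Theorem~\ref{thm:mproperties}.\ref{part:mrange} (using $\tau(x_n) = \tau(y_n) = 1$). For non-negative bounded sequences one has
\[
    \limsup_{n\to\infty} m(x_n,y_n) m(y_n,x_n) \leq \left(\limsup_{n\to\infty} m(x_n,y_n)\right)\left(\limsup_{n\to\infty} m(y_n,x_n)\right) \leq m(x,y) m(y,x),
\]
where the first inequality follows by bounding $m(x_n,y_n) \leq \limsup_k m(x_k,y_k) + \varepsilon$ and similarly for $m(y_n,x_n)$ for large $n$, then sending $\varepsilon \to 0$. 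Since the function $t \mapsto (1-t)/(1+t)$ is strictly decreasing on $[0,1]$, applying it to both sides (which turns $\limsup$ into $\liminf$) gives $\liminf_{n\to\infty} d(x_n,y_n) \geq d(x,y)$.

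There is no real obstacle here: the argument is essentially a careful bookkeeping of $\limsup$'s combined with the variational characterization. The only subtlety is ensuring that the denominators $\tau(y_n a)$ are eventually positive, which is handled automatically because convergence in $\|\cdot\|_1$ implies convergence of the scalar $\tau(y_n a)$ to the strictly positive limit $\tau(ya)$.
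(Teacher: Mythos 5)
Your proof is correct and follows essentially the same route as the paper: both use the variational formula of Theorem~\ref{thm:mproperties}.\ref{part:m_inf_formula} to pick a near-optimal test element $a\in M_+$, exploit $\|\cdot\|_1$-convergence to pass the ratio $\tau(\cdot\,a)/\tau(\cdot\,a)$ to the limit, and then deduce the statement for $d$ from the joint upper semicontinuity of $m$. Your write-up is merely a bit more explicit about the positivity of the denominators and the product-of-limsups step, which the paper leaves implicit.
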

\begin{proof}
The definition of $d$ implies its joint lower semicontinuity will follow from the joint upper semicontinuity of $m$. To see the latter, let $\eta> m(x,y)$ and use Theorem~\ref{thm:mproperties}.\ref{part:m_inf_formula} to find $a\in M_+$ such that
    \[
        m(x,y)\leq \frac{\tau(xa)}{\tau(ya)} < \eta.
    \]
Then we can find $N\in \N$ so that 
    \[
        m(x_n,y_n) \leq \frac{\tau(x_na)}{\tau(y_na)} <\eta
    \]
holds for all $n\geq N$. Thus
    \[
        \limsup_{n\to\infty} m(x_n,y_n) \leq \eta,
    \]
and letting $\eta\to m(x,y)$ completes the proof.
\end{proof}

\begin{rmk}
This is another striking similarity between the Hennion metric and the relative entropy. We recall that the relative entropy is jointly lower semicontinuous in its arguments \cite[Theorem 4.1]{Kosaki}.
\end{rmk}


\subsection{On components of \texorpdfstring{$S$}{S}}\label{subsec:Components}
We demonstrate below that $S$ is the disjoint union of at least four connected components. Recall that we say a densely defined linear operator $x$ on $L^2(M,\tau)$ is \emph{boundedly invertible} if there exists $b\in B(L^2(M,\tau))$ satisfying $xb=1$ and $bx\subset 1$, in which case we write $x^{-1}:=b$. Note that if $x\in L^1(M,\tau)$ (and hence affiliated with $M$), then necessarily $x^{-1}\in M$.

\begin{note}\label{def:Sstar}
We let $S^\times$ denote the set of operators $x\in S$ that are boundedly invertible, and we denote $S^\circ:=S\setminus S^\times$. We also set the following notation:
    \begin{align*}
        S^\times_b &:=S^\times \cap M & S^\times_u&:=S^\times \setminus M\\
        S^\circ_b&:= S^\circ\cap M & S^\circ_u&:= S^\circ\setminus M.
    \end{align*}
Observe that
    \[
        S=S^\times_b\sqcup S^\times_u\sqcup S^\circ_b \sqcup S^\circ_u,
    \]
and $S^\times= S^\times_b \sqcup S^\times_u$ and $S^\circ = S^\circ_b \sqcup S^\circ_u$. We shall also write $S_b := S\cap M (= S^\times _b \sqcup S^\circ _b)$ and $S_u :=S\setminus M (= S^\times_u \sqcup S^\circ_u)$. Also note the sets $S^\times$, $S_b$, and $S_b^\times$ are convex.$\hfill\blacksquare$
\end{note}

\begin{thm}\label{thm:geometry}
Let $(M,\tau)$ be a tracial von Neumann algebra. We have the following: 
    \begin{enumerate}[label=(\arabic*)]
        \item  Suppose $x,y\in S$ satisfy $d(x,y)<1$. Then, $x$ is bounded if and only if $y$ is bounded; and $x$ is boundedly invertible if and only if $y$ is boundedly invertible. In particular, $S_b^\times, S_u^\times, S_b^\circ, S_u^\circ$ are disjoint $d$-clopen sets that are distance one apart (provided they are non-empty).
        \item For all $x,y\in S^\times_b$, one has $d(x,y)<1$.
        \item If $M\neq \C$, then $\diam(S_b^\times)=1$ and $\diam(S_b^\circ)=1$, and the latter is achieved.
        \item If $M$ is infinite dimensional, then $\diam(S_u^\times)=1$ and $\diam(S_u^\circ)=1$, and both are achieved.
        \item If $M$ is infinite dimensional, $S_b^\times$ is not $d$-totally bounded.
    \end{enumerate}
\end{thm}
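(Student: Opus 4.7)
The strategy is to translate each claim into a statement about the $m$-quantity via Definition~\ref{def:mmetric}, then invoke the structural facts from Theorem~\ref{thm:mproperties}, the lower semicontinuity from Theorem~\ref{thm:lower_semicontinuity_of_d}, and the projection constructions from Lemma~\ref{lem:infinite_dimensional}. The key observation unlocking part (1) is that $d(x,y)<1$ is equivalent to $m(x,y),m(y,x)>0$, which by Theorem~\ref{thm:mproperties}.\ref{part:mstrictlypositive} yields scalars $\lambda_1,\lambda_2>0$ with $\lambda_1 x\le y\le \lambda_2 x$. Then Remark~\ref{rmk:boundedimpliesbounded} transfers boundedness and Remark~\ref{rmk:boundedlyinvert_implies_boundedlyinvert} transfers bounded invertibility between $x$ and $y$, so each of $S_b^\times,S_u^\times,S_b^\circ,S_u^\circ$ is preserved under $d(x,y)<1$. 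Contrapositively, two points in distinct sets are at $d$-distance $1$, which makes each set clopen (completeness from Theorem~\ref{thm:tracemetricinequality_and_completeness} is not even needed here). For part (2), $x\in S_b^\times$ gives $x\ge\delta\cdot 1$ and $y\in S_b^\times$ gives $y\le\|y\|_\infty\cdot 1$, so $y\le(\|y\|_\infty/\delta)x$ and $m(x,y)>0$; by symmetry $m(y,x)>0$, hence $d(x,y)<1$.

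For part (3), pick a non-trivial projection $p\in M$ (possible since $M\ne\C$) and consider $\hat p:=p/\tau(p)$ and $\hat q:=(1-p)/\tau(1-p)$, both in $S_b^\circ$. The orthogonality $p(1-p)=0$ immediately gives $\lambda\hat q\not\le\hat p$ for every $\lambda>0$, so $m(\hat p,\hat q)=m(\hat q,\hat p)=0$ and $d(\hat p,\hat q)=1$, achieved. To see $\diam(S_b^\times)=1$ (necessarily unachieved by part (2)), approximate via $x_n:=(1-\tfrac1n)\hat p+\tfrac1n\hat q$ and $y_n:=\tfrac1n\hat p+(1-\tfrac1n)\hat q$, which are in $S_b^\times$ since they are strictly positive in $M$; the convergences $x_n\to\hat p$ and $y_n\to\hat q$ in $\|\cdot\|_1$ combined with Theorem~\ref{thm:lower_semicontinuity_of_d} force $\liminf d(x_n,y_n)\ge 1$.

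For part (4), use Lemma~\ref{lem:infinite_dimensional} to produce pairwise orthogonal non-zero projections $\{p_n\}_{n\in\N}\subset M$ summing to $1$, and choose them (by regrouping, if necessary) so that $\sum_n n\tau(p_n)<\infty$. Partition $\N=A\sqcup B$ into two infinite sets. For $S_u^\times$, set
\[
    x := c_x\Big(\sum_{n\in A}(1+n)p_n+\sum_{n\in B} p_n\Big),\qquad y:=c_y\Big(\sum_{n\in A} p_n+\sum_{n\in B}(1+n)p_n\Big),
\]
with $c_x,c_y>0$ chosen so that $\tau(x)=\tau(y)=1$. Both $x,y$ are affiliated, lie in $L^1(M,\tau)$, are bounded below by a positive multiple of $1$ (hence boundedly invertible by Remark~\ref{rmk:boundedlyinvert_implies_boundedlyinvert}), but are unbounded above; so $x,y\in S_u^\times$. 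Testing against $p_n$ for $n\in A$ shows $m(y,x)=0$, and testing against $p_n$ for $n\in B$ shows $m(x,y)=0$, so $d(x,y)=1$, achieved. For $S_u^\circ$, use instead $a_n=n$ on $A$, $a_n=1/n$ on $B$ (and swap for $b_n$), again normalized; the same argument yields the desired pair.

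For part (5), the main subtlety is producing infinitely many boundedly invertible normal states with pairwise $d$-distances bounded away from $0$. By Lemma~\ref{lem:infinite_dimensional}.(iii) take projections $(p_n)$ with $\tau(p_n)\to 0$, and define
\[
    y_n := \tfrac{1}{2}\cdot 1 + \tfrac{1}{2\tau(p_n)} p_n\in S_b^\times
\]
(bounded above and bounded below by $1/2$). For $m\ne n$ the projections $p_m,p_n$ are orthogonal, so a direct spectral computation gives $m(y_m,y_n)=\tau(p_n)/(1+\tau(p_n))$, and by symmetry $m(y_n,y_m)=\tau(p_m)/(1+\tau(p_m))$. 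As $\tau(p_n)\to 0$, these products go to zero, so for any prescribed $\varepsilon>0$ there is $N$ with $d(y_m,y_n)\ge 1-\varepsilon$ for all distinct $m,n\ge N$; this uniform separation of infinitely many points forbids total boundedness.

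The main obstacle I expect is part (4): ensuring the formal spectral sums define honest elements of $L^1(M,\tau)_+$ with the claimed affiliation, unboundedness, and (non)invertibility properties, and also verifying the two sided $m=0$ conditions. Once the projections are chosen with sufficiently rapid trace decay, this reduces to a clean application of functional calculus together with Theorem~\ref{thm:mproperties}.\ref{part:mzeroapprox}. Parts (1)--(3) and (5) should then follow as described.
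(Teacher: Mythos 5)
Your argument is correct, and parts (1), (2), and (4) run essentially parallel to the paper's proof: the same reduction of $d(x,y)<1$ to the two-sided domination $m(x,y)y\le x\le m(y,x)^{-1}y$, the same use of Remarks~\ref{rmk:boundedimpliesbounded} and~\ref{rmk:boundedlyinvert_implies_boundedlyinvert}, and for (4) the same device of orthogonal projections carrying divergent weights on two complementary infinite index sets (the paper uses even/odd subsequences with a constant floor $\delta$, you use a regrouped family with $\sum_n n\tau(p_n)<\infty$; both are fine). Where you genuinely diverge is in (3) and (5). For (3) the paper exhibits an explicit family $x_\epsilon,y_\epsilon\in S_b^\times$ with $m(x_\epsilon,y_\epsilon)=m(y_\epsilon,x_\epsilon)=\epsilon$ and lets $\epsilon\to 0$, whereas you reach the same conclusion by approximating the orthogonal pair $\hat p,\hat q\in S_b^\circ$ from inside $S_b^\times$ and invoking the joint lower semicontinuity of $d$ (Theorem~\ref{thm:lower_semicontinuity_of_d}); your route is slightly slicker but leans on a later-proved analytic fact where the paper's is self-contained and quantitative. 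For (5) your family $y_n=\tfrac12 1+\tfrac{1}{2\tau(p_n)}p_n$ is cleaner than the paper's $x_k=\tfrac12\sum_{j\le n_k}p_j+ks_{n_k}$ and gives a stronger conclusion (pairwise distances tending to $1$ rather than a uniform $\ge 1/3$ bound); I checked the spectral computation $m(y_m,y_n)=\tau(p_n)/(1+\tau(p_n))$ and it is right.

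One small correction: in (5) you cite Lemma~\ref{lem:infinite_dimensional}.(iii), which only provides non-zero projections with $\tau(p_n)\to 0$ and does \emph{not} guarantee pairwise orthogonality, yet your computation of $m(y_m,y_n)$ uses $p_mp_n=0$ essentially. You should instead invoke Lemma~\ref{lem:infinite_dimensional}.(ii) to get a pairwise orthogonal family summing to $1$, from which $\tau(p_n)\to 0$ follows; with that substitution the argument goes through verbatim. Also, in (3) note that $x_1=\hat q\notin S_b^\times$, so the family should start at $n\ge 2$.
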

\begin{proof} 
\begin{enumerate}[label = \textbf{(\arabic*):}]
\item Notice that $d(x,y)<1$ implies that $m(x,y),m(y,x)>0$. We have $m(x,y) y \leq x \leq m(y,x)^{-1} y$, hence $x$ is bounded if and only if $y$ is bounded by Remark~\ref{rmk:boundedimpliesbounded}, and $x$ is boundedly invertible if and only if $y$ is boundedly invertible by Remark~\ref{rmk:boundedlyinvert_implies_boundedlyinvert}.

Now, if $x,y\in S$ belong to distinct subsets $S_b^\times$, $S_b^\circ$, $S_u^\times$, or $S_u^\circ$, then the above implies $d(x,y)=1$. Consequently, one has
    \[
        S_b^\times = \bigcup_{x\in S_b^\times} B_d(x,\frac12),
    \]
and similarly for $S_b^\circ$, $S_u^\times$, and $S_u^\circ$. Thus each of these sets are open, and since they partition $S$ they are also closed.
\item Note that $m(x,y)\geq \frac{1}{\|y\|\|x^{-1}\|} >0$ (by Remark~\ref{rmk:boundedlyinvert_implies_boundedlyinvert}) and similarly $m(y,x)>0$. Hence $d(x,y)<1$.
\item Let $p\in M$ be a non-trivial projection, and for $\epsilon\geq 0$ let
    \[
        x_\epsilon:= \frac{1}{1+\epsilon}(p + \epsilon(1-p)) \qquad \qquad y_\epsilon:= \frac{1}{1+\epsilon}(\epsilon p + (1-p)).
    \] 
For $\epsilon>0$, $x_\epsilon,y_\epsilon \in S_b^\times$ with $m(x_\epsilon, y_\epsilon)= m(y_\epsilon, x_\epsilon)=\epsilon$, and therefore $d(x_\epsilon, y_\epsilon)\to 1$ as $\epsilon \to 0$. For $\epsilon=0$, $x_0,y_0\in S_b^\circ$ with $m(x_0, y_0)=m(y_0,x_0)=0$, and therefore $d(x_0, y_0)=1$.

\item Since $M$ is infinite dimensional, Lemma~\ref{lem:infinite_dimensional} yields a family $\{p_n\colon n\in \N\}\subset M$ of non-zero pairwise orthogonal projections mutually orthogonal projections satisfying $\sum p_n =1$. Note that $\tau(p_n) \to 0$ as $n\to \infty$. Pick a subsequence $(p_{n_k})_{k\in \N}$ so that $\tau(p_{n_k})<(k2^k)^{-1}$ and $n_k$ is even. Let $A = \{n \in \N : n \neq n_k \text{ for all } k\}$. Set $x_k = kp_{n_k}$ and observe that
    \[
        \tau \left(\sum_{k=1}^\infty x_k\right) = \sum_{k=1}^\infty k\tau(p_{n_k}) < \sum_{k=1}^\infty  \frac{1}{2^k} = 1,
    \] 
while by pairwise orthogonality, 
    \[
        \left \| \sum_{k=1}^d x_k\right\| = d.
    \] 
Now, set $\alpha := \sum_{k=1}^\infty \tau(x_k)<1$ and $\delta := (1-\alpha) (\sum_{n\in A} \tau(p_n))^{-1}$. Note that
    \[
        \alpha > \sum_{k=1}^\infty \tau(p_{n_k}) = 1 - \sum_{n\in A} \tau(p_n)
    \]
implies $\delta<1$. Thus if we let
    \[
        x:= \sum_{k=1}^\infty x_k + \delta \sum_{n\in A} p_n,
    \]
then $x\in S_u^\times$ with $x\geq \delta$. Repeat the above construction but this time choosing the subsequence $(p_{m_k})_{k\in \N}$ so that $m_k$ is odd to obtain $y\in S_u^\times$ with $y\geq \delta$. Then for any $\lambda>0$, $\lambda y \leq x$ fails since multiplying by $p_{m_k}$ with $k> \frac{\delta}{\lambda}$ gives the contradiction $\lambda k p_{m_k} \leq \delta p_{m_k}$. Hence $d(x,y)=1$ and $\diam(S_u^\times)=1$ and is achieved.

The same construction with $\delta=0$ shows $\diam(S_u^\circ)=1$ and is achieved.
    
\item Let $\{p_n\colon n\in \N\}\subset M$ be as in the previous part. Put $s_n = \sum_{k> n}p_k$ and observe that $\tau(s_n) \to 0$ as $n\to \infty$. Pick a subsequence $s_{n_k}$ so that $\tau(s_{n_k}) \le (2k)^{-1}$, and set $x_k = \frac12 \sum_{j=1}^{n_k} p_j + k s_{n_k}$. Note that $\alpha_k:= \frac{1}{\tau(x_k)}\in (1,2)$. 

Now, $(\alpha_k x_k)_{k\in \N}$ is a sequence of states that are bounded below by $\frac12$ and above by $2k$, and so $(\alpha_k x_k)_{k\in \N} \subset S_b^\times$. For $k>k'$ one has $m(x_k, x_{k'})\le \frac{1}{2k'}$ and $m(x_{k'}, x_k) \le \frac{k'}{k}$. Indeed, we see $m(x_k,x_{k'}) x_{k'} \le x_k$ demands $m(x_k,x_{k'}) k'\sum_{j=n_{k'}+1}^{n_k} p_j\le \frac12\sum_{j=n_{k'}+1}^{n_k} p_j$, while $m(x_{k'},x_k)x_k \le x_{k'}$ implies $m(x_{k'},x_k) k s_{n_k} \leq k' s_{n_k}$. Therefore by Theorem~\ref{thm:mproperties}.\ref{part:mscaling},
    \begin{align*}
        d(\alpha_k x_k, \alpha_{k'} x_{k'})&\ge \frac{1-\frac{k'}{k}\frac{1}{2k'}}{1 + \frac{k'}{k}\frac{1}{2k'}}\\
        &= \frac{2k-1}{2k+1}\ge 1/3,
    \end{align*}  uniformly in $k$. Thus $S_b^\times$ does not admit a finite cover by balls of radius $\frac16$ or less.\qedhere
\end{enumerate}
\end{proof}

\begin{rmk}\label{rmk:d_not_homeo_to_1-norm}
Note that unlike in the finite dimensional case (see \cite[Lemma 3.9]{jeff}), for infinite dimensional $M$ the $d$-topology and $\|\cdot\|_1$-topologies are not homeomorphic on $S^\times$ or even on $S^\times_b$. Indeed, let $(p_n)_{n\in \N}\subset M$ be as in Lemma~\ref{lem:infinite_dimensional}.(iii). Then for $0<\alpha<1$ and
    \[
        x_n:= \frac{1}{\alpha + (1-\alpha)\tau(p_n)}(p_n + \alpha(1-p_n)),
    \]
$x_n\to 1$ in $L^1(M,\tau)$ but it is not a Cauchy sequence with respect to $d$.$\hfill\blacksquare$
\end{rmk}

\begin{lem}\label{thm:cauchytocauchy}
Let $(M,\tau)$ be a tracial von Neumann algebra with Hennion metric $d$. Let $(x_n)_{n\in \N}\subset S_b$ and $x\in S$.
    \begin{enumerate}[label=(\arabic*)]
    \item If $d(x_n,x)\to 0$ then $x\in S_b$ and $\|x_n - x\|\to 0$.
    
    \item Let $(x_n)_{n\in \N}\subset S_b^\times$ with $\|x_n -x\| \to 0$ and suppose $x\in S_b^\times$. Then, $d(x_n,x)\to 0$.
    \end{enumerate}
Consequently, $(S_b^\times,d)$ and $(S_b^\times,\|\cdot\|)$ are homeomorphic.
\end{lem}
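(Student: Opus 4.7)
\textbf{Proof plan for Lemma~\ref{thm:cauchytocauchy}.} The plan is to translate between the two metrics by using that, for $x,y\in S_b^\times$, the $m$-quantities are controlled by uniform two-sided operator inequalities of the form $\alpha y\le x\le \beta y$.

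For part (1), first I would observe that $d(x_n,x)\to 0$ forces $d(x_n,x)<1$ for large $n$, so Theorem~\ref{thm:geometry}(1) immediately gives $x\in S_b$ because each $x_n\in S_b$. Next, since $m(x_n,x),m(x,x_n)\in[0,1]$ for states, the condition $m(x_n,x)m(x,x_n)\to 1$ forces both $\alpha_n:=m(x_n,x)\to 1$ and $\beta_n:=m(x,x_n)\to 1$. The defining inequalities $\alpha_n x\le x_n$ and $\beta_n x_n\le x$ then give
    \[
        (1-\alpha_n)x \;\ge\; x-x_n \;\ge\; -(1-\beta_n)x_n,
    \]
while $x_n\le \beta_n^{-1}x$ bounds $\|x_n\|\le \beta_n^{-1}\|x\|$, which is at most $2\|x\|$ for large $n$. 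Combining yields
    \[
        \|x_n-x\| \;\le\; \max\bigl((1-\alpha_n),\,2(1-\beta_n)\bigr)\,\|x\|\;\longrightarrow\;0.
    \]

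For part (2), I would use Remark~\ref{rmk:boundedlyinvert_implies_boundedlyinvert} to pick $\delta>0$ with $x\ge \delta\cdot 1$. The operator inequality $-\|x_n-x\|\cdot 1\le x_n-x\le \|x_n-x\|\cdot 1$ combined with $1\le \delta^{-1}x$ then gives
    \[
        \Bigl(1-\tfrac{\|x_n-x\|}{\delta}\Bigr) x \;\le\; x_n \;\le\; \Bigl(1+\tfrac{\|x_n-x\|}{\delta}\Bigr) x.
    \]
Hence $m(x_n,x)\ge 1-\|x_n-x\|/\delta$ and $m(x,x_n)\ge (1+\|x_n-x\|/\delta)^{-1}$, so
    \[
        d(x_n,x) \;\le\; \frac{1-\frac{1-\|x_n-x\|/\delta}{1+\|x_n-x\|/\delta}}{1+\frac{1-\|x_n-x\|/\delta}{1+\|x_n-x\|/\delta}} \;\longrightarrow\;0.
    \]

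The final ``consequently'' follows by restricting part (1) to $S_b^\times$, using Theorem~\ref{thm:geometry}(1) again to ensure the $d$-limit of boundedly invertible states is boundedly invertible, and combining with part (2). The only subtlety I anticipate is the need to first promote $m(x_n,x)m(x,x_n)\to 1$ to individual convergence $m(x_n,x),m(x,x_n)\to 1$ in part (1); this is the main technical step but is immediate from the bound $m(x,y)\le 1$ available on the state space, and is genuinely helpful because it converts the single scalar $d(x_n,x)$ into two usable operator inequalities. No infinite-dimensional pathology intrudes here, since both arguments take place inside $S_b$ (or $S_b^\times$), where the operator norm is the natural substitute for the $\|\cdot\|_1$-control unavailable in infinite dimensions (cf.\ Remark~\ref{rmk:d_not_homeo_to_1-norm}).
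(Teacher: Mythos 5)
Your proposal is correct and follows essentially the same route as the paper's own proof: both parts hinge on converting $d$-closeness into the two-sided operator inequalities $\alpha x\le x_n\le \beta^{-1}x$ (using $m\le 1$ on states to pass from the product $m(x_n,x)m(x,x_n)$ to the individual factors) and, conversely, using $x\ge\delta 1$ to turn operator-norm closeness back into such inequalities. The only cosmetic difference is that the paper first fixes a uniform bound $R$ on $\sup_n\|x_n\|$ before estimating, whereas you bound $\|x_n\|\le 2\|x\|$ only for large $n$; both suffice.
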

\begin{proof}
\begin{enumerate}[label=\textbf{(\arabic*):}]
\item Suppose $(x_n)_{n\in \N}\subset S_b$ converges to some $x$ with respect to $d$. Note that necessarily $x\in S_b$ by Theorem~\ref{thm:geometry}. Let $N_0\in \N$ be sufficiently large so that $d(x_n,x)<\frac{1}{2}$ for all $n\ge N_0$. Then, we see from the definition of $d$ that, 
    \[
        \frac{1}{3} < m(x_n, x)m(x, x_n) \le \min\{m(x_n, x),m(x, x_n)\}
    \] 
using the fact that $m(x,y)\le 1$ for states. In particular, this means that for any $n\ge N_0$, we have $x_n \le 3 x$ whence $\| x_n\| \le 3 \|x\|$. Let $R = \max_{1\le j \le N_0-1}\{ \|x_j\|, 3 \| x\|\}$ so that $(x_n)_{n\in\N}\subset (M)_R$.

Now, for $1>\epsilon>0$ let $N\in \N$  be sufficiently large so that for $n\geq N$, we have $d(x_n, x)<\epsilon$. In particular, this implies that 
    \[
        \frac{1-\epsilon}{1+\epsilon}\le m(x_n, x)m(x, x_n)\le \min\{ m(x_n,x), m(x,x_n) \}.
    \] 
This yields the following inequalities
    \begin{equation*}
        \frac{1-\epsilon}{1+\epsilon} x \le x_n \qquad \text{ and }\qquad \frac{1-\epsilon}{1+\epsilon}  x_n \le x,
    \end{equation*} 
for all $n \ge N$. Rearranging, we get 
        \[
                x_n \le x + 2 \epsilon R \qquad \text{ and }\qquad
                x\leq x_n + 2\epsilon R,
        \] 
for any $n\geq N$. Therefore
    \[
        -2\epsilon R = x-2\epsilon R  - x \le x_n - x \le 2\epsilon R + x - x = 2\epsilon R,
    \] 
whence for any $\xi \in L^2(M,\tau)$ with $\|\xi\|_2 =1$ we have 
    \[
        |\langle (x_n-x) \xi, \xi \rangle | \le 2\epsilon R,
    \] for any $n,k\ge n_0$. Taking the supremum over $\xi$ this yields $\|x_n - x\| < 2\epsilon R$. Hence $x_n\to x$ in operator norm.
    
\item Suppose $(x_n)_{n\in \N}\subset S_b^\times$ converges to $x\in S_b^\times$ with respect to the operator norm. Let $\epsilon>0$. Since $x\in S_b^\times$, there exists $\delta>0$ so that $x\geq \delta$. So for sufficiently large $n\in \N$ we have
    \[
        x_n \leq x + \|x_n-x\| < x + \epsilon\delta \leq (1+\epsilon)x, 
    \]
and hence $m(x_n,x) \geq (1+\epsilon)^{-1}$. Next, for sufficiently large $n$ one has
    \[
        x_n \geq x - \|x-x_n\| \geq \frac{\delta}{2}.
    \]
Increasing $n$ if necessary, we then obtain
    \[
        x\leq x_n + \|x-x_n\| < x_n + \epsilon\frac{\delta}{2} \leq  (1+\epsilon)x_n,
    \]
so that $m(x,x_n) \geq (1+\epsilon)^{-1}$. Thus
    \[
        \lim_{n\to\infty} d(x_n,x) \leq \frac{1-(1+\epsilon)^{-2}}{1+(1+\epsilon)^{-2}},
    \]
and letting $\epsilon\to 0$ completes the proof.\qedhere
\end{enumerate} 
\end{proof}

Part (2) of Lemma~\ref{thm:cauchytocauchy} fails when the limit is not boundedly invertible---even in the finite dimensional case---as the following example demonstrates. Moreover, this example also shows that the Hennion metric is \emph{not} equivalent to metric induced by the operator norm on $S_b^\times$, despite inducing the same topology by Lemma~\ref{thm:cauchytocauchy}.

\begin{exmp}\label{exmp:norm_convergent_not_d_convergent}
Let $M = \mathbb{M}_{2\times 2}$ be the algebra of $2\times 2$ matrices and consider the family of states (with respect to the normalized tracial state)
    \[
        X_\eta := \frac{2}{1+\eta} \begin{bmatrix}
            1 & 0 \\ 0 & \eta \\
        \end{bmatrix}.
    \] 
As $\eta\to 0$, the state $X_\eta$ converges in operator norm to the re-scaled projection $P = \begin{bmatrix} 2  & 0\\ 0 & 0\end{bmatrix}$. However, for any $\eta' < \eta$, one can calculate that 
    \[
        m(X_\eta, X_\eta')m(X_{\eta'}, X_\eta)=  \frac{\eta'}{\eta}.
    \] 
Therefore by choosing $\eta \ge 3\eta '$, one obtains $d(X_\eta, X_{\eta'}) \ge \frac12$, thus $(X_\eta)_{\eta>0}$ has no subsequences which are Cauchy with repsect to the Hennion metric. We also mention that because of \cite[Lemma 3.3(4)]{jeff}, $d(X_\eta, P) \equiv 1$ for all $\eta>0$.$\hfill\blacksquare$
\end{exmp}

\section{Contraction Mappings on \texorpdfstring{$S$}{S}}\label{sec:cmap}
In this section we will consider linear maps $\gamma$ on $L^1(M,\tau)$ and their contraction properties with respect to the Hennion metric. We completely characterize faithful maps that contract on $S$ in terms of an operator inequality (see Theorem~\ref{thm:strict_Hennion_projective_actions}). Of particular interest for application are those $\gamma$ which arise as the preduals of normal positive maps on $M$ which are investigated in Section~\ref{subsubsec:SHCfromnormal}.
\subsection{The class of mappings and projective actions}
\begin{lem}\label{lem:clopenkernel}
For a positive linear map $\gamma$ on $L^1(M,\tau)$, $\ker\gamma\cap S$ is open in the Hennion metric. If $\gamma$ is bounded, then this set is also closed.
\end{lem}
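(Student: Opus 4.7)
My plan is to show the stronger statement that for any $x\in\ker\gamma\cap S$, the entire open $d$-ball $B_d(x,1):=\{y\in S:d(x,y)<1\}$ is contained in $\ker\gamma$. The key point is that $d(x,y)<1$ forces $m(x,y)m(y,x)>0$, hence in particular $m(x,y)>0$. By Definition~\ref{def:m} this means $m(x,y)y\leq x$ as affiliated operators in $L^1(M,\tau)_+$, or equivalently $x-m(x,y)y\in L^1(M,\tau)_+$. Applying the positive linear map $\gamma$ to this difference preserves positivity, so
\[
    m(x,y)\gamma(y)\leq\gamma(x)=0.
\]
Combined with $\gamma(y)\in L^1(M,\tau)_+$ from positivity and the fact that $m(x,y)>0$ is a strictly positive scalar, antisymmetry of the order on $L^1(M,\tau)_+$ forces $\gamma(y)=0$. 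Since $\text{diam}_d(S)\leq 1$ by Theorem~\ref{thm:mmetricproperties}, $B_d(x,1)$ is indeed an open $d$-neighborhood of $x$ in $S$, which gives openness.

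\textbf{Plan for closedness.} Now assume $\gamma$ is bounded, and let $(x_n)_{n\in\N}\subset\ker\gamma\cap S$ converge in $(S,d)$ to some $x\in S$. By the inequality $\frac{1}{2}\|x_n-x\|_1\leq d(x_n,x)$ from Theorem~\ref{thm:tracemetricinequality_and_completeness}, $d$-convergence implies $\|\cdot\|_1$-convergence, so $\|x_n-x\|_1\to 0$. Boundedness of $\gamma$ as an operator on $(L^1(M,\tau),\|\cdot\|_1)$ then yields $\|\gamma(x_n)-\gamma(x)\|_1\to 0$. Since $\gamma(x_n)=0$ for every $n$, we obtain $\gamma(x)=0$, so $x\in\ker\gamma\cap S$ and the set is $d$-closed.

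\textbf{Anticipated obstacles.} I do not expect a genuine obstacle: both halves reduce to almost one-line arguments once one is willing to use, for openness, that positivity of $\gamma$ on $L^1(M,\tau)$ passes through operator inequalities between elements of $L^1(M,\tau)_+$, and, for closedness, that $d$ dominates $\|\cdot\|_1$ on $S$. The only small point worth being explicit about is the passage from $m(x,y)\gamma(y)\leq 0$ and $\gamma(y)\geq 0$ to $\gamma(y)=0$, which uses the antisymmetry of the order on $L^1(M,\tau)_+$ (this is noted in the paper's discussion of positivity, following from $\tau$ being faithful). Worth observing also is that openness does not require $\gamma$ to be bounded, matching the asymmetry in the statement.
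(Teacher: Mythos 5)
Your proof is correct and follows essentially the same route as the paper: for openness, the ball $B_d(x,1)$ is absorbed into $\ker\gamma$ via $m(x,y)>0$ and $m(x,y)\gamma(y)\le\gamma(x)=0$, and for closedness, the inequality $\tfrac12\|x-y\|_1\le d(x,y)$ reduces $d$-closedness to $\|\cdot\|_1$-closedness of the kernel of a bounded map. The only cosmetic difference is that the paper phrases the key inequality as $y\le\frac{1}{m(x,y)}x$ rather than $m(x,y)y\le x$, which is of course equivalent.
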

\begin{proof}
For any $x\in \ker \gamma \cap S$, the open ball $B_d(x, 1)$ is contained in $\ker \gamma \cap S$. Indeed, since $y\in B_d(x,1)$ implies that $m(x,y)m(y,x)>0$, we then have $y \le \frac{1}{m(x,y)}x$ and applying $\gamma$ gives $y\in \ker \gamma \cap S$. If $\gamma$ is bounded, then $\ker\gamma\cap S$ is closed with respect to $\|\cdot\|_1$, and so Theorem~\ref{thm:tracemetricinequality_and_completeness} implies it is also closed in the Hennion metric.
\end{proof}

\begin{define}\label{def:projectiveaction}
Let $\gamma$ be a positive map on $L^1(M,\tau)$. The \textbf{projective action} of $\gamma$ on $x\in S$ is the map $\gamma\, \cdot\, \colon S\setminus \ker \gamma \to S$ given by 
    \begin{equation*}
        \gamma \cdot x = \frac{1}{\tau(\gamma(x))} \gamma(x).
    \end{equation*}  
\end{define}

\begin{rmk}\label{rmk:projective_actions_preserve_lines}
Although the projective action is non-linear, it does preserve lines in the following sense. Given $x,y\in S\setminus \ker\gamma$, for any $t\in \mathbb{R}$ satisfying $tx+(1-t)y\in S\setminus \ker\gamma$ there exists $s\in \mathbb{R}$ so that
    \[
        \gamma\cdot (tx + (1-t)y) = s\gamma\cdot x + (1-s)\gamma\cdot y;
    \]
namely, $s:=\tau(\gamma(tx))\tau(\gamma(tx+(1-t)y))^{-1}$. In particular, if $t\in [0,1]$ then $s\in [0,1]$.$\hfill\blacksquare$
\end{rmk}

The following lemma implies projective actions of positive maps are always Lipschitz continuous with respect to the Hennion metric. We explore the associated Lipschitz constants in greater detail in Section~\ref{sec:contraction_mappings}.

\begin{lem}\label{lem:dantitone}
For a positive linear map $\gamma$ on $L^1(M,\tau)$,
    \begin{equation}\label{eqn:dantitone}d(\gamma \cdot x, \gamma \cdot y) \le d(x,y).\end{equation} 
for all $x,y\in S\setminus \ker\gamma$.
\end{lem}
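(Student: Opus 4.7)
The plan is to reduce the bound on $d$ to a bound on the products $m(\gamma \cdot x, \gamma \cdot y)\, m(\gamma \cdot y, \gamma \cdot x)$ versus $m(x,y)\,m(y,x)$, and then exploit positivity of $\gamma$ to compare $m$-quantities of $\gamma(x),\gamma(y)$ directly with those of $x,y$. Since the function $f(t)=\tfrac{1-t}{1+t}$ is strictly decreasing on $[0,1]$, the inequality $d(\gamma\cdot x,\gamma\cdot y)\le d(x,y)$ is equivalent to
\[
    m(\gamma\cdot x,\gamma\cdot y)\,m(\gamma\cdot y,\gamma\cdot x)\;\ge\; m(x,y)\,m(y,x),
\]
so it suffices to prove this latter inequality.

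First I would remove the normalizing scalars using the scaling property of $m$ from Theorem~\ref{thm:mproperties}.\ref{part:mscaling}. Namely,
\[
    m(\gamma\cdot x,\gamma\cdot y) \;=\; \frac{\tau(\gamma(y))}{\tau(\gamma(x))}\, m(\gamma(x),\gamma(y)),
\]
and symmetrically for $m(\gamma\cdot y,\gamma\cdot x)$. The scalar prefactors cancel upon multiplication, reducing the goal to showing $m(\gamma(x),\gamma(y))\,m(\gamma(y),\gamma(x)) \ge m(x,y)\,m(y,x)$. For the latter, by Definition~\ref{def:m} we have $m(x,y)\, y \le x$, and since $\gamma$ is positive this gives $m(x,y)\,\gamma(y) \le \gamma(x)$, whence $m(\gamma(x),\gamma(y)) \ge m(x,y)$. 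The same argument with the roles of $x$ and $y$ swapped yields $m(\gamma(y),\gamma(x)) \ge m(y,x)$. Multiplying these two inequalities produces the required bound.

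Finally, I would combine these observations: the two displayed inequalities and the scaling identity give
\[
    m(\gamma\cdot x,\gamma\cdot y)\,m(\gamma\cdot y,\gamma\cdot x)\;\ge\; m(x,y)\,m(y,x),
\]
and since $m(x,y)m(y,x)$ and $m(\gamma\cdot x,\gamma\cdot y)m(\gamma\cdot y,\gamma\cdot x)$ both lie in $[0,1]$ by Theorem~\ref{thm:mproperties}.\ref{part:mrange}, monotonicity of $f(t)=\tfrac{1-t}{1+t}$ on $[0,1]$ yields $d(\gamma\cdot x,\gamma\cdot y) \le d(x,y)$.

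There is no real obstacle here; the statement is essentially a packaging of the scaling and monotonicity properties of $m$ already proved in Theorem~\ref{thm:mproperties}. The only point requiring a small amount of care is confirming that the normalizing trace factors are nonzero, which is guaranteed by the hypothesis $x,y\in S\setminus\ker\gamma$ together with faithfulness of $\tau$ on positive elements (so that $\tau(\gamma(x))=\|\gamma(x)\|_1>0$ and likewise for $y$).
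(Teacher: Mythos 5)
Your proof is correct and follows essentially the same route as the paper's: cancel the normalization scalars via the scaling property of $m$, apply positivity of $\gamma$ to the inequality $m(x,y)\,y\le x$ to get $m(\gamma(x),\gamma(y))\ge m(x,y)$ (and symmetrically), then conclude by monotonicity of $t\mapsto\frac{1-t}{1+t}$. The additional remark about the trace factors being nonzero is a small point of care the paper leaves implicit.
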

\begin{proof}
First observe that $m(\gamma\cdot x,\gamma\cdot y)m(\gamma\cdot y, \gamma\cdot x) = m(\gamma(x),\gamma(y))m(\gamma(y),\gamma(x))$ by Theorem~\ref{thm:mproperties}.\ref{part:mscaling}. Thus
    \begin{align*}
        d(\gamma \cdot x, \gamma \cdot y) &= \frac{1 - m(\gamma\cdot x, \gamma \cdot y)m(\gamma \cdot y, \gamma \cdot x)}{1+m(\gamma\cdot x, \gamma \cdot y)m(\gamma \cdot y, \gamma \cdot x)} = \frac{1 - m(\gamma( x), \gamma(y))m(\gamma(y), \gamma(x))}{1+m(\gamma(x), \gamma(y))m(\gamma(y), \gamma(x))}.
    \end{align*}
Next, applying $\gamma$ to $m(x,y) y \leq x$ yields $m(x,y) \gamma(y)\leq \gamma(x)$ so that $m(x,y)\leq m(\gamma(x),\gamma(y))$. Since $\frac{1-t}{1+t}$ is decreasing, we can continue the above computation with
    \[
        d(\gamma \cdot x, \gamma \cdot y)\le \frac{1 - m(x,y)m(y,x)}{1+m(x,y)m(x,y)} = d(x,y). \tag*{\qedhere}
    \]
\end{proof}

For faithful maps, the above inequality can be refined as follows.

\begin{lem}\label{lem:jeffinequality}
Let $\gamma$ be a faithful positive linear map on $L^1(M,\tau)$. For any $x,y\in S$, we have 
\begin{equation}\label{eqn:jeffsinequality}
    d(\gamma \cdot x, \gamma \cdot y) \le d(\gamma \cdot A_-, \gamma \cdot A_+) d(x,y)
\end{equation} where $A_{\pm}$ are defined as in Lemma~\ref{lem:dformula}.
\end{lem}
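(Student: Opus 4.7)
My plan is to compute the relevant $m$-quantities directly on both sides and reduce the claimed inequality to a manifestly non-negative polynomial expression. Without loss of generality I will assume $r \le s$ (swapping $x \leftrightarrow y$ if necessary); the case $r = s$ gives $x = y$ and both sides vanish.

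Set $\xi := \gamma(A_-)$, $\eta := \gamma(A_+)$ (both non-zero by the faithfulness of $\gamma$), and write $a := m(\xi,\eta)$, $b := m(\eta,\xi)$. By the scaling property in Theorem~\ref{thm:mproperties}.\ref{part:mscaling}, the normalization factors cancel out in the relevant products, yielding
\[
m(\gamma\cdot A_-,\gamma\cdot A_+)\, m(\gamma\cdot A_+,\gamma\cdot A_-) \;=\; ab,
\]
and $m(\gamma\cdot x,\gamma\cdot y)\, m(\gamma\cdot y,\gamma\cdot x) = m(\gamma(x),\gamma(y))\, m(\gamma(y),\gamma(x))$. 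Since $\gamma(x) = r\xi + (1-r)\eta$ and $\gamma(y) = s\xi + (1-s)\eta$, computing $m(\gamma(x),\gamma(y))$ amounts to finding the largest $\lambda \ge 0$ with $(r-\lambda s)\xi + ((1-r)-\lambda(1-s))\eta \ge 0$. I will perform a case analysis on the signs of the two coefficients: in the regime where both are non-negative the inequality is automatic, and in the regime where the $\xi$-coefficient turns negative one uses $b\xi \le \eta$ to determine the tight threshold. This yields
\[
m(\gamma(x),\gamma(y)) \;=\; \frac{r + b(1-r)}{s + b(1-s)}, \qquad m(\gamma(y),\gamma(x)) \;=\; \frac{as + (1-s)}{ar + (1-r)}.
\]

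Writing $d(u,v) = (1 - m)/(1 + m)$ with $m = m(u,v)m(v,u)$ and cross-multiplying, the desired inequality $d(\gamma\cdot x,\gamma\cdot y) \le d(\gamma\cdot A_-,\gamma\cdot A_+)\, d(x,y)$ is algebraically equivalent to
\[
m_{\mathrm{out}} \;\ge\; \frac{m_A + m_{xy}}{1 + m_A\, m_{xy}},
\]
where $m_{\mathrm{out}}$ is the product of the two expressions above, $m_A = ab$, and $m_{xy} = r(1-s)/(s(1-r))$ (from Lemma~\ref{lem:dformula}, using $r \le s$). Substituting and clearing denominators, after a factorization this reduces to
\[
(1 - ab)(s - r)\big[ars + b(1-r)(1-s)\big] \;\ge\; 0,
\]
which is manifestly true because each factor is non-negative.

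The main obstacle will be the case analysis in the second paragraph: I must verify both that $\lambda = (r + b(1-r))/(s + b(1-s))$ is admissible (using $b\xi \le \eta$ to control the negative $\xi$-coefficient) and that it is the \emph{largest} admissible value, which I plan to show by a contradiction argument exploiting the maximality of $b = m(\eta,\xi)$: any larger $\lambda$ would force $(c/d)\xi \le \eta$ with $c/d > b$, contradicting the definition of $b$. Once this computation is in hand, the final algebraic reduction and verification of non-negativity is routine.
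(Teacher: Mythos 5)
Your proposal is correct, and it takes a genuinely different route from the paper's proof. The paper applies Lemma~\ref{lem:dformula} a second time to the image pair $\gamma\cdot x,\gamma\cdot y$, obtaining extreme points $B_\pm$ of the image line, expresses $\gamma\cdot A_\pm$ as convex combinations $pB_-+(1-p)B_+$ and $qB_-+(1-q)B_+$ via Remark~\ref{rmk:projective_actions_preserve_lines}, and then grinds through a four-parameter computation in $p,q,r,s$ weighted by $\alpha=\tau(\gamma(A_-))$, $\beta=\tau(\gamma(A_+))$, with the inequality ultimately coming from discarding the terms $2\alpha^2p(1-p)rs+2\beta^2q(1-q)(1-r)(1-s)\geq 0$ in a denominator. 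You instead bypass $B_\pm$ entirely and compute the $m$-quantities of $\gamma(x)=r\xi+(1-r)\eta$ and $\gamma(y)=s\xi+(1-s)\eta$ directly in terms of $a=m(\xi,\eta)$ and $b=m(\eta,\xi)$, reducing everything to the identity $P(q+abp)-Q(abq+p)=(1-ab)(q-p)(u+v)$ with $u=ars$, $v=b(1-r)(1-s)$, $p=r(1-s)$, $q=s(1-r)$; I verified this factorization and it is exactly $(1-ab)(s-r)[ars+b(1-r)(1-s)]\geq 0$ as you claim. Your approach buys a cleaner final inequality and avoids the second invocation of Lemma~\ref{lem:dformula}; its cost is the sign analysis needed to justify the formulas for $m(\gamma(x),\gamma(y))$ and $m(\gamma(y),\gamma(x))$. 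Two refinements there: first, the admissibility step is cleanest if you observe the exact factorizations
\[
\gamma(x)-\tfrac{r+b(1-r)}{s+b(1-s)}\,\gamma(y)=\tfrac{s-r}{s+b(1-s)}\,(\eta-b\xi)\geq 0,
\qquad
\gamma(y)-\tfrac{as+(1-s)}{ar+(1-r)}\,\gamma(x)=\tfrac{s-r}{ar+(1-r)}\,(\xi-a\eta)\geq 0,
\]
which make the appeal to $b\xi\leq\eta$ and $a\eta\leq\xi$ immediate and sidestep any worry about which coefficient changes sign first (one should be slightly careful with the meaning of $\leq$ for affiliated operators here, but the paper's convention that $x\leq y$ means the closure of $y-x$ is positive handles this). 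Second, since the target inequality only requires a \emph{lower} bound on $m(\gamma(x),\gamma(y))m(\gamma(y),\gamma(x))$ (the map $t\mapsto(1-t)/(1+t)$ being decreasing), admissibility alone suffices and your planned maximality/contradiction argument can be dropped entirely. The degenerate cases ($r=s$, $r=0$, $s=1$, $ab=1$, and vanishing denominators, which cannot occur for $r<s$) all check out.
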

\begin{proof}
The proof is essentially the same as in \cite[Lemma 3.10(1)]{jeff}. Let $x,y\in S$ be distinct and note that we can assume $ \gamma\cdot x, \gamma\cdot y$ are also distinct since otherwise the inequality is trivially true. Applying Lemma~\ref{lem:dformula} to the pairs $x,y$ and $\gamma\cdot x, \gamma\cdot y$ gives $A_{\pm},B_{\pm}\in S$, $t_{\pm}, w_{\pm}\in \mathbb{R}$, and $r,s, u,v\in [0,1]$ satisfying
    \begin{align}\label{eqn:convex_relations_for_xygammaxgammay}
    \begin{split}
        A_{\pm} = t_{\pm} x + (1-t_{\pm})y \qquad\qquad  x &= r A_- + (1-r) A_+ \qquad\qquad  y = s A_- + (1-s) A_+\\
        B_{\pm} = w_{\pm} \gamma\cdot x + (1-w_{\pm})\gamma\cdot y \qquad\qquad  \gamma\cdot x &= u B_- + (1-u) B_+ \qquad\qquad  \gamma\cdot y = v B_- + (1-v) B_+.
    \end{split}
    \end{align}
Additionally, we have
    \[
        d(x,y) = \left|\frac{r(1-s)-(1-r)s}{r(1-s)+(1-r)s}\right| \qquad\qquad d(\gamma\cdot x,\gamma\cdot y)= \left|\frac{u(1-v)-(1-u)v}{u(1-v)+(1-u)v}\right|.
    \]
By Remark~\ref{rmk:projective_actions_preserve_lines}, we have
    \begin{align*}
        \gamma\cdot A_{\pm} = \frac{\tau(\gamma(t_\pm x))}{\tau(\gamma(t_\pm x+ (1-t_\pm) y))} \gamma\cdot x + \frac{\tau(\gamma((1-t_\pm) y))}{\tau(\gamma(t_\pm x+ (1-t_\pm) y))} \gamma\cdot y.
    \end{align*}
In particular, $\gamma\cdot A_{\pm}$ lie in the convex set
        \[
            \{w\gamma\cdot x + (1-w)\gamma\cdot y\in S\colon w_-\leq w\leq w_+\}\cap S, 
        \]
and therefore $\{p\gamma\cdot A_- + (1-p)\gamma\cdot A_+\colon p\in \mathbb{R}\}\cap S$ has the same extreme points; namely, $B_{\pm}$. Thus there exist $p,q\in [0,1]$ satisfying
    \begin{align*}
        \gamma\cdot A_- &= p B_- + (1-p)B_+\\
        \gamma\cdot A_+ &= q B_- + (1-q)B_+,
    \end{align*}
and
    \[
        d(\gamma\cdot A_-, \gamma\cdot A_+) = \left|\frac{p(1-q)-(1-p)q}{p(1-q)+(1-p)q}\right|,
    \]
by Lemma~\ref{lem:dformula}. Using the above formulae we have
    \begin{align*}
        u B_- + (1-u) B_+ &= \gamma\cdot x \\
            &= \frac{1}{\tau(\gamma(x))}(r \gamma(A_-) + (1-r) \gamma(A_+))\\
            &= \frac{\tau(\gamma(A_-)) rp + \tau(\gamma(A_+)) (1-r)q}{\tau(\gamma(x))} B_- + \frac{\tau(\gamma(A_-)) r(1-p) + \tau(\gamma(A_+)) (1-r)(1-q)}{\tau(\gamma(x))}B_+.
    \end{align*}
This determines $u,1-u$ since $B_{\pm}$ are distinct states (the line connecting them contains the distinct states $\gamma\cdot x$ and $\gamma\cdot y$) and are therefore linearly independent. A similar computation yields
    \[
        v = \frac{\tau(\gamma(A_-)) sp + \tau(\gamma(A_+)) (1-s)q}{\tau(\gamma(y))} \qquad\qquad 1-v = \frac{\tau(\gamma(A_-)) s(1-p) + \tau(\gamma(A_+)) (1-s)(1-q)}{\tau(\gamma(y))}.
    \]
Writing $\alpha:=\tau(\gamma(A_-))$ and $\beta:=\tau(\gamma(A_+))$, we have:
    \begin{align*}
        d(\gamma\cdot x , \gamma\cdot y) &= \left|\frac{u(1-v)-(1-u)v}{u(1-v)+(1-u)v}\right| \\
        &= \frac{\alpha \beta |p(1-q)-(1-p)q||r(1-s)- (1-r)s|}{2\alpha^2 p(1-p)rs+\alpha\beta (p(1-q)+(1-p)q)(r(1-s)+(1-r)s) + 2\beta^2q(1-q)(1-r)(1-s)}\\
        &\leq \frac{|p(1-q)-(1-p)q||r(1-s)- (1-r)s|}{(p(1-q)+(1-p)q)(r(1-s)+(1-r)s)} = d(\gamma\cdot A_-, \gamma\cdot A_+)d(x,y),
    \end{align*}
where the second equality follows from an abundance of arithmetic and the inequality follows from $2\alpha^2 p (1-p)rs + 2\beta^2 q(1-q)(1-r)(1-s)\geq 0$.
\end{proof}

\subsection{Contraction mappings}\label{sec:contraction_mappings}

Given a metric space $(X,\rho)$, recall that a mapping $T:X\to X$ is said to be a \textit{contraction mapping} if there is a constant $0\le q<1$ so that $\rho(T(p_1), T(p_2))\le q \rho(p_1, p_2)$ for all $p_1,p_2\in X$. Since we have shown (Theorem \ref{thm:tracemetricinequality_and_completeness}) that the normal state space $S\subset L^1(M,\tau)$ is complete in Hennion's Metric, we shall aim to give a characterization of a large class of contractions with respect to Hennion's metric. 

\begin{define}\label{def:Hennion_contraction}
The \textbf{Hennion contraction constant} of a faithful positive linear map $\gamma$ on $L^1(M,\tau)$ is the quantity
    \begin{equation}\label{eqn:contraction}
        c(\gamma):=\sup_{x,y\in S} \frac{d(\gamma\cdot x,\gamma\cdot y)}{d(x,y)}.
    \end{equation}
We say $\gamma$ is a \textbf{strict Hennion contraction} if $c(\gamma)< 1$, and we denote by $SHC(M)$ the family of all such maps.
\end{define}

Lemma~\ref{lem:dantitone} implies $c(\gamma)\leq 1$ for all faithful positive linear maps $\gamma$ on $L^1(M,\tau)$. Using Lemma~\ref{lem:jeffinequality} we actually have
    \[
        \frac{d(\gamma\cdot x, \gamma\cdot y)}{d(x,y)} \leq d(\gamma\cdot A_-, \gamma\cdot A_+),
    \]
and consequently $c(\gamma)\leq \diam(\gamma\cdot S)$. The reverse inequality is a consequence of $d(x,y)\leq 1$ for all $x,y\in S$, and so we have proven the following:

\begin{prop}\label{prop:cont_const_is_diam}
For a faithful positive linear map $\gamma$ on $L^1(M,\tau)$ one has
    \[
        c(\gamma)=\diam(\gamma\cdot S).
    \]
\end{prop}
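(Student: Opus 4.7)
The plan is to prove the two inequalities $c(\gamma) \leq \diam(\gamma \cdot S)$ and $c(\gamma) \geq \diam(\gamma \cdot S)$ separately, each as a one-line consequence of already-established machinery.

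For the upper bound, I would apply Lemma~\ref{lem:jeffinequality} directly. Given distinct $x, y \in S$, that lemma produces states $A_\pm \in S$ (the endpoints of the line segment $\{tx + (1-t)y : t \in \mathbb{R}\} \cap S$ coming from Lemma~\ref{lem:dformula}) satisfying
\[
d(\gamma \cdot x, \gamma \cdot y) \leq d(\gamma \cdot A_-, \gamma \cdot A_+) \cdot d(x,y).
\]
Since $A_\pm \in S$, the factor $d(\gamma \cdot A_-, \gamma \cdot A_+)$ is at most $\diam(\gamma \cdot S)$. Dividing by $d(x,y) > 0$ and taking the supremum over distinct pairs yields $c(\gamma) \leq \diam(\gamma \cdot S)$. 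The projective actions $\gamma \cdot A_\pm$ make sense because $\gamma$ is faithful and $A_\pm \in S$ is nonzero, so $\tau(\gamma(A_\pm)) > 0$.

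For the lower bound, I would use $d(x,y) \leq 1$ for all $x, y \in S$, which is part of Theorem~\ref{thm:mmetricproperties}. Then for any distinct $x, y \in S$,
\[
\frac{d(\gamma \cdot x, \gamma \cdot y)}{d(x,y)} \geq d(\gamma \cdot x, \gamma \cdot y),
\]
so taking the supremum over distinct pairs gives
\[
c(\gamma) \;\geq\; \sup_{\substack{x, y \in S \\ x \neq y}} d(\gamma \cdot x, \gamma \cdot y) \;=\; \diam(\gamma \cdot S).
\]

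I do not anticipate any genuine obstacles here: both bounds are direct consequences of previously proved results, and the only subtlety worth noting is that one must verify $\gamma \cdot A_\pm$ is well-defined, which is immediate from faithfulness of $\gamma$ on $L^1(M,\tau)_+$.
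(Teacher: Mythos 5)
Your proposal is correct and matches the paper's own argument exactly: the upper bound $c(\gamma)\leq \diam(\gamma\cdot S)$ comes from Lemma~\ref{lem:jeffinequality} applied to the extreme points $A_\pm\in S$, and the reverse inequality follows from $d(x,y)\leq 1$ on $S$. The well-definedness remark about $\gamma\cdot A_\pm$ via faithfulness is a sensible (if minor) addition.
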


If we further assume $\gamma$ is bounded as linear operator on $L^1(M,\tau)$, then
the joint lower semicontinuity of $d$ with respect to $\|\cdot\|_1$ (see Theorem~\ref{thm:lower_semicontinuity_of_d}) implies the contraction constant can be witnessed on any $\|\cdot\|_1$-dense subset of $L^1(M,\tau)$. 

\begin{prop}\label{prop:contr_const_for_bounded_maps}
Let $\gamma$ be a bounded faithful positive linear map on $L^1(M,\tau)$. For any $\|\cdot\|_1$-dense subset $S_0\subset S$, one has
    \[
        c(\gamma) = \diam(\gamma\cdot S_0).
    \]
\end{prop}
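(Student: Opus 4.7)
The plan is to leverage Proposition~\ref{prop:cont_const_is_diam}, which already identifies $c(\gamma)$ with $\diam(\gamma\cdot S)$, and then show that the diameter of $\gamma\cdot S$ coincides with the diameter of $\gamma\cdot S_0$ whenever $S_0$ is $\|\cdot\|_1$-dense. One direction, $\diam(\gamma\cdot S_0)\le \diam(\gamma\cdot S)$, is immediate from the inclusion $S_0\subset S$. For the reverse, given arbitrary $x,y\in S$, the strategy is to approximate $x,y$ in the $\|\cdot\|_1$-topology by sequences in $S_0$ and then apply the lower semicontinuity of $d$ established in Theorem~\ref{thm:lower_semicontinuity_of_d}.

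In more detail, I would pick $x,y\in S$ and sequences $(x_n),(y_n)\subset S_0$ with $\|x_n-x\|_1\to 0$ and $\|y_n-y\|_1\to 0$. Boundedness of $\gamma$ then yields $\|\gamma(x_n)-\gamma(x)\|_1\to 0$ and similarly for $y_n$, so in particular $\tau(\gamma(x_n))\to \tau(\gamma(x))$ (and likewise for $y$). Faithfulness of $\gamma$ combined with positivity ensures $\tau(\gamma(x))=\|\gamma(x)\|_1>0$ and $\tau(\gamma(y))>0$, so the normalizations in $\gamma\cdot x_n$ and $\gamma\cdot y_n$ are eventually well-defined, and a short computation shows $\|\gamma\cdot x_n-\gamma\cdot x\|_1\to 0$ and $\|\gamma\cdot y_n-\gamma\cdot y\|_1\to 0$.

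Invoking Theorem~\ref{thm:lower_semicontinuity_of_d} gives
\[
    d(\gamma\cdot x,\gamma\cdot y)\le \liminf_{n\to\infty} d(\gamma\cdot x_n,\gamma\cdot y_n)\le \diam(\gamma\cdot S_0),
\]
and taking the supremum over $x,y\in S$ yields $\diam(\gamma\cdot S)\le \diam(\gamma\cdot S_0)$. Combined with Proposition~\ref{prop:cont_const_is_diam}, this gives the required equality $c(\gamma)=\diam(\gamma\cdot S_0)$.

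The only subtle point, and essentially the only non-routine step, is verifying that $\gamma\cdot x_n\to \gamma\cdot x$ in $\|\cdot\|_1$ despite the nonlinearity of the projective action—this amounts to checking the limit of a quotient using boundedness of $\gamma$ and the strict positivity of $\tau(\gamma(x))$. Everything else is bookkeeping around Theorem~\ref{thm:lower_semicontinuity_of_d} and Proposition~\ref{prop:cont_const_is_diam}.
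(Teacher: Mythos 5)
Your proposal is correct and follows essentially the same route as the paper: approximate $x,y\in S$ by sequences in $S_0$ in the $\|\cdot\|_1$-topology, note that boundedness and faithfulness of $\gamma$ give $\gamma\cdot x_n\to\gamma\cdot x$ in $\|\cdot\|_1$, and then apply the joint lower semicontinuity of $d$ from Theorem~\ref{thm:lower_semicontinuity_of_d} together with Proposition~\ref{prop:cont_const_is_diam}. The only cosmetic difference is that the paper works with an auxiliary $\eta<c(\gamma)$ and lets $\eta\to c(\gamma)$ rather than taking a supremum directly, and your write-up is if anything slightly more careful about justifying the convergence of the normalized quotients.
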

\begin{proof}
Let $\eta < c(\gamma)$ and let $x,y\in S$ be such that 
    \[
        \eta < d(\gamma\cdot x, \gamma\cdot y) \leq c(\gamma).
    \]
Letting $(x_n)_{n\in \N}, (y_n)_{n\in \N}\subset S_0$ be sequences converging to $x$ and $y$, respectively, with respect to $\|\cdot\|_1$. Then $\gamma\cdot x_n \to \gamma\cdot x$ and $\gamma\cdot y_n\to \gamma\cdot y$ with respect to $\|\cdot\|_1$, and so Theorem~\ref{thm:lower_semicontinuity_of_d} implies
    \[
        \diam(\gamma\cdot S_0) \geq \liminf_{n\to\infty} d(\gamma\cdot x_n , \gamma\cdot y_n) \geq d(\gamma\cdot x, \gamma\cdot y ) > \eta.
    \]
Letting $\eta\to c(\gamma)$ completes the proof.
\end{proof}

\begin{exmp}\label{exmp:strictly_positive}
Let $(M,\tau) = (\M_n, \frac1n \text{Tr})$ be the $n\times n$ matrices equipped with its normalized trace. If $\gamma\colon \M_n\to \M_n$ is \emph{strictly positive} in the sense that $\gamma\cdot S\subset S^\times$, then it is a strict Hennion contraction. Indeed, $S$ is compact in this case and therefore  
    \[
        c(\gamma)= \diam(\gamma\cdot S)=d(\gamma\cdot x, \gamma\cdot y),
    \]
for some $x,y\in S$. Using Theorem~\ref{thm:geometry}.(2), we see that $c(\gamma)<1$. Conversely, if $\gamma$ is a strict Hennion contraction and $\gamma\cdot 1\in S^\times$, then $\gamma$ is strictly positive by Theorem~\ref{thm:geometry}. $\hfill\blacksquare$
\end{exmp}

\begin{exmp}[A Non-Example]
Let $(M,\tau) = (\M_n, \frac1n \text{Tr})$ be the $n\times n$ matrices equipped with its normalized trace. The transpose map $x\mapsto x^T$ is a unital and tracial map that is well known to be positive but not completely positive (see \cite{paulsen}). Moreover, for $x,y\in (\M_n)_+$ one has $x\leq y$ iff $x^T\leq y^T$. Thus $S\ni x\mapsto x^T$ is an isometry with respect to $d$ and is therefore not a strict Hennion contraction. $\hfill\blacksquare$
\end{exmp}

We have seen in Theorem~\ref{thm:tracemetricinequality_and_completeness} that $(S,d)$ is a complete metric space, and so the Banach Fixed Point Theorem implies a strict Hennion contraction $\gamma$ has a unique fixed point: $\gamma\cdot x_0 = x_0$ for some $x_0\in S$. In fact, this along with the partial order on the normal state space can be used to characterize when one has a strict Hennion contraction:

\begin{thm}\label{thm:strict_Hennion_projective_actions}
For a faithful positive linear map $\gamma$ on $L^1(M,\tau)$, the following are equivalent:
    \begin{enumerate}[label=(\roman*)]
    \item $\gamma$ is a strict Hennion contraction.
    
    \item\label{part:fixed_point_dilation_constant} For some $x_0\in S$ there exists $\eta\in (0,1]$ so that
        \[
            \eta x_0 \leq \gamma\cdot x \leq \eta^{-1} x_0 \qquad \forall x\in S.
        \]

    \end{enumerate}
In this case, $x_0$ can be taken to be the unique fixed point of the projective action. Moreover, if $(\gamma\cdot S)\cap S_b^\times \neq \emptyset$ then the above are further equivalent to
    \begin{enumerate}
        \item[(iii)] For each $y_0\in S_b^\times$ there exists $\kappa\in (0,1]$ so that
            \[
                \kappa y_0 \leq \gamma\cdot x \leq \kappa^{-1} y_0 \qquad \forall x\in S.
            \]
    \end{enumerate}
In this case, one has $\gamma\cdot S \subset S_b^\times$.
\end{thm}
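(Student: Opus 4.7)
The plan is to establish (i)$\Leftrightarrow$(ii) by a direct translation between the Hennion contraction hypothesis and operator comparability with a single reference state, and then bootstrap the moreover clause from that basic equivalence.

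For (i)$\Rightarrow$(ii), I would invoke the Banach fixed point theorem on the complete metric space $(S,d)$ guaranteed by Theorem~\ref{thm:tracemetricinequality_and_completeness} to produce the unique fixed point $x_0$ of $\gamma\,\cdot$. For any $x\in S$ one then has $d(\gamma\cdot x, x_0) = d(\gamma\cdot x, \gamma\cdot x_0)\le c(\gamma)d(x,x_0)\le c(\gamma)<1$, since $\diam_d(S)\le 1$ by Theorem~\ref{thm:mmetricproperties}. Writing out the definition of $d$, this yields a uniform lower bound
\[
m(\gamma\cdot x, x_0)\,m(x_0,\gamma\cdot x) \ge \frac{1-c(\gamma)}{1+c(\gamma)} =: \eta > 0.
\]
The critical observation is that each $m$-quantity is at most $1$ (since $\tau(\gamma\cdot x)=\tau(x_0)=1$ and Theorem~\ref{thm:mproperties}.\ref{part:mrange}), so each factor individually is bounded below by $\eta$. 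This rearranges to the sandwich $\eta x_0 \le \gamma\cdot x \le \eta^{-1} x_0$.

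For (ii)$\Rightarrow$(i), I would chain the hypothesis for two arbitrary points: from $\eta x_0\le\gamma\cdot y$ and $\gamma\cdot x\le\eta^{-1}x_0$ one gets $\gamma\cdot x\le\eta^{-2}\gamma\cdot y$, and symmetrically. Thus $m(\gamma\cdot x,\gamma\cdot y)\,m(\gamma\cdot y,\gamma\cdot x)\ge\eta^4$, giving $d(\gamma\cdot x,\gamma\cdot y)\le (1-\eta^4)/(1+\eta^4)<1$, and Proposition~\ref{prop:cont_const_is_diam} yields $c(\gamma)=\diam(\gamma\cdot S)<1$.

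For the moreover, (iii)$\Rightarrow$(ii) is immediate by picking any $y_0\in S_b^\times$. For (i)$\Rightarrow$(iii), suppose $\gamma\cdot z_0\in S_b^\times$ for some $z_0\in S$. From (ii), $\eta x_0\le\gamma\cdot z_0\le\eta^{-1}x_0$; Remark~\ref{rmk:boundedimpliesbounded} applied to the left inequality forces $x_0\in S_b$, and Remark~\ref{rmk:boundedlyinvert_implies_boundedlyinvert} applied to the right inequality forces $x_0$ boundedly invertible, so $x_0\in S_b^\times$. The same sandwich then puts every $\gamma\cdot x$ between a strictly positive bounded operator and a bounded operator, giving $\gamma\cdot S\subset S_b^\times$. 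Finally, given arbitrary $y_0\in S_b^\times$, both $x_0$ and $y_0$ are bounded and boundedly invertible, so $m(y_0,x_0),m(x_0,y_0)\ge \mu$ for some $\mu>0$; composing $\mu y_0\le x_0\le\mu^{-1}y_0$ with (ii) yields (iii) with $\kappa:=\eta\mu$.

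The main technical step is the passage (i)$\Rightarrow$(ii): converting a Hennion-metric contraction into a two-sided operator sandwich requires controlling each $m$-quantity individually, not merely their product. This works only because states have unit trace, pinning $m$ above by $1$; without this normalization one could not split the product bound. The rest of the argument is essentially bookkeeping with the observations established in Sections~\ref{sec:dtop} and~\ref{sec:cmap}.
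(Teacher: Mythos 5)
Your proof is correct and follows essentially the same route as the paper: Banach fixed point for the contraction direction, splitting the product of $m$-quantities via the unit-trace bound $m\le 1$ to get the sandwich, chaining the sandwich for the converse, and using order-comparability to transfer bounded invertibility in the moreover clause. The only cosmetic differences are that you chain inequalities directly where the paper cites the $m$-triangle inequality, and you deduce $\gamma\cdot S\subset S_b^\times$ from the Remarks rather than from Theorem~\ref{thm:geometry}; both are equivalent.
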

\begin{proof}
$(i)\Rightarrow (ii)$: By the discussion preceding the statement of the theorem, there exists a unique $x_0\in S$ so that $\gamma\cdot x_0 = x_0$. Then Proposition~\ref{prop:cont_const_is_diam} gives
    \[
        d(\gamma \cdot x, x_0) = d(\gamma\cdot x, \gamma\cdot x_0) \leq c(\gamma)<1,
    \]
for all $x\in S$. Arguing as in Theorem~\ref{thm:tracemetricinequality_and_completeness}, we see that 
    \[
        \min\{ m(\gamma\cdot x, x_0), m(x_0, \gamma\cdot x)\}\ge \frac{1-c(\gamma)}{1+c(\gamma)} =:\eta,
    \] 
for all $x\in S$. Thus
    \[
        \eta x_0 \leq m(\gamma\cdot x,x_0) x_0 \leq \gamma\cdot x \leq m(x_0,\gamma\cdot x)^{-1}x_0 \leq \eta^{-1} x_0,
    \]
as claimed.\\

\noindent$(ii)\Rightarrow (i)$: It follows from $\eta x_0 \leq \gamma\cdot x\leq \eta^{-1}x_0$ that $m(\gamma\cdot x, x_0)m(x_0,\gamma\cdot x) \geq \eta^2$ for all $x\in S_0$. Using Theorem~\ref{thm:mproperties}.\ref{part:mtriangle_ineq}, we then have 
    \[
        m(\gamma\cdot x,\gamma\cdot y)m(\gamma\cdot y,\gamma\cdot x) \geq \eta^4
    \]
for all $x,y\in S$, and therefore
    \[
        d(\gamma\cdot x,\gamma\cdot y) \leq \frac{1-\eta^4}{1+\eta^4}.
    \]
Thus
    \[
        c(\gamma)=\diam(\gamma \cdot S)\leq \frac{1-\eta^4}{1+\eta^4} < 1,
    \]
where the first equality follows from Proposition~\ref{prop:cont_const_is_diam}.\\

\noindent Now suppose $(\gamma\cdot S)\cap S_b^\times \neq \emptyset$. If $\gamma$ is a strict Hennion contraction, then $\gamma\cdot S\subset S_b^\times$ holds by Theorem~\ref{thm:geometry}. Consequently, for the fixed point $x_0=\gamma\cdot x_0$ if we let $\delta := \min\{\|x_0\|^{-1}, \|x_0^{-1}\|^{-1}\}\in (0,1]$ then $\delta 1 \leq x_0 \leq \delta^{-1} 1$. Let $y_0\in S_b^\times$, let $\eta\in (0,1)$ be as in $\ref{part:fixed_point_dilation_constant}$, and set $\kappa:=\delta \eta\min\{\|y_0\|^{-1},\|y_0^{-1}\|^{-1}\}$. Then one has
    \[
        \kappa y_0 \leq \delta \eta \leq  \eta x_0 \leq \gamma\cdot x \leq \eta^{-1} x_0 \leq (\delta\eta)^{-1} 1 \leq \kappa^{-1} y_0,
    \]
for all $x\in S$. The converse (namely, $(iii)\Rightarrow (ii)$) is immediate.
\end{proof}

\begin{exmp}\label{exmp:strongly_summable}
Fix a non-zero $m\in L^1(M,\tau)_+$ and $\eta\in (0,1]$. Let $\{(a_i,m_i)\in M_+\times L^1(M,\tau)_+\colon i\in I\}$ be a family satisfying:
    \begin{enumerate}[label=(\arabic*)]
    \item $a:=\sum_{i\in I} a_i$ converges in the strong operator topology;
    
    \item $\tau(xa)>0$ for all $x\in L^1(M,\tau)_+\setminus \{0\}$;
    
    \item $\eta m \leq m_i\leq \eta^{-1} m$ for all $i\in I$.
    \end{enumerate}
For $x\in L^1(M,\tau)$, we claim that
    \[
        \gamma(x):= \sum_{i\in I} \tau(xa_i) m_i
    \]
converges. Indeed, let $x=v|x|$ be the polar decomposition and let $\epsilon>0$. The strong summability of the $a_i$ implies there exists a finite $F_0\subset I$ so that whenever a finite subset $F\subset I$ satisfies $F\cap F_0 = \emptyset$ then
    \[
        \left\| \sum_{i\in F} a_i |x|^{\frac12} \right\|_2,\ \left\| \sum_{i\in F} a_i v|x|^{\frac12} \right\|_2 <\epsilon.
    \]
Let $F,G\subset I$ be finite subsets both containing $F_0$ so that $F\Delta G$ is disjoint from $F_0$. Let $w$ be the polar part of
    \[
        \sum_{i\in F} \tau(xa_i)m_i - \sum_{i\in G} \tau(xa_i) m_i.
    \]
Then using $\|m_i\|_1 =\tau(m_i)\leq \eta^{-1} \tau(m)$ we have
    \begin{align*}
        \left\| \sum_{i\in F} \tau(xa_i)m_i - \sum_{i\in G} \tau(xa_i) m_i\right\|_1 &\leq \sum_{i\in F\Delta G} |\tau(a_ix)| |\tau(w^*m_i)|\\
        &\leq \sum_{i\in F\Delta G} |\<a_i^{\frac12} v|x|^{\frac12}, a_i^{\frac12} |x|^{\frac12}\>_2|  \eta^{-1} \tau(m)\\
            &\leq  \left( \sum_{i\in F\Delta G} \|a_i^{\frac12} v|x|^{\frac12}\|_2^2 \right)^{\frac12} \left( \sum_{i\in F\Delta G} \|a_i^{\frac12} |x|^{\frac12}\|_2^2 \right)^{\frac12} \eta^{-1} \tau(m)\\
            &= \left\< \sum_{i\in F\Delta G} a_i v|x|^{\frac12},v|x|^{\frac12}\right\>_2^{\frac12} \left\< \sum_{i\in F\Delta G} a_i |x|^{\frac12}, |x|^{\frac12} \right\>_2^{\frac12} \eta^{-1} \tau(m)\\
            &< \epsilon \|x\|_1^{\frac12} \eta^{-1} \tau(m).
    \end{align*}
Thus the net of partial sums is Cauchy and converges in $L^1(M,\tau)$. We therefore have a positive linear map $\gamma$ on $L^1(M,\tau)$, which is faithful by (2). One can also show $\gamma$ is bounded using similar estimates as above:
    \begin{align*}
        |\tau(\gamma(x) b)|  &\leq   \sum_{i\in I} |\tau(xa_i)\tau(m_ib)| \leq \sum_{i\in I} |\tau(xa_i)| \eta^{-1}\tau(m)\|b\| \\
        &\leq \< a v|x|^{\frac12}, v|x|^{\frac12}\>_2^{\frac12} \< a|x|^{\frac12}, |x|^\frac12\>_2^{\frac12} \eta^{-1} \tau(m) \|b\| \leq \|a\| \|x\|_1 \eta^{-1} \tau(m)\|b\|,
    \end{align*}
for all $b\in M$. Hence $\|\gamma\| \leq \|a\| \eta^{-1} \tau(m)$.

\noindent Now, by applying $\tau$ to the inequalities in (3), one can show that
    \[
        \tau(m_i) \eta^2 \frac{m}{\tau(m)} \leq m_i \leq \tau(m_i) \eta^{-2} \frac{m}{\tau(m)}.    
    \]
Since
    \[
        \tau(\gamma(x)) = \sum_{i\in I} \tau(xa_i) \tau(m_i),
    \]
for $x\in L^1(M,\tau)_+$, it follows that for $x_0:= \frac{m}{\tau(m)}\in S$ one has
    \[
        \gamma(x) \leq \sum_{i\in I} \tau(xa_i) \tau(m_i) \eta^{-2} x_0 = \tau(\gamma(x)) \eta^{-2} x_0,
    \]
and similarly one has $\gamma(x)\geq \tau(\gamma(x))\eta^2 x_0$. This shows that $\gamma$ is a strict Hennion contraction by Theorem~\ref{thm:strict_Hennion_projective_actions}. In particular, there exists a fixed point; that is, $\gamma(x) = \tau(\gamma(x)) x$ for some $x\in L^1(M,\tau)$. Note that $m$ controls the component of $S$ in which the projective action of $\gamma$ is valued. $\hfill\blacksquare$
\end{exmp}    

The following lemma will be needed in later sections once we start considering quantum processes, which in the context of this article are compositions of positive maps.

\begin{lem}\label{lem:compositions_of_contractions}
For faithful positive linear maps $\alpha,\beta$ on $L^1(M,\tau)$, one has
    \[
        c(\alpha\circ \beta) \leq c(\alpha)c(\beta).
    \]
Consequently, the family of strict Hennion contractions is invariant under (pre or post) composition with faithful positive linear maps on $L^1(M,\tau)$.
\end{lem}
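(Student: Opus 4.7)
The plan is to reduce everything to a single identity: the projective action respects composition. Given that $\gamma \cdot x = \gamma(x)/\tau(\gamma(x))$ is invariant under replacing $\gamma(x)$ by any positive scalar multiple of itself, one computes
\[
(\alpha \circ \beta)(x) = \tau(\beta(x)) \, \alpha(\beta \cdot x) = \tau(\beta(x))\, \tau(\alpha(\beta \cdot x)) \, \bigl(\alpha \cdot (\beta \cdot x)\bigr),
\]
so after normalizing by the trace one obtains $(\alpha \circ \beta) \cdot x = \alpha \cdot (\beta \cdot x)$ for every $x \in S$ (note that faithfulness of $\alpha$ and $\beta$ implies $\alpha \circ \beta$ is faithful, so this makes sense on all of $S$).

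With this identity in hand, the main inequality is almost immediate. First, I would fix distinct $x,y \in S$. If $d(\beta \cdot x, \beta \cdot y) > 0$, then telescoping through $\beta \cdot x$ and $\beta \cdot y$ gives
\[
\frac{d((\alpha\circ\beta)\cdot x,\, (\alpha\circ\beta)\cdot y)}{d(x,y)}
 = \frac{d(\alpha \cdot (\beta\cdot x),\, \alpha \cdot (\beta\cdot y))}{d(\beta \cdot x,\, \beta \cdot y)} \cdot \frac{d(\beta\cdot x,\, \beta\cdot y)}{d(x,y)}
 \le c(\alpha)\,c(\beta),
\]
since each factor is bounded by the corresponding Hennion contraction constant by Definition~\ref{def:Hennion_contraction}. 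If instead $d(\beta \cdot x, \beta \cdot y) = 0$, then $\beta \cdot x = \beta \cdot y$, hence $\alpha \cdot (\beta \cdot x) = \alpha \cdot (\beta \cdot y)$ as well, so the numerator on the left already vanishes and the inequality holds trivially. Taking a supremum over $x \neq y$ in $S$ yields $c(\alpha \circ \beta) \le c(\alpha) c(\beta)$.

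For the consequence, suppose $\beta$ is a strict Hennion contraction and $\alpha$ is any faithful positive linear map on $L^1(M,\tau)$. Lemma~\ref{lem:dantitone} gives $c(\alpha) \le 1$, so the main inequality produces
\[
c(\alpha \circ \beta) \le c(\alpha) c(\beta) \le c(\beta) < 1,
\]
and symmetrically $c(\beta \circ \alpha) \le c(\beta) c(\alpha) < 1$, establishing that $SHC(M)$ is invariant under pre- and post-composition with faithful positive linear maps. I do not anticipate a significant obstacle here: the only subtlety is the degenerate case where $\beta$ identifies $x$ and $y$ at the projective level, which is handled as above; the computation $(\alpha \circ \beta) \cdot x = \alpha \cdot (\beta \cdot x)$ is purely algebraic and uses nothing beyond the definition of the projective action.
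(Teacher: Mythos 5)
Your proposal is correct and follows essentially the same route as the paper: both establish the identity $(\alpha\circ\beta)\cdot x = \alpha\cdot(\beta\cdot x)$ and then chain the Lipschitz bounds for $\alpha$ and $\beta$. The paper writes the chain as $d(\alpha\cdot(\beta\cdot x),\alpha\cdot(\beta\cdot y))\le c(\alpha)\,d(\beta\cdot x,\beta\cdot y)\le c(\alpha)c(\beta)\,d(x,y)$, which absorbs your degenerate case $\beta\cdot x=\beta\cdot y$ automatically, but this is only a cosmetic difference.
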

\begin{proof}
For $x\in L^1(M,\tau)_+\setminus\{0\}$ one has
        \[
            (\alpha \circ \beta)\cdot x = \frac{1}{\tau(\alpha(\beta(x)))} \alpha(\beta(x)) = \frac{1}{\tau\left(\alpha(\beta\cdot x\right))} \alpha(\beta\cdot x)=\alpha \cdot (\beta \cdot x),
        \]
so that
    \begin{align*}
        d((\alpha\circ \beta)\cdot x, (\alpha \circ \beta) \cdot y)) = d(\alpha\cdot (\beta\cdot x), \alpha\cdot (\beta\cdot y)) \leq c(\alpha) d(\beta \cdot x, \beta \cdot y)\leq c(\alpha) c(\beta)d(x,y).
    \end{align*} 
Hence $c(\alpha\circ\beta)\leq c(\alpha)c(\beta)$.
\end{proof}

\subsubsection{Strict Hennion contractions from normal maps}\label{subsubsec:SHCfromnormal}

A positive normal map $\phi\colon M\to M$ can induce bounded positive maps on $L^1(M,\tau)$ in two ways: by the predual map $\phi_*$, or by extending $\phi$ itself $L^1(M,\tau)$ provided it is $\tau$-bounded (see Lemma~\ref{lem:normalbddextension}). We will investigate when these induced maps are strict Hennion contractions, but we must first characterize when they are faithful:

\begin{lem}\label{lem:when_is_predual_faithful}
Let $\phi\colon M\to M$ be a normal completely positive map.
    \begin{enumerate}[label=(\arabic*)]
        \item $\phi_*$ is faithful if and only if $\phi(M)M$ is weak* dense in $M$.
        \item $\phi$ admits a bounded faithful extension to $L^1(M,\tau)$ if and only if $\phi_*$ is $M$-preserving and $\phi_*(M)M$ is weak* dense in $M$.
    \end{enumerate}
\end{lem}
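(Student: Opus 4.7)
The plan is to unify both parts of the lemma through a single intermediate condition: the support projection $p := s(\phi(1))$ equals $1$, equivalently $\phi(1)$ has dense range in $L^2(M,\tau)$. For part (1), I will show separately that $\phi_*$ being faithful is equivalent to $p=1$ and that weak* density of $\phi(M)M$ is equivalent to $p=1$. For part (2), I will reduce to part (1) applied to the map $\phi_*|_M$, using Lemma~\ref{lem:normalbddextension} to identify the bounded extension $\phi|^{L^1(M,\tau)}$ with $(\phi_*|_M)_*$.

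For the first equivalence in part (1), the duality $\tau(\phi_*(x)) = \tau(x\phi(1))$ combined with positivity of $\phi_*$ and faithfulness of $\tau$ reduces $\phi_*(x) = 0$ for $x \in L^1(M,\tau)_+$ to $\tau(x\phi(1)) = 0$. Rewriting this as $\|\phi(1)^{1/2}x^{1/2}\|_2^2 = 0$ and using that bounded vectors are a core for $\phi(1)^{1/2}$, one concludes that the implication $\tau(x\phi(1))=0\Rightarrow x=0$ holds on $L^1(M,\tau)_+$ if and only if $\phi(1)^{1/2}$ (hence $\phi(1)$) has dense range, i.e.\ $p=1$.

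For the second equivalence, the key input is complete positivity through the $2\times 2$ matrix positivity
\[
    (\phi\otimes I_2)\begin{pmatrix} 1 & a \\ a^* & a^*a \end{pmatrix} = \begin{pmatrix} \phi(1) & \phi(a) \\ \phi(a)^* & \phi(a^*a) \end{pmatrix} \ge 0 \qquad \text{in } \mathbb{M}_2(M),
\]
obtained by applying $\phi\otimes I_2$ to the rank-one positive matrix $\left(\begin{smallmatrix} 1 \\ a^*\end{smallmatrix}\right)\left(\begin{smallmatrix} 1 & a\end{smallmatrix}\right)$. A standard consequence of the positivity of such a block matrix is $\ker\phi(1)\subseteq \ker\phi(a)^*$, equivalently $\phi(a) = p\phi(a)$; applying the same to $a^*$ gives $\phi(a) = p\phi(a)p$, whence $\phi(M)M\subseteq pM$. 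For the reverse inclusion, choose bounded Borel functions $f_n(t) := t^{-1}\mathbf{1}_{[1/n,\infty)}(t)$, so that $p_n := \phi(1)f_n(\phi(1)) = \mathbf{1}_{[1/n,\infty)}(\phi(1))\in \phi(1)M\subseteq \phi(M)M$ with $p_n\uparrow p$ strongly. Hence $p_nm\to pm$ weak* for every $m\in M$, giving $pM\subseteq \overline{\phi(M)M}^{w^*}$. Combining the two inclusions, $\overline{\phi(M)M}^{w^*} = pM$, which equals $M$ iff $p=1$.

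Part (2) then follows by applying part (1) to $\phi_*|_M$: by Lemma~\ref{lem:normalbddextension}, $\phi$ admits a bounded extension to $L^1(M,\tau)$ iff $\phi$ is $\tau$-bounded iff $\phi_*$ is $M$-preserving, and in that case the extension is $(\phi_*|_M)_*$; since $\phi_*|_M$ is a normal completely positive map on $M$, part (1) applied to it says its predual is faithful iff $(\phi_*|_M)(M)\cdot M = \phi_*(M)M$ is weak* dense in $M$. The main obstacle is the inclusion $\phi(M)\subseteq pMp$, which genuinely relies on $2$-positivity---for a merely positive map the range need not sit in the corner $pMp$; the remaining steps (spectral approximation of $p$, and the $\tau$-duality characterization) are routine.
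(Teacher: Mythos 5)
Your proof is correct, and part (1) takes a genuinely different (though closely parallel) route from the paper's. The paper argues directly: given a nonzero $x\in\ker\phi_*\cap L^1(M,\tau)_+$, the Kadison--Schwarz inequality $\phi(a^*)^*b^*b\,\phi(a^*)\le\|b\|^2\|\phi(1)\|\phi(aa^*)$ shows $\overline{\phi(M)M}^{\|\cdot\|_2}\subseteq\ker x^{1/2}$, hence the right ideal is proper; conversely, if $p:=[\phi(M)M]\neq 1$ then $\tau(\phi_*(1-p)a)=\tau((1-p)\phi(a))=0$ for all $a$, so $\phi_*(1-p)=0$. You instead route both conditions through the single intermediate statement $s(\phi(1))=1$: the equivalence with faithfulness of $\phi_*$ uses only positivity and the identity $\tau(\phi_*(x))=\tau(x\phi(1))$, while the equivalence with weak* density of $\phi(M)M$ uses the $2\times 2$ block-matrix positivity to get $\phi(M)\subseteq pMp$ and spectral calculus to get $p\in\overline{\phi(1)M}^{w^*}$. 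The two proofs use $2$-positivity in essentially equivalent ways (Kadison--Schwarz versus the kernel inclusion for positive block matrices), consistent with the paper's remark that $2$-positivity suffices. Your version buys a slightly sharper conclusion---it identifies $\overline{\phi(M)M}^{w^*}$ explicitly as $s(\phi(1))M$ and characterizes faithfulness of $\phi_*$ as fullness of the support of $\phi(1)$, which also explains the paper's subsequent observation that invertibility of $\phi(1)$ forces faithfulness---at the cost of a little more bookkeeping. Part (2) is handled identically in both proofs, by applying part (1) to $\phi_*|_M$ via Lemma~\ref{lem:normalbddextension}.
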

\begin{proof}
\begin{enumerate}[label=\textbf{(\arabic*):}]
    \item The weak* closure of $\phi(M)M$ is a weak* closed right ideal in $M$ and hence of the form $pM$ for $p:=[\phi(M)M]$. Thus $\phi(M)M$ is weak* dense if and only if $p=1$.

Now, suppose $\phi_*$ is not faithful and let $x\in L^1(M,\tau)_+$ be a non-zero element in its kernel. For all $a,b\in M$ we then have
    \begin{align*}
        \|x^{\frac12} \phi(a)b\|_2^2 &= \tau(b^*\phi(a)^* x\phi(a)b) = \tau(x^{1/2} \phi (a^*)^*b^*b \phi(a^*) x^{1/2}) \\
            &\leq \|b\|^2\|\phi(1)\| \tau(x^{1/2} \phi(aa^*) x^{1/2})= \|b\|^2\|\phi(1)\|  \tau(\phi_*(x)aa^*)=0.
    \end{align*}
Thus $\overline{\phi(M)M}^{\|\cdot\|_2} \subset \ker{x^{\frac12}}$. Since $x$ is non-zero, so is $x^{\frac12}$, and therefore $\overline{\phi(M)M}^{\|\cdot\|_2}$ must be a proper subspace of $L^2(M,\tau)$. The projection onto this subspace is $p$ by definition,  so $p\neq 1$.

Conversely, if $p\neq 1$ then for all $a\in M$ we have
    \[
        \tau(\phi_*(1-p) a)= \tau( (1-p)\phi(a)) = \<(1-p) \phi(a), 1\>_2 = 0.
    \]
Since this holds for all $a\in M$, it follows that $\phi_*(1-p)=0$.

\item The equivalence of $\phi$ admitting a bounded extension to $L^1(M,\tau)$ and $\phi_*$ being $M$-preserving follows from Lemma~\ref{lem:normalbddextension}. The rest is then a consequence of part (1), with the roles of $\phi$ and $\phi_*$ reversed.\qedhere
\end{enumerate}
\end{proof}

Note that if $\phi$ is unital or even satisfies that $1$ belongs to the weak* closure of $\phi(M)$, then $\phi_*$ is faithful by the previous lemma. Another way to guarantee faithfulness of $\phi_*$ (which avoids assuming that $\phi$ is completely positive) is to assume that $\phi(1)$ is invertible. Indeed, this implies $\phi(1)\geq \delta 1$ for some $\delta>0$, which is equivalent via duality to $\tau(\phi_*(x)) \geq \delta \tau(x)$ for all $x\in L^1(M,\tau)_+$.

\begin{rmk}
In the proof of Lemma~\ref{lem:when_is_predual_faithful}.(1), complete positivity was only used for the Schwarz inequality:
    \[
        \phi(x)^*\phi(x) \leq \|\phi(1)\| \phi(x^*x) \qquad \forall x\in M.
    \]
Thus it would be sufficient to assume $\phi$ was merely $2$-positive (see \cite[Proposition 3.3]{paulsen}).$\hfill\blacksquare$
\end{rmk}

As a consequence of the following lemma, it will turn out one of $\phi_*$ or the extension of $\phi$ is a strict Hennion contraction if and only if the other is, provided both maps exist and are faithful (see Corollaries~\ref{cor:when_do_preduals_give_SHC} and \ref{cor:when_do_extensions_gives_SHC}).

\begin{lem}\label{lem:contraction_constant_of_duals}
Let $\phi\colon M\to M$ be a positive normal map with a bounded faithful extension to $L^1(M,\tau)$ and a faithful predual $\phi_*$. Then $c(\phi)=c(\phi_*)$.  
\end{lem}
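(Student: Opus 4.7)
The strategy combines the variational formula for the $m$-quantity (Theorem~\ref{thm:mproperties}.\ref{part:m_inf_formula}) with the duality $\tau(\phi(x)a) = \tau(x\phi_*(a))$ to reveal a symmetric expression identifying $c(\phi)$ with $c(\phi_*)$. Throughout I write $\phi$ for its $L^1$-extension $\phi|^{L^1(M,\tau)}$ and $S_b := S \cap M$.

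First, I would reduce the computation to $S_b$. Since $S_b$ is $\|\cdot\|_1$-dense in $S$ (via spectral truncations $x \mapsto x 1_{[0,n]}(x)/\tau(x 1_{[0,n]}(x))$), Proposition~\ref{prop:contr_const_for_bounded_maps} gives $c(\phi) = \diam_d(\phi\cdot S_b)$ and $c(\phi_*) = \diam_d(\phi_*\cdot S_b)$. Setting
\[
M(\gamma) := \inf_{x,y\in S_b} m(\gamma(x),\gamma(y))\, m(\gamma(y),\gamma(x)),
\]
the monotonicity of $t \mapsto (1-t)/(1+t)$ together with the scale-invariance of $m(\alpha,\beta)\,m(\beta,\alpha)$ (Theorem~\ref{thm:mproperties}.\ref{part:mscaling}) yields $c(\gamma) = (1-M(\gamma))/(1+M(\gamma))$ for $\gamma \in \{\phi, \phi_*\}$, so it suffices to show $M(\phi) = M(\phi_*)$.

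Next, applying Theorem~\ref{thm:mproperties}.\ref{part:m_inf_formula} to each factor and invoking the duality $\tau(\phi(x)a) = \tau(x\phi_*(a))$ gives, for fixed $x, y \in S_b$,
\[
m(\phi(x),\phi(y))\, m(\phi(y),\phi(x)) = \inf_{a,b\in M_+} \frac{\tau(x\phi_*(a))\,\tau(y\phi_*(b))}{\tau(y\phi_*(a))\,\tau(x\phi_*(b))}.
\]
Since the ratio is scale-invariant in $a$ and in $b$ separately, I use that $\phi_*$ is faithful and $M$-preserving to re-parameterize by $\alpha := \phi_*(a)/\tau(\phi_*(a))$ and $\beta := \phi_*(b)/\tau(\phi_*(b))$ ranging over $\phi_*\cdot S_b$. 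Substituting into $M(\phi)$ and exchanging the order of the joint infimum, I obtain
\[
M(\phi) = \inf_{\alpha,\beta\in \phi_*\cdot S_b}\; \inf_{x,y\in S_b} \frac{\tau(x\alpha)\,\tau(y\beta)}{\tau(y\alpha)\,\tau(x\beta)} = \inf_{\alpha,\beta\in \phi_*\cdot S_b} m(\alpha,\beta)\, m(\beta,\alpha),
\]
where the final equality uses the tracial property $\tau(x\alpha) = \tau(\alpha x)$ and another application of Theorem~\ref{thm:mproperties}.\ref{part:m_inf_formula}, extending $x, y$ from $S_b$ to $M_+$ via scale-invariance.

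Finally, the scale-invariance of $m(\alpha,\beta)\, m(\beta,\alpha)$ lets me identify the outer infimum with $\inf_{x,y\in S_b} m(\phi_*(x),\phi_*(y))\, m(\phi_*(y),\phi_*(x)) = M(\phi_*)$, whence $c(\phi) = c(\phi_*)$. The main technical obstacle is the re-parameterization and inf-swap in the third step: it requires tracking positivity of the various denominators (using faithfulness of both $\phi$ and $\phi_*$) so that Theorem~\ref{thm:mproperties}.\ref{part:m_inf_formula} applies cleanly throughout. In particular, whenever I pass to an infimum, I must restrict to those arguments for which the denominator is strictly positive.
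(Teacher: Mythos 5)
Your proof is correct and follows essentially the same route as the paper's: both reduce $c(\phi)=c(\phi_*)$ to the equality of a four-variable infimum of trace ratios by combining Proposition~\ref{prop:contr_const_for_bounded_maps} (restriction to bounded arguments, where $\phi_*(a)\in M_+$ makes sense), Theorem~\ref{thm:mproperties}.\ref{part:m_inf_formula}, and the duality $\tau(\phi(x)a)=\tau(x\phi_*(a))$. The paper just observes the symmetry of the resulting expression directly by rearranging the four factors, rather than passing through your intermediate re-parameterization over $\phi_*\cdot S_b$ and the inf-swap, but the underlying mechanism is identical.
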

\begin{proof}
By Proposition~\ref{prop:cont_const_is_diam} it suffices to show $\diam(\phi\cdot S) = \diam(\phi_*\cdot S)$, and by the formula for $d$ in terms of $m$ along with Theorem~\ref{thm:mproperties}.\ref{part:m_inf_formula} it further suffices to show
        \begin{align*}
            \inf&\left\{\frac{\tau(a\phi(x))}{\tau(a\phi(y))}\frac{\tau(b\phi(y))}{\tau(b\phi(x))}\colon x,y\in L^1(M,\tau)_+,\ a,b\in M_+,\  \tau(b\phi(x)), \tau(a\phi(y))>0 \right\}\\
                &=\inf\left\{\frac{\tau(a\phi_*(x))}{\tau(a\phi_*(y))}\frac{\tau(b\phi_*(y))}{\tau(b\phi_*(x))}\colon x,y\in L^1(M,\tau)_+,\ a,b\in M_+,\  \tau(b\phi_*(x)), \tau(a\phi_*(y))>0 \right\}.
        \end{align*}
    Since $\phi$ and $\phi_*$ are both bounded on $L^1(M,\tau)$, neither of the above infima is changed if we restrict to $x,y\in M_+$. But then
        \[
            \frac{\tau(a\phi(x))}{\tau(a\phi(y))}\frac{\tau(b\phi(y))}{\tau(b\phi(x))} = \frac{\tau(\phi_*(a)x)}{\tau(\phi_*(a)y)}\frac{\tau(\phi_*(b)y)}{\tau(\phi_*(b)x)} = \frac{\tau(x\phi_*(a))}{\tau(x\phi_*(b))}\frac{\tau(y\phi_*(b))}{\tau(y\phi_*(a))}
        \]
    implies the two infima are equal.
\end{proof} 

The next results are corollaries to Theorem~\ref{thm:strict_Hennion_projective_actions}.

\begin{cor}\label{cor:when_do_preduals_give_SHC}
Let $\phi\colon M\to M$ be a positive normal map with faithful predual $\phi_*$. Then $\phi_*$ is a strict Hennion contraction if and only if for some $x_0\in S$ there exists $\eta\in (0,1]$ so that
    \[
        \eta \tau(x_0a)\phi(1) \leq \phi(a) \leq \eta^{-1} \tau(x_0 a)\phi(1) \qquad \forall a\in M_+.
    \]
Moreover, $\phi_*\cdot S\subset S_b^\times$ if and only if one can choose $x_0=1$, and in this case $\phi$ extends to a bounded faithful map on $L^1(M,\tau)$ which is also a strict Hennion contraction.
\end{cor}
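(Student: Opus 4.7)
The plan is to invoke Theorem~\ref{thm:strict_Hennion_projective_actions} with $\gamma=\phi_*$ and translate its operator inequalities through the duality pairing $\tau(\phi_*(x)a)=\tau(x\phi(a))$ together with the characterization of positivity in Equation~(\ref{eqn:positive_iff}).

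For the first equivalence, Theorem~\ref{thm:strict_Hennion_projective_actions}.(ii) says $\phi_*$ is a strict Hennion contraction iff there exist $x_0\in S$ and $\eta\in(0,1]$ with $\eta x_0\le \phi_*\cdot x\le \eta^{-1}x_0$ for every $x\in S$. Writing $\phi_*\cdot x=\phi_*(x)/\tau(x\phi(1))$ and pairing against an arbitrary $a\in M_+$, Equation~(\ref{eqn:positive_iff}) converts the operator inequality into the scalar inequality
$$\eta\,\tau(x\phi(1))\,\tau(x_0a)\le \tau(x\phi(a))\le \eta^{-1}\tau(x\phi(1))\,\tau(x_0a) \qquad \forall x\in S,\ a\in M_+.$$
Because every nonzero $x\in L^1(M,\tau)_+$ is a positive scalar multiple of an element of $S$, the condition for all $x\in S$ is equivalent to the condition for all $x\in L^1(M,\tau)_+$, and a second application of Equation~(\ref{eqn:positive_iff}) in the $x$ variable then yields the claimed inequality $\eta\tau(x_0a)\phi(1)\le\phi(a)\le\eta^{-1}\tau(x_0a)\phi(1)$.

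For the second equivalence, I would apply Theorem~\ref{thm:strict_Hennion_projective_actions}.(iii) with the specific choice $y_0=1\in S_b^\times$. The resulting bound $\kappa\cdot 1\le \phi_*\cdot x\le \kappa^{-1}\cdot 1$ is precisely the $x_0=1$ case of the duality derived above, giving the forward direction. Conversely, if $x_0=1$ works, then the first equivalence already guarantees $\phi_*$ is a strict Hennion contraction with $\eta\cdot 1\le \phi_*\cdot x\le \eta^{-1}\cdot 1$, so $\phi_*\cdot x$ is trapped between positive multiples of the identity and therefore belongs to $S_b^\times$ by Remarks~\ref{rmk:boundedimpliesbounded} and~\ref{rmk:boundedlyinvert_implies_boundedlyinvert}.

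For the final assertion, the inclusion $\phi_*\cdot S\subset S_b^\times\subset M$ yields $\phi_*(S)\subset M$ (multiplying by $\tau(\phi_*(x))$), hence $\phi_*(L^1(M,\tau))\subset M$ by real/imaginary and positive/negative decomposition. Thus $\phi_*$ is $M$-preserving, and Lemma~\ref{lem:normalbddextension} provides a bounded positive extension $\tilde\phi:=(\phi_*|_M)_*$ of $\phi$ to $L^1(M,\tau)$. Faithfulness of $\tilde\phi$ uses that $1\in S$ forces $\phi_*\cdot 1\in S_b^\times$, so $\phi_*(1)\ge \delta$ for some $\delta>0$; then $\tilde\phi(x)=0$ for $x\in L^1(M,\tau)_+$ gives $0=\tau(\tilde\phi(x))=\tau(x\phi_*(1))\ge \delta\tau(x)$, whence $x=0$ by faithfulness of $\tau$. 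Finally, Lemma~\ref{lem:contraction_constant_of_duals} gives $c(\tilde\phi)=c(\phi_*)<1$, so $\tilde\phi$ is itself a strict Hennion contraction. The only real obstacle is keeping the direction of the duality consistent across the several translations; no step requires substantial new input beyond the machinery already developed.
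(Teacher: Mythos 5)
Your proof is correct and follows essentially the same route as the paper: both reduce the statement to Theorem~\ref{thm:strict_Hennion_projective_actions} by translating its condition (ii)/(iii) through the duality $\tau(\phi_*(x)a)=\tau(x\phi(a))$, and both close the final assertion with Lemma~\ref{lem:contraction_constant_of_duals}. The only cosmetic differences are that the paper packages your pairing argument via Equation~(\ref{eqn:positive_iff}) as a single citation of Lemma~\ref{lem:adjoint&predual}, and it obtains the bounded faithful extension of $\phi$ by taking the trace of the operator inequality $\eta\tau(a)\phi(1)\leq\phi(a)\leq\eta^{-1}\tau(a)\phi(1)$ rather than by first noting that $\phi_*$ is $M$-preserving.
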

\begin{proof}
Noting that
    \[
        (a\mapsto \tau(x_0a)\phi(1))_* = x\mapsto \tau(\phi_*(x)) x_0,
    \]
we see that from Lemma~\ref{lem:adjoint&predual} that the inequalities on $\phi$ are equivalent to
    \[
        \eta \tau(\phi_*(x)) x_0 \leq \phi_*(x) \leq \eta^{-1} \tau(\phi_*(x)) x_0 \qquad \forall x\in L^1(M,\tau)_+.
    \]
These are in turn equivalent to $\phi_*$ being a strict Hennion contraction by Theorem~\ref{thm:strict_Hennion_projective_actions}. The last part of this theorem also implies the second if and only if statement.

Now, suppose $\eta \tau(a) \phi(1) \leq \phi(a) \leq \eta^{-1} \tau(a) \phi(1)$ holds for all $a\in M_+$. Taking the trace yields $\eta \tau(\phi(1)) \tau(a)\leq \tau\circ \phi(a) \leq \eta^{-1} \tau(\phi(1)) \tau(a)$. This shows $\phi$ admits a bounded faithful extension to $L^1(M,\tau)$ and it is necessarily a strict Hennion contraction by Lemma~\ref{lem:contraction_constant_of_duals}.
\end{proof}

\begin{cor}\label{cor:when_do_extensions_gives_SHC}
Let $\phi\colon M\to M$ be a positive normal map with a bounded faithful extension to $L^1(M,\tau)$. Then $\phi$ is a strict Hennion contraction if and only if for some $b_0\in S_b$ there exists $\eta\in (0,1]$ so that
    \[
        \eta \tau(\phi(a)) b_0 \leq \phi(a) \leq \eta^{-1} \tau(\phi(a)) b_0 \qquad \forall a\in M_+.
    \]
Moreover, $\phi\cdot S\subset S_b^\times$ if and only if one can choose $b_0=1$, and in this case $\phi_*$ is a bounded faithful map on $L^1(M,\tau)$ which is also a strict Hennion contraction.
\end{cor}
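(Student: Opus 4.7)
The plan is to follow the structure of the proof of Corollary~\ref{cor:when_do_preduals_give_SHC}, applying Theorem~\ref{thm:strict_Hennion_projective_actions} directly to the extended map $\phi$ on $L^1(M,\tau)$, with one additional step that ensures the dilation state can be chosen in $S_b$ rather than merely in $S$.

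First I would observe that for non-zero $a\in M_+$, the hypothesized inequality is equivalent to $\eta b_0 \le \phi\cdot a \le \eta^{-1} b_0$, since faithfulness of $\phi$ gives $\tau(\phi(a))>0$. A standard $\|\cdot\|_1$-density argument, in which one approximates $x\in L^1(M,\tau)_+$ by a sequence in $M_+$, pairs both sides of the inequality against an arbitrary $c\in M_+$, and invokes Equation~(\ref{eqn:positive_iff}), upgrades this to $\eta b_0 \le \phi\cdot x \le \eta^{-1} b_0$ for every $x\in S$. With this reformulation in hand, the main equivalence reduces directly to Theorem~\ref{thm:strict_Hennion_projective_actions}.\ref{part:fixed_point_dilation_constant} applied to $\phi$. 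The only subtlety is that Theorem~\ref{thm:strict_Hennion_projective_actions} produces $x_0\in S$, whereas the corollary insists on $b_0\in S_b$. This is easily remedied: the bounded extension $\phi$ is $M$-preserving by Lemma~\ref{lem:normalbddextension}, hence $\phi\cdot 1 \in S_b$, and the inequality $x_0 \le \eta^{-1}\phi\cdot 1$ combined with Remark~\ref{rmk:boundedimpliesbounded} forces $x_0$ to be bounded, so one sets $b_0 := x_0$.

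For the moreover clause I would handle both directions directly. If $b_0=1$ works, then for each $a\in S$ the operator $\phi\cdot a$ satisfies $\eta \le \phi\cdot a \le \eta^{-1}$, which by Remarks~\ref{rmk:boundedimpliesbounded} and~\ref{rmk:boundedlyinvert_implies_boundedlyinvert} places $\phi\cdot a \in S_b^\times$. Conversely, granted that $\phi$ is already known to be a strict Hennion contraction from the main equivalence, $\phi\cdot S\subset S_b^\times$ gives $(\phi\cdot S)\cap S_b^\times \neq \emptyset$, so Theorem~\ref{thm:strict_Hennion_projective_actions}(iii) applies with $y_0=1$ and produces the desired $\kappa$.

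Finally, when $b_0=1$, evaluating the inequality at $a=1$ yields $\phi(1)\ge \eta\tau(\phi(1))\cdot 1$, so $\phi(1)$ is boundedly invertible. The identity $\tau(\phi_*(x))=\tau(x\phi(1))$ then gives $\tau(\phi_*(x))\ge \eta\tau(\phi(1))\tau(x)$ for $x\in L^1(M,\tau)_+$, which shows $\phi_*$ is faithful. Combining with the standing hypothesis that $\phi$ has a bounded faithful extension to $L^1(M,\tau)$, Lemma~\ref{lem:contraction_constant_of_duals} delivers $c(\phi_*)=c(\phi)<1$, completing the proof. The only mildly delicate step is the density argument extending the inequality from $M_+$ to $L^1(M,\tau)_+$ and the observation that the fixed point automatically lives in $S_b$; the remainder is bookkeeping over Theorem~\ref{thm:strict_Hennion_projective_actions}, Lemma~\ref{lem:normalbddextension}, and Lemma~\ref{lem:contraction_constant_of_duals}.
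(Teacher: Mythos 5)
Your proposal is correct and takes essentially the same route as the paper: both iff statements are reduced to Theorem~\ref{thm:strict_Hennion_projective_actions} (you merely spell out the routine density step from $M_+$ to $S$ and the observation that the dilation state lands in $S_b$, which the paper leaves implicit), and the claim about $\phi_*$ is obtained from Lemma~\ref{lem:contraction_constant_of_duals} once $\phi_*$ is shown to be faithful. The only cosmetic difference is that you deduce faithfulness of $\phi_*$ directly from $\phi(1)\geq \eta\tau(\phi(1))1$, whereas the paper transfers the whole inequality to $\phi_*$ via Lemma~\ref{lem:adjoint&predual}; both are valid.
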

\begin{proof}
The first if and only if statement follows immediately from Theorem~\ref{thm:strict_Hennion_projective_actions}, and the last part of this theorem also implies the second if and only if statement.

Now, suppose $\eta\tau(\phi(a))\leq \phi(a)\leq \eta^{-1}\tau(\phi(a))$ holds for all $a\in M_+$. Invoking Lemma~\ref{lem:adjoint&predual} we obtain
    \[
        \eta \tau(x)\phi_*(1) \leq \phi_*(x) \leq  \eta^{-1} \tau(x) \phi_*(1) \qquad \forall x\in L^1(M,\tau)_+.
    \]
This implies $\phi_*$ is faithful, and hence is a strict Hennion contraction by Lemma~\ref{lem:contraction_constant_of_duals}. 
\end{proof}

\begin{cor}\label{cor:tau_interval_gives_SHC}
Let $\phi\colon M\to M$ be a positive normal map. Then the following are equivalent.
    \begin{enumerate}[label=(\roman*)]
        \item There exists $\eta\in (0,1]$ so that $\eta \tau\leq \phi \leq \eta^{-1} \tau$.
        
        \item There exists $\delta\in (0,1]$ so that $\delta\tau\leq \tau\circ \phi\leq \delta^{-1} \tau$, and $\phi$ has a bounded faithful extension to $L^1(M,\tau)$ which is a strict Hennion contraction valued in $S_b^\times$.
        
        \item There exists $\delta_*\in (0,1]$ so that $\delta_*\tau \leq \tau\circ \phi_*\leq \delta_*^{-1} \tau$, and $\phi_*$ is a faithful map on $L^1(M,\tau)$ which is a strict Hennion contraction valued in $S_b^\times$.
    \end{enumerate}
\end{cor}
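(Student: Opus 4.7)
I would interpret the shorthand in (i) as $\eta \tau(a) 1 \le \phi(a) \le \eta^{-1} \tau(a) 1$ for all $a \in M_+$, identifying $\tau$ with the normal completely positive map $a \mapsto \tau(a) 1$. A quick duality check shows this condition is symmetric under $\phi \leftrightarrow \phi_*$: for $a \in M_+$ and $x \in L^1(M,\tau)_+$, (\ref{eqn:positive_iff}) gives $\tau(\phi_*(x) a) = \tau(x \phi(a)) \in [\eta \tau(x)\tau(a), \eta^{-1}\tau(x)\tau(a)]$, so (i) holds for $\phi$ iff the analogous inequality $\eta \tau(x) 1 \le \phi_*(x) \le \eta^{-1} \tau(x) 1$ holds for $\phi_*$ with the same $\eta$. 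This self-duality will handle the faithfulness claim in (iii) and the ``bounded faithful extension'' claim in (ii). The overall plan is to prove (i) $\Leftrightarrow$ (ii) via Corollary~\ref{cor:when_do_extensions_gives_SHC} specialized to $b_0 = 1$, and (i) $\Leftrightarrow$ (iii) via Corollary~\ref{cor:when_do_preduals_give_SHC} specialized to $x_0 = 1$; the rest is arithmetic bookkeeping of constants.

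For (i) $\Rightarrow$ (ii): applying $\tau$ to the sandwich yields the trace inequality with $\delta = \eta$ and in particular makes $\phi$ $\tau$-bounded, so Lemma~\ref{lem:normalbddextension} produces the $M$-preserving bounded extension of $\phi$ to $L^1(M,\tau)$; faithfulness of this extension follows from the lower estimate $\eta \tau(x) \le \phi(x)$ on $L^1(M,\tau)_+$, which persists by $\|\cdot\|_1$-continuity. To match Corollary~\ref{cor:when_do_extensions_gives_SHC}'s hypothesis with $b_0 = 1$, chain $\phi(a) \ge \eta \tau(a) \ge \eta^2 \tau(\phi(a))$ (using $\tau(\phi(a)) \le \eta^{-1} \tau(a)$) and analogously $\phi(a) \le \eta^{-2} \tau(\phi(a))$; Corollary~\ref{cor:when_do_extensions_gives_SHC} then certifies that $\phi$ is a strict Hennion contraction valued in $S_b^\times$. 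For the converse, apply that same corollary to extract $\eta' \in (0,1]$ with $\eta' \tau(\phi(a)) \le \phi(a) \le (\eta')^{-1} \tau(\phi(a))$, then substitute the $\delta$-bounds on $\tau \circ \phi$ to recover (i) with $\eta = \eta'\delta \in (0,1]$.

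For (i) $\Leftrightarrow$ (iii): specializing (i) at $a = 1$ gives $\eta \le \phi(1) \le \eta^{-1}$, hence $\eta \cdot 1 \ge \eta^2 \phi(1)$ and $\eta^{-1} \cdot 1 \le \eta^{-2} \phi(1)$; inserting these into (i) produces $\eta^2 \tau(a) \phi(1) \le \phi(a) \le \eta^{-2} \tau(a) \phi(1)$, which is precisely Corollary~\ref{cor:when_do_preduals_give_SHC}'s hypothesis with $x_0 = 1$, so $\phi_*$ is a strict Hennion contraction valued in $S_b^\times$. The $\delta_* = \eta$ trace bound on $\tau \circ \phi_*$ follows from $\tau(\phi_*(x)) = \tau(x \phi(1))$ combined with the $\phi(1)$ sandwich, and faithfulness of $\phi_*$ is immediate from the self-dual form of (i) noted above. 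Conversely, (iii) supplies the Corollary~\ref{cor:when_do_preduals_give_SHC} inequality $\eta' \tau(a) \phi(1) \le \phi(a) \le (\eta')^{-1} \tau(a) \phi(1)$, while dualizing $\delta_* \tau \le \tau \circ \phi_* \le \delta_*^{-1} \tau$ via $\tau(\phi_*(x)) = \tau(x \phi(1))$ yields $\delta_* \cdot 1 \le \phi(1) \le \delta_*^{-1} \cdot 1$; combining the two gives (i) with $\eta = \eta' \delta_*$. The only subtle point is correctly interpreting condition (i); after that the result is a routine repackaging of Corollaries~\ref{cor:when_do_preduals_give_SHC} and~\ref{cor:when_do_extensions_gives_SHC}.
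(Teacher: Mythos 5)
Your proof is correct and follows essentially the same route as the paper: the same chain $\eta^2\tau(\phi(a))\le\eta\tau(a)\le\phi(a)\le\eta^{-1}\tau(a)\le\eta^{-2}\tau(\phi(a))$ to feed Corollaries~\ref{cor:when_do_preduals_give_SHC}/\ref{cor:when_do_extensions_gives_SHC} with $b_0=x_0=1$, the same $\eta=\kappa\delta$ bookkeeping for the converses, and your direct duality check via Equation~(\ref{eqn:positive_iff}) is just Lemma~\ref{lem:adjoint&predual} unwound. The only cosmetic difference is that the paper handles $\phi_*$ by applying Corollary~\ref{cor:when_do_extensions_gives_SHC} to the self-dual inequality (and restricting $\phi_*$ to $M$ in $(iii)\Rightarrow(i)$), whereas you route through Corollary~\ref{cor:when_do_preduals_give_SHC}; these are interchangeable.
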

\begin{proof}
$(i)\Rightarrow (ii),(iii):$ By Lemma~\ref{lem:adjoint&predual}, $\eta\tau \leq \phi\leq \eta^{-1}\tau$ is equivalent to $\eta\tau \leq \phi_*\leq \eta^{-1} \tau$, since $\tau_*=\tau$. Applying $\tau$ to these inequalities gives $\eta\tau\leq \tau\circ \phi\leq \eta^{-1} \tau$, so that $\phi$ extends to a bounded faithful map on $L^1(M,\tau)$, and $\eta\tau\leq \tau\circ \phi_*\le \eta^{-1} \tau$, so that $\phi_*$ is faithful. Since
    \[
        \eta^2 \tau(\phi(a)) \leq \eta \tau(a) \leq \phi(a) \leq \eta^{-1} \tau(a)\leq \eta^{-2} \tau(\phi(a)),
    \]
for all $a\in M_+$, Corollary~\ref{cor:when_do_extensions_gives_SHC} implies $\phi$ and $\phi_*$ are strict Hennion contractions. They are both necessarily valued in $S_b^\times$ since $\eta \leq \phi(1), \phi_*(1) \leq \eta^{-1}$.\\

\noindent$(ii)\Rightarrow(i):$ Theorem~\ref{thm:strict_Hennion_projective_actions} yields a $\kappa\in (0,1]$ satisfying $\kappa \tau\circ \phi \leq \phi \leq \kappa^{-1}\tau\circ \phi$, and therefore
    \[
        \kappa \delta \tau \leq \kappa \tau\circ \phi \leq \phi \leq \kappa^{-1} \tau\circ \phi \leq \kappa^{-1} \delta^{-1} \tau.
    \]
So we take $\eta:=\kappa\delta$.\\

\noindent$(iii)\Rightarrow (i):$ Since $\phi_*\cdot 1\in M$, we have that $\phi_*|_M\colon M\to M$ is a positive normal map (see \cite[Proposition 2.5.11]{ap}). So the same argument as in $(ii)\Rightarrow(i)$ gives $\eta \tau\leq \phi_* \leq \eta^{-1} \tau$ for some $\eta\in (0,1]$, and appealing to Lemma~\ref{lem:adjoint&predual} completes the proof.
\end{proof}

\begin{thm}\label{thm:converseconditions}
Let $\phi\colon M\to M$ be a normal completely positive map such that $\tau\circ\phi\leq \tau$ ($\phi$ is subtracial), $\phi(1)\leq 1$ ($\phi$ is subunital), and $1-\phi(1)$ belongs to the weak* closure of $\phi(M_+)$. If $\phi_*$ is a strict Hennion contraction then the unital tracial map
    \[
        \tilde{\phi}_*(x):= \phi_*(x) + \frac{(\tau-\tau\circ \phi_*)(x)}{(\tau- \tau\circ \phi_*)(1)}(1-\phi_*(1))
    \]
is a strict Hennion contraction valued in $S_b^\times$. In this case, the extension of
    \[
        \tilde{\phi}(x):= \phi(x) + \frac{(\tau-\tau\circ \phi)(x)}{(\tau- \tau\circ \phi)(1)}(1-\phi(1))
    \]
to $L^1(M,\tau)$ is also a strict Hennion contraction valued in $S_b^\times$.
\end{thm}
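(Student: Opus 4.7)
My plan is to reduce the full theorem to verifying the single operator inequality $\eta'\tau \leq \tilde\phi \leq \eta'^{-1}\tau$ (as positive normal maps $M\to M$) for some $\eta'\in(0,1]$, at which point Corollary~\ref{cor:tau_interval_gives_SHC} will simultaneously deliver both conclusions: $(i)\Rightarrow (ii)$ will give that the extension of $\tilde\phi$ is a strict Hennion contraction valued in $S_b^\times$, and $(i)\Rightarrow (iii)$ will give the same for $\tilde\phi_*$ (which coincides with $(\tilde\phi)_*$ by a short duality calculation). Completely positive normality of $\tilde\phi$ passes automatically from $\phi$ and from the rank-one completely positive summand. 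Before attacking the inequality I would handle the degenerate case $\phi(1)=1$: subtraciality then forces $\tau\circ\phi=\tau$ (a positive functional on $M$ vanishing at $1$ is identically zero), so $1-\phi(1)=0$, $\tilde\phi=\phi$, and $\tilde\phi_*=\phi_*$. In this case $\phi_*\cdot 1=1\in S_b^\times$ and $c(\phi_*)<1$, so Theorem~\ref{thm:geometry} confines $\phi_*\cdot S$ to the $d$-clopen component of $1$, namely $S_b^\times$, and Lemma~\ref{lem:contraction_constant_of_duals} transfers the conclusion to the extension of $\phi$.

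In the non-degenerate case $\phi(1)<1$, I would first extract quantitative estimates. Theorem~\ref{thm:strict_Hennion_projective_actions}(ii) applied to $\phi_*$ produces $x_0\in S$ and $\eta\in(0,1]$ with $\eta x_0\leq \phi_*\cdot y\leq \eta^{-1}x_0$ for every $y\in S$, which dualizes through $L^1(M,\tau)^*\cong M$ to
\[
    \eta\tau(x_0 a)\phi(1) \leq \phi(a) \leq \eta^{-1}\tau(x_0 a)\phi(1)\qquad \forall a\in M_+.
\]
Subtraciality makes $\phi_*$ $M$-preserving, so $\phi_*\cdot 1\in S_b$, and Theorem~\ref{thm:geometry} together with $c(\phi_*)<1$ forces $x_0\in S_b$ so $\|x_0\|_\infty<\infty$. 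Next I would choose $(b_\alpha)\subset M_+$ with $\phi(b_\alpha)\to 1-\phi(1)$ weak*; tracing the sandwich pins $\tau(x_0 b_\alpha)$ between positive multiples of $\tau(\phi(b_\alpha))\to 1-\tau(\phi(1))>0$, so I can pass to a subnet with $\tau(x_0 b_\alpha)\to c\in(0,\infty)$. Taking weak* limits in the sandwich (using that $M_+$ is weak*-closed) then yields $\eta c\,\phi(1)\leq 1-\phi(1)\leq \eta^{-1}c\,\phi(1)$, which rearranges to $\phi(1)\geq (1+\eta^{-1}c)^{-1}\cdot 1$ and $1-\phi(1)\geq \eta c/(1+\eta c)\cdot 1$; both are therefore boundedly invertible elements of $M_+$.

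The upper bound $\tilde\phi(a)\leq C_+\tau(a)\cdot 1$ would then follow routinely from $\phi(1),\,1-\phi(1)\leq 1$, $\tau(x_0 a)\leq \|x_0\|_\infty\tau(a)$, and the estimate $(\tau-\tau\circ\phi)(a)/(\tau-\tau\circ\phi)(1)\leq \tau(a)/(1-\tau(\phi(1)))$. The \emph{main obstacle} is the lower bound, because each of the summands $\phi(a)$ and $(\tau-\tau\circ\phi)(a)/(\tau-\tau\circ\phi)(1)\cdot (1-\phi(1))$ can individually degenerate (the former when $\tau(x_0 a)=0$, the latter when $a$ saturates subtraciality). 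To couple them I would substitute the invertibility bounds on $\phi(1)$ and $1-\phi(1)$ into the lower half of the sandwich to obtain a scalar estimate of the form
\[
    \tilde\phi(a) \geq \left[ \tfrac{\eta}{1+\eta^{-1}c}\tau(x_0 a) + \tfrac{\eta c}{(1+\eta c)(1-\tau(\phi(1)))}\bigl(\tau(a)-\tau(\phi(a))\bigr)\right]\cdot 1,
\]
and then use the dual inequality $\tau(x_0 a)\geq \eta\tau(\phi(a))/\tau(\phi(1))$ (from tracing the upper sandwich) to rewrite the bracket as a positive affine function of $t=\tau(\phi(a))\in[0,\tau(a)]$. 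Minimizing over this interval then produces a uniform lower bound $C_-\tau(a)$ with $C_->0$: the two endpoints $t=0$ and $t=\tau(a)$ realize the two degeneracies, and at each one the surviving summand keeps the bound positive. With $\eta':=\min(C_-,C_+^{-1})$ the inequality $\eta'\tau\leq \tilde\phi\leq \eta'^{-1}\tau$ is established, and Corollary~\ref{cor:tau_interval_gives_SHC} completes the proof.
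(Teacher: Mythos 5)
Your proposal is correct, but it takes a genuinely different route from the paper. The paper never passes through the operator inequality $\eta'\tau\leq\tilde\phi\leq\eta'^{-1}\tau$; instead it works directly with $m$-quantities. Its key step is the inequality $m(\phi_*(x),\phi_*(y))\,(\tau-\tau\circ\phi_*)(y)\leq(\tau-\tau\circ\phi_*)(x)$, obtained by writing $(\tau-\tau\circ\phi_*)(x)=\tau(x(1-\phi(1)))=\lim_i\tau(\phi_*(x)a_i)$ along a net with $\phi(a_i)\to 1-\phi(1)$ weak* and invoking the infimum formula Theorem~\ref{thm:mproperties}.\ref{part:m_inf_formula}; this immediately yields $m(\tilde\phi_*(x),\tilde\phi_*(y))\geq m(\phi_*(x),\phi_*(y))$, hence $\diam(\tilde\phi_*(S))\leq\diam(\phi_*\cdot S)=c(\phi_*)<1$ by Proposition~\ref{prop:cont_const_is_diam}, with the $S_b^\times$ claim following from unitality of $\tilde\phi_*$ and the extension handled by Corollary~\ref{cor:tau_interval_gives_SHC} exactly as you do. You instead route through the classification (Theorem~\ref{thm:strict_Hennion_projective_actions} via Corollary~\ref{cor:when_do_preduals_give_SHC}) to get the two-sided sandwich $\eta\tau(x_0 a)\phi(1)\leq\phi(a)\leq\eta^{-1}\tau(x_0 a)\phi(1)$, use the same weak*-approximating net to deduce bounded invertibility of $\phi(1)$ and $1-\phi(1)$, and then verify the $\tau$-interval condition by an explicit affine optimization in $t=\tau(\phi(a))$. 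I checked the details and they go through: $x_0\in S_b$ does follow from $M$-preservation plus Theorem~\ref{thm:geometry}, the subnet extraction for $\tau(x_0 b_\alpha)\to c>0$ is legitimate because the trace of the sandwich pins this quantity in a compact interval bounded away from $0$, and the endpoint analysis of the affine lower bound correctly handles the two degeneracies. The trade-off: the paper's argument is shorter and yields the sharper bound $c(\tilde\phi_*)\leq c(\phi_*)$, whereas yours produces explicit constants, the extra conclusion that $\phi(1)$ and $1-\phi(1)$ are boundedly invertible, and a cleanly separated treatment of the degenerate unital case $\phi(1)=1$ (which the paper's formula leaves as an implicit $0/0$ convention). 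Both proofs use the weak*-closure hypothesis for the same essential purpose: to couple the deficiency functional $\tau-\tau\circ\phi$ to $\phi$ itself.
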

\begin{proof}
We first observe that the assumption on $\phi$ implies $1=(1-\phi(1)) + \phi(1)$ belongs to the weak* closure of $\phi(M)M$ so that $\phi_*$ is faithful by Lemma~\ref{lem:when_is_predual_faithful}. We also claim that for all $x,y\in L^1(M,\tau)_+\setminus\{0\}$ one has
    \begin{align*}
        m(\phi_*(x), \phi_*(y))(\tau- \tau\circ\phi_*)(y) \leq (\tau - \tau\circ\phi_*)(x).
    \end{align*}
Indeed, if $(\tau- \tau\circ\phi_*)(y)=0$ then this holds trivially, so suppose $(\tau- \tau\circ \phi_*)(y)>0$. If we let $(a_i)_{i\in I}\subset M_+$ be a net such that $\phi(a_i)\to 1-\phi(1)$ weak*, then
    \[
        \frac{ (\tau - \tau\circ \phi_*)(x)}{(\tau - \tau\circ \phi_*)(y)} = \frac{\tau(x(1-\phi(1))}{\tau(y(1-\phi(1))} = \lim_{i\to\infty} \frac{\tau(x \phi(a_i))}{\tau(y\phi(a_i))} = \lim_{i\to\infty} \frac{\tau(\phi_*(x) a_i)}{\tau(\phi_*(y) a_i)} \geq m(\phi_*(x), \phi_*(y)),
    \]
where we have used Theorem~\ref{thm:mproperties}.\ref{part:m_inf_formula} in the last inequality. Using this we have
    \[
        m(\phi_*(x),\phi_*(y)) \tilde{\phi}_*(y) = m(\phi_*(x),\phi_*(y)) \phi_*(y) +  \frac{m(\phi_*(x),\phi_*(y))(\tau-\tau\circ \phi_*)(y)}{(\tau- \tau\circ \phi_*)(1)}(1-\phi_*(1)) \leq \tilde{\phi}_*(x).
    \]
Thus $m(\phi_*(x),\phi_*(y)) \leq m(\tilde{\phi}_*(x), \tilde{\phi}_*(y))$, and consequently
    \[
        m(\tilde{\phi}_*(x), \tilde{\phi}_*(y)) m(\tilde{\phi}_*(y), \tilde{\phi}_*(x)) \geq m(\phi_*(x), \phi_*(y)) m(\phi_*(y), \phi_*(x)) = m(\phi_*\cdot x,\phi_*\cdot y) m(\phi_*\cdot y, \phi_*\cdot x),
    \]
where the equality is due to Theorem~\ref{thm:mproperties}.\ref{part:mscaling}. From this we obtain $d(\tilde{\phi}_*(x), \tilde{\phi}_*(y))\leq d(\phi_*\cdot x, \phi_*\cdot y)$ for all $x,y\in S$, and hence
    \[
        \diam(\tilde{\phi}_*(S))\leq \diam(\phi_*\cdot S).
    \]
Thus if $\phi_*$ is a strict Hennion contraction, then Proposition~\ref{prop:cont_const_is_diam} implies $\tilde{\phi}_*$ is as well, and in fact we have $\tilde{\phi}_*(S) \subset S_b^\times$ because it is unital. Since $\tilde{\phi}_*$ is the predual map of $\tilde{\phi}$, Corollary~\ref{cor:tau_interval_gives_SHC} gives that the extension of $\tilde{\phi}$ is also a strict Hennion contraction valued in $S_b^\times$.
\end{proof}

Recall \cite{farenik} that a positive normal map $\phi\colon M\to M$ is said to be \emph{reducible} if there is a nontrivial projection $p\in M$ and a constant $\lambda>0$ so that $\phi(p)\le \lambda p$. This is equivalent \cite[Proposition 1]{farenik} to the statement that $\phi(pMp) \subset pMp$. If there is no such nontrivial projection, $\phi$ is said to be \emph{irreducible}. 

\begin{cor}\label{cor:irred}
If $\gamma$ is a bounded strict Hennion contraction valued in $S_b^\times$, then $\gamma|_M$ is irreducible.
\end{cor}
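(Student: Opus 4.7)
The plan is to proceed by contrapositive: assume $\gamma|_M$ is reducible and derive that $\gamma\cdot S$ cannot be contained in $S_b^\times$. First note that the hypothesis $\gamma \cdot S \subset S_b^\times \subset M$ already guarantees $\gamma$ restricts to a genuine map $M \to M$: for $a \in M_+$ with $\tau(a) > 0$ we have $\gamma(a) = \tau(\gamma(a))\,\gamma\cdot(a/\tau(a)) \in M$, and the $a \in M_+$ with $\tau(a)=0$ are zero by faithfulness of $\tau$; linearity finishes it.

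So suppose $\gamma|_M$ is reducible: there exist a nontrivial projection $p \in M$ and $\lambda > 0$ with $\gamma(p) \le \lambda p$. The key observation is that this forces $\gamma(p)$ to be supported under $p$. Indeed, compressing the inequality $\lambda p - \gamma(p) \ge 0$ by $1-p$ gives
\[
    0 \le (1-p)(\lambda p - \gamma(p))(1-p) = -(1-p)\gamma(p)(1-p),
\]
while positivity of $\gamma(p)$ gives the opposite inequality, so $(1-p)\gamma(p)(1-p) = 0$. Since $\gamma(p) \ge 0$, taking square roots yields $\gamma(p)^{1/2}(1-p) = 0$, and hence $\gamma(p)(1-p) = 0 = (1-p)\gamma(p)$, i.e.\ $\gamma(p) = p\gamma(p) p$.

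Now, $p$ is a nontrivial projection so $\tau(p) > 0$, whence $p/\tau(p) \in S$. Faithfulness of $\gamma$ forces $\gamma(p) \ne 0$, so $\tau(\gamma(p)) > 0$ and we may form the state $x := \gamma\cdot(p/\tau(p)) = \gamma(p)/\tau(\gamma(p))$. By the support calculation above, $x = pxp$, so $(1-p)$ lies in the kernel of $x$. Because $p \ne 1$, this nonzero kernel prevents $x$ from being boundedly invertible, contradicting $x \in \gamma \cdot S \subset S_b^\times$. Hence $\gamma|_M$ must be irreducible.

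There is no real obstacle here; the argument is a direct unfolding of the definitions together with the elementary fact that a positive operator dominated on one side by a projection is automatically supported under that projection. The only minor subtlety is confirming that $\gamma$ actually sends $M$ into $M$ before even speaking of $\gamma|_M$ being reducible, and that the projective action $\gamma \cdot (p/\tau(p))$ is defined (i.e.\ $\tau(\gamma(p)) \ne 0$), both of which are immediate from the standing hypotheses.
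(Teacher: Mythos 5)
Your proof is correct. It differs from the paper's in one genuine respect: the paper first invokes Theorem~\ref{thm:strict_Hennion_projective_actions} (with $y_0=1$) to obtain a \emph{uniform} lower bound $\gamma(p)\geq \kappa\,\tau(\gamma(p))\,1$ for all projections $p$, and then observes that this is incompatible with $\gamma(p)\leq \lambda p$ for nontrivial $p$; you instead use only the pointwise fact that $\gamma\cdot(p/\tau(p))\in S_b^\times$ is boundedly invertible, together with the elementary compression argument showing $\gamma(p)\leq\lambda p$ forces $\gamma(p)=p\gamma(p)p$ and hence a nontrivial kernel. Both proofs ultimately rest on the same tension (invertibility of $\gamma(p)$ versus domination by a proper projection), but your route bypasses Theorem~\ref{thm:strict_Hennion_projective_actions} entirely and makes visible that the contraction property is not really used: only faithfulness (which is built into the definition of a strict Hennion contraction) and the hypothesis that the projective action lands in $S^\times$ are needed. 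Your preliminary verification that $\gamma(M)\subset M$ matches the paper's (the paper asserts it "by virtue of $\gamma\cdot S\subset S_b^\times$", which is exactly your rescaling argument), and the remaining details --- $\tau(\gamma(p))>0$ from faithfulness, and the passage from $(1-p)\gamma(p)(1-p)=0$ to $\gamma(p)(1-p)=0$ via the square root --- are all sound.
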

\begin{proof}
Note that $\gamma(M)\subset M$ by virtue of $\gamma\cdot S\subset S_b^\times$. Thus if $\gamma$ is bounded as a map on $L^1(M,\tau)$ then $\gamma|_M\colon M\to M$ is normal. Theorem~\ref{thm:strict_Hennion_projective_actions} then gives a $\kappa\in (0,1]$ so that $\gamma(p)\geq \kappa \tau(\gamma(p))1$ for all projections $p\in M$. Thus if $\gamma(p)\leq \lambda p$ for some $\lambda>0$, then necessarily $p=1$.
\end{proof}

\section{Ergodic Quantum Processes}\label{sec:EQP}

\begin{define}
Let $(M,\tau)$ be a tracial von Neumann algebra, let $(\Omega,\P)$ be a probability space equipped with $T\in \Aut(\Omega,\P)$, and let $\gamma_\omega \colon L^1(M,\tau)\to L^1(M,\tau)$ be a bounded random linear operator. We call the family of bounded random linear operators
            \[
                \Gamma_{n,m}^T(\omega) = \gamma_{T^n\omega}\circ \gamma_{T^{n-1}\omega}\circ \cdots \circ \gamma_{T^m \omega} \qquad\qquad n,m\in \mathbb{Z},\ n\geq m,
            \]
an (\textbf{interval}) \textbf{quantum process} on $L^1(M,\tau)$. If $T$ is ergodic, then we call the above an \textbf{ergodic} quantum process on $L^1(M,\tau)$. When no confusion can arise, we will suppress the superscript $T$.
\end{define}

In order to avoid measurability issues, we will from now on assume our von Neumann algebras all have separable predual. In Section~\ref{subsec:convergence_properties_of_EQP} we establish a number of convergence properties for ergodic quantum processes under the assumptions that $\gamma_\omega$ is bounded, positive, and faithful almost surely and that with positive probability $\Gamma_{n,m}$ is eventually a strict Hennion contraction. These results are infinite dimensional generalizations of \cite[Theorems 1 and 2]{jeff}. Of course, here one lacks the reflexivity $M_n(\C)_*\cong M_n(\C)$ present in the finite dimensional case, so in Section~\ref{subsec:right_EQP_from_normal_maps} we consider quantum processes on $M$.

\subsection{Contraction constant asymptotics}

This first lemma addresses some technical aspects of measurability. Recall that in Section~\ref{sec:cmap} we rarely required boundedness of the positive maps on $L^1(M,\tau)$. However, it will be essential in this section in order to apply Proposition~\ref{prop:contr_const_for_bounded_maps} and leverage our assumption that $L^1(M,\tau)$ is separable.

\begin{lem}\label{lem:mismeasurable}
Let $(M,\tau)$ be a tracial von Neumann algebra with a separable predual, let $(\Omega, \P)$ be a probability space, and let $\gamma_\omega:L^1(M,\tau) \to L^1(M,\tau)$ be a  random linear operator that is positive and faithful almost surely. For any $x,y\in S$ one has $m(\gamma_\omega\cdot x, \gamma_\omega \cdot y)\in L^\infty(\Omega, \P)$. Furthermore, if $\gamma_\omega$ is bounded almost surely then $c(\gamma_\omega)\in L^\infty(\Omega,\P)$.
\end{lem}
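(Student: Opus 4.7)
My plan is to reduce both $m(\gamma_\omega\cdot x,\gamma_\omega\cdot y)$ and $c(\gamma_\omega)$, which are a priori defined as uncountable extrema, to countable extrema of measurable real-valued functions on $\Omega$, exploiting separability of $L^1(M,\tau)$ at every stage. Boundedness of each $m$-quantity and of $c(\gamma_\omega)$ comes for free from Theorem~\ref{thm:mproperties}.\ref{part:mrange} and Lemma~\ref{lem:dantitone}, respectively, so the only real content is measurability.

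The first step is to produce a countable ``test family'' in $M_+$. Since $L^1(M,\tau)$ is separable, the closed unit ball $(M)_1$ is $\sigma(M,L^1)$-compact and metrizable, hence separable; its weak*-closed positive part $(M)_{1,+}$ therefore contains a countable weak*-dense subset $\{a_k:k\in\mathbb{N}\}$. I would then verify the following countable refinement of Theorem~\ref{thm:mproperties}.\ref{part:m_inf_formula}: for every nonzero $u,v\in L^1(M,\tau)_+$,
\[
m(u,v)=\inf\Bigl\{\tfrac{\tau(ua_k)}{\tau(va_k)}:k\in\mathbb{N},\ \tau(va_k)>0\Bigr\}.
\]
The $\geq$ inequality is immediate from Theorem~\ref{thm:mproperties}.\ref{part:m_inf_formula}. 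The $\leq$ direction follows by taking any $a\in M_+$ with $\tau(va)>0$, rescaling into $(M)_{1,+}$, approximating weak* by $a_{k_n}$, and using $\tau(u\,\cdot\,),\tau(v\,\cdot\,)\in L^1(M,\tau)^*\cong M$-duality to pass to the limit; Theorem~\ref{thm:mproperties}.\ref{part:mscaling} absorbs the rescaling factor.

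Applying this formula $\P$-almost surely (on the event where $\gamma_\omega$ is positive and faithful, so that $\gamma_\omega\cdot x,\gamma_\omega\cdot y\in S$ are well defined) gives
\[
m(\gamma_\omega\cdot x,\gamma_\omega\cdot y)=\inf_{k}f_k(\omega),\qquad f_k(\omega):=\begin{cases}\frac{\tau(\gamma_\omega(x)a_k)\tau(\gamma_\omega(y))}{\tau(\gamma_\omega(y)a_k)\tau(\gamma_\omega(x))},&\tau(\gamma_\omega(y)a_k)>0,\\ +\infty,&\text{else}.\end{cases}
\]
Because $\omega\mapsto \gamma_\omega(x)$ is by assumption strongly measurable as an $L^1(M,\tau)$-valued function, post-composing with the continuous linear functional $z\mapsto\tau(za_k)$ yields a measurable scalar function, so each $f_k$ is measurable on $\Omega$. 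A countable infimum of extended measurable functions is measurable, proving the first assertion; boundedness by $1$ from Theorem~\ref{thm:mproperties}.\ref{part:mrange} then places the function in $L^\infty(\Omega,\P)$.

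For the second assertion, assuming $\gamma_\omega$ is almost surely bounded, separability of $L^1(M,\tau)$ allows us to choose a countable $\|\cdot\|_1$-dense $S_0\subset S$, and Proposition~\ref{prop:contr_const_for_bounded_maps} gives $c(\gamma_\omega)=\sup_{x,y\in S_0}d(\gamma_\omega\cdot x,\gamma_\omega\cdot y)$ almost surely. Each $d(\gamma_\omega\cdot x,\gamma_\omega\cdot y)$ is a continuous function of the already-measurable pair $(m(\gamma_\omega\cdot x,\gamma_\omega\cdot y),m(\gamma_\omega\cdot y,\gamma_\omega\cdot x))$ via the defining formula, hence is measurable; the countable supremum preserves measurability and is bounded by $1$, so $c(\gamma_\omega)\in L^\infty(\Omega,\P)$. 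I expect the main obstacle to be the countable-refinement of the $m$-formula in the first step---verifying that weak*-density of $\{a_k\}$ in $(M)_{1,+}$ really does suffice to compute the infimum over all of $M_+$ is what makes the rest of the argument routine, and it is the place where separability of the predual enters essentially.
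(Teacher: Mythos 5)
Your proof is correct and follows essentially the same route as the paper: both reduce $m(\gamma_\omega\cdot x,\gamma_\omega\cdot y)$ to a countable infimum over a weak*-dense countable family in $M_+$ via Theorem~\ref{thm:mproperties}.\ref{part:m_inf_formula}, and then obtain $c(\gamma_\omega)$ as a countable supremum over a $\|\cdot\|_1$-dense $S_0\subset S$ via Proposition~\ref{prop:contr_const_for_bounded_maps} (your extra care in justifying the countable refinement by weak*-approximation is a welcome elaboration of what the paper asserts tersely). One cosmetic slip: the labels $\geq$ and $\leq$ in your countable-refinement step are swapped --- the trivial direction is $m(u,v)\leq\inf_k$ (infimum over a smaller index set), and the weak*-density argument establishes $m(u,v)\geq\inf_k$.
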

\begin{proof}
    Since $L^1(M,\tau)$ is separable, there is a countable $\sigma$-WOT dense subset $\{a_n\}_{n\in \mathbb{N}}\subset M_+$ by Theorem~\ref{thm:sep}. By setting $a_n \mapsto \frac{\tau(\gamma(y))}{\tau(\gamma(x))} \frac{\tau(a_n\gamma(x))}{\tau(a_n \gamma(y))}$ equal to $+\infty$ whenever $\tau(\gamma_\omega(x))=0$ or $\tau(a_n\gamma(y))=0$, we see that \[m(\gamma \cdot x, \gamma \cdot y) = \frac{\tau(\gamma(y))}{\tau(\gamma(x))} \inf\left\{\frac{\tau(a\gamma(x))}{\tau(a \gamma(y))}: a\in M_+ \setminus \{0\}\right\} = \frac{\tau(\gamma(y))}{\tau(\gamma(x))} \inf_{n\ge 1} \frac{\tau(a_n\gamma(x))}{\tau(a_n \gamma(y))}.\] We have therefore expressed $m(\gamma \cdot x, \gamma \cdot y)$ as the infimum of a sequence of random variables, hence it is also a random variable.

    Now suppose $\gamma_\omega$ is bounded almost surely. Fix a countable $\|\cdot\|_1$-dense subset $S_0\subset S$. Then,
        \[
            c(\gamma_\omega) = \diam(\gamma_\omega \cdot S_0)= \sup_{x,y\in S_0} d(\gamma_\omega\cdot x, \gamma_\omega\cdot y)
        \]
    almost surely by Proposition~\ref{prop:contr_const_for_bounded_maps}. The first part of the proof implies $d(\gamma_\omega\cdot x, \gamma_\omega\cdot y)$ is measurable for each $x,y\in S_0$, and hence the right-most expression above is measurable as the supremum of a countable set of measurable functions. Since the contraction constant is always bounded, we get $c(\gamma_\omega)\in L^\infty(\Omega,\P)$.
\end{proof}

\begin{rmk}\label{rem:almost_surely_bounded_is_bounded_by_measurable}
For $\gamma_\omega$ as in Lemma~\ref{lem:mismeasurable}, when $\gamma_\omega$ is bounded almost surely the separability of $L^1(M,\tau)$ also implies that $\omega\mapsto \|\gamma_\omega\|$ is measurable. Indeed, let $A_0$ and $X_0$ be countable subsets of the unit balls of $M$ and $L^1(M,\tau)$, respectively, that are dense in the $\sigma$-weak operator and $\|\cdot\|_1$ topologies, respectively. Then,
    \[
        \|\gamma_\omega\| = \sup_{\substack{a\in A_0\\ x\in X_0}} |\tau(a \gamma_\omega(x))|,  
    \]
almost surely, and thus $\|\gamma_\omega\|$ is measurable since each $\tau(a \gamma_\omega(x))$ is measurable by assumption. It follows that 
    \[
        \P[\|\gamma_\omega\|<\infty \text{ and } \|\gamma_\omega(x)\|_1\leq \|\gamma_\omega\| \|x\|_1\ \forall x\in L^1(M,\tau)]=1.
    \]
In this case, one says that $\gamma_\omega$ is a \emph{bounded} random linear operator (see \cite[Definition 2.24]{Bharucha-Reid}). $\hfill\blacksquare$
\end{rmk}

In light of the above remark, we henceforth adopt the convention of saying a random linear operator $\gamma\colon \Omega\times L^1(M,\tau)\to L^1(M,\tau)$ has a property associated to linear maps on $L^1(M,\tau)$ if for almost every $\omega\in \Omega$ the map $L^1(M,\tau)\ni x\mapsto \gamma_\omega(x)$ has the corresponding property (e.g. bounded, positive, completely positive, faithful etc.). Provided the list of properties is countable, the event that $\gamma_\omega$ has all of the properties still occurs with probability one.

Recall from Definition~\ref{def:Hennion_contraction} that $SHC(M)$ denotes the set of all strict Hennion contractions on $M$. The following lemma analyzes a standard hypothesis in our convergence results. It tells us that as long as the event $[\Gamma_{n,m}\in SHC(M)]$ occurs with positive probability for \emph{some} $n\geq m$, then for \emph{any} $m\in\mathbb{Z}$ the sequence $\Gamma_{m,m}, \Gamma_{m+1,m},\ldots$ will almost surely land in $SHC(M)$ (and by Lemma~\ref{lem:compositions_of_contractions} remain there forever after), and likewise for the sequence $\Gamma_{n,n}, \Gamma_{n,n-1},\ldots$ for any $n\in \mathbb{Z}$. This is comparable to \cite[Assumption 1]{jeff} by Example~\ref{exmp:strictly_positive} (see also \cite[Lemma 2.1]{jeff}).

\begin{lem}\label{lem:a_not_so_mild_hypothesis}
Let $(M,\tau)$ be a tracial von Neumann algebra with a separable predual, let $(\Omega, \P)$ be a probability space equipped with ergodic $T \in \Aut(\Omega,\P)$, let $\gamma_\omega:L^1(M, \tau) \to L^1(M,\tau)$ be a bounded positive faithful random linear operator, and let $\Gamma_{n,m}$ be the associated interval quantum process. Suppose 
    \[
        \P[\exists n,m\in \mathbb{Z} \text{ such that } n\geq m \text{ and } \Gamma_{n,m}\in SHC(M)]>0.
    \]
Then
    \[
        \P[\forall m\in \mathbb{Z}\ \exists n\geq m \text{ such that } \Gamma_{n,m}\in SHC(M)]=\P[\forall n\in \mathbb{Z}\ \exists m\leq n \text{ such that } \Gamma_{n,m}\in SHC(M)] =1.
    \]
\end{lem}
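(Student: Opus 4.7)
The plan is to deduce from the hypothesis, via ergodicity, that for every choice of $m$ (respectively $n$) the event ``$\Gamma_{n,m}$ is eventually a strict Hennion contraction'' occurs almost surely, and then to intersect over the countable family of $m$'s (respectively $n$'s). First I need measurability of the relevant events: since $\gamma_\omega$ is bounded, positive, and faithful almost surely and $L^1(M,\tau)$ is separable, Theorem~\ref{thm:RLOcomp} shows the composition $\Gamma_{n,m}$ is a bounded positive faithful random linear operator, and Lemma~\ref{lem:mismeasurable} then gives that $c(\Gamma_{n,m})$ is a bounded random variable. Hence $A_{n,m} := [c(\Gamma_{n,m}) < 1]$ is measurable for every $n \geq m$.

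The argument rests on two structural facts. The first is the shift identity $\Gamma_{n,m}(T\omega) = \Gamma_{n+1,m+1}(\omega)$, which yields $T^{-1}(A_{n,m}) = A_{n+1,m+1}$. The second is monotonicity: by Lemma~\ref{lem:compositions_of_contractions} the Hennion contraction constant is submultiplicative under composition of faithful positive maps, and each $\gamma_\omega$ satisfies $c(\gamma_\omega) \leq 1$ by Lemma~\ref{lem:dantitone}, so factoring $\Gamma_{n',m'}$ through $\Gamma_{n,m}$ whenever $n' \geq n$ and $m' \leq m$ gives $A_{n,m} \subseteq A_{n',m'}$.

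Set $A := \bigcup_{n \geq m} A_{n,m}$. The shift identity makes $A$ a $T^{-1}$-invariant set, so ergodicity combined with the standing hypothesis $\P[A] > 0$ upgrades this to $\P[A] = 1$. Now fix $m \in \mathbb{Z}$ and put $C_m := \bigcup_{n \geq m} A_{n,m}$. The shift identity gives $T^{-1}(C_m) = C_{m+1}$, so $\P[C_m]$ is constant in $m$ by measure preservation, while monotonicity forces $C_{m+1} \subseteq C_m$. As $m \to -\infty$ these sets increase to $\bigcup_{m \in \mathbb{Z}} C_m = A$, so continuity from below together with the constancy of $\P[C_m]$ force $\P[C_m] = \P[A] = 1$ for every $m \in \mathbb{Z}$. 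The countable intersection $\bigcap_{m \in \mathbb{Z}} C_m$ is then of full measure, which is precisely the first asserted identity. The second identity is dual: setting $D_n := \bigcup_{m \leq n} A_{n,m}$, one has $T^{-1}(D_n) = D_{n+1}$ by the shift identity while monotonicity now yields $D_n \subseteq D_{n+1}$, and the same argument runs.

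I do not anticipate a serious obstacle. The one spot requiring care is to align the directions of the two monotonicities with the shift: $C_m$ decreases as $m$ increases, so $\bigcup_m C_m$ is the limit as $m \to -\infty$, whereas $D_n$ increases as $n$ increases, so $\bigcup_n D_n$ is the limit as $n \to +\infty$. In both cases this union equals $A$, which forces the common value $\P[C_m] = \P[D_n] = 1$.
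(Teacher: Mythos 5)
Your proposal is correct and follows essentially the same route as the paper: the shift identity $\Gamma_{n,m}\circ T=\Gamma_{n+1,m+1}$, the monotonicity of the events coming from submultiplicativity of $c$ (Lemma~\ref{lem:compositions_of_contractions}), ergodicity applied to an invariant union, measure preservation to make the probabilities constant, and continuity of measure to pass to the countable intersection. The only cosmetic difference is that you apply ergodicity directly to the exactly $T$-invariant global union $A=\bigcup_{n\geq m}A_{n,m}$ and then recover $\P[C_m]=1$ via continuity from below, whereas the paper first isolates a single $m_0$ with $\P[C_{m_0}]>0$ by a union bound and applies ergodicity to the sub-invariant event $C_{m_0}$; both are valid instances of the same argument.
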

\begin{proof}
Since
    \[
        0< \P[\exists n,m\in \mathbb{Z} \text{ such that } n\geq m \text{ and } \Gamma_{n,m}\in SHC(M)] \leq \sum_{m\in \mathbb{Z}} \P[\exists n\geq m \text{ such that } \Gamma_{n,m}\in SHC(M)],
    \]
it follows that
    \[
        \P[\exists n\geq m_0 \text{ such that } \Gamma_{n,m_0}\in SHC(M)]>0
    \]
for some $m_0\in \mathbb{Z}$. Observe that $\Gamma_{n,m_0}(\omega) = \Gamma_{n+m-m_0, m}(T^{m_0-m}\omega)$ for each $m\in \mathbb{Z}$ and consequently
    \begin{align}\label{eqn:T-invariant_tails}
        [\exists n\geq m \text{ such that } \Gamma_{n,m}\in SHC(M)] = T^{m_0-m}[\exists n\geq m_0 \text{ such that } \Gamma_{n,m_0}\in SHC(M)].
    \end{align}
Also note that for any $m\geq  m'$ one has
    \[
        [\exists n\geq m \text{ such that } \Gamma_{n,m}\in SHC(M)] \subset [\exists n\geq m' \text{ such that } \Gamma_{n,m'}\in SHC(M)]
    \]
by Lemma~\ref{lem:compositions_of_contractions}. In particular, we have
    \begin{align*}
        T^{-1}[\exists n\geq m_0 \text{ such that } \Gamma_{n,m_0}\in SHC(M)] &= [\exists n\geq m_0+1 \text{ such that } \Gamma_{n,m_0+1}\in SHC(M)]\\
            &\subset [\exists n\geq m_0 \text{ such that } \Gamma_{n,m_0}\in SHC(M)].
    \end{align*}
Thus this event occurs with probability one by the ergodicity of $T$. Since $T$ is measure preserving, this along with Equation~(\ref{eqn:T-invariant_tails}) yields
    \[
        \P[\exists n\geq m \text{ such that } \Gamma_{n,m}\in SHC(M)] = \P[\exists n\geq m_0 \text{ such that } \Gamma_{n,m_0}\in SHC(M)] =1
    \]
for all $m\in \mathbb{Z}$. Finally, it follows from continuity from above that
    \[
        \P[\forall m\in \mathbb{Z}\ \exists n\geq m \text{ such that } \Gamma_{n,m}\in SHC(M)] = 1.\qedhere
    \]
\end{proof}

\begin{rmk}
If one removes the ergodicity assumption in Lemma~\ref{lem:a_not_so_mild_hypothesis}, the same proof shows
    \[
        \P[\exists n\geq m \text{ such that } \Gamma_{n,m}\in SHC(M)] = \P[\exists n\geq m_0 \text{ such that } \Gamma_{n,m_0}\in SHC(M)]>0
    \]
for all $m\in \mathbb{Z}$, and consequently
    \[
        \P[\forall m\in \mathbb{Z}\ \exists n\geq m \text{ such that } \Gamma_{n,m}\in SHC(M)]  = \P[\exists n\geq m_0 \text{ such that } \Gamma_{n,m_0}\in SHC(M)]>0.
    \]
Similarly, $\P[\forall n\in \mathbb{Z}\ \exists m\leq n \text{ such that } \Gamma_{n,m}\in SHC(M)]>0$.$\hfill\blacksquare$
\end{rmk}

These processes induce a natural filtration on the probability space. We will see that one can define a stopping time with respect to the following filtration:

\begin{note}
Let $(M,\tau)$ be a tracial von Neumann algebra with a separable predual, let $(\Omega, \P)$ be a probability space equipped with ergodic $T \in \Aut(\Omega,\P)$, let $\gamma_\omega:L^1(M, \tau) \to L^1(M,\tau)$ be a bounded positive faithful random linear operator, and let $\Gamma_{n,m}$ be the associated interval quantum process.
    \begin{enumerate}[label = (\alph*)]
        \item Fix an integer $m\in \mathbb{Z}$. For $n\ge m$ let
        \[
            \mathcal{F}^+(n,m) := \sigma( c(\Gamma^T_{n,m}), c(\Gamma_{n-1,m}^T), \dots, c(\Gamma_{m,m}^T)),
        \] 
        so that the family $\mathcal{F}^+(\bullet,m):=(\mathcal{F}^+(n,m))_{n\in  [m,\infty)}$ is the natural filtration with respect to the stochastic process $c(\Gamma_{\bullet,m}):=(c(\Gamma_{n,m}))_{n\in  [m,\infty)}$.
    
        \item Similarly, we will denote by $\mathcal{F}^-(n,\bullet):=(\mathcal{F}^-(n,m))_{m\in (-\infty, n]}$ the natural filtration associated with $c(\Gamma_{n,\bullet}):=(c(\Gamma_{n,m}))_{m\in (-\infty,n]}$ given by 
        \[
            \mathcal{F}^-(n,m) := \sigma( c(\Gamma^T_{n,n}), c(\Gamma_{n,n-1}^T), \dots, c(\Gamma_{n,m}^T)).
        \]
    \end{enumerate}
\end{note}

The following lemma is similar to \cite[Lemma 3.11]{jeff} and uses many of the ideas present in its proof as well as the proof of \cite[Lemma 2.1]{jeff}.

\begin{lem}\label{lem:randomconstproperties}
Let $(M,\tau)$ be a tracial von Neumann algebra with a separable predual, let $(\Omega, \P)$ be a probability space equipped with ergodic $T \in \Aut(\Omega,\P)$, let $\gamma_\omega:L^1(M, \tau) \to L^1(M,\tau)$ be a bounded positive faithful random linear operator, and let $\Gamma_{n,m}$ be the associated interval ergodic quantum process. Suppose that
    \[
        \P [\exists n,m\in \mathbb{Z}\colon n\geq m \text{ and }\Gamma_{n,m}\in SHC(M)] >0.    
    \]
    \begin{enumerate}[label=(\arabic*)]
    \item There exists a constant $C\in [0,1)$ so that for all $m\in \mathbb{Z}$
        \[
            \lim_{n\to +\infty} c(\Gamma_{n,m})^{\frac{1}{n-m+1}} = C
        \]
    almost surely, and for all $n\in \mathbb{Z}$
        \[
            \lim_{m\to-\infty} c(\Gamma_{n,m})^{\frac{1}{n-m+1}} = C
        \]
    almost surely.

    \item For $\kappa\in (C,1)$ and each $k\in \mathbb{Z}$, there exist finite almost surely random variables $D_{\bullet, k}$ and $D_{k,\bullet}$ satisfying
        \[
            D_{\bullet,k}(T^{\ell} \omega) = D_{\bullet, k+\ell}(\omega) \qquad \text{ and } \qquad D_{k,\bullet}(T^\ell \omega) = D_{k+\ell,\bullet}(\omega) 
        \]
    for all $\ell\in \mathbb{Z}$, and
        \begin{align*}
            c(\Gamma_{n,k}) &\leq D_{\bullet, k}\kappa^{n-k+1}\\
            c(\Gamma_{k,m)} &\leq D_{k,\bullet} \kappa^{k-m+1},
        \end{align*}
    almost surely for all $n\geq k\geq m$. In particular, $c(\Gamma_{n,k}), c(\Gamma_{k,m})\to 0$ exponentially fast almost surely as $n\to +\infty$ and $m\to -\infty$.

    \item For each $k\in \mathbb{Z}$, with respect to $\mathcal{F}^+(\bullet,k)$ (resp. $\mathcal{F}^{-}(k,\bullet)$) the random variable $\nu(\Gamma_{\bullet,k}):= \inf\{ n\geq k \colon \Gamma_{n,k}\in SHC(M)\}$ (resp. $\nu(\Gamma_{k,\bullet}):= \inf\{ m\leq k \colon \Gamma_{k,m}\in SHC(M)\}$) is a stopping time that is finite almost surely.
    \end{enumerate}
\end{lem}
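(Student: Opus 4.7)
The plan is to reduce all three parts to Kingman's subadditive ergodic theorem (Theorem~\ref{thm:kingman}) applied to the logarithms of the contraction constants. The starting observation is that $c(\Gamma_{n,m})$ is a genuine random variable by Lemma~\ref{lem:mismeasurable}, and that the shift identity $\Gamma_{n,m}(T^\ell\omega) = \Gamma_{n + \ell, m + \ell}(\omega)$ together with submultiplicativity (Lemma~\ref{lem:compositions_of_contractions}) supplies all the cocycle structure needed.

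For the forward half of part (1) (with $m = 0$), I will define $Z_n(\omega) := \log c(\Gamma_{n-1, 0}(\omega))$ for $n \geq 1$. The factorization $\Gamma_{n+m-1, 0}(\omega) = \Gamma_{n+m-1, n}(\omega) \circ \Gamma_{n-1, 0}(\omega)$ with $\Gamma_{n+m-1, n}(\omega) = \Gamma_{m-1, 0}(T^n\omega)$ yields the subadditive relation $Z_{n+m}(\omega) \leq Z_n(\omega) + Z_m(T^n\omega)$, and $Z_n \leq 0$ makes the integrability hypothesis trivial. Kingman's theorem then produces an almost-sure limit $n^{-1} Z_n \to Z$, where $Z = \inf_n n^{-1}\E[Z_n] \in [-\infty, 0]$. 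The hypothesis combined with Lemma~\ref{lem:a_not_so_mild_hypothesis} gives some $n_0$ with $\P[Z_{n_0} < 0] > 0$, so $\E[Z_{n_0}] < 0$ and therefore $C := e^Z < 1$ (using the convention $e^{-\infty} := 0$). General fixed $m$ then follows from $c(\Gamma_{n, m}(\omega)) = c(\Gamma_{n - m, 0}(T^m\omega))$ and $T$-invariance of $\P$. For the backward half (fixed $n$, $m \to -\infty$), I will apply Kingman's theorem with the ergodic automorphism $U := T^{-1}$ to $b_n(\omega) := \log c(\Gamma_{0, -n+1}(\omega))$, whose analogous factorization yields $b_{n+m}(\omega) \leq b_n(\omega) + b_m(U^n\omega)$. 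Since $\Gamma_{0, -n+1}(\omega) = \Gamma_{n-1, 0}(T^{-(n-1)}\omega)$, $T$-invariance forces $\E[b_n] = \E[Z_n]$, so both infima of time-averaged expectations coincide and both asymptotics produce the same constant $C$.

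For part (2), given $\kappa \in (C, 1)$, I will set
\[
D_{\bullet, k}(\omega) := \sup_{n \geq k} \frac{c(\Gamma_{n, k}(\omega))}{\kappa^{n - k + 1}},
\]
with $D_{k, \bullet}$ defined analogously via the backward limit. The convergence $c(\Gamma_{n, k})^{1/(n - k + 1)} \to C < \kappa$ guarantees that the ratio is eventually less than $1$, so the supremum is an essentially finite maximum almost surely. Translation covariance $D_{\bullet, k}(T^\ell\omega) = D_{\bullet, k + \ell}(\omega)$ is immediate from the shift identity after reindexing the supremum, and the exponential bound on $c(\Gamma_{n, k})$ is then tautological. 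Part (3) is also short: measurability of $c(\Gamma_{j, k})$ makes $\{\nu(\Gamma_{\bullet, k}) \leq n\} = \bigcup_{j = k}^{n} \{c(\Gamma_{j, k}) < 1\}$ an $\mathcal{F}^+(n, k)$-event, and finiteness almost surely follows either from part (1) or directly from Lemma~\ref{lem:a_not_so_mild_hypothesis}.

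The main obstacle I expect is verifying that the two one-sided limits in part (1) produce the \emph{same} constant $C$. Kingman's theorem applies cleanly to the forward direction, but the backward asymptotic requires building a second cocycle with respect to $T^{-1}$ and then using $T$-invariance to match the two time-averaged expectation infima. The shift bookkeeping under both $T$ and $T^{-1}$ is where the most care is required.
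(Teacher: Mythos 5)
Your proposal is correct and takes essentially the same route as the paper: Kingman's theorem applied to $\log c(\Gamma_{\cdot,\cdot})$, with submultiplicativity supplied by Lemma~\ref{lem:compositions_of_contractions}, the hypothesis entering through Lemma~\ref{lem:a_not_so_mild_hypothesis} to force $C<1$, the same supremum construction of $D_{\bullet,k}$ and $D_{k,\bullet}$ in part (2), and part (3) reducing to the stopping-time observation of Lemma~\ref{lem:submultiplicativeStoppingtime}. The only (cosmetic) difference is that you match the forward and backward constants by noting $\E[\log c(\Gamma_{0,-n+1})]=\E[\log c(\Gamma_{n-1,0})]$ directly from $T$-invariance, whereas the paper runs a short $\epsilon$-comparison of the two infima; these are the same fact.
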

\begin{proof}

\begin{enumerate}[label = \textbf{(\arabic*):}]
\item Fix $m \in \mathbb{Z}$ and let $X_k:=\log[c(\Gamma_{m+k-1,m})]$ for $k\in \mathbb{N}$. By Lemma~\ref{lem:compositions_of_contractions}, 
    \[
        X_{k+\ell}(\omega) \leq X_k(\omega)+ \log [ c(\Gamma_{m+k+\ell-1,m+k}(\omega)] =X_k(\omega) + X_{\ell}(T^k\omega),
    \] 
holds almost surely. Additionally, $X_k^{+}\equiv 0$ since $c(\Gamma_{m+k-1,m})\le 1$.  Thus, by writing $n=m+k-1$, an application of Theorem~\ref{thm:kingman} tells us that the sequence $(c(\Gamma_{n,m})^{\frac{1}{n-m+1}})_{n\colon n\geq m}$ converges almost surely to the constant
    \[
        C_m:= \exp\left(\inf_{n : n\geq m} \frac{1}{n-m+1} \mathbb{E}[\log(c(\Gamma_{n,m}))] \right).
    \]
Now, Lemma~\ref{lem:a_not_so_mild_hypothesis} tells us that
    \[
       \bigcup_{n\geq m} [c(\Gamma_{n,m})<1] = [\exists n\geq m \text{ such that } \Gamma_{n,m}\in SHC(M)],
    \]
occurs with probability one. Therefore there exists $n_1\geq m$ so that $\mathbb{P}[c(\Gamma_{n_1,m})<1]>0$. Consequently,
    \[
        \log(C_m) = \inf_{n\geq m} \frac{1}{n-m+1} \mathbb{E}[\log(c(\Gamma_{n,m}))] \leq \frac{1}{n_1-m+1}\mathbb{E}[\log(c(\Gamma_{n_1,m}))]<0,
    \]
and so $C_m<1$. Also, since $T$ is measure preserving for any $m,m'\in \mathbb{Z}$ we have
    \begin{align*}
        C_m &= \inf_{k\geq 0} \frac{1}{k+1} \E[\log(c(\Gamma_{m+k,m}))]\\
            &= \inf_{k\geq 0} \frac{1}{k+1} \E[\log(c(\Gamma_{m+k,m}\circ T^{m'-m}))] \\
            &= \inf_{k\geq 0} \frac{1}{k +1}\E[\log(c(\Gamma_{m'+k,m'}))] = C_{m'}.
    \end{align*}
So we set $C:= C_0$.

Now, using $Y_k:= \log[c(\Gamma_{n,n-k-1})]$, $k\in \N$ and the fact that $T^{-1}$ is also ergodic, the same argument as above yields another constant $C'\in [0,1)$
such that for all $n\in \mathbb{Z}$
    \[
        C'= \exp\left(\inf_{m: m\leq n}\frac{1}{n-m+1} \E[\log(c(\Gamma_{n,m}))]\right)
    \]
and
    \[
        \lim_{m\to-\infty} c(\Gamma_{n,m})^{\frac{1}{n-m+1}} = C'
    \]
almost surely. Given $\epsilon>0$, let $n\geq 1$ be large enough so that
    \[
        \exp\left(\frac{1}{n} \E[\log(c(\Gamma_{n,1}))]\right) \leq C+\epsilon.
    \]
Then
    \[
        C' = \exp\left(\inf_{m\colon m\leq n} \frac{1}{n-m+1} \E[\log(c(\Gamma_{n,m}))]\right) \leq \exp\left(\frac{1}{n} \E[\log(c(\Gamma_{n,1}))]\right) \leq C+ \epsilon.
    \]
Letting $\epsilon\to 0$ yields $C'\leq C$, and reversing the roles of $n$ and $m$ gives $C'=C$.

\item Fix $\kappa\in (C,1)$ and $k\in \mathbb{Z}$ and define
    \[
        D_{\bullet,k}:= 1 \vee \sup_{n: n\geq k} \frac{c(\Gamma_{n,k})}{\kappa^{n-k+1}} \qquad \text{ and } \qquad D_{k,\bullet}:= 1 \vee \sup_{m: m\leq k} \frac{c(\Gamma_{k,m})}{\kappa^{k-m+1}},
    \]
which are random variables by Lemma~\ref{lem:mismeasurable}. By the previous part, for almost every $\omega\in \Omega$
    \[
        \lim_{n\to+\infty} c( \Gamma_{n,k})^{\frac{1}{n-k+1}} = \lim_{m\to -\infty} c(\Gamma_{k,m})^{\frac{1}{k-m+1}}=C< \kappa.
    \]
Consequently there is an $\ell_0\geq 0$ (depending on $\kappa$, $k$, and $\omega$) so that
    \[
        \frac{c(\Gamma_{n,k})}{\kappa^{n-k+1}}, \frac{c(\Gamma_{k,m})}{\kappa^{k-m+1}} \leq 1
    \]
for all $n> k+\ell_0$ and $m < k- \ell_0$, which implies
    \begin{align*}
        D_{\bullet,k}(\omega) &= 1 \vee \max_{n: k \leq n \leq k+\ell_0} \frac{c(\Gamma_{n,k}(\omega))} {\kappa^{n-k+1}} \leq \frac{1}{\kappa^{\ell_0+1}} <\infty,\\
        D_{k,\bullet}(\omega) &= 1 \vee \max_{m: k-\ell_0\leq m \leq k} \frac{c(\Gamma_{k,m}(\omega))}{\kappa^{k-m+1}} \leq \frac{1}{\kappa^{\ell_0+1}}<\infty.
    \end{align*}
Hence $D_{\bullet,k}$ and $D_{k,\bullet}$ are finite almost surely, and the remaining properties follow from their definition.

\item This is an immediate consequence of Lemma~\ref{lem:submultiplicativeStoppingtime}.\qedhere
\end{enumerate}
\end{proof}

\subsection{Ergodic quantum processes from normal maps}\label{subsec:right_EQP_from_normal_maps}

As noted above, in order to emulate the proofs of \cite{jeff} it is necessary to consider the duals of quantum processes, which according to Lemma~\ref{lem:normalbddextension} should correspond to \emph{random} normal positive linear maps on $M$. In fact, from the perspective of von Neumann algebras, such maps may be even more natural than those on $L^1(M,\tau)$. The goal of this section is to formalize this notion and relate it to random linear operators on $L^1(M,\tau)$.

\begin{define}\label{def:wstarRLO}
Let $(M,\tau)$ be a tracial von Neumann algebra and $(\Omega, \P)$ be a probability space. A \textbf{weak* random variable} is a function $f\colon \Omega\to M$ such that $\tau(f(\omega)x)$ is measurable for all $x\in L^1(M,\tau)$. We say that a mapping $\phi\colon \Omega\times M\to M$ is a \textbf{weak* random linear operator} if $\omega\mapsto \phi_\omega(a)$ is a weak* random variable for all $a\in M$ and $\P[\phi(\alpha a + b)= \alpha \phi(a) + \phi(b)]=1$ for all $a,b\in M$ and $\alpha\in \C$.
\end{define}

As we did with random linear operators on $L^1(M,\tau)$, we adopt the convention of saying a weak* random linear operator $\phi\colon \Omega\times M\to M$ has a property associated to linear maps on $M$ if for almost every $\omega\in \Omega$ the map $M\ni a\mapsto \phi_\omega(a)$ has the corresponding property (e.g. normal, positive, completely positive, $\tau$-bounded). 

We first establish the random version of the correspondence in Lemma~\ref{lem:normalbddextension} between normal linear maps on $M$ and bounded linear maps on $L^1(M,\tau)$.

\begin{lem}\label{lem:measurabilityTest}
Let $(M,\tau)$ be a tracial von Neumann algebra with separable predual, and let $(\Omega,\P)$ be a probability space. Up to almost sure equality, there is a one-to-one correspondence between normal positive (resp. completely positive) weak* random linear operators $\phi_\omega$ on $M$ and bounded positive (resp. completely positive) random linear maps $(\phi_\omega)_*$ on $L^1(M,\tau)$ determined by
    \begin{align}\label{eqn:random_tau_relation}
        \tau((\phi_\omega)_*(x) a)=\tau(x\phi_\omega(a))  \qquad\qquad \omega\in \Omega,\ x\in L^1(M,\tau),\ a\in M.
    \end{align}
This correspondence restricts to a one-to-one correspondence between $\tau$-bounded normal positive (resp. completely positive) weak* random linear operators on $M$ and $M$-preserving bounded positive (resp. completely positive) random linear operators on $L^1(M,\tau)$. The former maps $\phi_\omega$ also admit unique extensions $\phi_\omega|^{L^1(M,\tau)}$ to $L^1(M,\tau)$ that are $M$-preserving bounded and positive (resp. completely positive), and the latter maps $(\phi_\omega)_*$ also have restrictions $(\phi_\omega)_*|_M$ to $M$ that are $\tau$-bounded normal and positive (resp. completely positive). In this case, one has $((\phi_\omega)_*|_M)_*= \phi_\omega|^{L^1(M,\tau)}$.
\end{lem}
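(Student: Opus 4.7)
The strategy is to reduce everything to the deterministic correspondence of Lemma~\ref{lem:normalbddextension} and handle measurability via Pettis's theorem (Theorem~\ref{thm:Pettis}), which is where the separability of $L^1(M,\tau)$ is essential. I will treat positivity and complete positivity uniformly, since the deterministic correspondence handles them in parallel and the a.s.\ properties pass through without additional work.

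First I would establish the main correspondence. For each $\omega$ in the probability-one set where $\phi_\omega$ is normal and positive (resp.\ completely positive), Lemma~\ref{lem:normalbddextension} produces a bounded positive (resp.\ completely positive) map $(\phi_\omega)_*$ satisfying~(\ref{eqn:random_tau_relation}); declare it zero on the complementary null set. To check that $\omega\mapsto (\phi_\omega)_*(x)$ is strongly measurable into $L^1(M,\tau)$ for each $x$, Theorem~\ref{thm:Pettis} reduces the claim to measurability of $\omega \mapsto \tau(a(\phi_\omega)_*(x))$ for each $a\in M=L^1(M,\tau)^*$, which via~(\ref{eqn:random_tau_relation}) is precisely the weak* measurability hypothesis on $\phi$. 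The reverse direction is entirely symmetric: given a bounded positive random linear operator $(\phi)_*$, define $\phi_\omega$ pointwise by deterministic duality, and observe that weak* measurability $\omega\mapsto \tau(x\phi_\omega(a)) = \tau((\phi_\omega)_*(x)a)$ follows from Pettis applied to the strongly measurable map $\omega\mapsto (\phi_\omega)_*(x)$ paired against $a\in M$.

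For the restricted correspondence, the pointwise equivalence of $\tau$-boundedness of $\phi_\omega$ with $M$-preservation of $(\phi_\omega)_*$ is immediate from Lemma~\ref{lem:normalbddextension} and passes to the almost-sure setting by convention. The main technical obstacle I anticipate is the measurability of the extensions and restrictions, because the relevant operator norms are $\omega$-dependent and so one cannot invoke uniform bounds. For the extension $\phi_\omega|^{L^1(M,\tau)}$, the plan is to fix $x\in L^1(M,\tau)$, choose a $\|\cdot\|_1$-approximating sequence $(x_n)\subset M$, note that each $\omega\mapsto \phi_\omega(x_n)$ is strongly measurable by the preceding paragraph, and then exploit that $\phi_\omega|^{L^1(M,\tau)}$ is almost surely bounded to conclude $\phi_\omega(x_n)\to \phi_\omega|^{L^1(M,\tau)}(x)$ in $\|\cdot\|_1$ almost surely. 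Since pointwise a.s.\ limits of strongly measurable maps are strongly measurable, this yields the required measurability. The restriction $(\phi_\omega)_*|_M$ is handled analogously: $(\phi_\omega)_*(a)\in M$ almost surely by $M$-preservation, and for each $x\in L^1(M,\tau)$ the function $\omega\mapsto \tau(x(\phi_\omega)_*(a))$ is the a.s.\ pointwise limit of the measurable functions $\omega\mapsto \tau(b_n(\phi_\omega)_*(a))$ for $b_n\in M$ approximating $x$ in $\|\cdot\|_1$, with convergence using the almost sure operator-norm bound on $(\phi_\omega)_*(a)$.

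Uniqueness up to almost sure equality is inherited directly from the pointwise uniqueness in Lemma~\ref{lem:normalbddextension}, and the identity $((\phi_\omega)_*|_M)_*=\phi_\omega|^{L^1(M,\tau)}$ likewise holds almost surely for the same pointwise reason. In summary, the proof is a measurability exercise layered over the already-proven deterministic bijection, with Pettis's theorem and the separability of the predual doing all of the heavy lifting.
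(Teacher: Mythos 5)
Your proposal is correct and follows essentially the same route as the paper: reduce to the deterministic correspondence of Lemma~\ref{lem:normalbddextension}, use Pettis's theorem together with separability of $L^1(M,\tau)$ to convert weak(*) measurability into strong measurability, and handle the extensions and restrictions by approximating $x\in L^1(M,\tau)$ with a $\|\cdot\|_1$-convergent sequence from $M$. The only cosmetic difference is that you upgrade to almost sure $\|\cdot\|_1$-convergence of $\phi_\omega(x_n)$ before invoking closure of strong measurability under a.s.\ limits, whereas the paper passes to the limit only weakly against each $a\in M$ and then applies Pettis once more; both are valid.
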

\begin{proof}
By Lemma~\ref{lem:normalbddextension}, it suffices to check measurability. For the first correspondence, this is a consequence of Equation~(\ref{eqn:random_tau_relation}). (Note that we are invoking the separability of $L^1(M,\tau)$ here to reduce checking that $(\phi_\omega)_*(x)$ is a random variable to checking that it is weakly measurable.) For the measurability of extensions and restrictions, note that if $(b_n)_{n\in \N}\subset M$ converges to $x\in L^1(M,\tau)$ in $\|\cdot\|_1$-norm, then for any $a\in M$ one has
    \[
        \tau(\phi_\omega(x) a) = \lim_{n\to\infty} \tau(\phi_\omega(b_n) a) = \lim_{n\to\infty} \tau(b_n (\phi_\omega)_*(a)) = \tau(x (\phi_\omega)_*(a))
    \]
almost surely. Each $\tau(\phi_\omega(b_n) a)$ is measurable, from which it follows that the extension $\phi_\omega|^{L^1(M,\tau)}$ is a random linear operator and the restriction $(\phi_\omega)_*|_M$ is a weak* random linear operator.
\end{proof}

\begin{rmk}
Note that if $c_\omega$ denotes the norm of $\phi_\omega$ on $L^1(M,\tau)$ whenever the extension exists, then $\omega\to c_\omega$ is a random variable by Remark~\ref{rem:almost_surely_bounded_is_bounded_by_measurable}. Thus when $L^1(M,\tau)$ is separable, $\phi_\omega$ being $\tau$-bounded almost surely is equivalent to there existing a (finite almost surely) random variable $c\colon \Omega\to [0,\infty]$ so that $\mathbb{P}[\tau\circ \phi_\omega(x^*x) \leq c_\omega \tau(x^*x)\ \forall x\in M]=1$. $\hfill\blacksquare$
\end{rmk}

Of course, the first correspondence in Lemma~\ref{lem:measurabilityTest} is still true without any positivity assumptions, and consequently compositions of normal weak* random linear operators on $M$ give normal weak* random linear operators. Indeed, the predual maps are bounded random linear operators on $L^1(M,\tau)$ whose composition (in the reverse order) is also a bounded random linear operator by Lemma~\ref{thm:RLOcomp}. The dual of this then gives a weak* random linear operator that is almost surely equal to the composition of the original weak* random linear operators.

\begin{define}
Let $(M,\tau)$ be a tracial von Neumann algebra, let $(\Omega,\P)$ be a probability space equipped with $T\in \Aut(\Omega,\P)$, and let $\phi_\omega \colon M\to M$ be a normal weak* random linear operator. We call the family of normal weak* random linear operators
    \[
        \Phi_{n,m}^T(\omega):= \phi_{T^n\omega}\circ \cdots \circ \phi_{T^m\omega} \qquad\qquad n,m\in \mathbb{Z},\ n\geq m
    \]
an (\textbf{interval}) \textbf{quantum process} on $M$. If $T$ is ergodic, then we call the above an \textbf{ergodic} quantum process on $M$. When no confusion can arise, we will supress the superscript $T$.
\end{define}

The most common example of a quantum process on $M$ that we shall consider is the following. Suppose $\Gamma_{n,m}^T$ is a quantum process on $L^1(M,\tau)$ associated to some bounded positive random linear operator $\gamma_\omega$ and $T\in \Aut(\Omega,\P)$. Denote $\phi_\omega:=\gamma_\omega^*$, which is a normal positive weak* random linear operator on $M$ by Lemma~\ref{lem:measurabilityTest}. Then quantum process $\Phi_{n,m}^{T^{-1}}$ on $M$ associated to $\phi_\omega$ and $T^{-1}$ satisfies
    \begin{align}\label{eqn:dual_quantum_process}
    \begin{split}
        (\Phi_{n,m}^{T^{-1}}(\omega))_* &= \Gamma_{-m,-n}^{T}(\omega),\\
        (\Gamma_{n,m}^T(\omega))^* &= \Phi_{-m,-n}^{T^{-1}}(\omega),
    \end{split}
    \end{align}
for all $n\geq m$ and $\omega\in \Omega$.

\subsection{Convergence properties}\label{subsec:convergence_properties_of_EQP}

We now prove the first main result of this section. This is the analogue of \cite[Lemma 3.14]{jeff}, which gives roughly half the proof of \cite[Theorem 1]{jeff} (see Remark~\ref{rmk:recovering_jeff_results} below). The key observation is that $\Gamma_{n,m-1}\cdot S \subset \Gamma_{n,m}\cdot S$ and that the diameter of these sets tends to zero almost surely as $m\to -\infty$ by Lemma~\ref{lem:randomconstproperties}. This is more or less the same proof as in the finite dimensional case, where one can also treat the limit $n\to +\infty$ by considering the dual process $\Gamma_{n,m}^*(\omega)$. This limit is analyzed in \cite[Lemma 3.12]{jeff}, which forms the other half of the proof of \cite[Theorem 1]{jeff}. However, in the infinite dimensional setting $\Gamma_{n,m}^*$ is a process on $M$ rather than $L^1(M,\tau)$ and therefore requires a separate argument that we present as Theorem~\ref{thm:right_convergence_on_M} (see also the proof of Corollary~\ref{cor:analogue_of_jeff2}).

\begin{thm}[{Theorem~\ref{thmx:A}}]\label{thm:right_convergence}
Let $(M,\tau)$ be a tracial von Neumann algebra with a separable predual, let $(\Omega, \P)$ be a probability space equipped with ergodic $T\in \Aut(\Omega, \P)$, let $\gamma_\omega\colon L^1(M,\tau)\to L^1(M,\tau)$ be a bounded positive faithful random linear operator, and let $\Gamma_{n,m}$ be the associated interval ergodic quantum process. Suppose that
    \[
        \P [\exists n,m\in \mathbb{Z}\colon n\geq m \text{ and }\Gamma_{n,m}\in SHC(M)] >0.    
    \]
Then there exists a family of random variables $X_n\colon \Omega\to S$, $n\in \mathbb{Z}$, satisfying
    \begin{align}\label{eqn:right_process_limit_properties}
        \gamma_{T^{n+1}\omega}\cdot X_n(\omega) = X_{n+1}(\omega) \qquad \text{ and } \qquad X_n(T^{\pm1}\omega) = X_{n\pm 1}(\omega)
    \end{align}
almost surely, and for all $x\in S$
    \[
            \lim_{m\to -\infty} \|\Gamma_{n,m}\cdot x - X_n\|_1 =0
    \]
almost surely for all $n\in \mathbb{Z}$.
\end{thm}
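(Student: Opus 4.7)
The plan is to show that, for each fixed $n\in\mathbb{Z}$ and each fixed $x\in S$, the sequence $(\Gamma_{n,m}(\omega)\cdot x)_{m\leq n}$ is almost surely Cauchy in Hennion's metric $d$, with a limit independent of $x$. The state-valued random variable $X_n(\omega)$ will then arise from the completeness of $(S,d)$ established in Theorem~\ref{thm:tracemetricinequality_and_completeness}, while the claimed $\|\cdot\|_1$-convergence follows from the inequality $\tfrac12\|\cdot-\cdot\|_1\leq d$ in that same theorem.

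The Cauchy property is the heart of the argument. For $m'<m\leq n$ one has the factorization $\Gamma_{n,m'}(\omega)=\Gamma_{n,m}(\omega)\circ\Gamma_{m-1,m'}(\omega)$, and therefore
\[
\Gamma_{n,m'}(\omega)\cdot x \;=\; \Gamma_{n,m}(\omega)\cdot\bigl(\Gamma_{m-1,m'}(\omega)\cdot x\bigr)\;\in\;\Gamma_{n,m}(\omega)\cdot S,
\]
where each projective action is well-defined on the almost-sure event that $\gamma_\omega$ is faithful. Combining this with Proposition~\ref{prop:cont_const_is_diam} yields
\[
d\bigl(\Gamma_{n,m}(\omega)\cdot x,\,\Gamma_{n,m'}(\omega)\cdot x\bigr)\;\leq\;\diam_d\bigl(\Gamma_{n,m}(\omega)\cdot S\bigr)\;=\;c\bigl(\Gamma_{n,m}(\omega)\bigr),
\]
and Lemma~\ref{lem:randomconstproperties}(2) forces $c(\Gamma_{n,m}(\omega))\to 0$ exponentially fast almost surely as $m\to -\infty$. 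The same diameter bound applied with $y\in S$ in place of $x$ shows that the limit is independent of the initial state, so we may unambiguously define $X_n(\omega)$ to be this common limit.

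For the covariance, the identity $\Gamma_{n,m}(T\omega)=\Gamma_{n+1,m+1}(\omega)$ together with the reindexing $m\mapsto m+1$ immediately gives $X_n(T\omega)=X_{n+1}(\omega)$. To obtain the recursion $\gamma_{T^{n+1}\omega}\cdot X_n(\omega)=X_{n+1}(\omega)$, I would use $\Gamma_{n+1,m}=\gamma_{T^{n+1}\omega}\circ\Gamma_{n,m}$, so that $\Gamma_{n+1,m}\cdot x=\gamma_{T^{n+1}\omega}\cdot(\Gamma_{n,m}\cdot x)$; letting $m\to-\infty$, the left-hand side $\|\cdot\|_1$-converges to $X_{n+1}(\omega)$, while on the right, boundedness of $\gamma_{T^{n+1}\omega}$ passes the $\|\cdot\|_1$-limit through the operator, and faithfulness guarantees that $\tau(\gamma_{T^{n+1}\omega}(X_n(\omega)))>0$ so that the normalization in the projective action is continuous at $X_n(\omega)$. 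Measurability of $X_n$ is inherited from that of each $\omega\mapsto\Gamma_{n,m}(\omega)\cdot x$, which follows from Theorem~\ref{thm:RLOcomp} together with measurability and almost-sure positivity of the normalizing constant $\tau(\Gamma_{n,m}(\omega)x)$; one then redefines $X_n$ to equal any fixed element of $S$ on the exceptional null set to obtain an everywhere-defined random variable.

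The main obstacle is not conceptual but logistical: one must carefully intersect the countable collection of full-measure events on which $\gamma_{T^k\omega}$ (for each $k\in\mathbb{Z}$) is bounded, positive, and faithful, with the full-measure event from Lemma~\ref{lem:randomconstproperties}(2) on which $c(\Gamma_{n,m}(\omega))\to 0$, and then justify that the projective action is continuous at $X_n(\omega)$ in the sense needed to pass $\|\cdot\|_1$-limits through it—both ultimately relying on the almost-sure faithfulness assumption.
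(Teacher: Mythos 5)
Your proposal is correct and follows essentially the same route as the paper: both arguments rest on $\diam_d(\Gamma_{n,m}\cdot S)=c(\Gamma_{n,m})\to 0$ almost surely (Proposition~\ref{prop:cont_const_is_diam} and Lemma~\ref{lem:randomconstproperties}), completeness of $(S,d)$, and the inequality $\tfrac12\|\cdot\|_1\leq d$. The only cosmetic differences are that the paper phrases the existence of $X_n$ via the Cantor intersection theorem applied to the nested sets $S_{n,m}=\Gamma_{n,m}\cdot S$ rather than a Cauchy sequence, and reads off the covariance relations directly from the identities $\gamma_{T^{n+1}\omega}\cdot S_{n,m}(\omega)=S_{n+1,m}(\omega)$ and $S_{n,m}(T^{\pm1}\omega)=S_{n\pm1,m\pm1}(\omega)$ instead of passing limits through the projective action.
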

\begin{proof}
For each $\omega\in \Omega$ define the family of (random) sets $S_{n,m}(\omega):= \Gamma_{n,m}(\omega)\cdot S$, $m\leq n$, and observe that $S_{n,m-1}\subset S_{n,m}$ by construction. By Proposition~\ref{prop:cont_const_is_diam} and Lemma~\ref{lem:randomconstproperties}, we know that $\diam(S_{n,m})= c(\Gamma_{n,m}) \to 0$ almost surely as $m\to - \infty$. Moreover, since $S$ is complete by Therorem~\ref{thm:tracemetricinequality_and_completeness}, we may invoke the Cantor intersection theorem to conclude 
    \[
        \bigcap_{m\colon m\leq n} S_{n,m}(\omega)
    \]
consists of a single element $X_n(\omega)$ for almost every $\omega$. Note that the relations in Equation~(\ref{eqn:right_process_limit_properties}) follow from the relations
    \[
        \gamma_{T^{n+ 1}\omega} \cdot S_{n,m}(\omega) = S_{n+ 1,m}(\omega) \qquad { and } \qquad S_{n,m}(T^{\pm 1}\omega) = S_{n\pm 1,m\pm 1}(\omega)
    \]
for each $\omega\in \Omega$.

Now, fix $n\in \mathbb{Z}$ and $x\in S$. By Theorem~\ref{thm:tracemetricinequality_and_completeness} we have
    \[
        \|\Gamma_{n,m}\cdot x - X_n\|_1 \leq 2 d(\Gamma_{n,m}\cdot x, X_n) \leq 2 \diam(S_{n,m}) = 2 c(\Gamma_{n,m}),
    \]
which tends to zero almost surely as $m\to-\infty$ by Lemma~\ref{lem:randomconstproperties}. Consequently, for all $a\in M$ we have
    \[
        \tau(X_n a) = \lim_{m\to -\infty} \tau((\Gamma_{n,m}\cdot x) a),
    \]
almost surely so that $X_n$ is weakly measurable, and hence a random variable by Theorem~\ref{thm:Pettis}.
\end{proof}

\begin{exmp}
Let $(M,\tau)$ be a tracial von Neumann algebra with separable predual and let $(\Omega,\mathbb{P})$ be a locally compact Hausdorff space with a Radon probability measure. Denote $N:=M\bar\otimes L^\infty(\Omega,\mathbb{P})$ and $\varphi:=\tau\otimes \int_\Omega\cdot\ d\mathbb{P}$. By \cite[Theorem IV.7.17]{TakesakiI}  $L^1(N,\varphi)$ can be identified with functions $f\colon \Omega\to L^1(M,\tau)$ such that $\omega\mapsto f_\omega(a)$ is measurable for all $a\in M$ and
    \[
        \int_{\Omega} \|f_\omega\|_1\ d\mathbb{P}(\omega) <\infty
    \]
(see also \cite[Propostions IV.7.2 and IV.7.4]{TakesakiI}). In particular, if $f\in L^1(N,\varphi)_+$ then $f_\omega\in L^1(M,\tau)_+$ almost surely.  Fix $f\in L^1(N,\varphi)_+$ which is non-zero almost surely and $\eta\in (0,1]$. Let $\{(a_i, f^{(i)})\in M_+\times L^1(N,\varphi)_+\colon i\in I\}$ be a countable family satisfying:
    \begin{enumerate}[label=(\arabic*)]
    \item $a:= \sum_{i\in I} a_i$ converges in the strong operator topology;
    
    \item $\tau(xa)>0$ for all $x\in L^1(M,\tau)_+\setminus\{0\}$;
    
    \item $0<f^{(i)} \leq \eta^{-1} f$ almost surely for all $i\in I$;

    \item $\P[\forall i\in I\ \eta f \leq f^{(i)}]>0$.
    \end{enumerate}
Then by Example~\ref{exmp:strongly_summable},
    \[
        \gamma(x):= \sum_{i\in I} \varphi(x(a_i\otimes 1)) f^{(i)}
    \]
defines a bounded positive faithful linear map on $L^1(N,\varphi)$ satisfying $\gamma\cdot x\leq \kappa^{-1} f^{(0)}$, where $f^{(0)} = \frac{f}{\varphi(f)}$ and $\kappa = \eta^2$. Consequently, 
    \[
        \gamma_\omega(x):=\gamma(x\otimes 1) = \sum_{i\in I} \tau(x a_i) f^{(i)}_\omega
    \]
defines a bounded random linear operator on $L^1(M,\tau)$. Moreover, $\gamma$ is positive and faithful almost surely. Indeed, recalling that $I$ is countable we see that $f^{(i)} \in L^1(M,\tau)_+$ almost surely implies $\gamma(x)$ is positive almost surely for $x\in L^1(M,\tau)_+$. Next (3) implies $f^{(i)}$ is non-zero almost surely so that
    \[
        \mathbb{P}[ \exists i\in I \text{ such that } f_\omega^{(i)}=0]=0.
    \]
Thus for $x\in L^1(M,\tau)_+\setminus\{0\}$
    \[
        \mathbb{P}[\gamma(x)=0]=\mathbb{P}[\forall i\in I\  \tau(xa_i)=0]=\mathbb{P}[\tau(xa) =0]=0
    \]
by (2). We also note that $\kappa f^{(0)} \leq \gamma\cdot x\leq \kappa^{-1} f^{(0)}$ holds with positive probability by (3) and (4), which tells us that $\gamma_\omega$ is a strict Hennion contraction with positive probability by Theorem~\ref{thm:strict_Hennion_projective_actions}. 

Now, let $T\in \text{Aut}(\Omega,\mathbb{P})$ be an ergodic automorphism. Then the associated interval ergodic quantum process is given by
    \[
        [\Gamma_{n,m}(\omega)](x) = \sum_{i_m,\ldots, i_n\in I} \tau(xa_{i_m})\tau(f_{T^m\omega}^{(i_m)}a_{i_{m+1}}) \cdots  \tau(f_{T^{n-1}\omega}^{(i_{n-1})} a_{i_n}) f_{T^n\omega}^{(i_n)}, 
    \]
and satisfies
    \[
        \P[\exists n,m\in \mathbb{Z}\colon n\geq m \text{ and } \Gamma_{n,m}\in SHC(M)] >0
    \]
since we noted above that $\Gamma_{0,0}=\gamma$ is a strict Hennion contraction with positive probability. Theorem~\ref{thm:right_convergence} therefore yields a family of random variables $F_n\colon \Omega \to S$ (which we can identify with elements of $L^1(N,\varphi)_+$) so that 
    \[
        \lim_{m\to-\infty} \frac{1}{\tau([\Gamma_{n,m}(\omega)](x)} \sum_{i_m,\ldots, i_n\in I} \tau(xa_{i_m})\tau(f_{T^m\omega}^{(i_m)}a_{i_{m+1}}) \cdots  \tau(f_{T^{n-1}\omega}^{(i_{n-1})} a_{i_n}) f_{T^n\omega}^{(i_n)} = F_n(\omega)
    \]
in $\|\cdot\|_1$-norm almost surely. Additionally, one has
    \[
        F_{n+1}(\omega) = \gamma_{T^{n+1}\omega}\cdot F_n(\omega) = \frac{1}{\tau(\gamma_{T^{n+1}\omega} (F_n(\omega)))} \sum_{i\in I} \tau(F_n(\omega) a_i) f^{(i)}_\omega
    \]
almost surely by Equation~(\ref{eqn:right_process_limit_properties}).$\hfill\blacksquare$
\end{exmp}

The following uses the second part of Lemma~\ref{lem:measurabilityTest} to extend a $\tau$-bounded weak* random linear operator on $M$ to a bounded random linear operator on $L^1(M,\tau)$.

\begin{thm}\label{thm:right_convergence_on_M}
Let $(M,\tau)$ be a tracial von Neumann algebra with a separable predual, let $(\Omega, \P)$ be a probability space equipped with ergodic $T\in \Aut(\Omega, \P)$, let $\phi_\omega\colon M\to M$ be a $\tau$-bounded normal positive weak* random linear operator. Suppose that the extension of $\phi_\omega$ to $L^1(M,\tau)$ is faithful almost surely and that the associated ergodic quantum process $\Phi_{n,m}$ on $L^1(M,\tau)$ satisfies
    \[
        \P [\exists n,m\in \mathbb{Z}\colon n\geq m \text{ and }\Phi_{n,m}\in SHC(M)] >0.    
    \]
Then there exists a family of random variables $A_n\colon \Omega\to S_b$, $n\in \mathbb{Z}$, satisfying
    \[
        \gamma_{T^{n+ 1}\omega}\cdot A_n(\omega) = A_{n+1}(\omega) \qquad \text{ and }\qquad A_n(T^{\pm 1}\omega)= A_{n\pm 1}(\omega)
    \]
almost surely, and for all $x\in S$
    \[
            \lim_{m\to -\infty} \left\|\Phi_{n,m}\cdot x  - A_n \right\|_1 =0
    \]
almost surely for all $n\in \mathbb{Z}$. Moreover, the above convergence holds in $\|\cdot\|_\infty$-norm for $x\in S_b$.
\end{thm}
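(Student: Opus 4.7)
The plan is to reduce to Theorem~\ref{thm:right_convergence} via the extension correspondence of Lemma~\ref{lem:measurabilityTest}, and then upgrade the conclusion using the geometry of $S_b$ established in Section~\ref{subsec:Components}. Concretely, let $\gamma_\omega := \phi_\omega|^{L^1(M,\tau)}$. By Lemma~\ref{lem:measurabilityTest}, this is an $M$-preserving bounded positive random linear operator on $L^1(M,\tau)$, which is faithful almost surely by hypothesis. The associated process on $L^1(M,\tau)$ coincides with $\Phi_{n,m}$, so the Hennion contraction hypothesis holds. Theorem~\ref{thm:right_convergence} then delivers random variables $A_n\colon \Omega\to S$ satisfying the two equivariance relations and the $\|\cdot\|_1$-convergence for all $x\in S$ (after renaming $X_n$ to $A_n$ and rewriting $\gamma_{T^{n+1}\omega}$ as $\phi_{T^{n+1}\omega}|^{L^1(M,\tau)}$).

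The first upgrade is to show $A_n(\omega)\in S_b$ almost surely. Inspecting the proof of Theorem~\ref{thm:right_convergence}, the random variable $A_n(\omega)$ is the unique point of $\bigcap_{m\leq n}\overline{S_{n,m}(\omega)}^d$, where $S_{n,m}(\omega) := \Phi_{n,m}(\omega)\cdot S$ has $d$-diameter $c(\Phi_{n,m}(\omega))\to 0$ almost surely by Lemma~\ref{lem:randomconstproperties}. Since $\gamma_\omega$ is $M$-preserving, $\Phi_{n,m}\cdot 1\in S_b$ for every $m\leq n$, and these converge to $A_n(\omega)$ in the Hennion metric. Because $S_b$ is $d$-clopen (Theorem~\ref{thm:geometry}(1)), this forces $A_n(\omega)\in S_b$.

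For the $\|\cdot\|_\infty$ convergence when $x\in S_b$, we proceed as follows. By Lemma~\ref{lem:randomconstproperties}(3) there exists an almost surely finite $m_0(\omega)$ with $\Phi_{n,m}(\omega)\in SHC(M)$ for all $m\leq m_0(\omega)$. For any such $m$, the set $\Phi_{n,m}(\omega)\cdot S$ has $d$-diameter strictly less than $1$ and contains the bounded state $\Phi_{n,m}\cdot 1\in S_b$, so Theorem~\ref{thm:geometry}(1) gives $\Phi_{n,m}(\omega)\cdot S\subset S_b$. In particular the sequence $(\Phi_{n,m}\cdot x)_{m\leq m_0(\omega)}$ lies in $S_b$ and converges to $A_n(\omega)\in S_b$ in the Hennion metric (again via $d(\Phi_{n,m}\cdot x, A_n)\leq c(\Phi_{n,m})\to 0$). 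Applying Lemma~\ref{thm:cauchytocauchy}(1) to this $d$-convergent sequence in $S_b$ yields $\|\Phi_{n,m}\cdot x - A_n\|_\infty\to 0$ almost surely.

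No step is especially delicate here: the architecture of Theorem~\ref{thm:right_convergence} does the analytic heavy lifting, and the enhancement to $S_b$ and the operator norm comes entirely from two facts from Section~\ref{sec:dtop} — the topological rigidity of $S_b$ under $d$ and the equivalence of $d$ and the operator norm on $S_b$ provided in Lemma~\ref{thm:cauchytocauchy}. The only mild point to verify is that the hypothesis on $\Phi_{n,m}$ remains a hypothesis on the extended process on $L^1(M,\tau)$, which is automatic by Lemma~\ref{lem:measurabilityTest}.
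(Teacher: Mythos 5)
Your proposal is correct and follows essentially the same route as the paper: apply Theorem~\ref{thm:right_convergence} to the extension $\gamma_\omega=\phi_\omega|^{L^1(M,\tau)}$, use the $M$-preserving property together with the $d$-clopenness of $S_b$ from Theorem~\ref{thm:geometry}(1) to place $A_n$ in $S_b$, and then invoke Lemma~\ref{thm:cauchytocauchy}(1) on the $d$-convergent sequence $\Phi_{n,m}\cdot x$ in $S_b$ to upgrade to $\|\cdot\|_\infty$-convergence. The only cosmetic difference is that you deduce $A_n\in S_b$ from the single sequence $\Phi_{n,m}\cdot 1\to A_n$ and closedness of $S_b$, whereas the paper notes $\Phi_{n,m}\cdot S\subset S_b$ once $c(\Phi_{n,m})<1$ and uses $A_n\in\bigcap_m \Phi_{n,m}\cdot S$; both are valid.
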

\begin{proof}
Applying Theorem~\ref{thm:right_convergence} gives us this family of random variables $A_n$, $n\in \mathbb{Z}$, though we must argue they are almost surely valued in $S_b$. Using Lemma~\ref{lem:randomconstproperties} for almost every $\omega$, $c(\Phi_{n,m}) <1$ for sufficiently small $m$ (depending on $\omega$). When this occurs, $\Phi_{n,m}(M)\subset M$ implies $\Phi_{n,m}\cdot S \subset S_b$ by Theorem~\ref{thm:geometry}. Recalling from the proof of Theorem~\ref{thm:right_convergence} that $A_n\in \bigcap_{m\leq n} \Phi_{n,m}\cdot S$, we thus have $A_n\in S_b$ almost surely. The final statement then follows from Lemma~\ref{thm:cauchytocauchy}.(1):  $\Phi_{n,m}\cdot S_b\subset S_b$ since $\phi_\omega$ is $M$-preserving and the proof of Theorem~\ref{thm:right_convergence} in fact shows $d(\Phi_{n,m}\cdot x, A_n)\to 0$ as $m\to -\infty$. 
\end{proof}

\begin{rmk}\label{rmk:recovering_jeff_results}
Theorems~\ref{thm:right_convergence} and \ref{thm:right_convergence_on_M} can be used to recover \cite[Lemmas 3.12 and 3.14]{jeff}, respectively, which together yields \cite[Theorem 1]{jeff}. Indeed, the hypotheses of Theorems~\ref{thm:right_convergence} and \ref{thm:right_convergence_on_M} follow for $\Gamma_{n,m}$ and $\Gamma_{n,m}^*$, respectively, from \cite[Assumption 1 and Lemma 2.1]{jeff} (or are automatic in the finite dimensional case). Consequently, any fixed points of the projective actions of $\Gamma_{n,m}$ and $\Gamma_{n,m}^*$ converge to $X_n$ as $m \to -\infty$ and $A_m$ as $n\to+\infty$, respectively. In fact, our results are slightly more general than those of \cite{jeff} because our hypotheses allow $X_n$ and $A_m$ to be valued outside of $S_b^\times$. $\hfill\blacksquare$
\end{rmk}

As a corollary to Theorems~\ref{thm:right_convergence} and \ref{thm:right_convergence_on_M}, we also obtain an analogue of \cite[Theorem 2]{jeff}. The discrepancy between that result and the one below are due to the inequivalence of the $\|\cdot\|_1$-norm and $\|\cdot\|_\infty$-norm in the general case, but it is clear how to reconcile the difference in the finite dimensional case.

\begin{cor}\label{cor:analogue_of_jeff2}
Let $(M,\tau)$ be a tracial von Neumann algebra with a separable predual, let $(\Omega, \P)$ be a probability space equipped with ergodic $T\in \Aut(\Omega, \P)$, let $\gamma_\omega\colon L^1(M,\tau)\to L^1(M,\tau)$ be an $M$-preserving bounded positive faithful random linear operator, and let $\Gamma_{n,m}$ be the associated interval ergodic quantum process. Suppose that the extension of $\gamma_\omega^*$ to $L^1(M,\tau)$ is faithful almost surely and that
    \[
        \P [\exists n,m\in \mathbb{Z}\colon n\geq m \text{ and }\Gamma_{n,m}\in SHC(M)] >0.    
    \]
Fix $k\in \mathbb{Z}$ and let $C$ be as in Lemma~\ref{lem:randomconstproperties}. Then for any $\kappa\in (C,1)$ and $n\geq k> m$, there exists random variables $X_n, B_m\colon \Omega\to S_b$ and $E_{ k}\colon\Omega\to [0,\infty)$ such that for all $a\in M$
    \[
        \left\|\frac{1}{\tau(\Gamma_{n,m}(1))} \Gamma_{n,m}(a) - \tau(B_m a) X_n \right\|_1 \leq E_{k} \kappa^{n-m+1}\|a\|_\infty
    \]
almost surely.
\end{cor}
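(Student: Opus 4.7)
The plan is to obtain the state-valued random variable $X_n$ directly from Theorem~\ref{thm:right_convergence}, construct $B_m$ by a dual application of Theorem~\ref{thm:right_convergence_on_M}, decompose the target difference into two pieces each controlled by $c(\Gamma_{n,m})$, and then extract the exponential rate using Lemma~\ref{lem:compositions_of_contractions} together with Lemma~\ref{lem:randomconstproperties}.(2).

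To build $B_m$, set $\phi_\omega := \gamma_\omega^*\colon M\to M$. By Lemma~\ref{lem:normalbddextension}, since $\gamma_\omega$ is $M$-preserving, bounded, positive, and faithful, $\phi_\omega$ is $\tau$-bounded normal positive, and by hypothesis its extension to $L^1(M,\tau)$ is faithful almost surely. Combining Equation~(\ref{eqn:dual_quantum_process}) with Lemma~\ref{lem:contraction_constant_of_duals} gives $c(\Phi^{T^{-1}}_{n,m}|^{L^1(M,\tau)}) = c(\Gamma^T_{-m,-n})$, so the Hennion-contraction hypothesis transfers from $\Gamma^T$ to the $\phi$-process under $T^{-1}$. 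Theorem~\ref{thm:right_convergence_on_M} then yields $A_n\colon \Omega \to S_b$, and setting $B_m := A_{-m}$ realises $B_m$ as the $\|\cdot\|_1$-limit of $(\Gamma_{n,m}^T)^*\cdot x$ as $n\to +\infty$ for any $x\in S$, with $B_m\in S_b$ almost surely.

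With both limits in hand, the core step is the decomposition. For $a\in M_+\setminus\{0\}$, set $x_a := a/\tau(a)\in S$ and use the duality identity $\tau(\Gamma_{n,m}(a))/\tau(\Gamma_{n,m}(1)) = \tau(a(\Gamma_{n,m}^*\cdot 1))$ to write
\begin{align*}
  \frac{\Gamma_{n,m}(a)}{\tau(\Gamma_{n,m}(1))} - \tau(B_m a) X_n = \frac{\tau(\Gamma_{n,m}(a))}{\tau(\Gamma_{n,m}(1))}\bigl(\Gamma_{n,m}\cdot x_a - X_n\bigr) + \tau\bigl(a(\Gamma_{n,m}^*\cdot 1 - B_m)\bigr)X_n.
\end{align*}
The scalar coefficient in the first term is at most $\|a\|_\infty$, while the proof of Theorem~\ref{thm:right_convergence} combined with Theorem~\ref{thm:tracemetricinequality_and_completeness} bounds $\|\Gamma_{n,m}\cdot x_a - X_n\|_1$ by $2c(\Gamma_{n,m})$ (using that $X_n$ lies in $\Gamma_{n,m}\cdot S$, whose diameter equals $c(\Gamma_{n,m})$ by Proposition~\ref{prop:cont_const_is_diam}). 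For the second term, $\|X_n\|_1=1$ and the trace-norm duality gives $|\tau(a(\Gamma_{n,m}^*\cdot 1 - B_m))|\leq \|a\|_\infty \|\Gamma_{n,m}^*\cdot 1 - B_m\|_1$; the analogous diameter argument on the dual process, coupled with $c((\Gamma_{n,m})^*|^{L^1(M,\tau)}) = c(\Gamma_{n,m})$ from Lemma~\ref{lem:contraction_constant_of_duals}, bounds this by $2\|a\|_\infty c(\Gamma_{n,m})$. Thus for positive $a$ the target norm is at most $4\|a\|_\infty c(\Gamma_{n,m})$, and decomposing a general $a\in M$ into four positive parts of norm at most $\|a\|_\infty$ extends this to $\|Y_{n,m}^a - \tau(B_m a)X_n\|_1 \leq C\|a\|_\infty c(\Gamma_{n,m})$ for an absolute constant $C$.

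To finish, split $\Gamma_{n,m} = \Gamma_{n,k}\circ \Gamma_{k-1,m}$ for $n\geq k>m$; Lemma~\ref{lem:compositions_of_contractions} gives $c(\Gamma_{n,m})\leq c(\Gamma_{n,k})c(\Gamma_{k-1,m})$, and for any $\kappa\in (C,1)$ Lemma~\ref{lem:randomconstproperties}.(2) bounds the right-hand side by $D_{\bullet,k}D_{k-1,\bullet}\kappa^{(n-k+1)+(k-m)} = D_{\bullet,k}D_{k-1,\bullet}\kappa^{n-m+1}$ almost surely. Setting $E_k := C\cdot D_{\bullet,k}\cdot D_{k-1,\bullet}$ then yields the claimed estimate. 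The main obstacle is the bookkeeping that sets up $B_m$ correctly: one must verify that the ergodic process generated by $\gamma^*$ under $T^{-1}$ satisfies the hypotheses of Theorem~\ref{thm:right_convergence_on_M} and that its projective limit matches $B_m = \lim_{n\to+\infty}(\Gamma_{n,m}^T)^*\cdot x$; once that duality is in place, the decomposition and the exponential bound are straightforward.
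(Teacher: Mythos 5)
Your proposal is correct and follows essentially the same route as the paper's proof: obtain $X_n$ from Theorem~\ref{thm:right_convergence}, obtain $B_m$ by dualizing to the process $\Phi^{T^{-1}}_{-m,-n}=(\Gamma^T_{n,m})^*$ and applying Theorem~\ref{thm:right_convergence_on_M}, use the identity $\tau(\Gamma_{n,m}(a))/\tau(\Gamma_{n,m}(1))=\tau(a(\Gamma_{n,m}^*\cdot 1))$ to split the error into the two terms each bounded by a multiple of $c(\Gamma_{n,m})\|a\|_\infty$, and then factor $c(\Gamma_{n,m})\le c(\Gamma_{n,k})c(\Gamma_{k-1,m})$ and invoke Lemma~\ref{lem:randomconstproperties}.(2). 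The only (harmless) blemishes are the reuse of the letter $C$ for both the Kingman constant and the absolute constant from the four-positive-parts decomposition (the paper uses $16$), and the undefined shorthand $Y^a_{n,m}$.
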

\begin{proof}
Applying Theorem~\ref{thm:right_convergence} gives the random variables $X_n$, which we note are valued in $S_b$ almost surely since $\gamma_\omega$ is $M$-preserving (see the proof of Theorem~\ref{thm:right_convergence_on_M}). Also recall from the proof of Theorem~\ref{thm:right_convergence} that we actually have
    \begin{align}\label{eqn:first_factor_bound}
        \|\Gamma_{n,m}\cdot x - X_n \|_1 \leq 2 c(\Gamma_{n,m})
    \end{align}
for all $x\in S$ almost surely. 

Next, denote $\phi_\omega:=\gamma_\omega^*$, which by Lemma~\ref{lem:measurabilityTest} is a $\tau$-bounded normal positive weak* random linear operator. It also almost surely has a faithful extension to $L^1(M,\tau)$ be assumption. Denote by $\Phi_{n,m}^{T^{-1}}$ the ergodic quantum processes associated to $\phi_\omega$ and $T^{-1}$ so that one has $(\Gamma_{n,m}^T)^* = \Phi_{-m,-n}^{T^{-1}}$. Using Lemma~\ref{lem:contraction_constant_of_duals}, it follows that
    \[
        \P [\exists n,m\in \mathbb{Z}\colon n\geq m \text{ and }\Phi_{n,m}^{T^{-1}}\in SHC(M)] = \P [\exists n,m\in \mathbb{Z}\colon n\geq m \text{ and }\Gamma_{n,m}^{T}\in SHC(M)]>0.
    \]
Thus we can apply Theorem~\ref{thm:right_convergence_on_M} to obtain random variables $A_n\colon \Omega\to S_b$ satisfying
    \[
        \| \Phi_{n,m}\cdot x - A_n \|_1 \leq 2 c(\Phi_{n,m}) = 2 c(\Gamma_{-m,-n}).
    \]
for all $x\in S$ almost surely. If we denote $B_m:=A_{-m}$, then by the above we have
    \begin{align}\label{eqn:second_factor_bound}
        | \tau([\Gamma_{n,m}^*\cdot 1 - B_m]a) | \leq 2 c(\Gamma_{m,n}) \|a\|_\infty
    \end{align}
for $a\in M$ almost surely.

Now, observe that
    \[
        \frac{1}{\tau(\Gamma_{n,m}(1))} \Gamma_{n,m}(a) = \frac{\tau(\Gamma_{n,m}(a))}{\tau(\Gamma_{n,m}(1))} \Gamma_{n,m}\cdot a = \tau( [\Gamma_{n,m}^*\cdot 1]a) \Gamma_{n,m}\cdot a.
    \]
Thus combining Estimates~(\ref{eqn:first_factor_bound}) and (\ref{eqn:second_factor_bound}), for $a\in S_b$ we have
    \begin{align*}
        \left\| \frac{1}{\tau(\Gamma_{n,m}(1))} \Gamma_{n,m}(a) - \tau(B_m a) X_n \right\|_1 &\leq  |\tau([\Gamma_{n,m}^*\cdot 1]a)| \| \Gamma_{n,m}\cdot a - X_n\|_1 + | \tau([\Gamma_{n,m}^*\cdot 1 - B_m]a) | \|X_n\|_1\\
            &\leq  4 c(\Gamma_{n,m}) \|a\|_\infty.
    \end{align*}
Applying the above estimate to arbitrary $a\in M$ by decomposing it into a linear combination of four positive elements and scaling gives an upper bound of $16 c(\Gamma_{n,m}) \|a\|_\infty$. For $n\geq k> m$, using Lemmas~\ref{lem:compositions_of_contractions}  and \ref{lem:randomconstproperties}, we can further bound this by 
    \[
        16 c(\Gamma_{n,k}) c(\Gamma_{k-1,m}) \|a\| \leq 16 D_{\bullet, k} \kappa^{n-k+1} D_{k-1,\bullet} \kappa^{k -m} \|a\|_\infty = 16 D_{\bullet, k} D_{k-1,\bullet} \kappa^{n-m+1}\|a\|_\infty.
    \]
So Taking $E_k(\omega):= 16 D_{\bullet, k}(\omega) D_{k-1,\bullet}(\omega) $ completes the proof.
\end{proof}

We next prove the second main result of the section, which is essentially a dual version of Theorem~\ref{thm:right_convergence} for ergodic quantum processes on $M$. Interestingly, this result and the next do \emph{not} have analogues in \cite{jeff}.

\begin{thm}[{Theorem~\ref{thmx:B}}]\label{thm:left_convergence_on_M}
Let $(M,\tau)$ be a tracial von Neumann algebra with a separable predual, let $(\Omega, \P)$ be a probability space equipped with ergodic $T\in \Aut(\Omega, \P)$, let $\phi_\omega\colon M\to M$ be a normal positive weak* random linear operator, and let $\Phi_{n,m}$ be the associated ergodic quantum process. Suppose that $\phi_\omega(1)$ is invertible almost surely and
    \[
        \P [\exists n,m\in \mathbb{Z}\colon n\geq m \text{ and }\eta \tau_{x_0} \leq \Phi_{m,n} \leq \eta^{-1} \tau_{x_0} \text{ for some }\eta\in (0,1],\ x_0\in S] >0.    
    \]
Then there exists a family of random variables $Y_m\colon \Omega\to S$, $m\in \mathbb{Z}$, satisfying
    \[
        (\phi_{T^{m- 1}\omega})_*\cdot Y_m(\omega) = Y_{m-1}(\omega) \qquad \text{ and } \qquad Y_m(T^{\pm 1}\omega)= Y_{m\pm 1}(\omega)
    \]
almost surely, and for all $a\in M$
    \[
            \lim_{n\to\infty } \left\|\Phi_{n,m}(1)^{-\frac12}\Phi_{n,m}(a)\Phi_{n,m}(1)^{-\frac12} - \tau(a Y_m) \right\|_\infty =0
    \]
almost surely for all $m\in \mathbb{Z}$.
\end{thm}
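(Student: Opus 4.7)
The strategy is to reduce to Theorem~\ref{thm:right_convergence} applied to the predual process, and then upgrade the resulting $\|\cdot\|_1$-convergence to $\|\cdot\|_\infty$-convergence via the spectral squeeze that a Hennion contraction imposes on a unital map.

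Let $\gamma_\omega:=(\phi_\omega)_*$, which is a bounded positive random linear operator on $L^1(M,\tau)$ by Lemma~\ref{lem:measurabilityTest}. Since $\phi_\omega(1)$ is invertible one has $\phi_\omega(1)\geq \delta_\omega 1$ for some $\delta_\omega>0$, and dualizing gives $\tau(\gamma_\omega(x))\geq \delta_\omega\tau(x)$, so $\gamma_\omega$ is faithful almost surely. Interpreting $\tau_{x_0}$ as the replacement channel $a\mapsto \tau(x_0 a)1$ on $M$, the hypothesis $\eta\tau_{x_0}\le \Phi_{n,m}\le \eta^{-1}\tau_{x_0}$ dualizes via Lemma~\ref{lem:adjoint&predual} to $\eta\tau(x)x_0\le (\Phi_{n,m})_*(x)\le \eta^{-1}\tau(x)x_0$ for $x\in L^1(M,\tau)_+$, whence $(\Phi_{n,m})_*\in SHC(M)$ by Theorem~\ref{thm:strict_Hennion_projective_actions}. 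Combining with $(\Phi^T_{n,m})_*=\Gamma^{T^{-1}}_{-m,-n}$ from Equation~(\ref{eqn:dual_quantum_process}), Theorem~\ref{thm:right_convergence} applies to the $T^{-1}$-ergodic process $\Gamma^{T^{-1}}$ built from $\gamma_\omega$, producing random variables $X_{n'}\in S$. Setting $Y_m:=X_{-m}$ and substituting $n'=-m$, $m'=-n$ transports the relations of Equation~(\ref{eqn:right_process_limit_properties}) into the claimed $(\phi_{T^{m-1}\omega})_*\cdot Y_m=Y_{m-1}$ and $Y_m\circ T^{\pm 1}=Y_{m\pm 1}$, together with $\|(\Phi_{n,m})_*\cdot x-Y_m\|_1\to 0$ uniformly in $x\in S$.

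Now $\Phi_{n,m}(1)$ is invertible almost surely (iterate $\phi_{T^k\omega}(1)\geq\delta_{T^k\omega}$), so $\tilde\Phi_{n,m}(a):=\Phi_{n,m}(1)^{-1/2}\Phi_{n,m}(a)\Phi_{n,m}(1)^{-1/2}$ is a well-defined unital positive normal map. A direct trace calculation gives $(\tilde\Phi_{n,m})_*(x)=(\Phi_{n,m})_*(\Delta_{n,m}x\Delta_{n,m})$ with $\Delta_{n,m}:=\Phi_{n,m}(1)^{-1/2}$; conjugation by $\Delta_{n,m}$ preserves the order on $L^1(M,\tau)_+$ and hence acts as an isometry on $(S,d)$, so Lemma~\ref{lem:compositions_of_contractions} yields $c_n:=c((\tilde\Phi_{n,m})_*)\leq c((\Phi_{n,m})_*)\to 0$ almost surely. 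Moreover, a second computation shows $(\tilde\Phi_{n,m})_*(1)=(\Phi_{n,m})_*\cdot z_n$ for $z_n:=\Phi_{n,m}(1)^{-1}/\tau(\Phi_{n,m}(1)^{-1})\in S$, so the uniform bound from Theorem~\ref{thm:right_convergence} gives $\|(\tilde\Phi_{n,m})_*(1)-Y_m\|_1\to 0$.

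The crux is the extraction of the spectral squeeze. Since $\tilde\Phi_{n,m}$ is unital, $(\tilde\Phi_{n,m})_*$ preserves $\tau$, so its projective action coincides with its direct action on $S$, and $c_n$ bounds the Hennion diameter of $(\tilde\Phi_{n,m})_*(S)$. Setting $\beta_n:=(1-c_n)/(1+c_n)\to 1$, for each $x\in S$ both $m((\tilde\Phi_{n,m})_*(x),(\tilde\Phi_{n,m})_*(1))$ and $m((\tilde\Phi_{n,m})_*(1),(\tilde\Phi_{n,m})_*(x))$ are at least $\beta_n$, giving $\beta_n(\tilde\Phi_{n,m})_*(1)\leq (\tilde\Phi_{n,m})_*(x)\leq \beta_n^{-1}(\tilde\Phi_{n,m})_*(1)$ for every state $x$. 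Dualizing through Equation~(\ref{eqn:positive_iff}) produces the two-sided $M$-bound $\beta_n\tau(\tilde\Phi_{n,m}(a))1\leq \tilde\Phi_{n,m}(a)\leq \beta_n^{-1}\tau(\tilde\Phi_{n,m}(a))1$ for $a\in M_+$. Combining this with $|\tau(\tilde\Phi_{n,m}(a))-\tau(aY_m)|\leq \|a\|_\infty\|(\tilde\Phi_{n,m})_*(1)-Y_m\|_1$ and the triangle inequality yields
\[
\|\tilde\Phi_{n,m}(a)-\tau(aY_m)1\|_\infty\leq (\beta_n^{-1}-\beta_n)\tau(\tilde\Phi_{n,m}(a))+\|a\|_\infty\|(\tilde\Phi_{n,m})_*(1)-Y_m\|_1,
\]
both terms vanishing as $n\to\infty$, and linear decomposition into four positive pieces extends this to general $a\in M$. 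The main obstacle is this final dualization of the Hennion contraction into an operator-norm spectral squeeze, which has no counterpart in \cite{jeff}: in finite dimensions the compactness of $S$ and equivalence of topologies on $M$ trivialize the step.
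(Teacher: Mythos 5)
Your proof is correct, and its first half—reducing to Theorem~\ref{thm:right_convergence} via $(\Phi^T_{n,m})_* = \Gamma^{T^{-1}}_{-m,-n}$, using the invertibility of $\phi_\omega(1)$ for faithfulness, and recognizing the operator-interval hypothesis as the dual of the strict Hennion contraction condition (Corollary~\ref{cor:when_do_preduals_give_SHC})—is exactly the paper's. Where you genuinely diverge is the upgrade from $\|\cdot\|_1$- to $\|\cdot\|_\infty$-convergence. The paper argues by duality: for arbitrary $x\in L^1(M,\tau)_+$ it rewrites $\tau\left(\left[\Phi_{n,m}(1)^{-\frac12}\Phi_{n,m}(a)\Phi_{n,m}(1)^{-\frac12}-\tau(aY_m)\right]x\right)$ as $\tau(a[\Gamma_{-m,-n}\cdot y_0 - X_{-m}])$ with $y_0$ the normalization of $\Phi_{n,m}(1)^{-\frac12}x\,\Phi_{n,m}(1)^{-\frac12}$, bounds this by $8\|a\|_\infty\, c(\Gamma_{-m,-n})$ uniformly in the state $y_0$, and concludes via $M\cong L^1(M,\tau)^*$. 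You instead pass to the normalized unital process $\tilde\Phi_{n,m}$, observe its predual is trace-preserving with Hennion diameter at most $c(\Gamma_{-m,-n})$, and convert this into the operator inequality $\beta_n\tau(\tilde\Phi_{n,m}(a))1\le\tilde\Phi_{n,m}(a)\le\beta_n^{-1}\tau(\tilde\Phi_{n,m}(a))1$, which pins $\tilde\Phi_{n,m}(a)$ to a scalar in operator norm directly. The two are close cousins—your $(\tilde\Phi_{n,m})_*(x)$ is precisely the paper's $\Gamma_{-m,-n}(y)$—but your route makes the mechanism more transparent (a small Hennion diameter of a trace-preserving image forces proximity to a multiple of the identity, in the spirit of Theorem~\ref{thm:strict_Hennion_projective_actions}), at the cost of a few extra verifications: that conjugation by $\Phi_{n,m}(1)^{-\frac12}$ is a faithful positive map whose projective action is $d$-nonexpansive so that Lemma~\ref{lem:compositions_of_contractions} applies, and that $(\tilde\Phi_{n,m})_*(1)=(\Phi_{n,m})_*\cdot z_n$ so the bound $\|\Gamma_{-m,-n}\cdot x - X_{-m}\|_1\le 2c(\Gamma_{-m,-n})$, which holds on an exceptional-null-set-free-of-$x$ event (as the paper's proof emphasizes), controls $\|(\tilde\Phi_{n,m})_*(1)-Y_m\|_1$ even though $z_n$ varies with $n$ and $\omega$. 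Both arguments yield the same exponential rate $O(c(\Gamma_{-m,-n}))$.
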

\begin{proof}
Denote $\gamma_\omega:= (\phi_\omega)_*$, which is a bounded positive random linear operator on $L^1(M,\tau)$ by Lemma~\ref{lem:measurabilityTest}. Moreover, $\gamma_\omega$ is faithful almost surely by the discussion following Lemma~\ref{lem:when_is_predual_faithful}. Letting $\Gamma_{n,m}^{T^{-1}}$ denote the associated ergodic quantum process on $L^1(M,\tau)$, we have
    \[
        (\Phi_{n,m}^T(\omega))_* = \Gamma_{-m,-n}^{T^{-1}}(\omega),
    \]
by Equation~(\ref{eqn:dual_quantum_process}). Lemma~\ref{cor:when_do_preduals_give_SHC} implies $\Gamma_{-m,-n}\in SHC(M)$ if and only if there exists $x_0\in S$ and $\eta\in (0,1]$ so that  $\eta \tau(x_0 a)\Phi_{m,n}(1) \leq \Phi_{m,n}(a) \leq \eta^{-1} \tau(x_0 a) \Phi_{m,n}(1)$ for all $a\in M_+$. Since $\Phi_{n,m}(1)$ is invertible almost surely by assumption, the latter is almost surely equivalent to $\eta \tau_{x_0} \leq \Phi_{n,m} \leq \eta^{-1} \tau_{x_0}$ after adjusting $\eta$ as necessary. Thus, altogether our hypotheses imply
    \[
        \P[\exists n,m\in \mathbb{Z}\colon n\geq m \text{ and } \Gamma_{n,m}^{T^{-1}}\in SHC(M)] >0.
    \]
Let $X_n\colon \Omega\to S$, $n\in \mathbb{Z}$, be the family of random variables obtained by applying Theorem~\ref{thm:right_convergence} to $\Gamma_{n,m}^{T^{-1}}$. Set $Y_m:= X_{-m}$ for each $m\in \mathbb{Z}$ so that the claimed relations follow from Equation~(\ref{eqn:right_process_limit_properties}) with $T$ replaced by $T^{-1}$. 

Now, fix $a\in M$ and $x\in S$ and denote $y:= \Phi_{n,m}(1)^{-\frac12} x \Phi_{n,m}(1)^{-\frac12}\in L^1(M,\tau)_+$. We have
    \begin{align*}
        \tau\left( \left[\Phi_{n,m}(1)^{-\frac12}\Phi_{n,m}(a)\Phi_{n,m}(1)^{-\frac12} - \tau(a Y_m) \right] x \right) &= \tau( a [\Gamma_{-m,-n}(y) - Y_m])\\
            &= \tau( a [ \Gamma_{-m,-n}\cdot y - X_{-m}]).
    \end{align*}
where in the last equality we have used $\tau(\Gamma_{-m,-n}(y)) = \tau(\Phi_{n,m}(1) y) = \tau(x)=1$. Denote $y_0:= \frac{y}{\tau(y)}$, which satisfies $y_0\in S$ and $\Gamma_{-m,-n}\cdot y_0 = \Gamma_{-m,-n}\cdot y$. We can therefore use the above computation to obtain the following estimate:
    \[
        \left| \tau\left( \left[\Phi_{n,m}(1)^{-\frac12}\Phi_{n,m}(a)\Phi_{n,m}(1)^{-\frac12} - \tau(a Y_m) \right] x \right)\right| \leq \|a\| \| \Gamma_{-m,-n}\cdot y_0 - X_{-m} \|_1.
    \]
Recall from the proof of Theorem~\ref{thm:right_convergence} that the second factor in the last expression is bounded almost surely by $2 c(\Gamma_{-m,-n})$, and the $\omega$ for which this fails does not depend on $y_0$. Consequently, decomposing an arbtitrary $x\in L^1(M,\tau)$ into a linear combination of four positive elements and scaling gives
    \[
        \left| \tau\left( \left[\Phi_{n,m}(1)^{-\frac12}\Phi_{n,m}(a)\Phi_{n,m}(1)^{-\frac12} - \tau(a Y_m) \right] x \right)\right| \leq \|a\| 4 \|x\|_1 2 c(\Gamma_{-m,-n}).
    \]
almost surely. Therefore
    \[
        \| \Phi_{n,m}(1)^{-\frac12}\Phi_{n,m}(a)\Phi_{n,m}(1)^{-\frac12} - \tau(a Y_m)\| \leq 8 \|a\| c(\Gamma_{-m,-n})
    \]
almost surely by the duality $M\cong L^1(M,\tau)^*$, and as $n\to \infty$ the above tends to zero almost surely by Lemma~\ref{lem:randomconstproperties}.
\end{proof}

Finally, we conclude with a dual version of Theorem~\ref{thm:right_convergence_on_M}. 

\begin{thm}\label{thm:left_convergence}
Let $(M,\tau)$ be a tracial von Neumann algebra with a separable predual, let $(\Omega, \P)$ be a probability space equipped with ergodic $T\in \Aut(\Omega, \P)$, let $\gamma_\omega\colon L^1(M,\tau) \to L^1(M,\tau)$ be an $M$-preserving bounded positive faithful random linear operator, and let $\Gamma_{n,m}$ be the associated ergodic quantum process. Suppose that $\gamma_\omega(1)$ is boundedly invertible almost surely and
    \[
        \P [\exists n,m\in \mathbb{Z}\colon n\geq m \text{ and }\Gamma_{n,m}\in SHC(M)] >0.    
    \]
Then there exists a family of random variables $B_m\colon \Omega\to S_b$, $m\in \mathbb{Z}$, satisfying:
    \[
        \gamma_{T^{m-1}\omega}\cdot B_m(\omega)= B_{m-1}(\omega) \qquad \text{ and } \qquad B_m(T^{\pm 1}\omega)= B_{m\pm 1}(\omega)
    \]
almost surely, and for all $a\in M$
    \[
        \lim_{n\to +\infty} \left\| \Gamma_{n,m}(1)^{-\frac12} \Gamma_{n,m}(a) \Gamma_{n,m}(1)^{-\frac12} - \tau(a B_m) \right\|_\infty =0
    \]
almost surely for all $m\in \mathbb{Z}$. Furthermore, whenever $m\in \mathbb{Z}$ satisfies $\sup_n \| \Gamma_{n,m}^*(\Gamma_{n,m}(1)^{-1})\|_\infty <\infty$ almost surely, then for all $x\in S$
    \[
        \lim_{n\to +\infty} \left\| \Gamma_{n,m}(1)^{-\frac12} \Gamma_{n,m}(x) \Gamma_{n,m}(1)^{-\frac12} - \tau(x B_m) \right\|_1 =0
    \]
almost surely.
\end{thm}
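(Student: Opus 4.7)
The strategy is to reduce to Theorem~\ref{thm:left_convergence_on_M} applied to the restriction $\phi_\omega := \gamma_\omega|_M\colon M\to M$. Since $\gamma_\omega$ is $M$-preserving, bounded, positive, and faithful, Lemma~\ref{lem:measurabilityTest} yields that $\phi_\omega$ is a $\tau$-bounded normal positive weak* random linear operator, and by construction $\Phi_{n,m} = \Gamma_{n,m}|_M$. The first hypothesis of Theorem~\ref{thm:left_convergence_on_M}, namely almost sure invertibility of $\phi_\omega(1) = \gamma_\omega(1)$, is assumed. For the sandwich hypothesis, observe that on the positive-probability event $\Gamma_{n,m}\in SHC(M)$ the almost sure invertibility of each $\gamma_{T^k\omega}(1)$ propagates to $\Gamma_{n,m}(1)\in S_b^\times$, so Theorem~\ref{thm:strict_Hennion_projective_actions}(iii) applied with $y_0=1$ yields $\kappa\in (0,1]$ with
\[
\kappa\,\tau(\Gamma_{n,m}(a))\cdot 1\leq \Gamma_{n,m}(a)\leq \kappa^{-1}\,\tau(\Gamma_{n,m}(a))\cdot 1 \qquad (a\in M_+).
\]
Writing $\tau(\Gamma_{n,m}(a)) = \tau(a\, y_\omega)$ for $y_\omega:=\Gamma_{n,m}^*(1)\in M_+$ (which is nonzero by faithfulness of $\Gamma_{n,m}$) and setting $x_0:=y_\omega/\tau(y_\omega)\in S$, a suitable $\eta\in(0,1]$ (depending on $\tau(y_\omega)$) yields $\eta\,\tau(x_0 a)\cdot 1\leq \Phi_{n,m}(a)\leq \eta^{-1}\,\tau(x_0 a)\cdot 1$, which is exactly the hypothesis required.

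Applying Theorem~\ref{thm:left_convergence_on_M} then produces random variables $B_m:=Y_m\colon\Omega\to S$ satisfying the stated covariance relations and the $\|\cdot\|_\infty$-norm convergence for all $a\in M$. To upgrade $B_m\in S$ to $B_m\in S_b$, inspect the construction inside that theorem: $Y_m = X_{-m}$ where $(X_n)$ arises by applying Theorem~\ref{thm:right_convergence} to the process of $(\phi_\omega)_*$ under $T^{-1}$. Since $\phi_\omega$ is $\tau$-bounded, its predual $(\phi_\omega)_*$ is $M$-preserving by Lemma~\ref{lem:normalbddextension}; consequently, as soon as $c(\Gamma^{T^{-1}}_{n,m})<1$ (which occurs for sufficiently negative $m$ by Lemma~\ref{lem:randomconstproperties}), the set $\Gamma^{T^{-1}}_{n,m}\cdot S$ has $d$-diameter less than $1$ and contains $\Gamma^{T^{-1}}_{n,m}\cdot 1\in S_b$, so by Theorem~\ref{thm:geometry} it lies entirely in $S_b$. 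Taking intersections gives $X_n\in S_b$, hence $B_m\in S_b$ almost surely.

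For the final claim, under the additional hypothesis, define the unital positive linear map $T_{n,m}\colon L^1(M,\tau)\to L^1(M,\tau)$ by $T_{n,m}(x) := \Gamma_{n,m}(1)^{-1/2}\Gamma_{n,m}(x)\Gamma_{n,m}(1)^{-1/2}$. A direct computation using the trace property gives $T_{n,m}^*(a) = \Gamma_{n,m}^*\bigl(\Gamma_{n,m}(1)^{-1/2}a\,\Gamma_{n,m}(1)^{-1/2}\bigr)$, so $T_{n,m}^*(1) = \Gamma_{n,m}^*(\Gamma_{n,m}(1)^{-1})$. Since $T_{n,m}^*$ is positive, Theorem~\ref{thm:Russo-Dye} (Russo--Dye) yields $\|T_{n,m}\|=\|T_{n,m}^*\|=\|\Gamma_{n,m}^*(\Gamma_{n,m}(1)^{-1})\|_\infty$, which is almost surely uniformly bounded in $n$ by the extra hypothesis. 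Given $x\in S$, pick $a_k\in M$ with $\|a_k-x\|_1\to 0$ and estimate
\[
\|T_{n,m}(x)-\tau(xB_m)\,1\|_1 \leq \|T_{n,m}\|\,\|x-a_k\|_1 + \|T_{n,m}(a_k)-\tau(a_k B_m)\,1\|_\infty + \|B_m\|_\infty\,\|a_k-x\|_1;
\]
for each fixed $\omega$ (outside a null set) we first choose $k$ large to control the first and third terms (using the $\omega$-wise uniform bound on $\|T_{n,m}\|$ and finiteness of $\|B_m\|_\infty$), then $n$ large to kill the middle term via the already-established $\|\cdot\|_\infty$-convergence.

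The main obstacle is the first paragraph: transferring the contraction condition $\Gamma_{n,m}\in SHC(M)$ into the stronger sandwich form $\eta\tau_{x_0}\leq \Phi_{n,m}\leq \eta^{-1}\tau_{x_0}$ demanded by Theorem~\ref{thm:left_convergence_on_M}. This step is where the almost sure invertibility of $\gamma_\omega(1)$ is essential, since it forces $\Gamma_{n,m}(1)\in S_b^\times$ and unlocks Theorem~\ref{thm:strict_Hennion_projective_actions}(iii) with $y_0=1$. The density argument in the last paragraph, and the verification that $B_m\in S_b$, are comparatively routine once the right reduction is in place.
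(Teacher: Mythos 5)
Your proof is correct, but it routes through the paper's machinery differently, so a comparison is worth recording. The paper's proof \emph{dualizes}: it sets $\phi_\omega:=\gamma_\omega^*$, forms the process $\Phi^{T^{-1}}_{n,m}$ on $M$ running under $T^{-1}$ (so that $(\Gamma^{T}_{n,m})^*=\Phi^{T^{-1}}_{-m,-n}$), transfers the contraction hypothesis via Lemma~\ref{lem:contraction_constant_of_duals}, applies Theorem~\ref{thm:right_convergence_on_M} to get $A_n\in S_b$ directly, sets $B_m:=A_{-m}$, and then re-runs the duality estimate from the proof of Theorem~\ref{thm:left_convergence_on_M} to obtain the $\|\cdot\|_\infty$ convergence. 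You instead \emph{restrict}: with $\phi_\omega:=\gamma_\omega|_M$ the process is literally $\Gamma_{n,m}|_M$ under $T$ itself, and Theorem~\ref{thm:left_convergence_on_M} applies as a black box, handing you the $\|\cdot\|_\infty$ convergence with no further estimation. The price is (a) verifying the sandwich hypothesis of Theorem~\ref{thm:left_convergence_on_M}, which you do correctly via $\Gamma_{n,m}\cdot 1\in S_b^\times$ and Theorem~\ref{thm:strict_Hennion_projective_actions}(iii) with $x_0=\Gamma_{n,m}^*(1)/\tau(\Gamma_{n,m}^*(1))$ --- you could have gotten this more cheaply from Lemma~\ref{lem:contraction_constant_of_duals} together with Corollary~\ref{cor:when_do_preduals_give_SHC}, since the predual of $\gamma_\omega|_M$ is the extension of $\gamma_\omega^*$ --- and (b) the upgrade from $B_m\in S$ to $B_m\in S_b$, which forces you to open up the proof of Theorem~\ref{thm:left_convergence_on_M}; your argument there is exactly the one used in the proof of Theorem~\ref{thm:right_convergence_on_M} and is fine. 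Your final density argument for the $\|\cdot\|_1$ statement coincides with the paper's (positivity of the dual map plus Russo--Dye to bound the $L^1$-operator norm by $R$). One caveat, shared with the paper: the covariance relation your construction actually yields is $(\gamma_{T^{m-1}\omega}|_M)_*\cdot B_m=B_{m-1}$, i.e.\ it holds for the extension of $\gamma_{T^{m-1}\omega}^*$ rather than for $\gamma_{T^{m-1}\omega}$ itself; the paper's own proof produces the identical relation, so this is not a defect of your argument relative to theirs, but neither matches the literal statement unless $\gamma_\omega$ is $\tau$-symmetric.
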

\begin{proof}
Denote $\phi_\omega:= (\gamma_\omega)^*$, which is a normal positive weak* random linear operator on $M$ by Lemma~\ref{lem:measurabilityTest}. Moreover, the assumptions on $\gamma_\omega(1)$ imply $\phi_\omega$ is $\tau$-bounded with a faithful extension to $L^1(M,\tau)$ almost surely. Letting $\Phi_{n,m}^{T^{-1}}$ denote the associated ergodic quantum process on $M$, we have that
    \[
        (\Gamma_{n,m}^{T}(\omega))^* =\Phi_{-m,-n}^{T^{-1}}(\omega).
    \]
by Equation~(\ref{eqn:dual_quantum_process}). Lemma~\ref{lem:contraction_constant_of_duals} implies $c(\Gamma_{n,m}^{T})=c(\Phi_{-m,-n}^{T^{-1}})$ and hence
    \[
        \P [\exists n,m\in \mathbb{Z}\colon n\geq m \text{ and }\Phi_{n,m}^{T^{-1}}\in SHC(M)]=\P [\exists n,m\in \mathbb{Z}\colon n\geq m \text{ and }\Gamma_{n,m}^{T}\in SHC(M)] >0.
    \]
Let $A_n\colon \Omega\to S$, $n\in \mathbb{Z}$, be the family of random variables obtained by applying Theorem~\ref{thm:right_convergence_on_M} to $\Phi_{n,m}^{T^{-1}}$. Set $B_m:=A_{-m}$ for each $m\in \mathbb{Z}$. Arguing exactly as in the proof of Theorem~\ref{thm:left_convergence_on_M}, for $a\in M$ we have
    \[
        \lim_{n\to +\infty} \left\| \Gamma_{n,m}(1)^{-\frac12} \Gamma_{n,m}(a) \Gamma_{n,m}(1)^{-\frac12} - \tau(a B_m) \right\|_\infty =0
    \]
almost surely for all $m\in \mathbb{Z}$. 

Finally, suppose $R:=\sup_n \| \Gamma_{n,m}^*(\Gamma_{n,m}(1)^{-1})\|_\infty <\infty$ almost surely. Then for $x\in L^1(M,\tau)$ and $a\in M$ we have
    \begin{align*}
        |\tau\left( \Gamma_{n,m}(1)^{-\frac12} \Gamma_{n,m}(x) \Gamma_{n,m}(1)^{-\frac12} a\right)| &= |\tau (x \Gamma_{n,m}^* \left(\Gamma_{n,m}(1)^{-\frac12}a \Gamma_{n,m}(1)^{-\frac12} \right) )|\\
            &\leq \|x\|_1 \left\|\Gamma_{n,m}^* (\Gamma_{n,m}(1)^{-\frac12}a \Gamma_{n,m}(1)^{-\frac12}) ) \right\|_\infty.
    \end{align*}
Since $a\mapsto \Gamma_{n,m}^* (\Gamma_{n,m}(1)^{-\frac12}a \Gamma_{n,m}(1)^{-\frac12}) )$ is a positive linear map on $M$, its norm is given by $\| \Gamma_{n,m}^*(\Gamma_{n,m}(1)^{-1})\|_\infty \leq R$. Thus, the last expression above is further bounded by $R\|x\|_1\|a\|_\infty$, implying that 
    \[
        \left\|\Gamma_{n,m}(1)^{-\frac12} \Gamma_{n,m}(x) \Gamma_{n,m}(1)^{-\frac12}\right\|_1 \leq R \|x\|_1.
    \]    
Now, fix $x\in S$, and given $\epsilon>0$ let $a\in M_+$ be such that $\|x-a\|_1<\epsilon$. Then using that the $\|\cdot\|_1$-norm is dominated by the $\|\cdot\|_\infty$-norm we have
    \begin{align*}
        \limsup_{n\to\infty} &\| \Gamma_{n,m}(1)^{-\frac12} \Gamma_{n,m}(x) \Gamma_{n,m}(1)^{-\frac12} - \tau(x B_m) \|_1 \\
            &\leq \limsup_{n\to\infty} \| \Gamma_{n,m}(1)^{-\frac12} \Gamma_{n,m}(x-a) \Gamma_{n,m}(1)^{-\frac12}\|_1 +  |\tau((a-x) B_m)| \leq \epsilon (R + \|B_m\|_\infty)
    \end{align*}    
almost surely. Hence the limit is zero almost surely.
\end{proof}

Note that the hypotheses in the previous theorem strong are enough that both Theorems~\ref{thm:right_convergence} and Corollary~\ref{cor:analogue_of_jeff2} can be applied. In fact, the identity
    \begin{align*}
        \frac{1}{\tau(\Gamma_{n,m}(1))}& \Gamma_{n,m}(a) - \tau(a B_m)X_n\\
            &= (\Gamma_{n,m}\cdot 1)^{\frac12} \left[\Gamma_{n,m}(1)^{-\frac12} \Gamma_{n,m}(a) \Gamma_{n,m}(1)^{-\frac12} - \tau(a B_m) \right](\Gamma_{n,m}\cdot 1)^{\frac12} + [\Gamma_{n,m}\cdot 1 - X_n] \tau(aB_m)
    \end{align*}
offers an alternative proof of Corollary~\ref{cor:analogue_of_jeff2} in this case.

\begin{rmk}
In the finite dimensional case of $(\M_N, \frac{1}{N} \text{Tr})$ the condition $\sup_n \| \Gamma_{n,m}^*(\Gamma_{n,m}(1)^{-1})\|_\infty <\infty$ is always satisfied since
    \begin{align*}
        \| \Gamma_{n,m}^*(\Gamma_{n,m}(1)^{-1})\|_\infty \leq \text{Tr}(\Gamma_{n,m}^*(\Gamma_{n,m}(1)^{-1}))  = N (\frac1N \text{Tr})\left(\Gamma_{n,m}(1) \Gamma_{n,m}(1)^{-1}\right) = N.
    \end{align*}
That is, the condition is automatic because the $\|\cdot\|_1$-norm and $\|\cdot\|_\infty$-norm are equivalent. Another assumption that guarantees the condition (even in the infinite dimensional case) is that $\gamma_\omega$ is unital. Indeed, then $\Gamma_{n,m}^*$ is tracial and
    \[
        \|\Gamma_{n,m}^*(\Gamma_{n,m}(1)^{-1})\|_\infty \leq \|\Gamma_{n,m}^*\cdot 1 - B_m\|_\infty + \|B_m\|_\infty\to 0,
    \]
as $n\to\infty$ almost surely by the final statement in Theorem~\ref{thm:right_convergence_on_M}. $\hfill\blacksquare$
\end{rmk}

\section{Application to Locally Normal States}\label{sec:FCS}

In  \cite{FannesNachtergaeleWerner}, Fannes, Nachtergaele, and Werner characterize translation invariant states on spin chains and establish clustering properties of finitely correlated states for local observables. In particular, this characterization offers a way to construct translate invariant states. Taking inspiration from this, we find a wide class of random variables $\Psi_\omega$ taking values in the locally normal states of a spin chain that satisfy a \emph{translation covariance} condition. Moreover, these states also exhibit clustering properties for local observables, and through averaging can yield deterministic translation invariant states.

\subsection{Spin chains and  their quasi-local algebras and locally normal states}\label{sec:appendix:SpinChainsLNS}

 The theory of spin chains arises as a class of quantum mechanical models from quantum statistical mechanics \cite{BratteliRobinson1,BratteliRobinson2}. We shall recall some basic facts about spin chains formed from tracial von Neumann algebras and translation invariant states. 

Let $(M,\tau)$ be a fixed tracial von Neumann algebra.  Consider an isomorphic copy of $M$ for each \textbf{site} $n\in \mathbb{Z}$, written $(M_n, \tau_n)$. These algebras represent the observable algebras of some physical quantity localized to $n$. For each finite subset $\Lambda\subset \mathbb{Z}$ we denote the von Neumann algebraic tensor product
    \[
        M_{\Lambda} := \overline{\bigotimes_{n\in \Lambda}} M_n,
    \]
which is equipped with the tensor product trace $\tau_{\Lambda}$. Set inclusions in $\mathbb{Z}$ naturally induce inclusions in the corresponding von Neumann algebras so that we may consider the inductive limit \emph{C*-algebra}
    \[
        \A_{\mathbb{Z}}:= \varinjlim M_{\Lambda},
    \]
which we call the \textbf{quasi-local algebra} associated to the spin chain with on-site algebras $M_n$. Note that this algebra can be faithfully represented in the infinite von Neumann algebra tensor product $(M_\mathbb{Z}, \tau_{\mathbb{Z}}):=\overline{\bigotimes}_{n\in \mathbb{Z}} (M_n, \tau_n)$, and consequently $\A_\mathbb{Z}$ admits a faithful tracial state $\tau_\mathbb{Z}|_{\A_\mathbb{Z}}$ (see \cite[Chapter XIV]{TakesakiIII}). Identifying $M_{\Lambda}\subset \A_{\mathbb{Z}}$ for each finite subset $\Lambda\subset\mathbb{Z}$, we call the unital $*$-subalgebra
    \[
        \A_{\mathbb{Z}}^{\text{loc}}:=\bigcup_{\Lambda \subset \mathbb{Z}} M_\Lambda
    \]
the \textbf{local algebra} and its elements are called \textbf{local observables}. The \textbf{support} of a local observable $a$ is the smallest $\Lambda\subset \mathbb{Z}$ such that $a\in M_{\Lambda}$. Given a state $\psi$ on $\A_\mathbb{Z}$, after \cite{HudsonMoody} we say it is \textbf{locally normal} if $\psi|_{M_\Lambda}$ is normal for all finite subsets $\Lambda\subset \mathbb{Z}$.

For $n,k\in \mathbb{Z}$, the map  $M_n\ni a\mapsto a\in M_{n+k}$ extends to a group action $\mathbb{Z}\overset{\alpha}{\curvearrowright} \A_{\mathbb{Z}}$.  We say a state $\psi$ on $\A_\mathbb{Z}$ is \textbf{translation invariant} if $\psi\circ\alpha_k = \psi$ for all $k\in \mathbb{Z}$.

    \begin{thm}[Proposition 2.3 and 2.5 \cite{FannesNachtergaeleWerner}]\label{thm:infinitedimensionalFCS}
        Let $\psi$ be a locally normal state on $\A_{\mathbb{Z}}$. Then, the following are equivalent: 
        \begin{enumerate}[label = (\roman*)]
            \item $\psi$ is translation invariant
            \item there exists a finite von Neumann algebra $W$, a normal unital completely positive map $\mathcal{E}:M\bar \otimes W \to W$, and a normal state $\varrho$ on $W$ so that for all $a_m\otimes\cdots \otimes a_n\in M_{[m,n]}$,
                \[
                     \psi( a_m  \otimes \cdots   \otimes a_{n} ) = \varrho\circ\mathcal{E}\circ (1\otimes \mathcal{E})\circ\cdots \circ (\underbrace{1\otimes \cdots \otimes 1}_{(n-m) \text{ times }} \otimes \mathcal{E})(a_m\otimes\cdots \otimes a_{n}\otimes 1_W).
                \]
        \end{enumerate} 
    \end{thm}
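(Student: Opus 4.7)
The proof naturally splits along the equivalence. For the implication (ii) $\Rightarrow$ (i) my plan is direct verification. Setting $\phi := \mathcal{E}(1_M \otimes \cdot) : W \to W$, which is a normal unital completely positive map, I observe that the displayed right-hand side depends only on the ordered tuple $(a_m, \ldots, a_n)$ and the length $n - m + 1$, never on the absolute position of the interval in $\mathbb{Z}$. Compatibility under the inclusion $M_{[m,n]} \subset M_{[m,n+1]}$ (extension by $1$ on the right) reduces via $\mathcal{E}(1_M \otimes 1_W) = 1_W$ to the formula on $M_{[m,n]}$, which is unitality of $\mathcal{E}$. Compatibility under $M_{[m,n]} \subset M_{[m-1,n]}$ (extension by $1$ on the left) reduces to the identity $\varrho \circ \phi = \varrho$, which must be built into any triple for which the formula defines a state. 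Granted these, the formula assembles into a locally normal state on $\A_\mathbb{Z}$, and translation invariance is immediate from position-independence of the right-hand side.

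The substantive direction is (i) $\Rightarrow$ (ii), for which the plan is to adapt the construction of \cite[Proposition 2.5]{FannesNachtergaeleWerner} to the infinite-dimensional on-site setting. Let $(\pi_\psi, \mathcal{H}_\psi, \Omega_\psi)$ denote the GNS triple of $\psi$ and consider the cyclic subspace $\mathcal{K}_+ := \overline{\pi_\psi(M_{[1,\infty)})\Omega_\psi}$ with corresponding projection $P_+$. The algebra $W$ will be extracted as a finite quotient of $P_+ \pi_\psi(M_{[1,\infty)})'' P_+$, identifying half-space observables whose pairings with every left-half local observable agree, and $\varrho$ will be the descent of the vector state $\langle \, \cdot \, \Omega_\psi, \Omega_\psi\rangle$ to this quotient. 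The map $\mathcal{E} : M \bar\otimes W \to W$ is then obtained by composing the natural inclusion $M_0 \otimes \pi_\psi(M_{[1,\infty)})'' \hookrightarrow \pi_\psi(M_{[0,\infty)})''$ with the shift $\alpha_{-1}$ (which carries $M_{[0,\infty)}$ into $M_{[-1,\infty)} \supset M_{[1,\infty)}$) and then compressing back onto $W$. Translation invariance of $\psi$ is precisely what ensures both that $\mathcal{E}$ descends to the quotient and that $\varrho \circ \mathcal{E}(1_M \otimes \cdot) = \varrho$; the displayed formula is then recovered by induction on $n - m$, unfolding the definition of $\mathcal{E}$ one tensor factor at a time.

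The principal obstacle, which does not appear in the finite-dimensional setting of \cite{FannesNachtergaeleWerner}, is ensuring that the resulting $W$ is genuinely a finite von Neumann algebra with separable predual. The half-space algebra $\pi_\psi(M_{[1,\infty)})''$ need not itself be finite even when $(M,\tau)$ is, so the correct quotient must be identified with care. A natural approach is to exploit both the local normality of $\psi$ and the canonical trace $\tau_{\mathbb{Z}}|_{\A_{\mathbb{Z}}}$ to realize $W$ inside an ultraweak closure that inherits a tracial structure descending to a faithful normal trace on the quotient. Once $W$ is produced as a tracial von Neumann algebra with the required properties, the remaining positivity, normality, and consistency verifications for $\mathcal{E}$ and $\varrho$ follow essentially as in \cite[Section 2]{FannesNachtergaeleWerner}.
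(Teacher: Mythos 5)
Your argument for (ii)~$\Rightarrow$~(i) is sound and in substance the same as the paper's: the right-hand side of the formula depends only on the ordered tuple $(a_m,\dots,a_n)$ and not on the location of the interval, so translation invariance is immediate (the consistency identities you isolate, unitality of $\mathcal{E}$ and $\varrho\circ\phi=\varrho$, are indeed automatic from the hypothesis that $\psi$ is a state satisfying the formula on every block).

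The gap is in (i)~$\Rightarrow$~(ii). You correctly identify the obstruction to transplanting the GNS/compression construction of \cite[Proposition~2.5]{FannesNachtergaeleWerner}: the half-chain algebra $P_+\pi_\psi(M_{[1,\infty)})''P_+$, or any quotient of it, has no reason to be a \emph{finite} von Neumann algebra. But you then leave exactly this point unresolved. The proposed remedy --- invoking the canonical trace $\tau_{\mathbb{Z}}|_{\A_{\mathbb{Z}}}$ to endow the quotient with a faithful normal trace --- does not work as stated: $\tau_{\mathbb{Z}}$ is a trace on the C*-algebra $\A_{\mathbb{Z}}$, not on $\pi_\psi(\A_{\mathbb{Z}})''$, and unless $\psi$ is quasi-contained in the trace there is no way for $\tau_{\mathbb{Z}}$ to descend to the GNS von Neumann algebra of $\psi$. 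Indeed, already for $M=\mathbb{M}_2$ and $\psi$ a translation-invariant product state built from a non-tracial on-site state, $\pi_\psi(M_{[1,\infty)})''$ is a type $\mathrm{III}$ Powers factor and admits no trace at all, so the "minimal'' $W$ your construction produces cannot be finite. The paper sidesteps the entire issue by abandoning minimality: it takes $W:=M_{\mathbb{N}}=\overline{\bigotimes}_{n\geq 1}(M_n,\tau_n)$, which is a tracial von Neumann algebra with separable predual by construction, $\mathcal{E}:=\id_M\otimes\alpha_1$ (the right shift, normal because it preserves $\tau_{\mathbb{Z}}$), and $\varrho:=\psi|_W$; the iterated formula then simply places $a_m,\dots,a_n$ at sites $1,\dots,n-m+1$ and translation invariance of $\psi$ gives the identity. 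If you want to keep your dilation-theoretic route you must either restrict the class of states (as Fannes--Nachtergaele--Werner do to obtain finite-dimensional $W$) or replace the quotient construction by something that manifestly lands in a tracial algebra; as written, the finiteness of $W$ --- which is part of the statement --- is not established.
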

\begin{proof}
$(i)\Rightarrow (ii)$: Put $W = M_{\mathbb{N}}$, viewed as a subalgebra of $M_{\mathbb{Z}}$. Observe that the automorphism $\alpha_1$ extends to a normal automorphism of $M_\mathbb{Z}$ because it preserves $\tau_{\mathbb{Z}}$. In particular, $\alpha_1(W)\subset M_{[2,+\infty)}$ so that we can define $\mathcal{E}:= id_M\otimes \alpha_1$. Then, taking $\varrho = \psi|_{W}$, the claimed identity holds since the translation invariance of $\psi$ means it is index agnostic.\\

\noindent$(ii)\Rightarrow (i)$:  Mimicking \cite[Proposition 2.3]{FannesNachtergaeleWerner}, we see that the family of maps 
    \begin{equation}\label{eqn:quoteiteratesendquote}
        \mathcal{E}^{(n+1)}:M \bar \otimes\underbrace{M\bar \otimes \cdots M}_{n \text{ times }} \bar \otimes W \to W
    \end{equation} via $\mathcal{E}^{(n+1)} = \mathcal{E} \circ (\id_{M}\otimes \mathcal{E}^{(n)})$ and $\mathcal{E}^{(1)}:= \mathcal{E}$ is completely positive and normal for each $n$ and moreover 
    \[
        \psi(a_1   \otimes \cdots   \otimes a_n ) := \varrho( \mathcal{E}^{(n)}(a_1  \otimes \cdots   \otimes a_n   \otimes 1_W)),
    \] is positive and normal. Translation invariance of $\psi$ follows from the fact that $\mathcal{E}$ is a function of the observable and is independent of its index (translation does not change the operator itself, just its index). 
\end{proof}

\begin{rmk}
Assuming $M\neq \mathbb{C}$, the $W$ constructed in the above proof is necessarily infinite dimensional. However,
Fannes, Nachtergaele, and Werner considered conditions on $\psi$ that were necessary and sufficient to guarantee that $W$ could be chosen to be a finite dimensional algebra (see \cite[Propositions 2.1 and 2.3]{FannesNachtergaeleWerner}). $\hfill\blacksquare$
\end{rmk}

\subsection{Locally normal states with random generating map}
    
Throughout this section, let $(M,\tau)$ be a tracial von Neumann algebra with a separable predual, and let $(\Omega, \P)$ be a probability space equipped with ergodic $T\in \Aut(\Omega,\P)$. We will write $(W,\tau_W)$ for an auxiliary finite von Neumann algebra possessing a separable predual. When it is clear from context, we shall drop the subscript on $\tau_W$. 
    
Consider the von Neumann tensor product $M\bar \otimes W$ and let $\mathcal{E}_\omega:M\bar \otimes W \to M \bar \otimes W$ be a normal unital positive weak* random linear operator such that 
        \[
            \mathcal{E}_\omega(M\bar\otimes W) \subset \mathbb{C}\bar\otimes W \cong W
        \]
almost surely. Associated to such a weak* random linear operator, there is a family of normal weak* random linear operators $E_{\omega,a}:W\to W$ indexed by $a\in M$ and given by 
    \[ 
        W\ni x\mapsto E_{\omega, a}(x):= \mathcal{E}_\omega(a\otimes x).
    \] 
We denote $\phi_\omega:=E_{\omega,1}$, which is a normal unital positive weak* random linear operator on $W$, and we let $\Phi_{n,m}^{T^{-1}}$ be the associated quantum process on $W$. Similarly, we denote $\gamma_\omega:= (\phi_\omega)_*$, which is a bounded tracial (hence faithful) positive random linear operator on $L^1(W,\tau_W)$ by Lemma~\ref{lem:measurabilityTest}, and we let $\Gamma_{n,m}^T$ be the associated quantum process on $L^1(W,\tau_W)$. Recall that these two processes are dual to each other by Equation~(\ref{eqn:dual_quantum_process}).

Given integers $m<n$ and $\omega\in \Omega$, we define a map $\mathcal{E}_\omega^{[m,n]}\colon M_{[m,n]}\to W$ by
    \[
        \mathcal{E}^{[m,n]}_\omega(a):= \mathcal{E}_{T^m\omega}\circ (1\otimes \mathcal{E}_{T^{m+1}\omega})\circ\cdots \circ (1\otimes \cdots \otimes 1\otimes \mathcal{E}_{T^n\omega})(a\otimes 1_W),
    \]
which is almost surely normal unital and positive. Using our previous notation, for $a=a_m\otimes \cdots \otimes a_n\in M_{[m,n]}$ we have
    \[
         \mathcal{E}^{[m,n]}_\omega(a) = E_{T^m\omega, a_m}\circ \cdots \circ E_{T^n\omega, a_n}(1_W).
    \]
Consequently, for $m<k<\ell<n$ and $a\in M_{[k,\ell]}\subset M_{[m,n]}$ one has
    \begin{align}\label{eqn:hidden_quantum_process}
        \mathcal{E}^{[m,n]}_\omega(a) = \Phi_{-m,-k+1}^{T^{-1}}\circ \mathcal{E}_{\omega}^{[k,\ell]}(a)
    \end{align}
almost surely.

\begin{thm}[Thermodynamic Limit]\label{thm:thermo}
With the assumptions and notation as above, suppose that
    \[
        \P [\exists n,m\in \mathbb{Z}\colon n\geq m \text{ and }\eta \tau_{x_0} \leq \Phi_{m,n}^{T^{-1}} \leq \eta^{-1} \tau_{x_0} \text{ for some }\eta\in (0,1],\ x_0\in S] >0.    
    \]
Then there exists a map $\Psi\colon \Omega\times \A_{\mathbb{Z}}\to \C$ satisfying:
    \begin{enumerate}[label=(\arabic*)]
        \item $\Psi_\omega$ is a locally normal state on $\A_{\mathbb{Z}}$ for almost every $\omega\in \Omega$;

        \item $\Psi_\omega(a) \in L^\infty(\Omega,\P)$ for all $a\in \A_{\mathbb{Z}}$;

        \item $\Psi_\omega\circ \alpha_k= \Psi_{T^k\omega}$ almost surely for all $k\in \mathbb{Z}$;

        \item and for any local observable $a\in M_{\Lambda}$ one has
            \[
                \lim_{N\to\infty} \|\mathcal{E}^{[-N,N]}_\omega(a) - \Psi_\omega(a)\|_\infty =0        
            \]
        almost surely.
    \end{enumerate}
\end{thm}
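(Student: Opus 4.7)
The plan is to derive everything from a single application of Theorem~\ref{thm:left_convergence_on_M} to $\phi_\omega$ with respect to the ergodic transformation $T^{-1}$. Unitality of $\mathcal{E}_\omega$ ensures $\phi_\omega(1)=1$ almost surely, so $\phi_\omega$ is a normal unital positive weak* random linear operator on $W$, and the hypothesis of Theorem~\ref{thm:thermo} matches that of Theorem~\ref{thm:left_convergence_on_M} applied to the process $\Phi^{T^{-1}}_{n,m}$. This produces random variables $Y_m\colon \Omega\to S\subset L^1(W,\tau_W)$ satisfying the shift relation $Y_m(T\omega)=Y_{m-1}(\omega)$ (the sign reversal reflecting the substitution $T\mapsto T^{-1}$) and the bound
\begin{equation*}
\|\Phi^{T^{-1}}_{n,m}(\omega)(b)-\tau_W(bY_m(\omega))\|_\infty \leq 8\|b\|_\infty\, c(\Gamma^T_{-m,-n}(\omega)),
\end{equation*}
which, on inspection of the proof, holds simultaneously for all $b\in W$ on a common almost-sure event (the exceptional set there depends only on the random contraction constants, not on the test element).

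The key structural observation is that unitality of $\phi$ collapses the padding factors in the definition of $\mathcal{E}^{[-N,N]}_\omega$ outside the support of $a\in M_{[k,\ell]}$, so that Equation~(\ref{eqn:hidden_quantum_process}) simplifies to $\mathcal{E}^{[-N,N]}_\omega(a) = \Phi^{T^{-1}}_{N,-k+1}(\omega)(\mathcal{E}^{[k,\ell]}_\omega(a))$ whenever $-N\leq k\leq \ell\leq N$. Define
\begin{equation*}
\Psi_\omega(a):=\tau_W\bigl(\mathcal{E}^{[k,\ell]}_\omega(a)\,Y_{-k+1}(\omega)\bigr).
\end{equation*}
Because $\mathcal{E}^{[k,\ell]}_\omega$ is unital positive (hence contractive by Russo--Dye), applying the uniform bound with $b=\mathcal{E}^{[k,\ell]}_\omega(a)$ yields
\begin{equation*}
\|\mathcal{E}^{[-N,N]}_\omega(a)-\Psi_\omega(a)\|_\infty \leq 8\|a\|_\infty\, c(\Gamma^T_{k-1,-N}(\omega)) \xrightarrow[N\to\infty]{} 0
\end{equation*}
almost surely by Lemma~\ref{lem:randomconstproperties}, which is assertion~(4); uniqueness of operator-norm limits then shows $\Psi_\omega(a)$ is independent of the choice of $[k,\ell]\supset\supp(a)$, so $\Psi_\omega$ is well-defined on the local algebra $\A_\mathbb{Z}^{\mathrm{loc}}$.

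Assertions~(1) and~(2) now follow routinely. Positivity, linearity, and unitality pass to the uniform limit, so $\Psi_\omega|_{\A_\mathbb{Z}^{\mathrm{loc}}}$ is a state and extends uniquely to the C*-completion $\A_\mathbb{Z}$. For each finite $\Lambda$, the restriction $\Psi_\omega|_{M_\Lambda}$ is the composition of the normal unital positive map $\mathcal{E}^{[k,\ell]}_\omega$ with the normal functional $\tau_W(\,\cdot\,Y_{-k+1}(\omega))$, hence is itself normal. Measurability of $\omega\mapsto\Psi_\omega(a)$ for local $a$ follows from measurability of both factors $\mathcal{E}^{[k,\ell]}_\omega(a)$ and $Y_{-k+1}(\omega)$, and extends to all of $\A_\mathbb{Z}$ by norm-density and continuity.

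For translation covariance~(3), unfolding the definition of $\mathcal{E}^{[\cdot,\cdot]}_\omega$ gives the index-shift identity $\mathcal{E}^{[k+j,\ell+j]}_\omega(\alpha_j(a))=\mathcal{E}^{[k,\ell]}_{T^j\omega}(a)$, while iterating the shift relation produces $Y_m(T^j\omega)=Y_{m-j}(\omega)$, whence $Y_{-(k+j)+1}(\omega)=Y_{-k+1}(T^j\omega)$. Combining these yields $\Psi_\omega(\alpha_j(a))=\Psi_{T^j\omega}(a)$ on local observables, and hence on all of $\A_\mathbb{Z}$ by density. The main technical obstacle is the uniform-in-$b$ refinement of Theorem~\ref{thm:left_convergence_on_M}, which is essential because $\mathcal{E}^{[k,\ell]}_\omega(a)$ itself depends on $\omega$; once that uniformity is established (via separability of $L^1(W,\tau_W)$), the remaining steps reduce to careful bookkeeping.
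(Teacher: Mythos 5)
Your proposal is correct and follows essentially the same route as the paper: apply Theorem~\ref{thm:left_convergence_on_M} to the process $\Phi^{T^{-1}}_{n,m}$ (unitality making the normalization trivial), define $\Psi_\omega$ on local observables by $\tau_W(\mathcal{E}^{[k,\ell]}_\omega(a)\,Y_{-k+1}(\omega))$ (the paper's $Z_k$), and combine Equation~(\ref{eqn:hidden_quantum_process}) with the uniform-in-$b$ contraction bound to obtain (4), with (1)--(3) following exactly as you describe. The only cosmetic difference is that you deduce independence of the choice of $[k,\ell]$ from uniqueness of operator-norm limits, whereas the paper verifies the consistency identity directly from the relation $(\phi_{T^{m}\omega})_*(Z_m(\omega))=Z_{m+1}(\omega)$; both arguments are valid.
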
   
\begin{proof}
Let $Y_m\colon \Omega\to S(W)$, $m\in \mathbb{Z}$, be the family of random variables obtained by applying Theorem~\ref{thm:left_convergence_on_M} to the ergodic quantum process $\Phi_{m,n}^{T^{-1}}$ on $W$. Then the family $Z_m:= Y_{-m+1}$, $m\in \mathbb{Z}$, satisfies
    \[
        (\phi_{T^{m}\omega})_*(Z_m(\omega))= Z_{m+1}(\omega) \qquad \text{ and } \qquad  Z_m(T^{\pm 1}\omega)= Z_{m\pm 1}(\omega)
    \]
almost surely. Using Equation~(\ref{eqn:hidden_quantum_process}), we also have for $a\in M_{[m,n]}$ that
    \[
        \lim_{N\to\infty} \left\| \mathcal{E}^{[-N,N]}(a) - \tau_W(\mathcal{E}^{[m,n]}(a) Z_m) \right\|_\infty = \lim_{N\to\infty} \left\| \Phi_{N,-m+1}^{T^{-1}}(\mathcal{E}^{[m,n]}(a)) - \tau_W(\mathcal{E}^{[m,n]}(a) Y_{-m+1}) \right\|_\infty =0
    \]
almost surely. For $m<k<\ell<n$ and $a\in M_{[k,\ell]}\subset M_{[m,n]}$, using Equation~(\ref{eqn:hidden_quantum_process}) again and the above properties of $Z_m$ we have
    \begin{align*}
        \tau_W(\mathcal{E}_\omega^{[m,n]}(a) Z_m(\omega)) &= \tau_W( [\Phi_{-m,-k+1}^{T^{-1}}(\omega)](\mathcal{E}_{\omega}^{[k,\ell]}(a)) Z_m(\omega))\\
            &= \tau_W( \mathcal{E}_\omega^{[k,\ell]}(a) (\phi_{T^{k-1}\omega})_*\circ\cdots \circ (\phi_{T^m\omega})_*(Z_m(\omega))) = \tau_W( \mathcal{E}_\omega^{[k,\ell]}(a) Z_k(\omega))
    \end{align*}
almost surely. Noting that the $\omega$ where the above fails is independent of $a$, it follows that
    \[
        \Psi_\omega(a):= \tau_W(\mathcal{E}_\omega^{[-n,n]}(a) Z_{-n}(\omega)) \qquad a\in M_{[-n,n]}
    \]
almost surely gives a well-defined state on the $*$-subalgebra of local observables. As the local observables are norm dense in the quasi-local algebra, $\Psi_\omega$ almost surely admits a unique extension to a locally normal state on $\A_{\mathbb{Z}}$. Thus (1) holds and (4) follows from our limit computation above.

To see (2), recall that the separability of $L^1(W,\tau_W)$ implies $Z_m\colon \Omega\to S(W)$ is strongly measurable and can therefore be approximated by simple functions. Consequently, $\Psi_\omega(a)\in L^\infty(\Omega,\P)$ with $\|\Psi_\omega(a)\|_{L^\infty(\Omega,\P)} \leq \|a\|$ for local observables $a$ by definition of $\Psi_\omega$. This then holds for all elements of the quasi-local algebra through approximation by sequences of local observables.

Finally, towards showing (3) we first observe that for $a\in M_{[m,n]}$ and $k\in \mathbb{Z}$ one has
    \[
        \mathcal{E}_\omega^{[m+k,n+k]}(\alpha_k(a)) = \mathcal{E}_{T^k\omega}^{[m,n]}(a).
    \]
Combined with the properties of $Z_m$ above, we therefore have
    \[
        \Psi_\omega(\alpha_k(a)) = \tau_W( \mathcal{E}_\omega^{[m+k,n+k]}(a) Z_{m+k}(\omega)) = \tau_W(\mathcal{E}_{T^k\omega}^{[m,n]}(a) Z_m(T^k\omega)) = \Psi_{T^k \omega}(a). 
    \]
By density, this extends to $a\in \A_\mathbb{Z}$.
\end{proof}

\begin{thm}[{Theorem~\ref{thmx:C}}]\label{thm:Rclustering}
Let $\Psi\colon \Omega\times \A_{\mathbb{Z}}\to \C$ be as in Theorem~\ref{thm:thermo}. There exists $\kappa\in [0,1)$ and a family of random variables $E_k\colon \Omega\to [0,\infty)$, $k\in \mathbb{Z}$, satisfying
    \[
        E_k(T^\ell\omega) = E_{k+\ell}(\omega)
    \]
almost surely for all $\ell\in \mathbb{Z}$, and
    \[
        |\Psi_\omega(ab) - \Psi_\omega(a) \Psi_\omega(b)| \leq E_k(\omega) \kappa^{\text{dist}(\Lambda,\Pi)-1} \|a\|_\infty \|b\|_\infty \qquad \qquad a\in M_{\Lambda},\ b\in M_{\Pi}
    \]
almost surely for finite subsets of integers $\Lambda \subset (-\infty,k-1)$ and $\Pi\subset [k+1,+\infty)$.
\end{thm}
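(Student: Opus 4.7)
The plan is to exploit a factorization of $\mathcal{E}^{[p,q]}_\omega(ab)$ through the ``gap'' between $\Lambda$ and $\Pi$ and apply the quantitative convergence rate derived inside the proof of Theorem~\ref{thm:left_convergence_on_M} on that gap. Since $\Lambda$ and $\Pi$ are finite, I may fix integers $p,q$ with $[p,i]\supset \Lambda$ and $[j,q]\supset \Pi$, where $i := \max\Lambda \leq k-1$ and $j := \min\Pi \geq k+1$, so $d := \dist(\Lambda,\Pi) = j - i \geq 2$. Identifying $ab$ with $a\otimes 1_{M_{[i+1,j-1]}}\otimes b \in M_{[p,q]}$ and unwinding the definition of $\mathcal{E}^{[p,q]}_\omega$ step by step yields
\[
\mathcal{E}^{[p,q]}_\omega(ab) = \widetilde{\mathcal{E}}^{[p,i]}_\omega\bigl(a \otimes \Phi^{T^{-1}}_{-i-1,\,1-j}(\omega)(w_b)\bigr), \qquad w_b := \mathcal{E}^{[j,q]}_\omega(b)\in W,
\]
where $\widetilde{\mathcal{E}}^{[p,i]}_\omega\colon M_{[p,i]}\bar\otimes W\to W$ is built from the same composition as $\mathcal{E}^{[p,i]}_\omega$ but without the final tensoring with $1_W$. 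Since every $\mathcal{E}_{T^n\omega}$ is normal, unital, and positive (hence contractive), the map $x\mapsto \widetilde{\mathcal{E}}^{[p,i]}_\omega(a\otimes x)$ on $W$ has operator norm at most $\|a\|_\infty$, and similarly $\|w_b\|_\infty \leq \|b\|_\infty$.

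Next, because $\phi_\omega$ is unital one has $\Phi^{T^{-1}}_{n,m}(1)=1$, so the quantitative bound produced inside the proof of Theorem~\ref{thm:left_convergence_on_M} simplifies to
\[
\bigl\|\Phi^{T^{-1}}_{-i-1,\,1-j}(\omega)(w_b) - \tau_W\bigl(w_b\, Y_{1-j}(\omega)\bigr)\bigr\|_\infty \leq 8\,\|b\|_\infty\, c\bigl(\Gamma^T_{j-1,\,i+1}(\omega)\bigr),
\]
where I used $(\Phi^{T^{-1}}_{n,m})_* = \Gamma^T_{-m,-n}$ from \eqref{eqn:dual_quantum_process} together with Lemma~\ref{lem:contraction_constant_of_duals} to rewrite the contraction constant on the right. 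Since $Z_j = Y_{1-j}$, Theorem~\ref{thm:thermo}(4) identifies the constant on the left as $\Psi_\omega(b)$. Applying the contractive map $\widetilde{\mathcal{E}}^{[p,i]}_\omega(a\otimes\,\cdot\,)$ to this inequality and then pairing with $Z_p(\omega)$ in the $\tau_W$-trace (using $\|Z_p\|_1=1$), together with the identity $\widetilde{\mathcal{E}}^{[p,i]}_\omega(a\otimes 1_W) = \mathcal{E}^{[p,i]}_\omega(a)$, will give
\[
|\Psi_\omega(ab) - \Psi_\omega(a)\Psi_\omega(b)| \leq 8\,\|a\|_\infty\|b\|_\infty\, c\bigl(\Gamma^T_{j-1,\,i+1}(\omega)\bigr).
\]

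It remains to exhibit exponential decay in $d$ with the correct $T$-covariance. By Lemma~\ref{lem:compositions_of_contractions} I would split the interval at the pivot $k$:
\[
c\bigl(\Gamma^T_{j-1,\,i+1}\bigr) \leq c\bigl(\Gamma^T_{j-1,\,k+1}\bigr)\cdot c\bigl(\Gamma^T_{k,\,i+1}\bigr).
\]
The hypothesis of Theorem~\ref{thm:thermo} on $\Phi^{T^{-1}}$ transfers to the positive-probability condition $\P[\exists n\geq m: \Gamma^T_{n,m}\in SHC(M)]>0$ via Corollary~\ref{cor:when_do_preduals_give_SHC}, Lemma~\ref{lem:contraction_constant_of_duals}, and \eqref{eqn:dual_quantum_process}, so Lemma~\ref{lem:randomconstproperties}(2) applies. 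Fixing any $\kappa\in(C,1)$, it supplies finite-a.s.\ random variables $D_{\bullet,k+1}$ and $D_{k,\bullet}$ satisfying $D_{\bullet,k+1}(T^\ell\omega) = D_{\bullet,k+1+\ell}(\omega)$, $D_{k,\bullet}(T^\ell\omega) = D_{k+\ell,\bullet}(\omega)$, and
\[
c\bigl(\Gamma^T_{j-1,\,k+1}\bigr) \leq D_{\bullet,k+1}\,\kappa^{j-k-1}, \qquad c\bigl(\Gamma^T_{k,\,i+1}\bigr) \leq D_{k,\bullet}\,\kappa^{k-i},
\]
whose product is $D_{\bullet,k+1}\,D_{k,\bullet}\,\kappa^{d-1}$. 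Setting $E_k(\omega) := 8\, D_{\bullet,k+1}(\omega)\, D_{k,\bullet}(\omega)$ then completes the desired bound and inherits the equivariance $E_k(T^\ell\omega)=E_{k+\ell}(\omega)$. I anticipate the main obstacle is purely bookkeeping: carefully tracking the index conventions among $\Gamma^T$, $\Phi^{T^{-1}}$, $Y_m$, and $Z_m$, and confirming that the probabilistic hypothesis on $\Phi^{T^{-1}}$ from Theorem~\ref{thm:thermo} genuinely yields the positive-probability condition for $\Gamma^T$ needed to invoke Lemma~\ref{lem:randomconstproperties}.
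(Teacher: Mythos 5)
Your proposal is correct and follows essentially the same route as the paper's proof: factor $\mathcal{E}^{[p,q]}_\omega(ab)$ through the gap between $\Lambda$ and $\Pi$, apply the bound $8\|b\|_\infty\, c(\Gamma^T_{j-1,i+1})$ extracted from the proof of Theorem~\ref{thm:left_convergence_on_M} (with the unitality simplification), split the contraction constant at the pivot via Lemma~\ref{lem:compositions_of_contractions}, and conclude with Lemma~\ref{lem:randomconstproperties}.(2). The only cosmetic differences are that the paper replaces $b$ by $b-\Psi_\omega(b)$ and uses traciality of $(\mathcal{E}^{[-N,m]}_\omega)_*$ instead of your Russo--Dye contractivity of $\widetilde{\mathcal{E}}^{[p,i]}_\omega(a\otimes\cdot)$, and it splits as $c(\Gamma^T_{n-1,k})c(\Gamma^T_{k-1,m+1})$ so that $E_k=8D_{\bullet,k}D_{k-1,\bullet}$; your split at $k+1$ degenerates in the edge case $\min\Pi=k+1$, but since $D_{\bullet,k+1}\geq 1$ by construction the bound $c(\Gamma^T_{k,i+1})\leq D_{k,\bullet}\kappa^{k-i}=D_{k,\bullet}\kappa^{\dist(\Lambda,\Pi)-1}$ already suffices there.
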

\begin{proof}
Let $C\in [0,1)$ be the constant obtained from applying Lemma~\ref{lem:randomconstproperties}.(1) to the dual quantum process $\Gamma^{T}_{n,m} = (\Phi_{-m,-n}^{T^{-1}})_*$. Set $\kappa$ to be any number in $(C,1)$. For $k\in \mathbb{Z}$,  we set
    \[
        E_k(\omega):= 8 D_{\bullet,k}(\omega) D_{k-1,\bullet}(\omega),
    \]
where $D_{\bullet,k}, D_{k-1,\bullet}$ are the random variables from Lemma~\ref{lem:randomconstproperties}.(2).
Now, for finite subsets of integers $\Lambda\subset (-\infty, k-1)$ and $\Pi\subset [k+1,+\infty)$, let $a\in M_\Lambda$ and $b\in M_\Pi$. Denote $c:=b - \Psi_\omega(b)\in M_\Pi$ so that
    \[
        \Psi_\omega(ab) - \Psi_\omega(a)\Psi_\omega(b)= \Psi_\omega(ac)
    \]
almost surely. Let $N\in \mathbb{N}$ be large enough so that $\Lambda\cup \Pi\subset [-N,N]$ and denote $m:=\max{\Lambda}$ and $n:=\min{\Pi}$. Note that $m<(m+1)\leq k-1<k \leq (n-1)< n$ and $\dist(\Lambda,\Pi)=n-m$. We have
    \begin{align*}
        |\Psi_\omega(ab) - \Psi_\omega(a)\Psi_\omega(b)| &= |\Psi_\omega(ac)|\\ &= \left| \tau_W(\mathcal{E}_\omega^{[-N,N]}(ac) Z_{-N}(\omega) ) \right|\\
            &= \left| \tau_W(\mathcal{E}_\omega^{[-N,m]}\left(a\otimes \Phi_{-(m+1),-(n-1)}^{T^{-1}}\circ \mathcal{E}_{\omega}^{[n,N]}(c)\right) Z_{-N}(\omega) ) \right|\\
            &\leq \|a\|_\infty \|\Phi_{-(m+1),-(n-1)}^{T^{-1}}\circ \mathcal{E}_{\omega}^{[n,N]}(c)\|_\infty \| (\mathcal{E}_\omega^{[-N,m]})_*(Z_{-N}(\omega)) \|_1\\
            &\leq \|a\|_\infty \|\Phi_{-(m+1),-(n-1)}^{T^{-1}}\circ \mathcal{E}_{\omega}^{[n,N]}(b) - \tau_W(\mathcal{E}_{\omega}^{[n,N]}(b) Z_n(\omega))\|_\infty,
    \end{align*}
where in the last inequality we have used that $\Phi_{-(m+1),-(n-1)}^{T^{-1}}\circ \mathcal{E}_{\omega}^{[n,N]}$ is unital and $(\mathcal{E}_\omega^{[-N,m]})_*$ is tracial, almost surely. By the proof of Theorem~\ref{thm:left_convergence_on_M}, we can estimate the second term in the last expression by
    \[
        \|\Phi_{-(m+1),-(n-1)}^{T^{-1}}\circ \mathcal{E}_{\omega}^{[n,N]}(b) - \tau_W(\mathcal{E}_{\omega}^{[n,N]}(b) Z_n(\omega))\|_\infty \leq 8\|b\|_\infty c(\Gamma_{ n-1,m+1}^T)
    \]
where we have used that $\mathcal{E}_{\omega}^{[n,N]}$ is unital and positive almost surely. Returning to our original estimate, the above and the properties of $D_{\bullet,k}, D_{k-1,\bullet}$ from Lemma~\ref{lem:randomconstproperties}.(2)  yield
    \begin{align*}
        |\Psi_\omega(ab) - \Psi_\omega(a)\Psi_\omega(b)| &\leq 8 c(\Gamma_{ n-1,m+1}^T) \|a\|_\infty \|b\|_\infty\\
        &\leq 8 c(\Gamma_{ n-1,k}^T)c(\Gamma_{ k-1,m+1}^T) \|a\|_\infty \|b\|_\infty\\
        &\leq 8 D_{\bullet,k}(\omega) \kappa^{(n-1)-k+1} D_{k-1,\bullet}(\omega) \kappa^{(k-1)-(m+1)+1} \|a\|_\infty \|b\|_\infty\\
        &= 8 D_{\bullet,k}(\omega) D_{k-1,\bullet}(\omega) \kappa^{n-m-1} \|a\|_\infty \|b\|_\infty = E_k(\omega) \kappa^{\text{dist}(\Lambda,\Pi) -1} \|a\|_\infty \|b\|_\infty,
    \end{align*}
almost surely.
\end{proof}

\begin{cor}
Let $\Psi\colon \Omega\times \A_{\mathbb{Z}}\to \C$ be as in Theorem~\ref{thm:thermo}. Then
    \[
        \bar\Psi(a):= \E[\Psi_\omega(a)]  
    \]
defines a locally normal translation invariant state on $\A_\mathbb{Z}$ such that
    \[
        \bar\Psi(a) = \lim_{N\to\infty} \frac{1}{2N+1} \sum_{n=-N}^N \Psi_{T^n\omega}(a) 
    \]
almost surely for all $a\in \A_{\mathbb{Z}}$.
\end{cor}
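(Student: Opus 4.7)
The plan is to verify in turn that $\bar\Psi$ is a state, that it is locally normal, that it is translation invariant, and finally that it satisfies the Cesàro formula. The first part is immediate from Theorem~\ref{thm:thermo}.(2): since $\omega \mapsto \Psi_\omega(a)$ lies in $L^\infty(\Omega,\mathbb{P})$, the expectation $\bar\Psi(a)$ is well-defined for every $a\in\mathcal{A}_\mathbb{Z}$. Linearity, positivity, and $\bar\Psi(1)=1$ follow by pushing expectation through the corresponding almost sure properties of each $\Psi_\omega$. Translation invariance is also quick: by Theorem~\ref{thm:thermo}.(3), for any fixed $a$ and $k$ one has $\Psi_\omega(\alpha_k(a))=\Psi_{T^k\omega}(a)$ almost surely, and the $T$-invariance of $\mathbb{P}$ gives $\bar\Psi(\alpha_k(a))=\mathbb{E}[\Psi_{T^k\omega}(a)]=\mathbb{E}[\Psi_\omega(a)]=\bar\Psi(a)$.

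The technical step is local normality. Fix a finite $\Lambda\subset\mathbb{Z}$; since $L^1(M,\tau)$ is separable, so is $L^1(M_\Lambda,\tau_\Lambda)$. By Theorem~\ref{thm:thermo}.(1), almost surely $\Psi_\omega|_{M_\Lambda}$ is normal, and under the identification $M_{\Lambda,*}\cong L^1(M_\Lambda,\tau_\Lambda)$ it corresponds to a density $\rho_\omega\in S(M_\Lambda)$ with $\Psi_\omega(a)=\tau_\Lambda(\rho_\omega a)$ for every $a\in M_\Lambda$. For each such $a$, measurability of $\omega\mapsto\Psi_\omega(a)=\tau_\Lambda(\rho_\omega a)$ is exactly weak measurability of $\rho_\omega$ as an $L^1(M_\Lambda,\tau_\Lambda)$-valued map, so by Theorem~\ref{thm:Pettis} the map $\omega\mapsto\rho_\omega$ is strongly measurable. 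Since $\|\rho_\omega\|_1=1$ almost surely, it is Bochner integrable, and $\bar\rho:=\int_\Omega\rho_\omega\,d\mathbb{P}(\omega)$ lies in $L^1(M_\Lambda,\tau_\Lambda)_+$. Commuting the bounded linear functional $\tau_\Lambda(\,\cdot\,a)$ with the Bochner integral yields $\tau_\Lambda(\bar\rho a)=\bar\Psi(a)$ for every $a\in M_\Lambda$, so $\bar\Psi|_{M_\Lambda}$ is normal with density $\bar\rho$.

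For the Cesàro limit, fix $a\in\mathcal{A}_\mathbb{Z}$ and set $f(\omega):=\Psi_\omega(a)$, which lies in $L^\infty(\Omega,\mathbb{P})\subset L^1(\Omega,\mathbb{P})$. Ergodicity of both $T$ and $T^{-1}$ lets us apply Birkhoff's ergodic theorem to $f$ and to $f\circ T^{-1}$, each giving almost sure convergence to $\mathbb{E}[f]$ of the corresponding one-sided averages. Writing the two-sided average as a convex combination
\[\frac{1}{2N+1}\sum_{n=-N}^N f(T^n\omega)=\frac{N+1}{2N+1}\cdot\frac{1}{N+1}\sum_{n=0}^N f(T^n\omega)+\frac{N}{2N+1}\cdot\frac{1}{N}\sum_{n=1}^N f(T^{-n}\omega),\]
one concludes $\lim_N\frac{1}{2N+1}\sum_{n=-N}^N\Psi_{T^n\omega}(a)=\bar\Psi(a)$ almost surely, where the null exceptional set depends on $a$. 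The main obstacle here is really the local normality claim: verifying that averaging preserves normality, which I expect to handle with the Bochner-integration argument above, made possible precisely by the standing separability hypothesis on $L^1(M,\tau)$.
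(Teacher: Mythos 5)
Your proof is correct and follows essentially the same route as the paper: statehood and translation invariance are read off from parts (1)--(3) of Theorem~\ref{thm:thermo}, the Ces\`aro formula comes from Birkhoff's theorem (the paper invokes the two-sided version directly, while you assemble it from the one-sided averages for $T$ and $T^{-1}$), and local normality is obtained by representing $\Psi_\omega|_{M_\Lambda}$ by a density, using Theorem~\ref{thm:Pettis} to get strong measurability, and integrating. Your Bochner-integral phrasing of the last step is exactly the paper's simple-function approximation plus dominated convergence, just stated more compactly.
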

\begin{proof}
That $\bar\Psi$ is well-defined and a state follows from parts (1) and (2) of Theorem~\ref{thm:thermo}, while part (3) and the fact that $T$ is measure preserving gives that $\bar\Psi$ is translation invariant. The limit formula follows from Birkhoff's strong ergodic theorem.

It remains to show that $\bar\Psi$ is locally normal, and it suffices to show its restriction to $M_{[m,n]}$ is normal. Recall that for $a\in M_{[m,n]}$ that one has
    \[
        \Psi_\omega(a) = \tau_W( \mathcal{E}_\omega^{[m,n]}(a)Z_m(\omega)),
    \]
which is almost surely normal since $\mathcal{E}_\omega$ is almost surely normal and $Z_m(\omega)\in L^1(M,\tau_W)$. When this is the case, set $X_m(\omega)\in S(M)$ so that
    \[
        \tau(a X_m(\omega)) = \Psi_\omega(a),
    \]
and otherwise let $X_m(\omega)=1$. Part (2) of Theorem~\ref{thm:thermo} implies $X_m\colon \Omega\to S(M)$ is weakly measurable, and hence a random variable by Theorem~\ref{thm:Pettis}. Consequently, there exists a a sequence of simple functions $\phi_k\colon \Omega\to L^1(M,\tau)$ satisfying $\|X_m(\omega) - \phi_k(\omega)\|_1 \to 0$ as $k\to\infty$ almost surely. Note that we may assume $\phi_k(\omega)\in S(M)$ almost surely, and therefore the dominated convergence theorem implies
    \[
        \lim_{k\to\infty} \int_\Omega \|X_m(\omega) - \phi_k(\omega)\|_1\ d\P(\omega) =0.
    \]
It follows that $(\int_\Omega \phi_k\ d\P)_{k\in \mathbb{N}}$ is a Cauchy sequence in $L^1(M,\tau)$, and if we denote the limit by $\E[X_m]$ then
    \[
        \bar\Psi(a) = \int_\Omega \tau(a X_m)\ d\P = \lim_{k\to\infty} \int_\Omega \tau(a \phi_k)\ d\P =  \lim_{k\to\infty}  \tau(a \int_\Omega \phi_k \ d\P)=\tau( a \E[X_m]),
    \]
for all $a\in M_{[m,n]}$.
\end{proof}


\bibliographystyle{amsalpha}
\bibliography{bibliography}
\end{document}